\date\today
\newcommand{\bbD}{{{\mathbb{D}}}}
\newcommand{\bbZ}{{{\mathbb{Z}}}}
\newcommand{\bbC}{{{\mathbb{C}}}}
\newcommand{\bbP}{{{\mathbb{P}}}}
\newcommand{\bbW}{{{\mathbb{W}}}}
\newcommand{\bbX}{{{\mathbb{X}}}}
\newcommand{\cD}{{\mathcal{D}}}
\newcommand{\cH}{{\mathcal{H}}}
\newcommand{\cK}{{\mathcal{K}}}
\newcommand{\cL}{{\mathcal{L}}}
\newcommand{\cO}{{\mathcal{O}}}
\newcommand{\cV}{{\mathcal{V}}}
\newcommand{\cW}{{\mathcal{W}}}
\newcommand{\cX}{{\mathcal{X}}}
\newcommand{\cY}{{\mathcal{Y}}}
\newcommand{\cZ}{{\mathcal{Z}}}
\newcommand{\ba}{\mathbf a}  
\newcommand{\bg}{\mathbf g}
\newcommand{\bz}{\mathbf z}
\newcommand{\bF}{\mathbf F}
\newcommand{\bK}{\mathbf K} 
\newcommand{\bL}{\mathbf L}
\newcommand{\bW}{\mathbf W}
\newcommand{\bw}{\mathbf w}
\newcommand{\ha}{{\hat{a}}}
\newcommand{\hi}{{\hat{i}}}
\newcommand{\hb}{{\hat{b}}}
\newcommand{\hj}{{\hat{j}}}
\newcommand{\hz}{{\hat{z}}}
\newcommand{\he}{{\hat{e}}}
\newcommand{\hA}{{\hat{A}}}
\newcommand{\hB}{{\hat{B}}}
\newcommand{\hC}{{\hat{C}}}
\newcommand{\hS}{{\hat{S}}}
\newcommand{\hT}{{\hat{T}}}
\newcommand{\tj}{{\tilde{j}}}
\newcommand{\tu}{{\tilde{u}}}
\newcommand{\tv}{{\tilde{v}}}
\newcommand{\tP}{{\tilde{P}}}
\newcommand{\ttb}{{\mathtt{b}}}
\newcommand{\ttv}{{\mathtt{v}}}
\newcommand{\ttw}{{\mathtt{w}}}
\newcommand{\fg}{\mathfrak{g}}
\newcommand{\hfg}{\hat{\mathfrak{g}}}
\newcommand{\hepsilon}{{\hat{\epsilon}}}
\newcommand{\hiota}{{\hat{\iota}}}
\newcommand{\hpsi}{{\hat{\psi}}}
\newcommand{\hmu}{{\hat{\mu}}}
\newcommand{\halpha}{{\hat{\alpha}}}
\newcommand{\tepsilon}{{\tilde{\epsilon}}}
\newcommand{\tiota}{{\tilde{\iota}}}
\newcommand{\bchi}{{\boldsymbol{\chi}}}
\newcommand{\balpha}{\boldsymbol{\alpha}}
\newcommand{\bbeta}{\boldsymbol{\beta}}
\newcommand{\bpsi}{{\boldsymbol{\psi}}}
\newcommand{\bomega}{{\boldsymbol{\omega}}}
\newcommand{\bcL}{{\boldsymbol{\mathcal L}}}
\newcommand{\bdots}{{\boldsymbol{:}}}
\newcommand{\tttv}{{\tilde{\mathtt{v}}}}
\newcommand{\tbbW}{{\tilde{\mathbb{W}}}}
\newcommand{\sfv}{{\mathsf{v}}}
\newcommand{\sfb}{{\mathsf{b}}}
\newcommand{\sfO}{{\mathsf{O}}}
\newcommand{\sfT}{{\mathsf{T}}}
\newcommand{\sfR}{{\mathsf{R}}}
\newcommand{\sfE}{{\mathsf{E}}}
\newcommand{\sfEz}{{\mathsf{E}^{\mathsf{reg}}}}
\newcommand{\sfOz}{{\mathsf{O}^{\mathsf{reg}}}}
\newcommand{\sfKs}{{\mathsf{K}_s}}
\newcommand{\sfKsz}{{\mathsf{K}_s^{\mathsf{reg}}}}
\newcommand{\hbbW}{{\hat{\mathbb{W}}}}
\newcommand{\KK}{{{\mathcal K}^{*}(\kappa)}}
\newcommand{\KKl}{{{\mathcal K}^{*}_l(\kappa)}}
\newcommand{\KKz}{{{\mathcal K}^{*}(\kappa)^z}}
\newcommand{\KKlz}{{{\mathcal K}^{*}_l(\kappa)^z}}
\newcommand{\cLK}{{{\mathcal L}(K)}}
\newcommand{\cu}{\check{u}}
\newcommand{\clambda}{{\check{\lambda}}}
\newcommand{\lO}{{}_O}
\newcommand{\lD}{{}_D}
\newcommand{\lP}{{}_P}
\newcommand{\lOp}{{}_{O'}}
\newcommand{\lDp}{{}_{D'}}
\newcommand{\lbz}{{}_{{|\bz|}}}
\newcommand{\lzj}{{}_{z_j}}
\newcommand{\sSz}{\sigma_{{}_{Sz}}}
\newcommand{\hotimes}{{\hat{\otimes}}}
\newtheorem{proposition}{Proposition}[section]
\newtheorem{theorem}[proposition]{Theorem}
\newtheorem{lemma}[proposition]{Lemma}
\newtheorem{corollary}[proposition]{Corollary}
\theoremstyle{definition}
\newtheorem{definition}[proposition]{Definition}
\newtheorem{remark}[proposition]{Remark}
\newtheorem{notation}[proposition]{Notation}
\begin{document}

\title[Rational and global forms of Chiral CFTs I]{Rational and global forms of certain Chiral Conformal Field Theories; Vertex Algebras I}


\author[T. R. Ramadas]{T. R. Ramadas}
\address{Abdus Salam ICTP\\
       Trieste Italy 34014\\
       Italy\\
\& (After 1 January 2014) Chennai Mathematical Institute\\
H1 Sipcot IT Park\\  
Siruseri, TN 603103\\
India} 
\email{ramadas@ictp.it,ramadas@cmi.ac.in}

\date{\today}

\begin{abstract} Quantum fields are traditionally formalised as (unbounded) operator-valued distributions; two-dimensional chiral quantum fields as formal power series with operator coefficients.  Chiral conformal field theories (CFTs) are essentially algebraic and geometric in nature. The thesis of this work is that a (Euclidean) chiral field is a morphism of (not always quasi-coherent)  sheaves, and the morphism can in many cases be described explicitly. In particular it is meaningful, and useful, to consider the value of a chiral field at a point. (Once this is noted, the sheaf-theory is best put aside.)

The CFTs of the title are the boson, neutral fermion, the $bc$-system, and current algebras (``chiral WZW models'').   By rational forms we mean constructions on the complex projective line that replace power series, formal or otherwise, by rational functions. By global forms we understand extensions to smooth projective curves of arbitrary genus. 

We define operator products and the notion of a vertex algebra structure in the specific case of the boson; the latter definition is particularly transparent.We define actions of the Heisenberg and Virasoro algebras; we also construct certain natural pairings. We then construct a hermitian structure and make explicit its reflection positivity.

We briefly consider the complex-analytic version: the chiral boson on a disc.

In the case of current algebras we exhibit actions of affine lie algebras.  We also construct pairings in this case. 

We give an account of a field theory associated to a one-dimensional lattice, and finish by constructing the neutral fermion, the $b-c$ system, and the boson in arbitrary genus. 

In a first sequel to this work we will study current algebras at length, exhibiting the Virasoro action and vertex algebra structure. Reflection positivity holds in the integrable case, i.e., when the central charge is integral.  We will describe constructions  of conformal blocks and the KZ connection that avoid use of local coordinates and glueing.  We will construct the ADW/Hitchin/KZB/TUY connection; we hope to give a transparent proof of its unitarity.

This is a preliminary version.
\end{abstract}

\thanks{It is a pleasure to thank E. Looijenga and M.S. Narasimhan for several conversations and suggestions regarding this work. Earlier versions adopted a function-theoretic approach, and it was a comment by E. Looijenga that led me to working with rational forms and functions rather than Bergman/Hardy spaces on the disc.}

\maketitle

\pagebreak

\tableofcontents

\pagebreak

\part{\textbf{{\Large Introduction}}}

Let $\bbP$ be a complex projective line.  Much of this work is concerned with constructions, in terms of rational functions, of certain ``Euclidean chiral quantum fields" on $\bbP$. We give no formal definition of an Euclidean chiral quantum field  but each example will be a natural morphism of (not always quasi-coherent) sheaves. Our constructions are relatively elementary; in particular, algebraic geometry is used mostly to provide a natural language and to make precise notions of continuity and limits.

In a later section we extend constructions to smooth projective curves of arbitrary genus.

The subtlest definition of this work is that of a current, outlined in \S \ref{subsection: Current algebra: a summary} below. We start, however, with a detailed description of the boson. Although simple,  the treatment without recourse to power series is new and crucial preparation for what follows.

We will also make contact with the conventional description of the field in terms of formal power series.

 \section{\textbf{Detailed summaries}}

We summarise the themes of this work.

\subsection{The boson}\label{subsection: The boson: a summary}

A meromorphic form on a smooth curve is said to be {\it of the second kind} if all its residues vanish; or equivalently it is locally (in the analytic topology) the exterior 
derivative of a meromorphic function. On $\bbP$ any such form is globally the  derivative of a meromorphic function, unique up to a constant.

Denote by $\bK$ the space of meromorphic one-forms of the second kind on $\bbP$. Let $\bW$ denote the symmetric algebra over $\bK$. Given $z \in \bbP$, let  $\bK^z \subset \bK$ be the space of forms regular at $z$, and $\bW^z$ the corresponding symmetric algebra. Let $K$ denote the canonical line bundle of $\bbP$. 

\begin{enumerate}

\item Define fields $e$ and $i$ as follows: (a) $e(z): \bW \to \bW \otimes K_z$ is multiplication by
\begin{equation*}
e_z= -\frac{du}{(u-u(z))^2} \otimes du_z
\end{equation*}
for any global coordinate $u$ (i.e., meromorphic function with one pole) regular at $z$, and (b) $i(z): \bW^z \to \bW^z \otimes K_z$ is -- up to a minus sign --  the derivation that extends the evaluation map at $z$. The field $b$ is defined to be the sum:
\begin{equation*}
b(z) \equiv i(z)+e(z): \bW^z \to \bW \otimes K_z
\end{equation*}
We will argue that $i, e$ and $b$ can be thought of as functions from $\bbP$ to operators, covariant with respect to automorphisms of $\bbP$. More precisely, 
\begin{itemize}
\item $\bW^z$ and $\bW$ are fibres (\emph{not} stalks) at $z$ of sheaves of $\cO_{\bbP}$-modules equivariant with respect to $Aut(\bbP)$,
\item $i,e$ and $b$ are equivariant morphisms, and
\item $i(z),e(z)$ and $b(z)$ the corresponding maps of fibres.
\end{itemize}

\item If $z_1 \ne z_2$, then $b(z_1)$ and $b(z_2)$ can be composed on $\bW^{z_1} \cap \bW^{z_2}$ and \emph{commute} (modulo a \emph{signless} exchange $K_{z_1} \otimes K_{z_2} \leftrightarrow K_{z_2} \otimes K_{z_1}$). Together with (\ref{OPEfirstexample}) below, this shows that $b$ is ``\emph{local with respect to itself'}', to use the terminology of vertex algebras.

\item One can immediately define and compute ``$n$-point functions'' of $b$, denoted $<b(z_1)\dots b(z_n)>$,  which are meromorphic polydifferentials on $\overbrace{\bbP \times \dots \times \bbP}^{n\  factors}$ with double poles along the partial diagonals (\S \ref{Wick}). These are defined by the equation:
\begin{equation*}
\begin{split}
b(z_1) \circ \dots b(z_n) \circ \mathbf{1} &= <b(z_1)\dots b(z_n)> \mathbf{1}\\ &+ \text{\small higher order terms in the symmetric algebra}
\end{split}
\end{equation*}
(We denote by $\mathbf{1}$ the element $1 \in \bbC \subset \bW$.) The $n$-point functions are given by ``Wick's theorem''. 

\suspend{enumerate}

\noindent \textit{The Quantum Field Theory context.} The $n$-point functions are Schwinger functions of the chiral boson, i.e., Wightman functions of the ``left-moving'' (or ``right-moving'' according to convention) fields analytically continued from $1+1$ dimensional Minkowski space to two-dimensional Euclidean space.  In the case of field theories which are described by path integrals, Schwinger functions are vacuum expectation values (VEVs) of  products of multiplication operators on a measure space.  This is an elegant explanation of their symmetry under permutation of arguments. Chiral field theories tend not to have path-integral (=Lagrangian) descriptions. Nonetheless the Schwinger functions have (anti)symmetry properties - this is expressed by the fact that they are VEVs of products of (anti)commuting operators. The Minkowski theory, with its package of Hilbert space and Poincar\'e group action can be reconstructed from the Euclidean theory once certain conditions are met - crucial among them reflection positivity.
\resume{enumerate}

\item\label{OPEfirstexample} We have our first example of an operator product expansion and renormalised product:
\begin{equation*}
\begin{split}
\lim_{z_2 \to z_1} b(z_2) \circ b(z_1) &- \frac{1}{(u(z_2)-u(z_1))^2} du_{z_2} \otimes^s du_{z_1}\\ &= i(z_1) \circ i(z_1) + e(z_1) \circ e(z_1) + 2 e(z_1) \circ i(z_1)
\end{split}
\end{equation*}
The limit is explained in \S \ref{limits}. In  \S \ref{renorm} we will define the notion of the renormalised product of two fields; the above equation will implies that the renormalised product $:b(z)^2:$ equals $i(z) \circ i(z) + e(z) \circ e(z) + 2 e(z) \circ i(z)$.

\item It is clear that the field $e$ ``creates singularities''. For any subset $Y \subset \bbP$ it is useful to consider ${}_Y\bW$, the symmetric algebra over the space of forms with poles contained in $Y$. If $Y$ is a finite set, it is easy to show that this is the space generated by repeated applications of $e(z)$ and its derivatives for $z \in Y$. Less evident is the fact that repeated applications of $b$, its derivatives and their renormalised products, evaluated at $z \in Y$ produce the same result.

\item\label{Heis} Consider a contractible domain $D$ biholomorphic to a disc and bounded by a contour $\gamma = \partial D$. Given a meromorphic function $\phi$, the integral
$$
\Phi_\gamma  \equiv ``\frac{1}{2\pi i} \int_\gamma \phi(z) b(z)"
$$
is formally defined (since $b$ is an operator-valued 1-form) as an operator on the symmetric algebra over $\bK_\gamma$, the subspace of $\bK$ consisting of forms regular along $\gamma$.  We give a rigorous defininiton (in \S \ref{DefsHeis}) of
$$
\lD\Phi_\gamma:\lD\bW \to  \lD\bW
$$
where $\lD\bW$ is the symmetric algebra over the space $\lD\bK$ of forms with all singularities in $D$. To return to the algebraic (rather than analytic) context, we define:
\begin{itemize}
\item given a point $O$ (by considering intersections of discs $D$ containing $O$)
$$
\lO \Phi:\lO\bW \to \lO\bW
$$ 
where $\lO \bW$ is the symmetric algebra over forms with poles only at $O$, and
\item given a point $P$ at "infinity" (by considering unions of discs $D$ excluding $P$)
$$
\Phi_P:\bW_P \to \bW_P
$$ 
where $\bW_P$ is the symmetric algebra over forms \emph{regular} at $P$.
\end{itemize}
We take $\gamma$ to be a ``sufficiently small loop around'' $O$ (respectively, $P$), and we define $\lO \Phi$ and $\Phi_P$ in terms of algebraic residues. The actions define Heisenberg-type central extensions of the space of rational functions.

\item We define the ``energy-momentum tensor'' $T$ by\begin{equation*}
2T(z) \equiv i^2(z)+e^2(z)+2e(z)i(z): \bW^z \to \bW \otimes K^2_z
\end{equation*}
This field is local with respect to $b$ as well as itself -- i.e.,, values at distinct points can be composed and commute, and products have poles of ``uniformly bounded'' order as the points approach each other. Given a meromorphic vector field $X$, (since $T$ is an operator-valued quadratic differential), we are led to consider the integral
$$
\bL^X_\gamma  \equiv ``-\frac{1}{2\pi i} \int_\gamma X(z) T(z)"
$$
Definitions as in the case of (\ref{Heis}) above lead to actions of Virasoro-type extensions of the Lie algebra of meromorphic vector fields.

\suspend{enumerate}

\noindent To go further, we need to introduce a package of paired vector spaces and operators so that $n$-point functions can be expressed in terms of the pairing. This involves a geometric partitioning of $\bbP$ into \emph{either} (\texttt{a}) disjoint disks $D,\ D'$ sharing a common boundary - in which case we pass to a complex analytic framework - \emph{or} (\texttt{b}) a point $P$ and its (Zariski-open) complement. The second is clearly more adapted to the algebro-geometric viewpoint, and the first will enable us to make contact with reflection positivity and hermitian structures on conformal blocks.
\resume{enumerate}

\item Given a partition, we identify two subspaces of $\bW$, and then exhibit their natural pairing. In case (\texttt{a}) the pairing is between $\lDp\bW$ and $\lD\bW$. In case (\texttt{b}) we pair ${}_{P}\bW$  and $\bW_P$. The pairing is such that (roughly speaking) $b$ is skew-symmetric with respect to it. 

\item To obtain hermitian structures, we choose an antiholomorphic involution $C:\bbP \to \bbP$; in terms of a suitable coordinate $u$,
$$
u(C(z))=1/\overline{u(z)}
$$
The fixed point set is a circle that separates two discs $D$ and $D'$, which are exchanged by $C$. We have an antilinear isomorphism $\lD\bW \to \lDp\bW$ induced by 
$$
\lD\bK \ni \alpha \mapsto C^* \overline{\alpha} \in \lDp\bK
$$
This, together with the pairing $\lDp\bW \times \lD\bW \to \bbC$ yields a hermitian structure on $\lD\bW$. An easy argument shows that this is non-negative, and in fact nondegenerate as well.  (\emph{The situation is much subtler for current algebras.}) The induced inner product on $\lD\bK$ is the natural inner product on forms regular on $D'$, and the completion yields the Bergman space of the disc $D'$.

\item The splitting of $b$ into a sum of $i$ and $e$ achieves several ends. In particular, 
\begin{itemize}
\item the field $e$ can be composed freely, and (as already remarked) given $Y \subset  \bbP$,  its values $\{e(z)|z \in Y\}$ (together with derivatives) generate the space ${}_Y \bW$, and
\item  the pairings described above are uniquely determined by requiring that $e$ and $i$ are, up to a sign, adjoints.
\end{itemize}

\end{enumerate}

\subsection{Vertex algebra structure}

In Part \ref{bosonvertexalgebra} we exhibit the vertex algebra structure underlying the boson. This is done in terms of rational functions rather than power series; we formulate a definition of a vertex algebra appropriate to this context and reprove the basic results of the theory.

\subsection{Current algebra}\label{subsection: Current algebra: a summary}

Let $\bg$ be a complex lie algebra endowed with a nondegenerate invariant symmetric bilinear form $(,)$; let $\fg$ denote the lie algebra of meromorphic functions $\bbP \to \bg$.  Let $n$ representations of $\bg$ be given on  $W_1,\dots,W_n$; let
\begin{equation*}
\cW = U\fg \otimes_{U\bg} W_1 \otimes \dots \otimes W_n
\end{equation*}
(regarding $\bg \subset \fg$ as constant maps)
and  $U\fg$, etc. denoting the corresponding universal enveloping algebras.  Given $z \in \bbP$, let $\fg^z \subset \fg$ denote the space of functions regular at $z$; define
\begin{equation*}
\cW^z = U\fg^z \otimes_{U\bg} W_1 \otimes \dots \otimes W_n
\end{equation*}
Let $n$ distinct points $z_1,\dots,z_n$ be given. The representation $W_j$ is supposed to represent an ``insertion'' at $z_j$.

Fix a point $P$ distinct from the $z_j$, and let $u$ be a coordinate with a pole at $P$. The constructions below do not depend on the choice of $u$; the current itself will not depend on $P$ either. 

\begin{enumerate}

\item Define fields $\epsilon$ and $\iota$ as follows: (a) Given $\nu \in \fg$ and $z \notin \{P,\text{poles of $\nu$}\}$, $\epsilon^{\nu(z)}(z): \cW \to \cW \otimes K_z$ is left multiplication by
\begin{equation*}
\epsilon^{\nu(z)}_z= \frac{\nu(z)}{u-u(z)} \otimes du_z
\end{equation*}
for any global coordinate $u$ with a pole at $z$, and (b) $\iota^{\nu(z)}(z): \cW^z \to \cW^z \otimes K_z$ is defined, for $z \notin \{P,\text{poles of $\nu$}\} \cup \{z_j\}$, inductively by
\begin{equation*}
\begin{split}
\iota^{\nu(z)}(z) \circ \balpha  - \balpha \circ \iota^{\nu(z)}(z)&=-[\epsilon^{\nu(z)}(z),\balpha]-(\nu(z), d\balpha(z))\\
&+ \iota^{[\nu(z), \balpha(z)]}(z)+\epsilon^{[\nu(z), \balpha(z)]}(z)
\end{split}
\end{equation*}
The starting point of the inductive definition is
\begin{equation*}
\iota^{\nu(z)}(z) (\ttw_1\otimes \dots \otimes \ttw_n)=\sum_j \ttw_1 \otimes \dots \otimes \frac{\nu(z)(\ttw_j)du_{z}}{u(z)-u(z_j)} \otimes \dots \otimes \ttw_n
\end{equation*}

\item The field $j$ is defined to be the sum (defined for $z \notin \{P,\text{poles of $\nu$}\} \cup\{z_j\}$):
\begin{equation*}
j^{\nu(z)}(z)=\iota^{\nu(z)}(z)+\epsilon^{\nu(z)}(z): \cW^z \to \cW \otimes K_z
\end{equation*}
Although $\iota$ and $\epsilon$ depend on the choice of $P$, the current extends to $P$ and is independent of this choice. In the absence of ``insertions'', it is an $Aut(\bbP)$-covariant object.

\suspend{enumerate}
We will often consider \emph{constant} functions $\nu$ taking the value $v\in \bg$. In this case we write $j^\nu=j^v$.
\resume{enumerate}

\item Given $v_1,\ v_2\ \in \bg$ and points $z_1 \ne z_2$, $j^{v_1}(z_1)$ and $j^{v_2}(z_2)$ can be composed on $\cW^{z_1} \cap \cW^{z_2}$ and  commute, though this is much less evident than in the case of the chiral boson. Together with (\ref{OPEj}) below this shows that $j$ is local with respect to itself. \emph{\textbf{Note that the field $\epsilon$ is NOT local with respect to itself, nor $\iota$.}}

\item\label{OPEj} As $z_2 \to z_1$ we have
\begin{equation*}
\begin{split}
j^{v_2}(z_2)j^{v_1}(z_1)&+\frac{(v_1,v_2)du_{z_1}du_{z_2}}{(u(z_2)-u(z_1))^2}-\frac{1}{u(z_2)-u(z_1)} du_{z_2}j^{[v_2,v_1]}(z_1)\\
\longrightarrow &\iota^{v_2}(z_1)\iota^{v_1}(z_1)
+\epsilon^{v_2} (z_1)  \epsilon^{v_1} (z_1)\\
+&\epsilon^{v_2} (z_1)  \iota^{v_1} (z_1)
+\epsilon^{v_1} (z_1)  \iota^{v_2} (z_1)\\
+& [d\hiota^{[v_2,v_1]}/dz](z_1)du_{z_1}^2\\
\end{split}
\end{equation*}
where we use the notation $\iota(z)=\hiota(z) du_z$.

\item Given a meromorphic function $\nu: \bbP \to \bg$ and a contour $\gamma$, (since $j$ is an operator-valued 1-form) the integral
$$
J^{\nu}_\gamma  \equiv ``\frac{1}{2\pi i} \int_\gamma j^{\nu(z)} (z)"
$$
is formally defined as an operator.  This leads to rigorous definitions of operators (a) acting on $\lDp\cW'$ and $\lD \cW$ and (b) on $\lP \cW$ and $\cW_P$. The actions define central extensions of $\fg$; in particular,
$$
[\lP J^{\mu},\lP J^{\nu}]= \lP J^{[\mu,\nu]} + Res_P(\mu,d\nu)
$$

\item If we partition $\bbP$, we are led to define paired spaces $\lDp\cW'$ and $\lD\cW$ (respectively, $\lP\cW'$ and $\cW_P$) as in the case of the chiral boson.  The pairing is such that $\iota$ and $\epsilon$ are, up to a sign, adjoints. This is a rational analogue of the Shapovalov form.

\end{enumerate}

\subsection{Boson-fermion equivalence; CFTs in higher genus}

In the final sections, we outline our approach to boson-fermion equivalence in genus zero, and then go on to describe certain field theories (the neutral fermion, the $b-c$ system, the chiral boson) in arbitrary genus.  Specialising to genus zero, we recover the description of the neutral fermion in terms of modes. We show that in the case of the $b-c$ system, a composite field $\sfb=\bdots bc\bdots $ has a two-point function appropriate to a boson. 

As for current algebra in higher genus, work is in progress.

\subsection{Notation}\label{Notation}

All tensor products are over $\bbC$ unless otherwise specified. 

We denote by $K_z$ the fibre at $z$ of the canonical line bundle $K$, i.e., the bundle of (1,0)-forms.  By a coordinate on $\bbP$, we mean a meromorphic function $u$ with one (simple) pole. If $z$ is \emph{not} this pole, $du_z \in K_z$ will denote the (nonzero) value of the differential at $z$.

We might need to fix a base-point $P$ in $\bbP$.  Once this is done, when we choose a coordinate $u$, it will be such that $u(P)=\infty$.  On occasion, we will choose in addition a point $O \ne P$  and demand that $u(O)=0$. Such a coordinate (with its pole at $P$ and vanishing at $O$) is unique up to a scalar, so one might well wonder why we do not start with this choice. This is partly a matter of taste; we will also see that $O$ and $P$ play rather different roles.

We might also work with a coordinate $\tu=1/u$, with a pole at $O$ and vanishing at $P$.

We will often consider a (open) domain $D$ (with smooth boundary)  biholomorphic to a disc. Its oriented boundary $\partial D$ is a contour (smooth, simple, oriented closed curve) which we will sometimes also denote by $\gamma$. The complement of the closure of $D$ will be denoted $D'$. Any choice of $P$ will be in $D'$, and $O$ in $D$. Note that any one of the triple $\{D,D',\gamma\}$ determines the other two.

We denote by $\cK$ the field of meromorphic functions. Recall that $\bK$ denotes the space of meromorphic one-forms of the second kind (that is, those with all residues zero) on $\bbP$. 

Given a subset $Y \subset \bbP$, we denote by  $\bK_Y$ the space of meromorphic forms (of the second kind) regular at all points of $Y$, and by ${}_Y\bK$ the space of such forms with all singularities contained in $Y$. If $Y$ consists of a single point $P$, we set $\bK_P=\bK_{\{P\}}$, etc.. As an exception, given a ``variable" point $z \in \bbP$, we let $\bK^z$  be the subspace of $\bK$ consisting of forms that are regular at $z$. These conventions apply in related contexts, for example, to spaces ``built out of $\bK$" such as the symmetric algebra $\bW$.  To summarise, \emph{the appearance of a subset as a subscript on the \textup{left} indicates that singularities are contained in that subset; its appearance on the \textup{right} (as a subscript or superscript) signifies that the relevant functions are regular there.}

When a coordinate $u$ is chosen we will write $b(z)=\hb(z) du_z$ (where $\hb(z)$ maps $\bW^z$ to $\bW$), and so on for other fields.

\pagebreak

\part{\textbf{{\Large The boson on the projective line}}}

 \section{\textbf{The boson: the fields}}

\subsection{The boson}\label{section: The boson}

Let $\bW$ denote the symmetric algebra over $\bK$:
\begin{equation*}
\bW =\bbC \oplus \bK  \oplus S^2\bK \oplus \dots \ \ \text{(algebraic direct sum)}
\end{equation*}
We denote by $\mathbf{1}$ the vacuum vector $1\ \in \bbC \subset \bW$.

Given $z \in \bbP$, we define $e_z \in \bK \otimes K_z$, by
\begin{equation*}
e_z= -\frac{du}{(u-u(z))^2} \otimes du_z
\end{equation*}
where $u$ is any coordinate regular at $z$. Note that this is well-defined independent of the coordinate $u$. Define the ``field''\footnote[1]{To be precise, $e(z)$ is the value of the field $e$ at $z$.} $e(z):\bW \to \bW \otimes K_z$ to be the multiplication by $e_z$:
\begin{equation*}
e(z) ({\alpha}_1 \otimes^s \dots  \otimes^s {\alpha}_p) =  {\alpha}_1 \otimes^s \dots \otimes^s {\alpha}_p \otimes^s e_z
\end{equation*}
where $\alpha_i \in \bK$ and by $\otimes^s$ we mean the symmetric tensor product.

Given  $z \in \bbP$, we let $\bK^z$  be the subspace of $\bK$ consisting of forms that are regular at $z$. Define $\bW^z$ to be the corresponding symmetric algebra over $\bK^z$. We now define a second field $i$, this time acting from $\bW^z$ to $\bW^z \otimes K_z$ by the formula:
\begin{equation*}
i(z) (\alpha_1 \otimes^s \dots \otimes^s \alpha_p) =  -\sum_i ({\alpha}_1 \otimes^s \dots \widehat{{{\alpha}_i}} \dots \otimes^s {\alpha}_p) {\alpha}_i(z)
\end{equation*}
Here $\alpha_1,\dots , \alpha_p$ belong to $\bK^z$ and the hat marks a term to be omitted. Needless to say, we define $i(z)({\mathbf 1})=0$. Note that $i$ can be defined inductively by this last condition and
$$
[i(z),\alpha] = -\alpha(z)
$$
where $\alpha$ is any form regular at $z$, which within the commutator stands for the operation $\alpha \otimes^s$.

Note that $i$ and $e$ are functions from $\bbP$ to operators, covariant with respect to automorphisms of $\bbP$. Further, given a domain (respectively, Zariski open) $U \subset \bbP$ and a vector $\ttv$ in $\bW$ that belongs to the intersection of $\{\bW^z|z \in U\}$, the vectors $i(z)\ttv$ and $e(z)\ttv$ vary holomorphically (respectively, algebraically) in $z$. We will refer to operator-valued functions such as $e(z)$ and $i(z)$ as fields.
We set
\begin{equation*}
b(z)=i(z)+e(z)
\end{equation*}
with domain $\bW^z$ and range $\bW \otimes K_z$. 

\subsection{Boson current algebra}

As remarked earlier, (on $\bbP$) taking the derivative sets up a bijection $d:\cK/\bbC \to \bK$ between the space of meromorphic functions (modulo constants) and the space of meromorphic differentials of the second kind.

We will use this to give an alternate description of the boson, wherein it becomes a simple case of a ``current algebra" associated to $\bbC$ regarded as an abelian lie algebra.   The reader might skip this section and return to it as a preparation for \S \ref{section: Current algebra: the fields}.

We will need to choose a point $P$ and a coordinate $u$ such that $u(P )=\infty$. On $\bbP \setminus P$ we will construct fields $\iota$ and $\epsilon$ that depend on the choice of $P$ (but not $u$), and show that their sum $j=\iota+\epsilon$ extends to $P$, and is a natural $Aut(\bbP)$-invariant field on $\bbP$.

Recall that the local ring at $P$ is the space of meromorphic functions regular at $P$, and conventionally denoted $\cO_P$; let $\mathfrak{m}_P \subset \cO_P$ be the maximal ideal consisting of functions vanishing there. Let $\mathbb{W}_{P,-}$ be the symmetric algebra over $\mathfrak{m}_P$:
\begin{equation*}
\mathbb{W}_{P,-} = \bbC \oplus \mathfrak{m}_P \oplus
 S^2 \mathfrak{m}_P \oplus \dots
\end{equation*}
(As before, let $\mathbf{1}=1 \in \bbC \subset \mathbb{W}_{P,-}$.) For $z \in \bbP \setminus P$, define
\begin{equation*}
    \epsilon_z = \frac{1}{u-u(z)} \otimes du_z
\end{equation*}
(Note that the expression $\frac{1}{u-u(z)} \otimes du_z$ is a canonical element of $\mathfrak{m}_P \otimes K_z$, independent of the coordinate.) Define the field $\epsilon(z):\mathbb{W}_{P,-}   \to  \mathbb{W}_{P,-} \otimes K_z$ to be multiplication by $\epsilon_z$.  Given $z \in \bbP \setminus P$, we define $\mathfrak{m}_P^z$ to be the subspace of $\mathfrak{m}_P$ consisting of functions that are regular at $z$ (that is, the intersection of the local ring at $z$ and the maximal ideal at $P$). Let
\begin{equation*}
\mathbb{W}^z_{P,-} = \bbC \oplus \mathfrak{m}^z_P \oplus
 S^2 \mathfrak{m}^z_P \oplus \dots
\end{equation*}
Define $\iota(z):\mathbb{W}^z_{P,-} \to \mathbb{W}^z_{P,-} \otimes K_z$ by the formula:
\begin{equation}\label{iota1}
\iota(z) (\balpha_1 \otimes^s \dots \otimes^s \balpha_p) =  -\sum_i ({\balpha}_1 \otimes^s \dots \widehat{{{\balpha}_i}} \dots \otimes^s {\balpha}_p) d{\balpha}_i (z)
\end{equation}
and $\iota(z)(\mathbf{1})=0$. The equation (\ref{iota1}) is equivalent to
\begin{equation}\label{iota2}
\iota(z) \circ \balpha - \balpha \circ \iota(z)= - d{\balpha} (z)
\end{equation}

Let $\hbbW_{P}$ be the symmetric algebra over $\cO_P$, modulo the ideal $<constants>$ generated by the constant functions $\bbP \to \bbC$ :
\begin{equation*}
\hbbW_{P} = \bbC \oplus \cO_P \oplus
 S^2 \cO_P \oplus \dots /<constants>
\end{equation*}
Clearly, the inclusion $\mathfrak{m}_P \to \cO_P$ induces a linear isomorphism $\mathbb{W}_{P,-} \to \hbbW_P$.  Given $z \ne P$, we have an analogous ismorphism $\mathbb{W}^z_{P,-} \to \hbbW^z_P$. We can regard $\epsilon$ and $\iota$ as maps from $\hbbW_P$ and $\hbbW^z_P$ respectively.

Let $\bK_P \subset \bK$  be the sub-space of forms regular at $P$. When we need to consider forms that are regular at a second (variable) point $z$, we will use the notation $\bK^z_P$. We let $\bW_P$ denote the symmetric algebra over $\bK_P$ and $\bW_P^z$ the symmetric algebra over $\bK_P^z$. 

Note that taking the derivative $\balpha \mapsto \alpha \equiv d\balpha$ sets up a linear isomorphism $\mathfrak{m}_P \to \bK_P$.
The induced isomorphisms $\mathbb{W}_{P,-} \to \bW_P$ and $\mathbb{W}^z_{P,-} \to \bW^z_P$ take the fields $\epsilon, \ \iota$ (respectively) to the fields $e,\ i$ described in section \S \ref{section: The boson}. More precisely,

\begin{proposition} Let $z \in \bbP \setminus P$. The operators $e(z):\bW \to \bW \otimes K_z$ and $i(z):\bW^z \to \bW^z \otimes K_z$ leave invariant $\bW_P$ and $\bW_P^z$ respectively, and we have commutative diagrams
\begin{equation*}
\begin{CD}
{\hbbW_P \sim \mathbb{W}_{P,-}} @>\epsilon(z)>> {\hbbW_{P} \otimes K_{z} \sim \mathbb{W}_{P,-} \otimes K_{z}}\\
@VdV{\sim}V @VdV{\sim}V   \\
\bW_P @>e(z)>> \bW_P \otimes K_{z} \\
\end{CD}
\end{equation*}
and
\begin{equation*}
\begin{CD}
{\hbbW^z_P \sim \mathbb{W}^z_{P,-}} @>\iota(z)>> {\hbbW^z_{P} \otimes K_{z} \sim \mathbb{W}_{P,-}^z \otimes K_{z}} \\
@VdV{\sim}V @VdV{\sim}V   \\
\bW_P^z @>i(z)>> \bW_P^z \otimes K_{z} \\
\end{CD}
\end{equation*}
\end{proposition}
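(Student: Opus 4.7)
The plan is to verify the proposition by unwinding definitions in three stages: establish that the vertical arrows are isomorphisms, check the claimed invariance of $\bW_P$ and $\bW_P^z$, and verify commutativity of the squares by direct computation on generators.

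\emph{Vertical isomorphisms.} Every form of the second kind on $\bbP$ is globally the derivative of a meromorphic function, unique up to an additive constant; hence $d:\cK/\bbC \to \bK$ is a bijection, a fact already invoked at the start of the subsection. A form $df$ lies in $\bK_P$ iff $f$ is regular at $P$, so normalising by $f(P)=0$ gives $d:\mathfrak{m}_P \to \bK_P$ an isomorphism. The same argument with the additional condition of regularity at $z \in \bbP \setminus P$ yields $d:\mathfrak{m}_P^z \to \bK_P^z$ an isomorphism. By functoriality of the symmetric algebra these extend to the algebra isomorphisms $\mathbb{W}_{P,-} \to \bW_P$ and $\mathbb{W}^z_{P,-} \to \bW_P^z$ appearing as the vertical arrows in the diagrams (the intermediate identification with $\hbbW_P$, $\hbbW_P^z$ being already established in the text).

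\emph{Invariance.} Multiplication by $e_z$ preserves $\bW_P$ iff $-du/(u-u(z))^2 \in \bK_P$; switching to the coordinate $\tu = 1/u$ near $P$, this form equals $d\tu/(1-u(z)\tu)^2$, plainly regular at $\tu=0$. For $i(z)$, its defining formula yields a $\bbC$-linear combination of symmetric products of a proper subset of the $\alpha_j$'s weighted by $\alpha_j(z) \in K_z$; if each $\alpha_j$ lies in $\bK_P^z$, the result lies in $\bW_P^z \otimes K_z$.

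\emph{Commutativity.} For the first square, the computation $d\bigl(\tfrac{1}{u-u(z)}\bigr) = -\tfrac{du}{(u-u(z))^2}$ shows that $\epsilon_z$ and $e_z$ correspond under $d \otimes \mathrm{id}_{K_z}$; since $\epsilon(z)$ is multiplication by $\epsilon_z$ in the symmetric algebra $\mathbb{W}_{P,-}$ and the vertical map is an algebra homomorphism, the square commutes on all symmetric tensors. For the second square, apply $d$ to the defining formula of $\iota(z)$: observing that $d\balpha_j(z) = \alpha_j(z)$ (the scalar factor already lives in $K_z$ and is unaffected by the further action of $d$), and that the vertical $d$ is multiplicative on symmetric products, the image reproduces exactly the defining formula for $i(z)$ evaluated on $d\xi$. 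The main obstacle is essentially none — the proposition is a bookkeeping exercise recording that $e$ is obtained from $\epsilon$ by applying the exterior derivative to the ``function part'' of the tensor, and analogously for $i$ and $\iota$. The only minor pitfall is notational: the symbol $d$ denotes the exterior derivative, its restrictions to $\mathfrak{m}_P$ and $\mathfrak{m}_P^z$, and the induced homomorphism of symmetric algebras, and these must be applied consistently.
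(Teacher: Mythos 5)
Your proof is correct. The paper in fact states this proposition with no proof at all, treating it as immediate from the definitions; your three-stage verification (the isomorphism $d:\mathfrak{m}_P \to \bK_P$, regularity of $-du/(u-u(z))^2$ at $P$ via the substitution $\tu = 1/u$, and the sign-consistent matching $d(\epsilon_z) = e_z$ and $d\balpha_i(z) = \alpha_i(z)$ on generators) is exactly the intended bookkeeping.
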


Suppose given a second point, $\tP$; denote the corresponding fields $\tepsilon$ and $\tiota$. We wish to compare these with $\epsilon$ and $\iota$ respectively. Choose $u$ such that $u(\tP)=0, u(P)=\infty$, and set $\tu=1/u$.  We have, for $z \ne P,\ \tP$
\begin{equation}
\label{PtildeP}
\begin{split}
\tepsilon_z&=\frac{1}{\tu-\tu(z)} d\tu_z\\
&=\frac{u u(z)}{u(z)-u} . \frac{-du_z}{u(z)^2}\\
&=\frac{u}{u-u(z)}. \frac{du_z}{u(z)}\\
&=\frac{1}{u-u(z)}{du_z}+\frac{du_z}{u(z)}\\
&=\epsilon_z+\frac{du_z}{u(z)}\\
\end{split}
\end{equation}
Let $\cK_{\{P,\tP\}}$ be the intersection in $\cK$ of the local rings at $P$ and $\tP$; i.e., the algebra of rational functions regular at both points. Define $\hbbW_{\{P,\tP\}}$ to be the symmetric algebra over $\cK_{\{P,\tP\}}$ modulo the ideal generated by constant functions, and define similarly $\hbbW^z_{\{P,\tP\}}$. We have inclusions $\hbbW_{\{P,\tP\}} \hookrightarrow \hbbW_P$, etc. Using (\ref{PtildeP}) we see
\begin{proposition}
For $z\ne P, \tP$ the maps $\hbbW_{\{P,\tP\}} \to \hbbW_{\{P,\tP\}} \otimes K_z$ induced by $\epsilon, \ \tepsilon$ agree. Similarly, the maps $\hbbW^z_{\{P,\tP\}} \to \hbbW^z_{\{P,\tP\}} \otimes K_z$ induced by $\iota, \ \tiota$ agree.
\end{proposition}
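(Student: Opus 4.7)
The approach is to exploit equation (\ref{PtildeP}) for the $\epsilon$-statement and then bootstrap to $\iota$ using the fact that $\iota$ and $\tilde\iota$ are both uniquely characterised by a derivation property together with a value on the vacuum. The key preliminary observation, which I would first verify, is that $\epsilon_z$ lies in $\cK_{\{P,\tP\}} \otimes K_z$: it is manifestly regular at $P$, and since $z \ne \tP$ means $u(z) \ne 0 = u(\tP)$, it is also regular at $\tP$. Likewise $\tepsilon_z \in \cK_{\{P,\tP\}} \otimes K_z$. Consequently multiplication by $\epsilon_z$ (resp.\ $\tepsilon_z$) preserves the subspace $\hbbW_{\{P,\tP\}} \subset \hbbW_P$ (resp.\ $\subset \hbbW_{\tP}$), giving well-defined maps $\hbbW_{\{P,\tP\}} \to \hbbW_{\{P,\tP\}} \otimes K_z$.

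For the first statement, equation (\ref{PtildeP}) gives
\begin{equation*}
\tepsilon_z - \epsilon_z = \frac{du_z}{u(z)},
\end{equation*}
i.e.\ a constant function on $\bbP$ tensored with a fixed element of $K_z$. Since the constant functions are precisely the ideal we divide out by in forming $\hbbW_{\{P,\tP\}}$, multiplication by the difference $\tepsilon_z - \epsilon_z$ is zero on $\hbbW_{\{P,\tP\}}$, and the two multiplication operators $\epsilon(z)$ and $\tepsilon(z)$ agree.

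For the second statement, I would observe that via the isomorphisms $\mathbb{W}_{P,-} \to \hbbW_P$ and $\mathbb{W}_{\tP,-} \to \hbbW_{\tP}$, both $\iota(z)$ and $\tiota(z)$ extend to the same kind of object on $\hbbW_{\{P,\tP\}}^z$: namely a derivation into $\hbbW^z_{\{P,\tP\}} \otimes K_z$ (with trivial action on the $K_z$-factor) that kills $\mathbf{1}$. I would verify that these derivations agree on generators. For a single $[f] \in \hbbW^z_{\{P,\tP\}}$ with $f \in \cK_{\{P,\tP\}}^z$, one writes $f = (f - f(P)) + f(P)$ so that $[f] = [f - f(P)]$ in $\hbbW_P$, and applies (\ref{iota1}) to obtain
\begin{equation*}
\iota(z)([f]) = -d(f - f(P))(z) = -df(z).
\end{equation*}
The identical computation using the decomposition $f = (f - f(\tP)) + f(\tP)$ shows $\tiota(z)([f]) = -df(z)$. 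Hence both derivations coincide on generators. As before one checks invariance of the subspace: on a product $[f_1] \otimes^s \dots \otimes^s [f_p]$ with $f_i \in \cK_{\{P,\tP\}}^z$, formula (\ref{iota1}) produces a sum of products of the remaining $[f_j]$'s (still in $\hbbW^z_{\{P,\tP\}}$) tensored with elements of $K_z$.

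The only mild obstacle is being careful about the identifications of vacuum vectors and of generators under the inclusions $\hbbW_{\{P,\tP\}} \hookrightarrow \hbbW_P$ and $\hbbW_{\{P,\tP\}} \hookrightarrow \hbbW_{\tP}$; everything else is formal once equation (\ref{PtildeP}) is in hand, since (\ref{PtildeP}) and its derivative already encode the essential content --- that passing from $P$ to $\tP$ only shifts the generating functions by constants, which are invisible in $\hbbW$, while their differentials (which is what $\iota$ and $\tiota$ see) are unchanged.
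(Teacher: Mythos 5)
Your proposal is correct and follows essentially the same route as the paper, which simply invokes equation (\ref{PtildeP}) and the fact that the discrepancy $\tepsilon_z-\epsilon_z=\frac{du_z}{u(z)}$ is a constant function, hence zero in the quotient by constants. Your additional spelling-out of the $\iota$ case --- that both $\iota(z)$ and $\tiota(z)$ are derivations killing $\mathbf{1}$ and sending a generator $[f]$ to $-df(z)$, which is insensitive to the choice of constant subtracted --- is exactly the detail the paper leaves implicit.
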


Tying together these propositions, we get
\begin{proposition} Let $\hbbW$ be the symmetric algebra over $\cK$ modulo the ideal generated by constant functions, etc. Then $\epsilon$, $\iota$ patch to give maps making then following diagrams commute:
\begin{equation*}
\begin{CD}
\hbbW= @>\epsilon(z)>> \hbbW \otimes K_{z}\\
@VdV{\sim}V @VdV{\sim}V   \\
\bW @>e(z)>> \bW \otimes K_{z} \\
\end{CD}
\end{equation*}
and
\begin{equation*}
\begin{CD}
\hbbW^z= @>\iota(z)>> \hbbW^z \otimes K_{z}\\
@VdV{\sim}V @VdV{\sim}V   \\
\bW^z @>i(z)>> \bW^z \otimes K_{z} \\
\end{CD}
\end{equation*}

\end{proposition}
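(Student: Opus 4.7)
The plan is to patch the constructions from the two preceding propositions across varying $P$ and then transport the already-established commutativities of those propositions to the glued objects.

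First I would establish the relevant union structure: since every element of $\hbbW$ is a polynomial in finitely many rational functions (modulo constants), each with only finitely many poles, there always exists some $P \in \bbP$ such that the element lies in $\hbbW_P$. Thus $\hbbW = \bigcup_P \hbbW_P$, and analogously $\hbbW^z = \bigcup_{P \ne z} \hbbW^z_P$, $\bW = \bigcup_P \bW_P$, and $\bW^z = \bigcup_{P \ne z} \bW^z_P$. The linear isomorphism $d : \cK/\bbC \to \bK$ extends multiplicatively to an isomorphism of symmetric algebras $d : \hbbW \to \bW$, which by construction restricts on each $\hbbW_P$ to the isomorphism $\hbbW_P \to \bW_P$ appearing in the first proposition.

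Next I would define the global fields by patching. Given $w \in \hbbW$ and any $z$, choose $P \ne z$ with $w \in \hbbW_P$, and set
\begin{equation*}
\epsilon(z) w := \epsilon_P(z) w \ \in \ \hbbW_P \otimes K_z \subset \hbbW \otimes K_z ,
\end{equation*}
where $\epsilon_P$ denotes the field attached to $P$ in the preceding construction. If $\tP \ne z$ is another valid choice, then $w$ lies in the common subspace $\hbbW_{\{P,\tP\}}$, and by the second proposition $\epsilon_P(z) w = \epsilon_{\tP}(z) w$ there. Hence $\epsilon(z) w$ is independent of the auxiliary choice and is linear in $w$. The same recipe, applied to $\hbbW^z_P$ in place of $\hbbW_P$ and using the $\iota$-statement of the second proposition, defines $\iota(z) : \hbbW^z \to \hbbW^z \otimes K_z$.

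With these definitions in hand, commutativity of the two squares is immediate. For any $w \in \hbbW$, choose $P \ne z$ with $w \in \hbbW_P$; then the first proposition applied at $P$ gives
\begin{equation*}
d \circ \epsilon(z)\, w \ = \ d \circ \epsilon_P(z)\, w \ = \ e(z) \circ d\, w ,
\end{equation*}
and the analogous computation with $\iota_P$ and $i(z)$ handles the second diagram.

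The only genuine obstacle is the well-definedness step: showing that the $P$-dependent formulas truly glue to a $P$-independent map landing in the intrinsic spaces $\hbbW \otimes K_z$ and $\hbbW^z \otimes K_z$. This is exactly what the computation \eqref{PtildeP} — via the second proposition — delivers, by exhibiting $\tepsilon_z - \epsilon_z$ as a constant (so trivial in $\hbbW$) and showing that $\iota$ only depends on the $\balpha_i$ through their differentials. Once this compatibility is granted, the rest of the argument is purely formal bookkeeping through the filtered union $\hbbW = \bigcup_P \hbbW_P$.
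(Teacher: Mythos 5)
Your proposal is correct and follows the paper's own route: the paper offers no written proof beyond the phrase ``tying together these propositions,'' and your argument is precisely the bookkeeping that phrase elides — exhaust $\hbbW$ by the subspaces $\hbbW_P$, use the second proposition (via the computation that $\tepsilon_z-\epsilon_z$ is a constant, hence zero modulo constants) for well-definedness, and transport commutativity from the first proposition.
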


\section{\textbf{Algebro-geometric interlude: fields as morphisms of sheaves}}

Informally, spaces such as $\bW$ and $\bW^z$ are fibres of  infinite-rank vector bundles; fields are morphisms of such vector bundles.  It turns out that the best way to formalise these notions is terms of sheaves and morphisms, which we now proceed to do. 

Let $\bbX$ be a smooth complex projective curve, $\bbX'$ a second copy.  Given an open set $U$ of $\bbX$, we denote by $U'$ the same set regarded as a subset of $\bbX'$. For any sheaf $E$ on $\bbX$, $E'$ will denote the corresponding sheaf on $\bbX'$. Consider the following two sheaves of $\cO_\bbX$-algebras on $\bbX$:  
\begin{itemize}
\item $\bbX \underset{open}{\supset} U \mapsto  \sfO(U)=\lim^1_{D}  \Gamma(U \times \bbX', \cO_{\bbX \times \bbX'} (D))$ where the limit is taken over effective divisors $D \subset \bbX \times \bbX'$ such that $D$ is either ``horizontal'' (pulled back from $\bbX'$) or a multiple of the diagonal $\Delta$.
\item $\bbX \underset{open}{\supset} U \mapsto  \sfOz(U)=\Gamma(U\times U', \cO_{U \times U'})$.
\end{itemize}
Given $z\in \bbX$, the fibre (the tensor product of the stalk with the residue field) is respectively
\begin{itemize}
\item the function field $\cK$ of $\bbX$, and
\item the local ring $\cO_z$.
\end{itemize}
More generally, given a vector bundle $E$ on $\bbX$, consider the sheaves of $\cO_\bbX$-modules on $\bbX$:
\begin{itemize}
\item  $\bbX \underset{open}{\supset} U  \mapsto   \sfE(U) \equiv \Gamma(U \times \bbX', \pi_{{}_{\bbX'}}^* E' (D))$ where the limit is taken over effective divisors $D \subset \bbX \times \bbX'$ as above. 
\item $\bbX \underset{open}{\supset} U  \mapsto   \sfEz(U) \equiv \Gamma(U\times U', \pi_{{}_{\bbX'}}^* E')$.
\end{itemize}
Given $z\in \bbX$, the fibre is now respectively
\begin{itemize}
\item the space of meromorphic sections $\cK(E)$ on $\bbX$, and
\item the space of meromorphic sections of $E$ regular at $z$, which we denote $\cK^z(E)$.
\end{itemize}
Note that these are locally free of rank $=rank\ E$ over the (respective) sheaves of $\cO_\bbX$-algebras described earlier. In each case the second sheaf of the pair is a (\textbf{\emph{not quasicoherent}}) subsheaf of the first sheaf, which is indeed quasi-coherent.

We will consider the symmetric algebras, over $\cO_\bbX$, of $\sfE$ and $\sfEz$. Set 
$$
\bbX'^{[l]}=\overbrace{\bbX' \times \dots \times \bbX'}^{l\  factors}
$$
Consider the $Sym_l$-equivariant bundle $\otimes_{i=1}^l E'_i$ on $\bbX'^{[l]}$ with the lift of the action of the symmetric group $Sym_l$ being taken ``without signs''. For any divisor $D$ on $\bbX \times \bbX'$, let $D_i$ denote the divisor on
$$
\bbX \times \bbX'^{[l]}
$$
obtained by pulling back $D$ from the factor $\bbX \times \bbX'_i$. Then, given $U \underset{open}\subset \bbX$,
\begin{itemize}
\item $S^l \sfE (U) =\lim^1_{D} \Gamma (U \times \bbX'^{[l]}, \cO_\bbX \otimes \otimes_{i=1}^l E'_i(D_i))^{Sym_l}$ where the limit is taken over effective divisors $D \subset \bbX \times \bbX'$ as above (i.e.,``either horizontal or diagonal''). 
\item $S^l \sfEz (U) =  \Gamma (U \times U'^{[l]}, \cO_\bbX \otimes \otimes_{i=1}^l E'_i)^{Sym_l}$  
\end{itemize}
With a little work, one sees that given $z\in \bbX$, the fibres are respectively\begin{itemize}
\item  $S^l \cK(E)$ and
\item $S^l \cK^z(E)$
\end{itemize}

When $E=K$ the canonical bundle, we have to consider a variant. Consider the sheaves of $\cO_\bbX$-modules on $\bbX$:
\begin{itemize}
\item   $\bbX \underset{open}{\supset} U  \mapsto   \sfKs(U) \equiv \Gamma_2(U\times \bbX', \pi_{{}_{\bbX'}}^* E'(D))$ where the limit is taken over effective divisors $D \subset \bbX \times \bbX'$ as above, and the subscript $2$ indicates that we only consider sections whose residue along $D$ (which would be a function on $D$) vanishes.
\item $\bbX \underset{open}{\supset} U  \mapsto   \sfKsz (U) \equiv \Gamma_2(U\times U', \pi_{{}_{\bbX'}}^* E')$. Here the subscript refers to the restriction that the residue along the complement of $U'$ vanishes.
\end{itemize}
Given $z\in \bbX$, the fibre is now respectively
\begin{itemize}
\item the space $\bK$ of meromorphic forms of the second kind on $\bbX$, and
\item the space of  $\bK^z$ of meromorphic forms of the second kind on $\bbX$ regular at $z$
\end{itemize}

We next consider the symmetric algebras, over $\cO_\bbX$, of $\sfKs$ and $\sfKsz $. 
Consider the $Sym_l$-equivariant bundle $\otimes_{i=1}^l K'_i$ on $\bbX'^{[l]}$ with the lift of the action of the symmetric group $Sym_l$ being taken ``without signs''. 
(Invariant sections are polydifferentials.) Then, given $U \underset{open}\subset \bbX$,
\begin{itemize}
\item $S^l \sfKs (U) =\lim^1_{D} \Gamma_2 (U \times \bbX'^{[l]}, \cO_\bbX \otimes \otimes_{i=1}^l K'_i(D_i))^{Sym_l}$ where the limit is taken over effective divisors $D \subset \bbX \times \bbX'$ as above. The subscript $2$ refers to the restriction that we consider sections whose residue along each $D_i^{reduced}$ (which would be a meromorphic section of $\otimes_{j \ne i} K'_j(D_j)$) vanishes.
\item $S^l \sfKsz  (U) =  \Gamma_2 (U \times U'^{[l]}, \cO_\bbX \otimes \otimes_{i=1}^l K'_i)^{Sym_l}$ . The subscript $2$ restricts the sections to those that have vanishing residue along each component of the complement of each $U'_i$.
\end{itemize}
In this case, one sees that given $z\in \bbX$, the fibres are respectively\begin{itemize}
\item  $S^l \bK$ and
\item $S^l \bK^z$
\end{itemize}

\begin{remark} The sheaves defined above are all $\cD$-modules.
\end{remark}

Restriction to the diagonal $\Delta_l$ induces a map of sheaves $i^{[l]}:S^l \sfKsz  \to S^{l-1} \sfKsz  \otimes_{\cO_\bbX} K_\bbX$; explicitly
\begin{equation*}
\Gamma_2 (U \times U'^{[l]}, \cO_\bbX \otimes \otimes_{i=1}^l K'_i)^{Sym_l} \to \Gamma_2 (U \times U'^{[l-1]}, K_\bbX \otimes \otimes_{i=1}^{l-1} K'_i)^{Sym_{l-1}} 
\end{equation*}
induces
\begin{equation*}
i^{[l]}(U): S^l \sfKsz  (U) \to S^{l-1} \sfKsz  (U) \otimes_{\cO_\bbX(U)}K_\bbX(U)
\end{equation*}

Specialise to $\bbX=\bbP$. We can regard $e_z$ (defined in \ref{section: The boson}) as the value at $z$ of a canonical section of $\sfKs \otimes K_\bbP$ on $\bbP$:
$$
e_z(u)=-\frac{du}{(u-u(z))^2} du_z \in \Gamma_2(\bbP \times \bbP',K_\bbP\otimes K_{\bbP'}(2\Delta))
$$
Multiplying by this section and averaging over $Sym_{l+1}$ gives the sheaf map $e^{[l]}:S^l \sfKs \to S^{l+1} \sfKs \otimes_{\cO_\bbX} K_\bbP$; explicitly
\begin{equation*}
\Gamma_2 (U \times \bbP'^{[l]}, \cO_\bbP \otimes \otimes_{i=1}^l K'_i(D_i))^{Sym_l} \to \Gamma_2 ( U \times \bbP'^{[l+1]}, K_\bbP \otimes \otimes_{i=1}^{l+1} K'_i(D_i+2\Delta_i))^{Sym_{l+1}}
\end{equation*}
induces
\begin{equation*}
\begin{split}
e^{[l]}(U): S^l \sfKs (U) \to S^{l+1} \sfKs (U) \otimes_{\cO_\bbP(U)}K_\bbP(U) \\
\end{split}
\end{equation*}

Adding $i^{[l]}$ and $e^{[l]}$, we get the map of sheaves $b^{[l]}:S^l \sfKsz  \to S^{l-1} \sfKs \otimes_{\cO_\bbP} K_\bbP \oplus S^{l+1} \sfKs \otimes_{\cO_\bbP} K_\bbP$ induced by
\begin{equation*}
\begin{split}
\Gamma_2 (U \times \bbP'^{[l]}, \cO_\bbP \otimes \otimes_{i=1}^l K'_i(D_i))^{Sym_l} &\to \Gamma_2 (U \times \bbP'^{[l-1]}, K_\bbP \otimes \otimes_{i=1}^{l-1} K'_i(D_i))^{Sym_{l-1}}\\ 
&\oplus  \Gamma_2 ( U \times \bbP'^{[l+1]}, K_\bbP \otimes \otimes_{i=1}^{l+1} K'_i(D_i+2\Delta_i))^{Sym_{l+1}}\\
\end{split}
\end{equation*}
for any effective divisor $D$ whose support is $\bbP \setminus U$.

\section{\textbf{The boson, continued}}

\subsection{Derivatives}\label{subsection: Derivatives}

Derivatives will be defined in a straightforward way that is best illustrated in context. 

To start with, consider the action of global vector fields. Let $X$ be a regular vector field on $\bbP$. This acts on $\bK$ by Lie derivative $\cL_X$, preserving (for any $z$) $\bK^z$; the action is extended to $\bW$ and $\bW^z$ as a derivation:
\begin{equation*}
\bL^X (\alpha_1 \otimes^s \dots \otimes^s \alpha_p) =  \sum_i ({\alpha}_1 \otimes^s \dots \cL_X(\alpha_i) \dots \otimes^s {\alpha}_p)
\end{equation*}
Note that the same formula defines an action of \emph{meromorphic} vector fields; this will be part of the action of the Virasoro algebra to be defined later. The notation $\bL^X$ is anticipates one that will be introduced in that context.

Derivatives of fields - we use the notation $\bcL_X$ to denote the derivative of a field with respect to a (regular) vector field $X$ - are defined because a field is a morphism of $D$-modules. For example
\begin{equation*}
\begin{split}
\bcL_X e(z) &= -\{\frac{2\xi(u(z))du_z}{(u-u(z))^3}+\frac{\xi'(u(z))du_z}{(u-u(z))^2}\}du \otimes^s \ [.]\\
&= -\frac{\{2\xi(u(z))+\xi'(u(z))(u-u(z))\}du_z}{(u-u(z))^3}du \otimes^s \ [.]\\
\end{split}
\end{equation*}
where $X(z)=\xi(u(z))d/du$ and  $\xi'(u(z))=d\xi(u(z))/du$.

On the other hand the commutator 
$$
[\bL^X,e(z)]=-\frac{\{2\xi(u)-\xi'(u)(u-u(z))\}du_z}{(u-u(z))^3}du \otimes^s \ [.]
$$
Since $X$ is a globally regular vector field, $\xi$ is a polynomial function of degree $\le 2$, and one checks that
$$
\bcL_X e(z)=[\bL^X, e(z)]
$$
This is an expression of the fact that $e$ is $Aut\ \bbP$-covariant. A similar equation  holds for the derivative of $i$ and therefore $b$.:
$$
\bcL_X b(z)=[\bL^X, b(z)]
$$

\subsection{Composition of operators, $n$-point functions, Locality, OPE}\label{subsection: OPEetc.}

\subsubsection{Composition of operators} 

Given distinct points $z_1$ and $z_2$, any (one) operator of the pair $e(z_1), \ i(z_1)$ can be composed with any of the pair $e(z_2), \ i(z_2)$ on $\bW^{z_1,z_2} \equiv \bW^{z_1} \cap \bW^{z_2}$. Clearly $e(z_1),\ e(z_2)$ commute, as do $i(z_1),\ i(z_2)$; on the other hand
\begin{equation}
\label{iecommute}
i(z_1) \circ e(z_2) - e(z_2) \circ i(z_1) = \frac{1}{(u(z_1)-u(z_2))^2} du_{z_1} \otimes^s du_{z_2}
\end{equation}
An important consequence is that for distinct points $z_1$ and $z_2$, the operators $b(z_1), \ b(z_2)$ (can be composed and) commute.

We formulate all this carefully, clarifying in passing the notation $du_{z_1} \otimes^s du_{z_2}$ in the above expression.

Consider first the diagram of maps
\begin{equation*}
\begin{CD}
\bW @>e(z_1)>> \bW \otimes K_{z_1} @>e(z_2)>> \bW \otimes K_{z_2} \otimes K_{z_1} \\
@| @. @VV{\text{exchange}}V \\
\bW @>e(z_2)>> \bW \otimes K_{z_2} @>e(z_1)>> \bW \otimes K_{z_1} \otimes K_{z_2}\\
\end{CD}
\end{equation*}
Note that the above diagram commutes provided the exchange map $K_{z_2} \otimes K_{z_1} \to K_{z_1} \otimes K_{z_2}$ is $\phi \otimes \psi \mapsto \psi \otimes \phi$ (\emph{without} a minus sign).

We also have the commutative diagram
\begin{equation*}
\begin{CD}
\bW^{z_1,z_2} @>i(z_1)>> \bW^{z_1,z_2} \otimes K_{z_1} @>i(z_2)>> \bW^{z_1,z_2} \otimes K_{z_2} \otimes K_{z_1}\\
@| @. @VV{\text{exchange}}V \\
\bW^{z_1,z_2} @>i(z_2)>> \bW^{z_1,z_2} \otimes K_{z_2} @>e(z_1)>> \bW^{z_1,z_2} \otimes K_{z_1} \otimes K_{z_2}\\
\end{CD}
\end{equation*}

Finally, we have the diagram (\emph{not} commutative) of maps
\begin{equation*}
\begin{CD}
\bW^{z_1,z_2} @>i(z_1)>> \bW^{z_1,z_2} \otimes K_{z_1} @>e(z_2)>> \bW^{z_1} \otimes K_{z_2} \otimes K_{z_1}  @>{\text{inclusion}}>> \bW \otimes K_{z_2} \otimes K_{z_1}\\
@| @. @. @VV{\text{exchange}}V \\
\bW^{z_1,z_2} @>e(z_2)>> \bW^{z_1} \otimes K_{z_2} @>i(z_1)>> \bW^{z_1} \otimes K_{z_1} \otimes K_{z_2}  @>{\text{inclusion}}>> \bW \otimes K_{z_1} \otimes K_{z_2}\\
\end{CD}
\end{equation*}
The equation (\ref{iecommute}) refers to this last diagram; the notation $du_{z_1} \otimes^s du_{z_2}$ is chosen to indicate the fact that the ``exchange" arrows do not have a negative sign associated to them. In other words, as $z_1,z_2$ vary, we are dealing with ``polydifferentials" rather than differential forms.

The bi-differential in (\ref{iecommute}) above can be easily checked to be $Aut(\bbP)$-invariant, and therefore well-defined independent of the coordinate. We will encounter this  invariant bi-differential often, so it is useful to set
\begin{equation}\label{invbidiff}
\omega  = \frac{1}{(u_1-u_2)^2} du_1 \otimes^s du_2
\end{equation}

\subsubsection{$n$-point functions; Wick's Theorem}\label{Wick}

We can immediately define and compute the ``$n$-point functions'' of the field $b$. Let $z_1,\dots, z_n$ be distinct points on $\bbP$; we define the $n$-point function $\langle b(z_1)b(z_2)\dots b(z_n)\rangle$ by the condition
\begin{equation*}
b(z_1)b(z_2)\dots b(z_n) \mathbf{1} = \langle b(z_1)b(z_2)\dots b(z_n)\rangle + \text{terms in} \ \oplus_{m>0} S^m \bK 
\end{equation*}
The computation is simple enough.
\begin{equation*}
\begin{split}
\langle b(z_1)b(z_2)\dots b(z_n)\rangle &=\langle (e(z_1)+i(z_1))b(z_2) \dots b(z_n) \rangle \\
&=\langle i(z_1)b(z_2)\dots b(z_n) \rangle \\
&=\langle \sum_{l=2}^n  b(z_2)\dots [i(z_1),b(z_l)] \dots b(z_n) \rangle \\
&= \sum_{l=2}^n  \frac{du_{z_1} \otimes^s du_{z_l}}{(u(z_1)-u(z_l))^2} \langle b(z_2)\dots \widehat{b(z_l)} \dots b(z_n) \rangle \\
\end{split}
\end{equation*}
where the hat marks a term to be omitted. We have used the fact that $i$ annihilates the vacuum, and that the image of $e$ is strictly of positive degree. By induction, we get the expression given by Wick's theorem:
\begin{equation*}
\langle b(z_1)b(z_2)\dots b(z_n)\rangle=
\begin{cases}
\begin{aligned}
& \ \ \ 0 \ \ \ \ \ \text{if $n$ is odd, and}\\
&\{\sum_{\substack{partitions\\ of\ \{1,\dots,n\}\\ into\ pairs}}  \ \  \prod_{\substack{(unordered)\ pairs\ \{a,b\}\\ in\ the\ partition}} \frac{1}{(u(z_a)-u(z_b))^2} \}\\
&\ \ \  \times du_{z_a} \otimes^s \dots \otimes^sdu_{z_n}\ \ \text{if $n$ is even}\\
\end{aligned}
\end{cases}
\end{equation*}

\subsubsection{Operator product expansions (OPE)} We wish to study the behaviour of the product $b(z_b)b(z_a)$ as $z_b \to z_a$. If $\ttv \in \bW^{z_a}=\bbC \oplus \bK^{z_a} \oplus S^2\bK^{z_a} \oplus \dots$, there are finitely many nonzero components of $v$ in the summands on the right. There will be a (Zariski) neighbourhood  $U$ of $z_a$ (depending on $\ttv$) such that all these components are built out of functions regular in that neighbourhood, and $b(z_b) \circ b(z_a) [\ttv]$ makes sense for $z_b \ne z_a$ belonging to such an ``adapted" neighbourhood. If
we define $\bdots b(z)^2\bdots =i(z) \circ i(z) + e(z) \circ e(z) + 2 e(z) \circ i(z)$, we have
\begin{itemize}
  \item $\bdots b(z)^2\bdots $ is defined on $\bW^z$, and
\item For $\ttv \in \bW^{z_a}$ and $z_b \ne z_a \in U$, with $U$ a neighbourhood adapted to $\ttv$, 
\begin{equation}\label{normalorderedbsquared}
\lim_{z_b \to z_a} (b(z_b) \circ b(z_a) - \omega(z_b,z_a))[\ttv] = \bdots b(z_a)^2\bdots [\ttv]
\end{equation}
\end{itemize}

\subsubsection{Limits}\label{limits}

The above limit is to be understood as follows.  

For $(z_b,z_a) \in U_b \times U_a \setminus \Delta_{ba}$ (with $U_a$ and $U_b$ copies of $U$), set
\begin{equation*}
(b(z_b) \circ b(z_a) - \omega(z_b,z_a))[\ttv] = \sum_{m} \ttv''_m(z_b,z_a)
\end{equation*}
For each $m$, $\ttv''_m$ corresponds to an invariant section $\tttv''_m$ of $\otimes_{i=1}^m K_i  \otimes K_b \otimes K_a \otimes \cO (2(\sum_{i_b=1}^m \Delta_{i_b,1} + \sum_{i_a=1}^m\Delta_{i_a,2})+N\Delta_{ba}$ on $U[m] \times U_a\times U_b$, where
\begin{itemize}
\item $U[m]=\overset{m\  factors}{U \times \dots \times U}$
\item $K_a$ (respectively, $K_b$) is the canonical bundle of the factor $U_a$ (respectively, $U_b$),
\item $K_i$ is the canonical bundle of the $i^{th}$ factor in the product $U[m]$,
\item $\Delta_{i,a}$ is the partial diagonal $\{(x_a,\dots,x_m,z_a,z_b)\in \bbP[m] \times \bbP_a \times \bbP_b|x_i=z_a\}$; and $\Delta_{i,b}$  is defined similarly, 
\item $\Delta_{ba}$ is the partial diagonal $z_a=z_b$, and
\item by `invariant' we mean `invariant under the permutation of the first $m$ factors',
\item $N$ is a non-negative integer.
\end{itemize}

Given all this, the equation (\ref{normalorderedbsquared}) expresses two facts:  (i) $N$ can be replaced by zero  so that the section $\tv_m$ extends across the divisor $\Delta_{ba}$ (for all $m$) and (ii) the restriction of the extended section to $\Delta_{ba}$ is given by $m^{th}$ component of the right hand side of (\ref{normalorderedbsquared}). Informally, we write
\begin{equation*}
\lim_{z_b \to z_a} b(z_b)b(z_a)-\omega (z_a,z_b) = \bdots b(z_a)^2\bdots 
\end{equation*}
We will refer to statements such as these as operator product expansions (OPE's). These serve a dual purpose - the ``regular part'' defines new fields (eg., the energy momentum tensor, see below), and the ``singular part'' helps to compute commutation relations between operators defined \textit{via} contour integrals (residues) that we will encounter in subsequent sections.

\begin{remark} Recall that when a coordinate is chosen, the field $\hb$ is defined by $b(z)=\hb(u(z))du_z$. In terms of $\hb$, the OPE above becomes
\begin{equation*}
\lim_{z_b \to z_a} \hb(z_b)\hb(z_b)-\frac{1}{(u(z_a)-u(z_b))^2}=\bdots \hb(z_a)^2\bdots 
\end{equation*}
\end{remark}

\subsubsection{Locality} We have seen that the field $b$ has the following two properties: (a) its ``values" at  non-coinciding points can be composed and commute, and (b) as these points tend towards each other, the singularity is a pole of degree (2 in this case) independent of the vector on which it acts. We say that ``the field $b$ is local with respect to itself". 
More generally, we will say that two fields $\hA$ and $\hB$ are ``mutually local'' if 
\begin{itemize}
\item given $z_2 \ne z_1$, $\hB(z_2)$ and $\hA(z_1)$ can be composed and
$$
\hB (z_2) \circ \hA (z_1) = \hA (z_1) \circ \hB (z_2), \ \text{and}
$$
\item there exists $N \ge 0$ such that for any $\ttv \in \bW^{z_1}$
$$
\underset{z_2 \to z_1}\lim (u(z_2)-u(z_1))^N \hB (z_2) \circ \hA (z_1) [\ttv] =0
$$
for any coordinate $u$ regular at $z_1$.
\end{itemize}

\subsubsection{Energy-momentum tensor}

We set
\begin{equation*}
T(z) = \frac{\bdots b(z)^2\bdots }{2}
\end{equation*}
This is the ``energy-momentum operator". Let us examine its OPE with the field $b$.

Note that if $z_i, \ i=1,2,3$ are distinct points, the operators $b(z_i)$ mutually commute; so $b(z_3) \circ b(z_2)$ commutes with $b(z_1)$. Hence so does $b(z_3) \circ b(z_2) - \omega(z_3,z_2)$  since the second term is a scalar. Taking the limit $z_3 \to z_2$, we see that $T(z_2)$ commutes with $b(z_1)$.

We will now consider the product $T(z_2) \circ b(z_1)$ and consider two limits: (a) the limit as $z_2 \to z_1$, with $z_1$ fixed and (b) the limit as $z_1 \to z_2$, with $z_2$ fixed.

Consider the product $2b(z_2) \circ T(z_1)$. Expanding in terms of $i$ and $e$ and moving the $i$'s to the right, we get (dropping the $\circ$'s for notational simplicity) $e^2(z_2)e(z_1)+3e(z_2)e(z_1)i(z_2)+3e(z_2)i(z_2)i(z_1)+i^2(z_2)i(z_1)+2 \omega (z_1,z_2) b(z_2)$. From this we read off
\begin{proposition}
Let $z_2\ , z_1 \in \bbP$, and $\ttv \in \bW^{z_1}$; set $S(z_1)=e^3(z_1)+3e^2(z_1)i(z_1)+3e(z_1)i^2(z_1)+i^3(z_1)$ Then, as $z_2\to z_1$,
\begin{equation*}
[2b(z_2)T(z_1)-2 \omega (z_2,z_1) b(z_1)]\ttv \to S(z_1) \ttv
\end{equation*}
(This should be read as an element of $\bW \otimes K_{z_2} \otimes K^2_{z_1}$ tending to an element of $\bW \otimes K^3_{z_1}$.)
\end{proposition}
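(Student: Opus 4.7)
The plan is a direct computation, parallel to the derivation of (\ref{normalorderedbsquared}). I expand $2T(z_1) = i^2(z_1) + e^2(z_1) + 2e(z_1)i(z_1)$ from the definition of $T$, multiply on the left by $b(z_2) = e(z_2) + i(z_2)$, and then use the single non-trivial commutation relation $[i(z_2), e(z_1)] = \omega(z_2,z_1)$ coming from (\ref{iecommute}) — a scalar-valued bidifferential — to move every occurrence of $i(z_2)$ to the right of the $e(z_1)$ factors. All other pairs among $\{e(z_1), e(z_2), i(z_1), i(z_2)\}$ commute freely, so the reordering produces a sum of ``already ordered'' monomials in $e$ and $i$ plus a singular remainder proportional to $\omega(z_2,z_1)$.

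The commutator bookkeeping is quick: moving $i(z_2)$ past $e^2(z_1)$ picks up the commutator twice and contributes $2\omega(z_2,z_1)\,e(z_1)$; moving it past $2e(z_1)i(z_1)$, and using that the two $i$'s commute, contributes $2\omega(z_2,z_1)\,i(z_1)$; moving past $i^2(z_1)$ contributes nothing. These assemble to $2\omega(z_2,z_1)\,[e(z_1)+i(z_1)] = 2\omega(z_2,z_1)\,b(z_1)$, precisely the term the proposition subtracts. The remaining six monomials,
\begin{equation*}
e(z_2)e^2(z_1) + 2e(z_2)e(z_1)i(z_1) + e(z_2)i^2(z_1) + e^2(z_1)i(z_2) + 2e(z_1)i(z_1)i(z_2) + i^2(z_1)i(z_2),
\end{equation*}
involve operators at two distinct points only; when applied to $\ttv \in \bW^{z_1}$ with $z_2$ restricted to a neighbourhood of $z_1$ adapted to $\ttv$, they define polydifferential sections that are regular along the partial diagonal $z_2 = z_1$. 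Setting $z_2 = z_1$ collapses them to $e^3(z_1) + 3e^2(z_1)i(z_1) + 3e(z_1)i^2(z_1) + i^3(z_1) = S(z_1)\,\ttv$, as claimed.

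The only point requiring care is that the convergence is to be interpreted in the precise sense of \S\ref{limits}. Concretely, one must check that for each of the six monomials the integer $N$ of that section may be taken equal to $0$, i.e., that the underlying polydifferential sections extend across $\Delta_{ba}$ and that the extensions restrict there to the obvious operators at coincident points. This is however automatic from the computation above: all singularities produced by reordering have been absorbed into the $2\omega(z_2,z_1)\,b(z_1)$ term, while on the complement of $\Delta_{ba}$ the sections $e(z)\ttv'$ and $i(z)\ttv'$ depend holomorphically (respectively algebraically) on $z$. I anticipate no serious obstacle; the entire argument is commutator bookkeeping, with the only nuance being the translation between the informal notation ``$z_2 \to z_1$'' and the precise meaning of limit established in \S\ref{limits}.
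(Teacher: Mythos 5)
Your proposal is correct and follows essentially the same route as the paper: expand $2T(z_1)=i^2(z_1)+e^2(z_1)+2e(z_1)i(z_1)$, multiply by $b(z_2)$, and move $i(z_2)$ to the right using $[i(z_2),e(z_1)]=\omega(z_2,z_1)$, so that the singular part assembles to $2\omega(z_2,z_1)b(z_1)$ and the six remaining ordered monomials collapse to $S(z_1)$ on the diagonal. Your bookkeeping of the commutator terms and your appeal to \S\ref{limits} for the precise sense of the limit are both accurate, and in fact your displayed list of the six regular monomials is the correct version of the (typo-marred) intermediate expression in the paper.
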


We now wish to reverse the roles of $T$ and $b$, and we have to deal with the limit, as $z_2\to z_1$, of $\omega(z_2,z_1)b(z_2)$. To this purpose we need to choose a point $P \ne z_1$, and an adapted coordinate $u$. Once this is done, we have

\begin{proposition}
Let $z_2\ , z_1 \in \bbP$,  $\ttv \in \bW^{z_1}$ and $S(z_1)$ be as above. Then, as $z_2\to z_1$,
\begin{equation*}
[2\hT(z_2)\hb(z_1)-2 \omega (z_2,z_1) b(z_1)-2\frac{1}{u(z_2)-u(z_1)}\hb'(z_1)]\ttv \to [\hS(z_1)+\hb''(z_1)]\ttv\\
\end{equation*}
where $\hb'$ and $\hb''$ are derivatives of the field $\hb$ with respect to the vector field $d/du$.
\end{proposition}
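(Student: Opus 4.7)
The plan is to combine two ingredients: (a) $\hT(z_2)$ commutes with $\hb(z_1)$ for $z_2\neq z_1$, so the previous proposition --- read after interchanging the labels $z_1\leftrightarrow z_2$ --- governs $2\hT(z_2)\hb(z_1)$ modulo the double-pole contribution $2\omega(z_2,z_1)\hb(z_2)$; and (b) for $\ttv\in\bW^{z_1}$ the vector $\hb(z_2)\ttv$ is rational in $u(z_2)$ and regular at $u(z_2)=u(z_1)$, so it admits an honest Taylor expansion.

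\textbf{Step 1.} Since $\hT(z_2)\hb(z_1)=\hb(z_1)\hT(z_2)$ on $\bW^{z_1,z_2}$, the previous proposition applied with the labels swapped (the ``extended section across the diagonal'' of \S\ref{limits} is an object on $\bbP\times\bbP$, so its restriction to $\Delta$ does not depend on which variable is held fixed) yields
\begin{equation*}
[2\hT(z_2)\hb(z_1) - 2\omega(z_2,z_1)\hb(z_2)]\ttv \longrightarrow \hS(z_1)\ttv \quad \text{as } z_2\to z_1.
\end{equation*}

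\textbf{Step 2.} With $\hb'$ and $\hb''$ denoting the derivatives of $\hb$ in the sense of \S\ref{subsection: Derivatives},
\begin{equation*}
\hb(z_2)\ttv = \hb(z_1)\ttv + (u(z_2)-u(z_1))\hb'(z_1)\ttv + \tfrac{1}{2}(u(z_2)-u(z_1))^2\hb''(z_1)\ttv + O((u(z_2)-u(z_1))^3).
\end{equation*}
Multiplying by $2\omega(z_2,z_1)=2(u(z_2)-u(z_1))^{-2}$ and subtracting the two leading polar pieces gives
\begin{equation*}
\Bigl[2\omega(z_2,z_1)(\hb(z_2)-\hb(z_1)) - \frac{2}{u(z_2)-u(z_1)}\hb'(z_1)\Bigr]\ttv \longrightarrow \hb''(z_1)\ttv.
\end{equation*}

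\textbf{Step 3.} The algebraic identity
\begin{equation*}
2\hT(z_2)\hb(z_1) - 2\omega\hb(z_1) - \tfrac{2\hb'(z_1)}{u(z_2)-u(z_1)} = \bigl[2\hT(z_2)\hb(z_1) - 2\omega\hb(z_2)\bigr] + 2\omega(\hb(z_2)-\hb(z_1)) - \tfrac{2\hb'(z_1)}{u(z_2)-u(z_1)}
\end{equation*}
exhibits the left-hand side as the sum of the expressions whose limits were computed in Steps 1 and 2; adding them produces the stated limit $[\hS(z_1)+\hb''(z_1)]\ttv$.

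The main obstacle is the relabelling used in Step 1: one must verify that the extended section produced by the framework of \S\ref{limits} is genuinely an object on $\bbP\times\bbP$, so that approaching the diagonal along either axis yields the same contact value and this value may equally be written as $\hS(z_1)$ or $\hS(z_2)$. Alternatively one may bypass this subtlety by redoing the $\hi,\he$ normal-ordering computation for $2\hT(z_2)\hb(z_1)$ directly: among the six monomials obtained by expanding $[\hi^2(z_2)+\he^2(z_2)+2\he(z_2)\hi(z_2)][\hi(z_1)+\he(z_1)]$, only $\hi^2(z_2)\he(z_1)$ and $2\he(z_2)\hi(z_2)\he(z_1)$ fail to be normal-ordered; applying the commutation relation $[\hi(z),\he(z')]=(u(z)-u(z'))^{-2}$ twice in each, the ``remainder'' operator in every commutator term sits at $z_2$, and the singular pieces sum to precisely $2\omega(z_2,z_1)\hb(z_2)$ --- this asymmetry (in contrast with the $2\omega\hb(z_1)$ appearing in the previous proposition) being exactly what forces the additional simple-pole piece $\tfrac{2}{u(z_2)-u(z_1)}\hb'(z_1)$ in the present OPE.
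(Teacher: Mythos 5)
Your proposal is correct and is essentially the paper's own argument: the normal-ordering computation you sketch in your final paragraph (yielding the singular piece $2\omega(z_2,z_1)\hb(z_2)$ sitting at $z_2$) is exactly the displayed expansion the paper performs just before the two propositions, and your Step 2 makes explicit the Taylor expansion of $\hb(z_2)\ttv$ about $z_1$ that the paper only alludes to with ``we have to deal with the limit of $\omega(z_2,z_1)b(z_2)$.'' The relabelling shortcut in your Step 1 is a harmless variant, and you rightly note it can be bypassed by the direct computation.
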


We see that the fields $T$ and $b$ not only commute at distinct points, but also that the product has singularities of order at most two (independent of the vector on which it acts) as the points coalesce. Thus they too, are ``mutually local". We define $\bdots \hb(z)\hT(z)\bdots =\hS(z)$ and $\bdots \hT(z)\hb(z)\bdots =\hS(z)+\hb''(z)$; these are both operators $\bW^z \to \bW$. We set $\bdots b(z)T(z)\bdots \equiv \bdots \hb(z)\hT(z)\bdots  du_z^3$ and $\bdots T(z)b(z)\bdots \equiv \bdots \hT(z)\hb(z)\bdots  du_z^3$; these are operators $\bW^z \to \bW \otimes K_z^3$, whose definition depends on the choice of $P$, \emph{but not the adapted coordinate} $u$.

We turn next to the product of $T$ with itself. An easy computation shows that $T(z_2) \circ T(z_1) - \frac{1}{2} \omega^2$ has a pole of order two as $z_2 \rightarrow z_1$. To go further, we need to choose as before a point $P$ and an adapted coordinate $u$. Once this is done, we compute:
\begin{equation*}
\begin{split}
4T(z_2) \circ T(z_1) &= e^2(z_2) T(z_1) + i^2(z_2) T(z_1) + 2e(z_2) i(z_2) T(z_1)\\
&=e^2(z_2) T(z_1)\\ 
&+ i^2(z_2)i^2(z_1) + i^2(z_2)e^2(z_1)+2i^2(z_2)e(z_1)i(z_1)\\ 
&+ 2e(z_2) i(z_2)i^2(z_1) + 2e(z_2) i(z_2)e^2(z_1) + 4e(z_2) i(z_2) e(z_1)i(z_1)\\
&=e^2(z_2) T(z_1)+i^2(z_2)i^2(z_1)+2e(z_2) i(z_2)i^2(z_1)\\
&+2i^2(z_2)e(z_1)i(z_1)+2e(z_2) i(z_2)e^2(z_1) + 4e(z_2) i(z_2) e(z_1)i(z_1)\\
&+ i^2(z_2)e^2(z_1)\\
&=e^2(z_2) T(z_1)+i^2(z_2)i^2(z_1)+2e(z_2)i(z_2)i^2(z_1)\\
&+2e(z_1)i^2(z_2)i(z_1)+2e(z_2)e^2(z_1)i(z_2)+4e(z_2)e(z_1)i(z_2) i(z_1)\\
&+4i(z_2)i(z_1) \omega(z_2,z_1)+4e(z_2)e(z_1) \omega(z_2,z_1)  + 4e(z_2)i(z_1) \omega(z_2,z_1) \\
&+ i(z_2)e^2(z_1)i(z_2)+2i(z_2)e(z_1)\omega(z_2,z_1)\\
&=e^2(z_2) T(z_1)+i^2(z_2)i^2(z_1)+ 2e(z_2) i(z_2)i^2(z_1)\\
&+2e(z_1)i^2(z_2)i(z_1)+2e(z_2)e^2(z_1)i(z_2)+4e(z_2)e(z_1)i(z_2) i(z_1)+e^2(z_1)i^2(z_2)\\
&+4i(z_2)i(z_1) \omega(z_2,z_1)+4e(z_2)e(z_1) \omega(z_2,z_1)  + 4e(z_2)i(z_1) \omega(z_2,z_1) \\
&+2e(z_1)i(z_2)\omega(z_2,z_1)
+2e(z_1)i(z_2)\omega(z_2,z_1)+2\omega^2(z_2,z_1)\\
\end{split}
\end{equation*}
which shows
\begin{equation*}
\begin{split}
\hT(z_2) \circ \hT(z_1)&-\frac{1}{2(u(z_2)-u(z_1))^4}-2\hT(z_1) \frac{1}{(u(z_2)-u(z_1))^2}-\hT'(z_1)\frac{1}{u(z_2)-u(z_1)}-\bdots T(z)^2
\underset{z_2 \to z_1}\to 0 \\
\end{split}
\end{equation*}
where we set $\bdots T(z)^2 \bdots =\frac{1}{4}\{
\he^4(z) + 4\he^3(z)\hi(z)+6\he^2(z)\hi^2(z) + 4\he(z) \hi^3(z)+\hi^4(z)\}du_z^4$.

\subsubsection{Renormalised product}\label{renorm}

We encountered above ``renormalised'' products of fields. We now give a general prescription for defining a renormalised product of two mutually local fields as above. Fix a point $P$, and choose a coordinate $u$ with a pole at $P$. The renormalised products $\bdots \hB (z_1) \hA (z_1) \bdots^{[n]}$ are defined inductively by requiring that for each $\ttv \in \bW^{z_1}$,
\begin{equation*}
\begin{split}
\underset{z_2 \to z_1}\lim \{\hB (z_2) \circ \hA (z_1) &- \frac{1}{(u(z_2)-u(z_1))^N}\bdots \hB (z_1) \hA (z_1) \bdots^{[N]}\\ &-\frac{1}{(u(z_2)-u(z_1))^{N-1} }\bdots \hB (z_1) \hA (z_1) \bdots^{[N-1]}\\ &\dots\\ 
&- \bdots \hB (z_1) \hA (z_1) \bdots \}[\ttv] =0
\end{split}
\end{equation*}
If $A(z)=\hA(z)du_z^{c_a}$ and $B(z)=\hB(z)du_z^{c_b}$ are ``tensorial fields'' (whose definition depends only on the choice of $P$), this will be true also of the renormalised products $\bdots \hB (z_1) \hA (z_1) \bdots^{[n]} du_{z_1}^{c_a+c_b+n}$.

Suppose given a field $\hC$ that is mutually local with respect to $\hA$ and $\hB$. Then $\hC$ is mutually local with respect to the renormalised products as well. This is a basic result in the theory of vertex algebras (Dong's Lemma).  Since we do not give general definitions we point to specific cases as they occur.

\subsection{Mode expansions}\label{modes}

Suppose a coordinate $u$ is chosen, vanishing at a point which we denote $O$, and with a pole at $P$.  

\subsubsection{Boson modes}\label{bosonmodes}

Let $H^0(\bbP \setminus O, K)$ denote the space of \emph{algebraic} sections of the canonical bundle $K$ on $\bbP \setminus P$ and let $\lO \bW$ denote the symmetric algebra over $H^0(\bbP \setminus O,K)$:
\begin{equation*}
\lO \bW=\bbC \oplus H^0(\bbP \setminus O, K) \oplus S^2 H^0(\bbP \setminus O, K) \oplus ......
\end{equation*}
(Note that a meromorphic form regular outside one point must have vanishing residue there and is therefore of the second kind.)

For $l \ge 1$, define operators $b_{-l}:  \bW \to  \bW$ by
\begin{equation*}
b_{-l}[\chi] = \chi \otimes^s d{u^{-l}}   = -\chi \otimes^s lu^{-l-1}du
\end{equation*}
(Note that $b_{-l}$ leaves the flag $\lO \bW \subset \bW_P \subset \bW$ invariant.) For $l \ge 1$, define operators $b_{l}: \bK_P \to \bbC$ by setting
\begin{equation*}
b_{l} [u^{-m-1}du ] =  -\delta_{l,m}, \ m \ge 1
\end{equation*}
and extending by linearity to  $\bK_P$.  Extend as a derivation $b_l:\bW_P \to \bW_P$ by setting $b_l ({\mathbf 1})=0$ and
\begin{equation*}
b_l (\alpha_1 \otimes^s \dots \otimes^s \alpha_p) =  \sum_i ({\alpha}_1 \otimes^s \dots \widehat{{{\alpha}_i}} \dots \otimes^s {\alpha}_p) b_l(\alpha_i)
\end{equation*}
The operators $b_l, \ l \ne 0$ are all defined on $\bW_P$ and satisfy the commutation relations $[b_l,b_m]=l\delta_{l+m,0}$. 

Let $z \in \bbP \setminus \{O,P\}$ and set $u(z)=w \in \bbC$. The series $\sum_{l \ge 1} b_{-l} w^{l-1}[\mathbf{1}]$ converges uniformly on the subsets $\{u||u|\ge |w|+\delta\}$ (for $\delta >0$) to $-\frac{du}{(u-w)^2} \in \bK_P$. Formally,
\begin{equation*}
\sum_{l \ge 1} b_{-l} w^{l-1}  = \hat{e} (z)
\end{equation*}
On the other hand,  $\sum_{l \ge 1} b_{l} w^{-l-1}[u^{-m-1}du]=-w^{-m-1}$, so that if $\alpha = a du\in  H^0(K, \bbP \setminus O)$,
\begin{equation*}
\sum_{l \ge 1} b_{l} w^{-l-1} [\alpha] = -a(z)
\end{equation*}
(This is actually a finite sum for any given $\alpha$.)
Thus
\begin{equation}
\sum_{l \ge 1} b_l w^{-l-1}= \hi(z)|_{{}_{\lO \bW\subset \bW^z}} : \lO \bW \to \lO \bW
\end{equation}
We have used the notation $i(z)=\hi(z) du_z, \  e(z)=\he(z) du_z$.

For ease of reference, we summarise the definition of the operators $b_l$:
\begin{equation*}
b_l=
\begin{cases}
\begin{aligned}
& du^l \otimes^s \ \ \ \ \ \ \ \ \ l \le -1\\
& du^m \mapsto l \delta_{l+m} \ l \ge 1,\ \text{extended as a derivation}\\
\end{aligned}
\end{cases}
\end{equation*}

\subsubsection{Boson current algebra modes}\label{bosoncurrentmodes}

Let $\lO \mathbb{W}_-=\bbC \oplus H^0(\bbP \setminus O, \mathfrak{m}_P) \oplus S^2 H^0(\bbP \setminus O, \mathfrak{m}_P) \oplus ......\ $, where $H^0(\bbP \setminus O, \mathfrak{m}_P)$ is the ideal of \emph{algebraic} functions on $\bbP \setminus O$ vanishing at $P$.

For $l \ge 1$, define operators $j_{-l}:  \mathbb{W}_- \to  \mathbb{W}_-$ by
\begin{equation*}
j_{-l}[\boldsymbol{\chi}] =  u^{-l} \otimes^s \boldsymbol{\chi}
\end{equation*}
Note that $j_l$ preserves $\lO \mathbb{W}_- \subset \mathbb{W}_-$.  For $l \ge 1$, define operators $j_{l}: \mathfrak{m}_P \to \bbC$  by setting
\begin{equation*}
j_{l} [u^{-m}] =  l\delta_{l,m}, \ m \ge 1
\end{equation*}
and extending by linearity to  $\mathfrak{m}_P$; next extend as a derivation
$j_l:\mathbb{W}_- \to \mathbb{W}_-$ by setting $j_l(\mathbf{1})=0$ and
\begin{equation*}
j_l (\balpha_1 \otimes^s \dots \otimes^s \balpha_p) =  \sum_i (\balpha_1 \otimes^s \dots \widehat{{\balpha_i}} \dots \otimes^s \balpha_p) \   j_l (\alpha_i)
\end{equation*}
The operators $j_l,\ l \ne 0$ are all defined on $\mathbb{W}_-$ and satisfy the commutation relations $[j_l,j_m]=l\delta_{l+m,0}$.

Let $z \in \bbP\setminus \{O,P\}$ and set $u(z)=w \in \bbC$.  
The series $\sum_{l \ge 1} j_{-l} w^{l-1}[\mathbf{1}]$ converges uniformly on the subsets $\{u||u|\ge |w|+\delta\}$ (for $\delta >0$) to $\frac{1}{(u-w)} \in \mathbb{W}_-$. Formally
\begin{equation*}
\sum_{l \ge 1} j_{-l} w^{l-1}   =  \hepsilon(z)
\end{equation*}
On the other hand, $\sum_{l \ge 1} j_{l} w^{-l-1}[u^{-m}]=lw^{-m-1}$, so that for $\balpha \in H^0(\bbP \setminus O, \mathfrak{m}_P)$, we have
\begin{equation*}
\sum_{l \ge 1} j_{l} w^{-l-1} [\balpha] = -d\balpha(w)/dw
\end{equation*}
(The sum is finite for any given $\balpha$.) Thus
\begin{equation}
\sum_{l \ge 1} j_{l} w^{-l-1}  = \hat{\iota}(z)|_{{}_{\lO \mathbb{W}_-\subset \mathbb{W}^z_-}} : \lO \mathbb{W}_- \to \lO \mathbb{W}_-
\end{equation}
We have used the notation $\iota(z)=\hiota(z) du_z, \  \epsilon(z)=\hepsilon(z) du_z$.

For ease of reference, we summarise the definition of the operators $j_l$:
\begin{equation*}
j_l=
\begin{cases}
\begin{aligned}
& u^l \otimes^s \ \ \ \ \ \ \ \ \ l \le -1\\
& u^m \mapsto l \delta_{l+m} \ l \ge 1,\ \text{extended as a derivation}\\
\end{aligned}
\end{cases}
\end{equation*}

\begin{remark} One might ask about a ``zero-mode", in other words, an operator $b_0$ or $j_0$ that would be central in the algebra generated by the other modes.   The natural way to introduce such an operator is {\it via} insertions, and the current algebra {\it avatar} of the boson is best suited to this. See \S \ref{bosoninsertions}.
\end{remark}

\section{\textbf{The boson: the states and symmetries}}

We now introduce subspaces of $\bW$ associated to subsets of $\bbP$, and set up notation, some of which has been anticipated above.

There are three main reasons to consider such subspaces:
\begin{itemize}
\item There exist certain natural pairings (and eventually, inner products). Going modulo the null-spaces, we get quotient modules over the (vertex) algebra of fields. The $n$-point functions determine the pairings and can in turn be expressed in terms of pairing of appropriate vectors. 
\item There exist links with the formalism of quantum field theory wherein which vectors and operators are reconstructed from Euclidean $n$-point (= Schwinger) functions.
\item Operators formally defined as contour integrals of fields act on these spaces in a way coherent with respect to the pairings. In particular, we obtain representations of Heisenberg, Virasoro, and eventually, affine lie algebras.
\end{itemize}

\subsection{Contour integrals and projections}

We record a preliminary result. Suppose given a simply connected domain $D \subset \bbP$, with smooth oriented boundary $\partial D=\gamma$, with $\gamma$ winding counterclockwise around $D$. Let $D'$ denote the complementary disc. Let $\bK_\gamma$ denote the space of meromorphic forms of the second kind regular along $\gamma$. (We are recapitulating part of \S \ref{Notation}.) It is elementary that 
$$
\bK_\gamma = \lD\bK \oplus \lDp \bK
$$
Let $\alpha \mapsto \lD[\alpha]$  denote the projection to the first factor. We have

\begin{lemma} \label{contourprojection} Write $\alpha=d\balpha$, with $\balpha$
a meromorphic function. We have, for $\hz \in D'$,
$$
\lD[\alpha] (\hz) = -\frac{du_\hz}{2\pi i}\int_{\gamma} \frac{\balpha(z)}{(u(\hz)-u(z))^2} du_z
$$
\end{lemma}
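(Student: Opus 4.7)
The plan is to reduce both sides to a computation with the simple Cauchy kernel by integration by parts, and then evaluate the resulting integral by the residue theorem on $\bbP$.

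First I would decompose $\alpha = \alpha_D + \alpha_{D'}$ via the direct sum $\bK_\gamma = \lD\bK \oplus \lDp\bK$, so that $\lD[\alpha] = \alpha_D$ has all singularities in $D$. Because $\alpha$ is of the second kind on $\bbP$, its primitive $\balpha$ is a (globally defined, single-valued) meromorphic function on $\bbP$ whose pole set coincides with that of $\alpha$; in particular $\balpha$ is regular along $\gamma$. Using $du_z/(u(z)-u(\hz))^2 = -d_z\bigl(1/(u(z)-u(\hz))\bigr)$ and integrating by parts on the closed contour $\gamma$ (the boundary term vanishes since both $\balpha$ and $1/(u(z)-u(\hz))$ are regular on $\gamma$), one obtains
\begin{equation*}
\int_\gamma \frac{\balpha(z)\,du_z}{(u(\hz)-u(z))^2} \;=\; \int_\gamma \frac{\alpha(z)}{u(z)-u(\hz)}.
\end{equation*}

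Next I would split this integral according to $\alpha=\alpha_D+\alpha_{D'}$. The form $\alpha_{D'}/(u(z)-u(\hz))$ is holomorphic in a neighbourhood of $\overline D$ (the singularities of $\alpha_{D'}$ lie in $D'$, and the pole $z=\hz$ of the kernel lies in $D'$ as well), so Cauchy's theorem yields $\int_\gamma \alpha_{D'}/(u(z)-u(\hz)) = 0$. For the $\alpha_D$ term, write $\alpha_D = \hat\alpha_D\,du_z$; the meromorphic $1$-form $\alpha_D/(u(z)-u(\hz))$ on $\bbP$ has a simple pole at $z=\hz$ with residue $\hat\alpha_D(\hz)$ and, apart from that, only poles inside $D$ (note that at the pole $P$ of $u$ the factor $1/(u(z)-u(\hz))$ has a simple zero, so no residue is produced there). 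Since $\gamma$ is counterclockwise around $D$, the integral equals $2\pi i$ times the sum of residues inside $D$; by vanishing of the total residue on $\bbP$ this sum equals $-\hat\alpha_D(\hz)$, giving
\begin{equation*}
\int_\gamma \frac{\alpha_D(z)}{u(z)-u(\hz)} \;=\; -2\pi i\,\hat\alpha_D(\hz).
\end{equation*}
Combining, $\int_\gamma \balpha(z)/(u(\hz)-u(z))^2\,du_z = -2\pi i\,\hat\alpha_D(\hz)$, and multiplying by $-du_\hz/(2\pi i)$ recovers $\hat\alpha_D(\hz)\,du_\hz = \alpha_D(\hz) = \lD[\alpha](\hz)$, as desired.

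The main obstacle is purely bookkeeping: one must verify (i) that the boundary term in the integration by parts really vanishes on the closed contour, (ii) that $\alpha_D/(u(z)-u(\hz))$ contributes no spurious residue at the pole of $u$, and (iii) that the orientation of $\gamma = \partial D$ is correctly tracked when passing between ``Cauchy in $D$'' for the $\alpha_{D'}$-piece and ``residues elsewhere on $\bbP$'' for the $\alpha_D$-piece. None of this is deep, but the sign in the final formula comes out of precisely this tracking.
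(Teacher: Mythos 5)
Your proof is correct and follows essentially the same route as the paper's: the paper likewise reduces to the decomposition $\alpha=\lD[\alpha]+\lDp[\alpha]$ and evaluates the contour integral case by case as a ``direct computation'' with the residue theorem on $\bbP$ (using that the total residue vanishes and that no residue arises at $P$). Your only variation is the preliminary integration by parts trading the second-order kernel for the Cauchy kernel; the paper's unstated computation instead reads off the residue of $\balpha(z)\,du_z/(u(z)-u(\hz))^2$ at $z=\hz$ directly, and both are sound.
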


This is a restatement of the next lemma, whose proof is a direct computation.
 
\begin{lemma} Let $\balpha$ be a meromorphic function regular along $\gamma$. Consider the contour integral
\begin{equation}\label{hphi}
\ha(\hz)   \equiv -\frac{1}{2\pi i}\int_{\gamma} \frac{\balpha(z)}{(u(\hz)-u(z))^2} du_z
\end{equation}
with $\hz \in D'$ (respectively, $\hz \in D$). Then $\hz \mapsto \ha(\hz)du_\hz$ is the restriction to $D'$ (respectively, $D$) of a meromorphic form (\textup{which we denote $\halpha$}). In fact, 
\begin{itemize}
\item for $\hz \in D'$: if $\balpha$ is regular in $D'$, $\halpha=d\balpha$, and if $\balpha$ is regular in $D$,  then $\halpha=0$.
\item  for $\hz \in D$: if $\balpha$ is regular in $D'$, then $\halpha=0$, and if $\balpha$ is regular in $D$ then $\halpha=-d\balpha$.
\end{itemize}
\end{lemma}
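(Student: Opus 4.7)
The plan is to interpret the integrand as a meromorphic form on $\bbP$ in the variable $z$ and apply the residue theorem. The crucial observation is that
\begin{equation*}
\frac{du_z}{(u(\hz)-u(z))^2} \otimes du_{\hz} = -e_{\hz}
\end{equation*}
in the notation of \S\ref{section: The boson}. Hence, for fixed $\hz$, the kernel is the restriction of a form on all of $\bbP$ whose only singularity in $z$ is a double pole along $z = \hz$; in particular it is regular at the pole of $u$. This latter fact may be verified explicitly by writing $\tu = 1/u$, which transforms the kernel into $-\tu(\hz)^2\, d\tu_z/(\tu(z) - \tu(\hz))^2$, manifestly regular at $\tu(z) = 0$. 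Consequently the full integrand $\balpha(z)\, du_z/(u(\hz)-u(z))^2$ is a meromorphic form in $z$ on $\bbP$ whose only possible poles are $z = \hz$ and the poles of $\balpha$.

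Next I would compute the residue at $z = \hz$. Taylor-expanding $\balpha(z) = \balpha(\hz) + \balpha'(\hz)(u(z) - u(\hz)) + \cdots$ with $\balpha'(\hz) := d\balpha(\hz)/du$, one obtains
\begin{equation*}
\frac{\balpha(z)\, du_z}{(u(\hz)-u(z))^2} = \frac{\balpha(\hz)\, du_z}{(u(z)-u(\hz))^2} + \frac{\balpha'(\hz)\, du_z}{u(z) - u(\hz)} + (\text{regular}),
\end{equation*}
so the residue at $\hz$ equals $\balpha'(\hz)$.

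The four cases are then dispatched by choosing the disc in which to deform $\gamma$. If $\hz \in D$ and $\balpha$ is regular on $D$, then inside $D$ the integrand has only the pole at $\hz$; since $\gamma$ is positively oriented around $D$, the integral equals $\balpha'(\hz)$, giving $\ha(\hz)\, du_{\hz} = -d\balpha(\hz)$. If $\hz \in D'$ and $\balpha$ is regular on $D'$, then by regularity of the kernel on all of $D'$ (including at the pole of $u$, if it lies in $D'$), the integrand again has only the pole at $\hz$ inside $D'$; since $\gamma$ is negatively oriented relative to $D'$, the integral equals $-\balpha'(\hz)$ and $\ha(\hz)\, du_{\hz} = d\balpha(\hz)$. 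In the two cross-regular cases the integrand is holomorphic on the entire disc ($D$ or $D'$) not containing $\hz$, so contraction of $\gamma$ there yields $\ha \equiv 0$. Meromorphic extension is automatic in each case, the form being identified with $\pm d\balpha$ or with $0$. The only step requiring any real care is the regularity of the kernel at the pole of $u$, which rests on $e_{\hz}$ being of the second kind on all of $\bbP$; everything else is pure residue calculus.
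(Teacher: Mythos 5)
Your proof is correct, and since the paper offers no argument beyond the remark that the lemma follows by ``a direct computation,'' your residue-calculus argument is precisely the computation being alluded to: the coordinate-invariance of $e_{\hz}$ (hence regularity of the kernel at the pole of $u$), the residue $\balpha'(\hz)$ at $z=\hz$, and the orientation of $\gamma$ relative to $D$ and $D'$ are exactly the points that need checking, and you handle all of them with the right signs. The only thing left implicit is that the opening claim of meromorphy of $\halpha$ for a general $\balpha$ regular along $\gamma$ follows from your four cases by linearity, via the splitting of $\balpha$ into parts with poles only in $D$ and only in $D'$.
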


\subsection{The Heisenberg algebra}

The operator $b(z)$ maps $\bW^z$ to $\bW\otimes K_z$. Given a meromorphic function $\phi$ (``test function") on $\bbP$ and a contour  $\gamma \subset \bbP$ which avoids the poles of $\phi$, we would like to integrate $\phi(z)b(z)$ along $\gamma$ to get an operator $H^\phi_\gamma$, which we mostly denote $\Phi_\gamma$:
\begin{equation}\label{contour1}
H^\phi_\gamma \equiv \Phi_\gamma =`` \frac{1}{2\pi i} \int_{\gamma} \phi(z) b(z)''
\end{equation}
The integral has to be given a meaning. We do this in two steps.

Let $\alpha = a(z) du \in \bK_\gamma$ and consider the commutator $[\Phi_\gamma, \alpha]$. Working formally, we have
\begin{equation*}
\begin{split}
[\Phi_\gamma, \alpha] &= \frac{1}{2\pi i} \int_{\gamma} \phi(z) [b(z),\alpha]\\
&=\frac{1}{2\pi i} \int_{\gamma} \phi(z) [i(z)+e(z),\alpha]\\
&=\frac{1}{2\pi i} \int_{\gamma} \phi(z) [i(z),\alpha]\\
&=-\frac{1}{2\pi i} \int_{\gamma} \phi(z) \alpha(z)\\
\end{split}
\end{equation*}
Next consider $\Phi_\gamma \mathbf{1}$. Again working formally,
\begin{equation*}
\begin{split}
\Phi_\gamma\mathbf{1} &= \frac{1}{2\pi i} \int_{\gamma} \phi(z) b(z) \mathbf{1}\\
&=\frac{1}{2\pi i}\int_{\gamma} \phi(z) (i(z)+e(z))\mathbf{1}\\
&=\frac{1}{2\pi i}\int_{\gamma} \phi(z) e(z)\mathbf{1}\\
&=-\frac{1}{2\pi i}\int_{\gamma} \frac{\phi(z)du du_z}{(u-u(z))^2}\\
\end{split}
\end{equation*}
This is the integral of a one-form (in the $z$ variable) with values in the infinite-dimensional vector space $\bK$:
$$
z \mapsto -\frac{1}{2\pi i}  \frac{\phi(z) du}{(u-u(z))^2} du_z
$$
and a meaning needs to be assigned to its integral. 

\subsubsection{Definitions of $\lD\Phi$, $\Phi_P$ and $\lO\Phi$}\label{DefsHeis}  

In view of the Lemma \ref{contourprojection} it is natural to \emph{define} the operator $\lD H^\phi \equiv \lD \Phi:\bW_{\partial D} \to \bW_{\partial D}$ (representing the formal integral along $\gamma$):

\begin{definition} For $\phi$ regular along the oriented boundary $\partial D$ of a disc $D$, we set
\begin{equation*}
\begin{split}
\lD \Phi [\alpha_1 \otimes^s \dots \otimes \alpha_p]&=- \sum_i  ({\alpha}_1 \otimes^s \dots \widehat{{{\alpha}_i}} \dots \otimes^s {\alpha}_p) \times \frac{1}{2\pi i}  \int_{\partial D} \phi \alpha_i\\ 
&+
\begin{cases}
d\phi \otimes^s \alpha_1 \otimes^s \dots \otimes^s \alpha_p \ \ \text{if $\phi$ is regular in $D'$, and}\\
0 \ \ \text{if $\phi$ is regular in $D$.}
\end{cases}
\end{split}
\end{equation*}
\end{definition}

\begin{definition} Since $\lD \Phi$ does not create singularities outside $D$, it induces an operator, $\lD H^\phi \equiv \lD\Phi:\lD\bW \to \lD\bW$ for which we retain the same notation.
\end{definition}

Let us now define algebro-geometric versions of the operators $\Phi$. 

\begin{definition} Let a point $P \in \bbP$ be given.  We will yet again use the fact that the space of meromorphic functions on $\bbP$ is the vector space direct sum $\cO_P \oplus \bbC[\bbP \setminus P]$, modulo constants.  Regarding $\bbP \setminus P$ as the union of discs $D$ that exhaust it, we \emph{define}, for an \emph{arbitrary meromorphic function} $\phi$, the operator $H^\phi_P \equiv \Phi_P:\bW \to \bW$ (representing an integral over a contour winding  \emph{clockwise} around $P$,  and ``close enough to $P$''):
\begin{equation*}
\begin{split}
\Phi_P [\alpha_1 \otimes^s \dots \otimes \alpha_p]&=\sum_i  ({\alpha}_1 \otimes^s \dots \widehat{{{\alpha}_i}} \dots \otimes^s {\alpha}_p) \times Res_P(\phi \alpha_i)\\
&+
\begin{cases}
d\phi \otimes^s \alpha_1 \otimes^s \dots \otimes^s \alpha_p \ \ \text{if $\phi$ is regular at $P$, and}\\
0 \ \ \text{if $\phi$ is regular away from $P$.}
\end{cases}
\end{split}
\end{equation*}
where $Res_O\P$ denotes the residue at $P$ of a meromorphic form. The minus sign has disappeared because the residue around a point $P$ is defined as an integral around a curve winding \emph{counterclockwise} around it, which is the orientation opposite to that of $\partial D$ for any of the discs $D$.
\end{definition}

\begin{definition} Since $\Phi_P$ does not create singularities at $P$, it induces an operator, $H_P^\phi \equiv \Phi_P:\bW_P \to \bW_P$ for which we retain the same notation.
\end{definition}

\begin{definition} Let a point $O$ be given, and consider the intersection of discs $D$ containing it.  (In this limit the $\lD\bW$ approaches $\lO\bW$.) We \emph{define} $\lO H^\phi \equiv \lO\Phi$ acting on $\bW$:
\begin{equation*}
\begin{split}
\lO\Phi [\alpha_1 \otimes^s \dots \otimes \alpha_p]&=-\sum_i  ({\alpha}_1 \otimes^s \dots \widehat{{{\alpha}_i}} \dots \otimes^s {\alpha}_p) \times Res_O(\phi \alpha_i)\\
&+
\begin{cases}
d\phi \otimes^s \alpha_1 \otimes^s \dots \otimes^s \alpha_p \ \ \text{if $\phi$ is regular away from $O$, and}\\
0 \ \ \text{if $\phi$ is regular at $O$.}
\end{cases}
\end{split}
\end{equation*}
This represents an integral over a contour winding counterclockwise around $O$ and close enough to $O$ so that the only poles of $\phi$ within the enclosed disc $D$ are at $O$.
\end{definition}

\begin{definition} Since $\lO\Phi$ does not create singularities away from $O$, it induces an operator, $\lO H^\phi \equiv \lO \Phi:\lO\bW \to \lO\bW$ for which we retain the same notation.
\end{definition}

In the notation of \S \ref{bosonmodes}, we have, for $l$ a positive integer:
\begin{equation}\label{Heisenbergbosonmodes}
\begin{split}
\lO H^{u^{-l}}&=b_{-l}\\
\lO H^{u^l}&= b_l
\end{split}
\end{equation}

\subsubsection{A formal computation} 

Given two functions $\phi$ and $\psi$, let us formally compute the commutator of $\lO H^\phi \equiv \lO\Phi$ and $\lO H^\psi \equiv \lO\Psi$; we use a contour integral argument that follows the traditional method in conformal field theory and can now be justified with very little work.
For the moment, we drop the suffix and set $\lO \Phi=\Phi$, etc.   Choose a coordinate $u$ so that $u(O)=0$ and write $b(z)=\hb(u(z))du_z$, etc.  .Then
\begin{equation*}
\Phi \circ \Psi [\ttv] = \frac{1}{2\pi i} \int_{\gamma_2}  du_2
 \frac{1}{2\pi i} \int_{\gamma_1} du_1 \ \phi(u_2)\psi(u_1) \hb(u_2)\hb(u_1)[\ttv]
\end{equation*}
where \emph{the contour $\gamma_1$ is  closer to $O$ than $\gamma_2$}, and both contours are chosen so that all singularities of $\phi, \psi$ and $\ttv$ are excluded except possibly for poles at $O$ itself. (To keep notation simple, we have set $u(z_1)=u_1$ and $u(z_2)=u_2$.)  Then
\begin{equation*}
\begin{split}
(\Phi \circ \Psi - \Psi \circ \Phi) [\ttv] &= \frac{1}{2\pi i} \int_{\gamma_2}  du_2
 \frac{1}{2\pi i} \int_{\gamma_1} du_1 (\phi(u_2)\psi(u_1)-\psi(u_2)\phi(u_1)) \hb(u_2)\hb(u_1)[\ttv]\\
 &= \frac{1}{2\pi i} \int_{\gamma_2}  du_2
 \frac{1}{2\pi i} \int_{\gamma_1} du_1 (\phi(u_2)\psi(u_1)-\psi(u_2)\phi(u_1))\\
 & \ \ \ \ \ \ \ \times (\hb(u_2)\hb(u_1)-\frac{1}{(u_2-u_1)^2})[\ttv]\\
 &\ \  + \frac{1}{2\pi i} \int_{\gamma_2}  du_2
 \frac{1}{2\pi i} \int_{\gamma_1} du_1  \frac{\phi(u_2)\psi(u_1)-\psi(u_2)\phi(u_1)}{(u_2-u_1)^2} [\ttv]\\
\end{split}
\end{equation*}
For $z_1,\ z_2$ in a small $u$-polydisc around $(O,O)$, let $\tilde{\ttv}(u_1,u_2)=(\phi(u_2)\psi(u_1)-\psi(u_2)\phi(u_1)) (\hb(u_2)\hb(u_1)-\frac{1}{(u_2-u_1)^2})[\ttv] $; this is well-defined and holomorphic, including when $u_1=u_2$, and antisymmetric in the two variables.
The first term in the last equation then becomes:
\begin{equation*}
  \frac{1}{2\pi i} \int_{\gamma_2}  du_2
 \frac{1}{2\pi i} \int_{\gamma_1} du_1   \tilde{\ttv}(u_1,u_2)
  \end{equation*}
with $\tilde{\ttv}$ an antisymmetric holomorphic (vector-valued) function in the two variables, defined in the product of an annulus with itself. The double contour integral is then easily seen to vanish. As for the second term, we rewrite it as:
\begin{equation*}
\begin{split}
\frac{1}{2\pi i} \underset{|u_2|=2r}{\int}  \phi(u_2) du_2 (-\frac{1}{2\pi i} \underset{|u_1|=3r}{\int} du_1 \frac{\psi(u_1)}{(u_2-u_1)^2}+\frac{1}{2\pi i} \underset{|u_1|=r}{\int} du_1
 \frac{\psi(u_1)}{(u_2-u_1)^2}) [\ttv]
\end{split}
\end{equation*}
for any real positive $r$ that is chosen small enough.  

Finally we get (for $r$ small enough):
\begin{equation*}
(\Phi \circ \Psi - \Psi \circ \Phi) [\ttv] =  -[\frac{1}{2\pi i} \underset{|u|=2r}\int du \phi(u)\psi'(u) \ttv = -Res_O(\phi d\psi) \ttv \ .
\end{equation*}

\subsubsection{The Heisenberg extension}

In fact, the following can be checked rigorously from the definitions in \S \ref{DefsHeis}:

\begin{proposition}
Let $\phi$, $\psi$ be meromorphic functions. Then
\begin{enumerate}
\item $\lD\Phi \circ \lD\Psi - \lD\Psi \circ \lD\Phi = -\frac{1}{2\pi i}\int_{\partial D} \phi d\psi$ if $\phi$ and $\psi$ are regular along $\partial D$.
\item $\Phi_P \circ \Psi_P - \Psi_P \circ \Phi_P = Res_P(\phi d\psi)$
\item $\lO\Phi \circ \lO\Psi - \lO\Psi \circ \lO\Phi = - Res_O(\phi d\psi)$
\end{enumerate}
\end{proposition}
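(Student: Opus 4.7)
The plan is to split each $\Phi$-operator into a derivation part and a creation (left-multiplication) part, expand the commutator into four pieces, and observe that only the two cross pieces survive. I work out (3) as the model and then indicate the small changes for (2) and (1).

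Decompose $\phi = \phi_+ + \phi_-$ modulo constants, with $\phi_+\in\cO_O$ and $\phi_-\in\bbC[\bbP\setminus O]$. By linearity of the definition,
\begin{equation*}
\lO\Phi \;=\; \lO I^\phi + \lO E^\phi,
\end{equation*}
where $\lO I^\phi$ is the unique derivation of $\bW$ whose value on a generator $\alpha\in\bK$ is the scalar $-\mathrm{Res}_O(\phi\alpha)\cdot\mathbf{1}$, and $\lO E^\phi$ is left multiplication by $d\phi_-\in\lO\bK$ (so $\lO E^\phi=0$ when $\phi_-=0$). The bracket $[\lO\Phi,\lO\Psi]$ expands into four terms, of which two vanish for formal reasons: $[\lO E^\phi,\lO E^\psi]=0$ since left multiplications in a commutative algebra commute, while $[\lO I^\phi,\lO I^\psi]$ is itself a derivation, and on any generator $\alpha$ both orderings factor through scalar multiples of $\mathbf{1}$, which the outer derivation annihilates; a derivation vanishing on $\mathbf{1}$ and on generators is zero. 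For the two surviving cross brackets, a direct Leibniz computation yields
\begin{equation*}
[\lO I^\phi,\lO E^\psi]\,\ttv = -\mathrm{Res}_O(\phi\,d\psi_-)\,\ttv, \qquad [\lO E^\phi,\lO I^\psi]\,\ttv = \mathrm{Res}_O(\psi\,d\phi_-)\,\ttv.
\end{equation*}

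Summing, and using the identity $\mathrm{Res}_O\,d(\phi_-\psi)=0$ (the residue of an exact form at any point is zero), which rearranges to $\mathrm{Res}_O(\psi\,d\phi_-)=-\mathrm{Res}_O(\phi_-\,d\psi)$, yields
\begin{equation*}
[\lO\Phi,\lO\Psi] \;=\; -\mathrm{Res}_O(\phi\,d\psi_-)\;-\;\mathrm{Res}_O(\phi_-\,d\psi).
\end{equation*}
Fully expanding $\phi=\phi_++\phi_-$ and $\psi=\psi_++\psi_-$ and discarding $\mathrm{Res}_O(\phi_+\,d\psi_+)$ (integrand regular at $O$) and $\mathrm{Res}_O(\phi_-\,d\psi_-)$ (global residue theorem on $\bbP$: the integrand has its only pole at $O$) then collapses the expression to the claimed $-\mathrm{Res}_O(\phi\,d\psi)$. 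Case (2) follows by swapping the roles of the $+$ and $-$ pieces---the creation part is now the piece regular at $P$---and the reversal of contour orientation supplies the sign flip. Case (1) uses the same template with the algebraic decomposition replaced by $\phi = \lD[\phi]+\lDp[\phi]$ from Lemma~\ref{contourprojection}, residues replaced by $\tfrac{1}{2\pi i}\int_{\partial D}$, and the two vanishing identities replaced by Cauchy's theorem applied respectively to forms holomorphic in $D$ and to forms holomorphic in $D'$. I expect the only real difficulty to be the careful book-keeping of signs and of which half of $\phi$ feeds the creation operator in each of the three cases.
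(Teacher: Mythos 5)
Your argument is correct, and it is worth noting that it is not the argument the paper actually displays. The paper's stated justification is the formal double-contour computation of the preceding subsubsection (deform $\gamma_1$ past $\gamma_2$, pick up the residue of the singular part of the OPE of $\hb$ with itself), followed by the assertion that the proposition "can be checked rigorously from the definitions." You supply precisely that omitted rigorous check, organized algebraically: split each operator into its derivation part $I^\phi$ (determined by $\alpha\mapsto -\mathrm{Res}(\phi\alpha)\mathbf{1}$ on generators) and its creation part $E^\phi$ (multiplication by $d$ of the singular half of $\phi$), kill $[E,E]$ because multiplications commute and $[I,I]$ because it is a derivation vanishing on $\mathbf{1}$ and on generators, and evaluate the two cross terms via $[D,m_a]=m_{Da}$. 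The remaining bookkeeping — $\mathrm{Res}\,d(\phi_-\psi)=0$, the vanishing of $\mathrm{Res}_O(\phi_+d\psi_+)$ by regularity and of $\mathrm{Res}_O(\phi_-d\psi_-)$ by the global residue theorem, and the correct identification of which half of $\phi$ feeds the creation operator in each of the three cases (the half regular away from $O$, the half regular at $P$, the half $\lD[\phi]$ with poles in $D$, respectively) — all checks out, including the sign reversal in case (2) which is built into the definition of $\Phi_P$. What each approach buys: the contour/OPE argument is the one that scales to the Virasoro and affine cases, where the central term is read off the singular part of an operator product; your decomposition is self-contained, makes no appeal to limits or analyticity in cases (2) and (3), and actually constitutes a proof rather than a heuristic. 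The only point I would make explicit in a final write-up is that the constant ambiguity in the splitting $\phi=\phi_++\phi_-$ is harmless, since $d\phi_-$ and $\mathrm{Res}(\phi_-\,d\psi)$ are unchanged by shifting $\phi_-$ by a constant — you gesture at this with "modulo constants" but it deserves a sentence.
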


We denote by $\lO\cH$ the central extension of Lie algebras:
$$
0 \to \bbC \to \lO\cH \to \cK \to 0
$$
where $\lO\cH=\cK \oplus \bbC$ and the Lie bracket is
$$
[(\phi,s),(\psi,t)]=-Res_O(\phi d\psi)
$$
Then $\phi \mapsto \lO\Phi: \lO\bW \to \lO\bW$ is a representation of $\lO\cH$. Note that constant functions map to zero under $\phi \to \lO\Phi$. This will not be the case with an insertion at $O$, see \S \ref{bosoninsertions} below.

Note also the following
\begin{proposition} Let $O,O'$ be distinct points, and $\phi,\phi'$ two meromorphic functions. Then
$$
[\lO\Phi, \lOp\Phi']=0
$$
on $\bW$.
\end{proposition}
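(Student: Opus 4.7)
The plan is to split each of $\lO\Phi$ and $\lOp\Phi'$ into an ``annihilation'' derivation and a ``creation'' multiplication, so that the commutator reduces to two scalars that cancel via the residue theorem and Leibniz.

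Concretely, decompose $\phi = \phi_O + \phi_{O^c}$ uniquely modulo constants, where $\phi_O$ is regular on $\bbP\setminus O$ (so $d\phi_O\in\bK$ has its only pole at $O$) and $\phi_{O^c}\in\cO_O$; similarly $\phi' = \phi'_{O'} + \phi'_{(O')^c}$. Unpacking the definition of $\lO\Phi$, one reads $\lO\Phi = A_O^\phi + C_O^{\phi_O}$, where $A_O^\phi$ is the unique derivation of $\bW$ annihilating $\mathbf{1}$ with $A_O^\phi(\alpha) = -Res_O(\phi\alpha)\mathbf{1}$ for $\alpha\in\bK$, and $C_O^{\phi_O}$ is multiplication by $d\phi_O\in\bK$; and analogously $\lOp\Phi' = A_{O'}^{\phi'} + C_{O'}^{\phi'_{O'}}$.

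Expanding the commutator into four pieces, two vanish at once: $[C_O^{\phi_O},C_{O'}^{\phi'_{O'}}] = 0$ because the symmetric algebra is commutative; and $[A_O^\phi, A_{O'}^{\phi'}]=0$ because it is a commutator of derivations, hence itself a derivation, which need only be checked on $\mathbf{1}$ and on generators $\alpha\in\bK$, and in both cases a single application of either $A$ lands in the scalars where the other $A$ kills it. The two cross-commutators contribute scalars via the general identity $[D,M_x]=M_{D(x)}$ applied to $x=d\phi'_{O'}$ and $x=d\phi_O$ respectively:
\begin{equation*}
[A_O^\phi, C_{O'}^{\phi'_{O'}}] = -Res_O(\phi\,d\phi'_{O'}), \qquad [C_O^{\phi_O}, A_{O'}^{\phi'}] = Res_{O'}(\phi'\,d\phi_O).
\end{equation*}

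The remaining step is to verify that these two scalars sum to zero. Since $\phi_{O^c}$ and $d\phi'_{O'}$ are both regular at $O$, one has $Res_O(\phi\,d\phi'_{O'}) = Res_O(\phi_O\,d\phi'_{O'})$; the form $\phi_O\,d\phi'_{O'}$ has poles only at $\{O,O'\}$, so the residue theorem on $\bbP$ gives $Res_O(\phi_O\,d\phi'_{O'}) = -Res_{O'}(\phi_O\,d\phi'_{O'})$. The Leibniz identity $d(\phi_O\phi'_{O'}) = \phi_O\,d\phi'_{O'} + \phi'_{O'}\,d\phi_O$, combined with the vanishing of residues of exact forms, yields $Res_{O'}(\phi_O\,d\phi'_{O'}) = -Res_{O'}(\phi'_{O'}\,d\phi_O)$, and the analogous reduction at $O'$ shows $Res_{O'}(\phi'\,d\phi_O) = Res_{O'}(\phi'_{O'}\,d\phi_O)$. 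Chaining these identities yields $-Res_O(\phi\,d\phi'_{O'}) + Res_{O'}(\phi'\,d\phi_O) = 0$. The main obstacle is just the bookkeeping of which parts are regular where; conceptually this is the rigorous shadow of the formal statement that $b(z_1)$ and $b(z_2)$ commute at distinct points, so small disjoint contours $\gamma_O, \gamma_{O'}$ allow Fubini to exchange the order of integration without picking up any central term.
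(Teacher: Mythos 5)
Your proof is correct and follows essentially the same route as the paper's: both reduce the commutator to a scalar and kill that scalar with the residue theorem plus the Leibniz rule $Res(d(fg))=0$. The only difference is organizational — the paper first notes $[[\lO\Phi,\lOp\Phi'],\alpha]=0$ to reduce to evaluating on $\mathbf{1}$ and then argues case by case (regular at / regular away from the point), while you make the decomposition $\lO\Phi=A_O^\phi+C_O^{\phi_O}$ explicit and thereby handle a general meromorphic $\phi$ uniformly rather than by bilinearity from the pure cases.
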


\begin{proof} Note first that $[[\lO\Phi, \lOp\Phi'],\alpha]=0$. So we need to check that
$[\lO\Phi, \lOp\Phi']\mathbf{1}=0$. This is a case-by-case argument. If $\phi$ is regular at $O$ and $\phi'$ at $O'$, this is easy to check. If $\phi$ is regular at $O$ and $\phi'$ away from $O'$, we have
$$
[\lO\Phi, \lOp\Phi']\mathbf{1}= \lO\Phi \circ \lOp\Phi'\mathbf{1}=-Res_O(\phi d\phi') \mathbf{1}=0
$$
and finally, if $\phi$ is regular away from $O$ and $\phi'$ away from $O'$
\begin{equation*}
\begin{split}
[\lO\Phi, \lOp\Phi']\mathbf{1}&=\lO\Phi (d\phi')-\lOp \Phi' (d\phi)\\
&=-Res_O(\phi d \phi')+Res_{O'}(\phi' d \phi)\\
&=Res_O(\phi' d \phi)+Res_{O'}(\phi' d \phi)\\
&=0
\end{split}
\end{equation*}
\end{proof}

\subsection{Action of meromorphic vector fields}

The operator $T(z)$ maps $\bW^z$ to $\bW \otimes K_z^2$. Given a meromorphic vector field $X$ on $\bbP$, pairing with $T(z)$ yields a meromorphic operator-valued one-form. Let $\gamma$ be a contour which avoids the poles of $X$. We wish to define
\begin{equation*}
\bL^X_\gamma \equiv \cX_\gamma = ``-\frac{1}{2\pi i} \int_{\gamma}  X(z) T(z) ''
\end{equation*}
(The minus sign is introduced to some purpose.) As in the case of the Heisenberg algebra, this formal expression can be used as a guide to define operators $\lD\cX$, $\cX_P$ and $\lP\cX$. For the first of these we would require $X$ to be regular along $\gamma$, the other two are defined with the contour ``close enough to''  $P$, and there are no restrictions on $X$. 

To reach a definition of $\cX_\gamma$, let us imitate the strategy followed earlier with the Heisenberg algebra.  Let $\alpha = a(z) du \in \bK$, and consider the commutator $[\cX_\gamma, \alpha]$. Working formally, we have
\begin{equation*}
\begin{split}
[\cX_\gamma, \alpha] &= -\frac{1}{2\pi i} \int_{\gamma} X(z) [b(z),\alpha]\\
&=-\frac{1}{4\pi i} \int_{\gamma} X(z) [i(z)^2+e(z)+2e(z)i(z),\alpha]\\
&=-\frac{1}{2\pi i} \int_{\gamma} X (z) [i(z),\alpha] (i(z)+e(z))\\
&=\frac{1}{2\pi i} \int_{\gamma} X(z) \alpha (z) b(z)\\
&=H^{i_X\alpha}_\gamma
\end{split}
\end{equation*}

Next consider $\cX_\gamma \mathbf{1}$. Write $X=\xi(u) d/du$. Again working formally, 
\begin{equation*}
\label{Iplus}
\cX_\gamma (\mathbf{1})=-\frac{1}{4\pi i} \int_\gamma {\xi({z})} \frac{{du_{\hz_1}}}{(u(\hz_1)-u(z))^2} \frac{{du_{\hz_2}}}{(u(\hz_2)-u(z))^2}  du_z\\
\end{equation*}
Evaluating this, with $\hz_1,\hz_2, \in D'$ we find
\begin{equation*}
\cX_\gamma (\mathbf{1})=
\begin{cases}
\begin{aligned}
& \{\frac{2(\xi(u_1)-\xi(u_2))-(\xi'(u_1)+\xi'(u_2))(u_1-u_2)}{2(u_1-u_2)}\} \frac{du_1 \otimes^s du_2}{(u_1-u_2)^2}\\
&\ \ \ \ \text{if $X$ is regular in $D'$, and}\\
&0 \ \ \ \text{if $X$ is regular in $D$.}
\end{aligned}\\
\end{cases}
\end{equation*}
This is consistent because the first expression vanishes for vector fields that are regular everywhere on $\bbP$.

\begin{remark}
For any meromorphic vector field $X=\xi(u)d/du$, let 
\begin{equation*}
\omega_X = \{\frac{2(\xi(u_1)-\xi(u_2))-(\xi'(u_1)+\xi'(u_2))(u_1-u_2)}{2(u_1-u_2)}\}\frac{du_1 \otimes^s du_2}{(u_1-u_2)^2}\\
\end{equation*}  
It is easy to check that in invariant terms 
\begin{equation}\label{omegax}
\omega_X= -\frac{1}{2}\cL_{X_1+X_2}\omega
\end{equation}
where $\omega$ is the invariant bidifferential form defined in (\ref{invbidiff}).
This last formula needs explanation. By $X_1$ we mean the vector field $X$ lifted to $\bbP \times \bbP$ from the first factor, $X_2$ is similarly defined; $\cL_{X_1+X_2}$ is the Lie derivative w.r.to the sum of the two vector fields; acting on the meromorphic symmetric  bi-differential $\omega$, it yields a another such bi-differential $\omega_X$. As a consequence of the formula (\ref{omegax}), there is an invariant differential operator
\begin{equation*}
\begin{split}
\text{{\small meromorphic vector fields on $\bbP$}} &\to \text{{\small symmetric meromorphic functions on $\bbP \times \bbP$}}\\
\xi(u)d/du &\mapsto \{\frac{2(\xi(u_1)-\xi(u_2))-(\xi'(u_1)+\xi'(u_2))(u_1-u_2)}{2(u_1-u_2)}\}
\end{split}
\end{equation*}
This function can be checked to vanish along the diagonal. Since it is clearly even, it must vanish to order two. Note also that (on any curve) the Lie derivative of \emph{any} meromorphic differential with respect to a meromorphic vector field is a form of the second kind. Thus, rather remarkably, \emph{$\omega_X$, which \textup{a priori} is just a symmetric meromorphic bidifferential on $\bbP \times \bbP$, actually belongs to $S^2 \bK$}. (I thank E. Loojenga for this argument.) If $X$ is regular at $P$, the element $\omega_X$ belongs to $S^2 \bK_P$.  
\end{remark}

\subsubsection{Definitions of $\bL^X_P$ and $\lO \bL^X$: the Virasoro algebra}\label{DefsVir}

We can now proceed to define $\bL^X_P \equiv \cX_P$ and $\lO \bL^X \equiv \lO\cX$. (We will forgo writing out the ``analytic'' case, namely, the definition of $\lD \bL^X \equiv \lD\cX$.)   It helps to organise $\cX_\gamma$ into its parts of ``degrees" 0, 1 and $-1$, setting $\cX_\gamma^0 = -\frac{1}{2\pi i} \int_\gamma X (z) e(z) \circ i(z)$, $\cX_\gamma^+ = -\frac{1}{2\pi i} \int_\gamma X (z) \frac{1}{2} e(z) \circ e(z)$,  and $\cX_\gamma^- = -\frac{1}{2\pi i} \int_\gamma X (z) \frac{1}{2} i(z)\circ i(z)$.

\begin{definition}  We define $\cX^-_P + \cX^0_P + \cX^+_P=\cX_P:\bW_P \to \bW_P$, where
\begin{equation*}
\cX^-_P (\alpha_1 \otimes^s \dots \otimes^s \alpha_p)= \sum_{i<j}(\alpha_1 \otimes^s \dots \widehat{\alpha_i}\dots \widehat{\alpha_j} \dots \otimes^s \alpha_p) R_P(X;\alpha_i \otimes^s \alpha_j)
\end{equation*}
with the hats marking terms to be omitted,
\begin{equation*}
\cX^0_P (\alpha_1 \otimes^s \dots \otimes^s \alpha_p)= \sum_i \alpha_1 \otimes^s \dots \otimes^s [\cL_X \alpha_i]_P \otimes^s \dots \otimes^s \alpha_p
\end{equation*}
and 
\begin{equation*}
\cX^+_P (\alpha_1 \otimes^s \dots \otimes^s \alpha_p)=  \cX^+_P(\mathbf{1}) \otimes^s \alpha_1 \otimes^s \dots   \otimes^s \alpha_p
\end{equation*}
where we set:
\begin{enumerate}
\item $R_P(X;\alpha_i \otimes^s \alpha_j)=Res_P([i_X\alpha_i]\alpha_j) \ (=Res_P([i_X\alpha_j]\alpha_i)$
\item for any meromorphic form $\eta$ of the second kind
\begin{equation*}
[\eta]_P=
\begin{cases}
0 \ \ \ \ \text{if $\eta$ is regular away from $P$, and}\\
\eta \ \ \ \ \text{if $\eta$ is regular at $P$}\\
\end{cases}
\end{equation*}
\item and
\begin{equation*}
\cX_P^+(\mathbf{1})=
\begin{cases}
0 \ \ \ \ \text{if $X$ is regular away from $P$, and}\\
\omega_X \ \ \ \ \ \ \ \ \ \text{if $X$ is regular at $P$.}
\end{cases}
\end{equation*}
\end{enumerate}
\end{definition}
Note that if $X$ is a regular vector field, it acts on $\bW_P$ in the obvious way:
\begin{equation*}
\cX_P (\alpha_1 \otimes^s \dots \otimes^s \alpha_p)= \sum_i \alpha_1 \otimes^s \dots \otimes^s \cL_X \alpha_i \otimes^s \dots \otimes^s \alpha_p
\end{equation*}

We now turn to $\lO \cX:\lO \bW \to \lO \bW$. Just as above, we write $\lO \cX=\lO \cX^- +\lO \cX^0+\lO \cX^+$.  

\begin{definition}\label{deflOcX}  We have $\lO \cX=\lO \cX^- + \lO \cX^0 + \lO \cX^+$, where
\begin{equation*}
\lO \cX^- (\alpha_1 \otimes^s \dots \otimes^s \alpha_p)= -\sum_{i<j}(\alpha_1 \otimes^s \dots \widehat{\alpha_i}\dots \widehat{\alpha_j} \dots \otimes^s \alpha_p) R_O(X;\alpha_i \otimes^s \alpha_j)
\end{equation*}
with the hats marking terms to be omitted,
\begin{equation*}
\lO \cX^0 (\alpha_1 \otimes^s \dots \otimes^s \alpha_p)= \sum_i \alpha_1 \otimes^s \dots \otimes^s \lO [\cL_X \alpha_i] \otimes^s \dots \otimes^s \alpha_p
\end{equation*}
and 
\begin{equation*}
\lO \cX^+ (\alpha_1 \otimes^s \dots \otimes^s \alpha_p)=  \lO \cX^+(\mathbf{1}) \otimes^s \alpha_1 \otimes^s \dots   \otimes^s \alpha_p
\end{equation*}
where we set:
\begin{enumerate}
\item for any meromorphic form $\eta$ of the second kind
\begin{equation*}
\lO [\eta]=
\begin{cases}
\eta \ \ \ \ \text{if $\eta$ is regular away from $O$, and}\\
0 \ \ \ \ \text{if $\eta$ is regular at $O$}\\
\end{cases}
\end{equation*}
\item and
\begin{equation*}
\lO \cX^+(\mathbf{1})=
\begin{cases}
\omega_X \ \ \ \ \text{if $X$ is regular away from $O$, and}\\
0 \ \ \ \ \ \ \ \ \ \text{if $X$ is regular at $O$.}
\end{cases}
\end{equation*}
\end{enumerate}
\end{definition}
As before, if $X$ is a regular vector field, it acts on $\lO\bW$ in the obvious way:
\begin{equation*}
\lO\cX (\alpha_1 \otimes^s \dots \otimes^s \alpha_p)= \sum_i \alpha_1 \otimes^s \dots \otimes^s \cL_X \alpha_i \otimes^s \dots \otimes^s \alpha_p
\end{equation*}
Note that 
\begin{equation}\label{CXalpha}
[\lO\cX,\alpha]=\lO H^{i_X\alpha}
\end{equation}

We turn to the study of commutators. We shall deal only with operators on $\lO\bW$; so $\lO\cX=\cX$, etc.  The basic result is

\begin{proposition}
Given a meromorphic function $\phi$ and a meromorphic vector field $X$,
\begin{equation*} 
[\bL^X,H^\phi]=H^{X(\phi)}
\end{equation*}
\end{proposition}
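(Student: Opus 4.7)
The plan is to show that $D := [\bL^X, H^\phi] - H^{X(\phi)}$ vanishes on $\lO\bW$. Since $\lO\bW = \mathrm{Sym}\, H^0(\bbP\setminus O, K)$ is a polynomial algebra generated by the multiplications $\alpha\otimes^s(\cdot)$ for $\alpha\in H^0(\bbP\setminus O, K)$ starting from $\mathbf{1}$, any operator commuting with all such multiplications is itself left multiplication by its value on $\mathbf{1}$; in particular, it vanishes iff it annihilates $\mathbf{1}$. So it suffices to verify (i) $[D,\alpha]=0$ for every such $\alpha$, and (ii) $D\mathbf{1}=0$. This is the rigorous algebraic shadow of the standard CFT contour calculation
\begin{equation*}
[\bL^X, H^\phi] = \tfrac{1}{2\pi i}\oint_\gamma \phi\,[\bL^X, b] = -\tfrac{1}{2\pi i}\oint_\gamma \phi\,\cL_X b = \tfrac{1}{2\pi i}\oint_\gamma X(\phi)\,b = H^{X(\phi)},
\end{equation*}
where $[\bL^X,b] = -\cL_X b$ on account of the sign in the definition of $\cX$, and the last equality uses the Leibniz rule $\phi\,\cL_X b = \cL_X(\phi b) - X(\phi)\,b$ together with $\oint \cL_X(\phi b) = \oint d\bigl(i_X(\phi b)\bigr) = 0$ by Stokes.

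\textbf{Step (i).} By the Jacobi identity,
\begin{equation*}
[[\bL^X, H^\phi], \alpha] = [\bL^X, [H^\phi, \alpha]] + [[\bL^X, \alpha], H^\phi].
\end{equation*}
The first summand vanishes because $[H^\phi,\alpha] = -Res_O(\phi\alpha)\in\bbC$ is a scalar. For the second, (\ref{CXalpha}) gives $[\bL^X,\alpha] = \lO H^{i_X\alpha}$, and the Heisenberg commutator of the preceding proposition yields $[\lO H^{i_X\alpha},\lO H^\phi] = -Res_O\bigl((i_X\alpha)\,d\phi\bigr)$. Comparing with $[H^{X(\phi)},\alpha] = -Res_O\bigl(X(\phi)\,\alpha\bigr)$, the pointwise Cartan identity $(i_X\alpha)\,d\phi = \alpha\cdot i_X d\phi$ on a curve (both equal $\xi a\phi'\,du$ locally with $X=\xi\partial_u$, $\alpha=a\,du$) completes this step.

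\textbf{Step (ii).} By bilinearity of both sides in $(X,\phi)$, decompose $\phi=\phi^O+\phi_O$ and $X=X^O+X_O$ by partial fractions, with superscripts denoting the parts regular at $O$ and subscripts the parts with all poles at $O$; then four bilinear cases remain. The case $(X^O,\phi^O)$ is trivial: both $\bL^X\mathbf{1}$ and $H^\phi\mathbf{1}$ vanish, and $X(\phi)$ is regular at $O$. The two mixed cases are handled by a direct computation: only one creation term in Definitions \ref{DefsHeis}--\ref{deflOcX} can fire, and one uses $\cL_X d\phi = d\bigl(X(\phi)\bigr)$ together with the compatibility of the projection $\lO[\,\cdot\,]$ with the partial-fraction decomposition of $X(\phi)$ to match the two sides. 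The remaining case $(X_O,\phi_O)$ has $\bL^X\mathbf{1}=\omega_X$ and $H^\phi\mathbf{1}=d\phi$; expanding using the definitions, the symmetric creations $\omega_X\otimes^s d\phi$ and $d\phi\otimes^s\omega_X$ cancel, leaving
\begin{equation*}
[\bL^X, H^\phi]\mathbf{1} \;=\; \bL^X(d\phi) - H^\phi(\omega_X) \;=\; d\bigl(X(\phi)\bigr) \;+\; c_\phi(\omega_X),
\end{equation*}
where $c_\phi:S^2\bK\to\bK$ is the residue contraction $\beta_1\otimes^s\beta_2 \mapsto Res_O(\phi\beta_1)\beta_2 + Res_O(\phi\beta_2)\beta_1$. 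Matching with $H^{X(\phi)}\mathbf{1} = d\bigl(X(\phi)\bigr)$ reduces the whole proposition to the vanishing $c_\phi(\omega_X)=0$ in this doubly-singular sub-case.

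\textbf{Main obstacle.} The critical step is the vanishing of $c_\phi(\omega_X)$, which is the algebraic manifestation of the Stokes step in the heuristic above. A direct verification uses that in the doubly-singular case each slice of $\omega_X$ (viewed as an element of $\bK$) lies in $\lO\bK\cap\bK_P$ and hence in a coordinate $u$ vanishing at $O$ has Laurent expansion $\sum_{m\ge 2}c_m u^{-m}\,du$, while $\phi_O = \sum_{k\ge 1}d_k u^{-k}$; thus $\phi\cdot\beta$ is a sum of monomials $u^{-N}\,du$ with $N\ge 3$, and $Res_O(\phi\beta)=0$ for each such slice. Conceptually, using $\omega_X = -\tfrac{1}{2}\cL_{X_1+X_2}\omega$ from (\ref{omegax}), the contraction $c_\phi(\omega_X)$ becomes the $O$-residue of a Lie-derivative, which by the same integration-by-parts that underlies the Stokes step is a total derivative and so has vanishing residue.
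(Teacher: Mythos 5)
Your reduction is the right one, and it is in fact more thorough than the paper's own argument: the paper proves only your step (i), verifying $[[\bL^X,H^\phi],\alpha]=[H^{X(\phi)},\alpha]$ for $\alpha\in\lO\bK$, and never performs the vacuum check, even though (as you correctly observe) an operator on $\lO\bW$ commuting with all multiplications is left multiplication by its value on $\mathbf 1$, so the vacuum computation is where the remaining content lies. Your step (i) coincides with the paper's proof. Your treatment of the doubly-singular case --- isolating the residue contraction $c_\phi(\omega_X)$ and showing it vanishes because every slice of $\omega_X$ is a combination of $u^{-m}du$ with $m\ge 2$ while $\phi$ is a polynomial in $u^{-1}$ --- is correct, and is exactly the algebraic shadow of the Stokes step that the formal contour argument hides.

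The gap is in the mixed case where $X$ has its poles at $O$ and $\phi$ is regular at $O$. There $H^\phi\mathbf 1=0$ and $\bL^X\mathbf 1=\lO\cX^+(\mathbf 1)$, so the whole identity reduces to $-H^\phi\bigl(\lO\cX^+(\mathbf 1)\bigr)=H^{X(\phi)}\mathbf 1$, i.e.\ to $c_\phi(\omega_X)=\lO[d(X(\phi))]$. This is a genuinely nontrivial identity about the bidifferential $\omega_X$, of the same nature as your ``main obstacle,'' and it does \emph{not} follow from $\cL_Xd\phi=d(X(\phi))$ plus compatibility of projections --- that mechanism disposes only of the other mixed case, where the creation term $d\phi$ fires and is transported by $\lO\cX^0$. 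You must prove this identity separately. Moreover, if you attempt it with the paper's stated formula for $\omega_X$ it comes out with the wrong sign: for $X=-u^{-1}\,d/du$ and $\phi=u$ one finds $\omega_X=-\tfrac12\,u^{-2}du\otimes^s u^{-2}du$, hence $c_\phi(\omega_X)=-u^{-2}du$, whereas $H^{X(\phi)}\mathbf 1=d(-u^{-1})=u^{-2}du$; the same discrepancy appears against (\ref{vircommute}) when one evaluates $[L_2,L_{-2}]\mathbf 1$. The contour evaluation of $\cX^+_\gamma(\mathbf 1)$ actually yields $+\tfrac12\cL_{X_1+X_2}\omega=-\omega_X$, so the definition of $\lO\cX^+(\mathbf 1)$ must be taken with the opposite sign for the proposition to hold on the vacuum. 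Your proof should either carry out this case explicitly with the corrected sign, or at minimum flag $c_\phi(\omega_X)=\lO[d(X(\phi))]$ as an independent input rather than folding it into the easy mixed case.
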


\begin{proof} We first check that given any $\alpha \in \lO\bK$,
\begin{equation*} 
\begin{split}
 [[\bL^X,H^\phi],\alpha]&=[[\bL^X,\alpha],H^\phi]]\\
&=[H^{i_X\alpha},H^\phi]\\
&=-Res_O ([i_X\alpha]d\phi)\\
&=-Res(X(\phi)\alpha)\\
&=[H^{X(\phi)},\alpha]
\end{split}
\end{equation*}
\end{proof}

\begin{proposition}
Let meromorphic vector fields $X,Y$ be given, and set $Z=[X,Y]$. Then  
\begin{equation*} 
\begin{split}
\{[\cX,\cY]-\cZ\}&\alpha_1 \otimes^s \dots \otimes^s \alpha_p\\
&=\sum_j \alpha_1 \circ \dots \circ \widehat{\alpha_j} \circ \dots \circ \alpha_p  \circ \{[H^{i_X\alpha_j},\cY]+[\cX,H^{i_Y\alpha_j}]-H^{i_Z\alpha_j}\} \mathbf{1}\\
&+\alpha_1 \otimes^s \dots \otimes^s \alpha_p\{[\cX,\cY]-\cZ\} \mathbf{1}\\
\end{split}
\end{equation*}
\end{proposition}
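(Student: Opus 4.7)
The plan is to apply the standard derivation identity $D(V\mathbf{1}) = [D, V]\mathbf{1} + V D \mathbf{1}$ with $D = [\cX, \cY] - \cZ$ and $V = \alpha_1 \circ \cdots \circ \alpha_p$ viewed as the composition of multiplication operators (so that $\alpha_1 \otimes^s \cdots \otimes^s \alpha_p = V\mathbf{1}$). This immediately produces the second term $\alpha_1 \otimes^s \cdots \otimes^s \alpha_p\,\{[\cX,\cY]-\cZ\}\mathbf{1}$ in the claim, and reduces the problem to identifying $[D, V]\mathbf{1}$ with the first sum.

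Because the multiplications $\alpha_i$ pairwise commute, the Leibniz rule gives $[D, V] = \sum_j \alpha_1 \circ \cdots \circ [D, \alpha_j] \circ \cdots \circ \alpha_p$. To identify each $[D, \alpha_j]$ I would combine the Jacobi identity with the basic commutator $[\cX, \alpha] = H^{i_X\alpha}$ from equation~(\ref{CXalpha}):
\begin{equation*}
[[\cX, \cY], \alpha_j] = [\cX, [\cY, \alpha_j]] + [[\cX, \alpha_j], \cY] = [\cX, H^{i_Y\alpha_j}] + [H^{i_X\alpha_j}, \cY],
\end{equation*}
and $[\cZ, \alpha_j] = H^{i_Z\alpha_j}$, so that $[D, \alpha_j]$ is exactly the operator $\{[H^{i_X\alpha_j},\cY]+[\cX,H^{i_Y\alpha_j}]-H^{i_Z\alpha_j}\}$ appearing in the statement.

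The step I expect to require the most care is the re-ordering: the Leibniz expansion sandwiches $[D, \alpha_j]$ between $\alpha_{j-1}$ and $\alpha_{j+1}$, whereas the statement asks that the operator act directly on $\mathbf{1}$ with the remaining $\alpha_k$'s multiplied on afterwards. To justify this I would use the preceding proposition $[\cX, H^\phi] = H^{X(\phi)}$ to collapse $[D, \alpha_j]$ into a single Heisenberg operator $H^{\phi_j}$ with
\begin{equation*}
\phi_j \;=\; X(i_Y \alpha_j) - Y(i_X \alpha_j) - i_Z \alpha_j.
\end{equation*}
Since $\alpha_j$ is a $1$-form on the one-dimensional curve $\bbP$, $d\alpha_j = 0$, and Cartan's magic formula gives $i_{[X,Y]}\alpha_j = X(i_Y\alpha_j) - Y(i_X\alpha_j)$, so $\phi_j \equiv 0$ and $[D, \alpha_j] = 0$ as an operator. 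This makes the re-ordering trivial (the scalar corrections from moving $[D, \alpha_j]$ past the later $\alpha_k$'s vanish since the Heisenberg residues involve $\phi_j$), and in fact shows that the first sum on the right-hand side is identically zero on $\bbP$. The proposition thereby reduces to the clean statement that $\{[\cX, \cY] - \cZ\}$ acts on any $V\mathbf{1}$ as multiplication by the vacuum value $\{[\cX, \cY] - \cZ\}\mathbf{1}$, which is the expected Virasoro-cocycle behavior.
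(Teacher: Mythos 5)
Your proof follows the paper's through its first two moves: the Leibniz expansion of $[D,V]$ with $D=[\cX,\cY]-\cZ$ and $V=\alpha_1\circ\cdots\circ\alpha_p$, and the Jacobi identity together with $[\cX,\alpha]=H^{i_X\alpha}$ to identify $[D,\alpha_j]$ with $[H^{i_X\alpha_j},\cY]+[\cX,H^{i_Y\alpha_j}]-H^{i_Z\alpha_j}$. You part ways at the reordering step. The paper proves only that this operator commutes with every multiplication operator $\beta$, by the direct computation $[[D,\alpha_j],\beta]=-Res_O(i_X\alpha_j\,\cL_Y\beta)-Res_O(i_X\beta\,\cL_Y\alpha_j)+Res_O(i_Z\alpha_j\,\beta)=0$ (in the end $Res_O\,d(fg)=0$); that is exactly what is needed to slide $[D,\alpha_j]$ past the remaining $\alpha_k$'s onto $\mathbf{1}$, and nothing more is claimed. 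You instead conclude $[D,\alpha_j]=H^{\phi_j}=0$ outright, by treating $[\bL^X,H^\phi]=H^{X(\phi)}$ as an exact operator identity and invoking $i_{[X,Y]}\alpha=X(i_Y\alpha)-Y(i_X\alpha)$ for the (automatically closed) one-form $\alpha_j$.

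The weak point is that reliance. The paper's proof of $[\bL^X,H^\phi]=H^{X(\phi)}$ verifies only that both sides have the same commutators with multiplication operators; the action on the vacuum is never checked there, so what is actually established is agreement up to an operator commuting with all multiplications. With only that input, your computation yields $[D,\alpha_j]=A_j$ for some $A_j$ commuting with multiplications --- which still proves the proposition as stated, with the first sum equal to $\sum_j\alpha_1\circ\cdots\circ\widehat{\alpha_j}\circ\cdots\circ\alpha_p\,A_j\mathbf{1}$, i.e.\ it degrades gracefully to the paper's argument --- but it does not justify your stronger claim that the first sum vanishes identically. That claim is precisely the missing vacuum check $\{[H^{i_X\alpha_j},\cY]+[\cX,H^{i_Y\alpha_j}]-H^{i_Z\alpha_j}\}\mathbf{1}=0$, a computation of exactly the kind this proposition is engineered to quarantine into explicit vacuum expectation values (compare the evaluation of $\{[\cX,\cY]-\cZ\}\mathbf{1}$ that produces the Virasoro central term). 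Your observation is consistent with the paper's later assertion that $[L_l,L_m]-(l-m)L_{l+m}$ commutes with the $b_n$, but to use it \emph{here} you should supply the vacuum computation rather than import it from a proposition whose proof omits it.
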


\begin{proof} We have, for $\alpha \in \lO\bK$
\begin{equation*} 
\begin{split}
[[\cX,\cY]-\cZ,\alpha]&=[[\cX,\alpha],\cY]+[\cX,[\cY,\alpha]]-[\cZ,\alpha]\\
&=[H^{i_X\alpha},\cY]+[\cX,H^{i_Y\alpha}]-H^{i_Z\alpha}\\
\end{split}
\end{equation*}
In turn, given any $\beta \in \lO\bK$
\begin{equation*}
\begin{split}
[[H^{i_X\alpha},\cY]+[\cX,H^{i_Y\alpha}]-H^{i_Z\alpha},\beta]
&=[H^{i_X\alpha},H^{i_Y\beta}]+[H^{i_X\beta},H^{i_Y\alpha}]+Res_O(i_Z\alpha \beta)\\
&=-Res_O(i_X\alpha \cL_Y \beta)-Res_O(i_X\beta \cL_Y \alpha)+Res_O(i_Z\alpha \beta)\\
&=0\\
\end{split}
\end{equation*}
Therefore
\begin{equation*}
\begin{split}
\{[\cX,\cY]-\cZ\}&\alpha_1 \otimes^s \dots \otimes^s \alpha_p\\ &= \sum_j \alpha_1 \circ \dots \circ \{[H^{i_X\alpha_j},\cY]+[\cX,H^{i_Y\alpha_j}]
-H^{i_Z\alpha_j}\} \circ \dots \circ \alpha_p\mathbf{1}\\&+\alpha_1 \otimes^s \dots \otimes^s \alpha_p\{[\cX,\cY]-\cZ\} \mathbf{1}\\
&=\sum_j \alpha_1 \circ \dots \circ \widehat{\alpha_j} \circ \dots \circ \alpha_p  \circ \{[H^{i_X\alpha_j},\cY]+[\cX,H^{i_Y\alpha_j}]-H^{i_Z\alpha_j}\} \mathbf{1}\\
&+\alpha_1 \otimes^s \dots \otimes^s \alpha_p\{[\cX,\cY]-\cZ\} \mathbf{1}\\
\end{split}
\end{equation*}
\end{proof}
We will study the algebra generated by the operators $\lO \bL^X$ in a sequel.
 
Given a coordinate $u$ vanishing at $O$, we introduce the notation $L_n=\lO \bL^{X_n}$, where $X_n=-u^{n+1} d/du$.  Note that $X$ is regular away from $O$ precisely when $n \le 1$, and regular at $O$ precisely when $n \ge -1$,  and regular everywhere for $n=-1,0,+1$.  Using (\ref{Heisenbergbosonmodes}) and (\ref{CXalpha}), we have $[L_m,b_n]=-nb_{n+m}$. From this it follows easily that
the operator $[L_l,L_m]-(l-m)L_{l+m}$ commutes with the operators $b_n$. Evaluating it on the vacuum, we obtain
\begin{proposition}
\begin{equation}\label{vircommute}
[L_l,L_m]=(l-m)L_{l+m}+\frac{\bL^3-l}{12} \delta_{l+m}
\end{equation}
\end{proposition}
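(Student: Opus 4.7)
The approach is to exploit the fact that $\lO\bW$ is a Fock module for the Heisenberg algebra generated by the modes $b_n$. The Virasoro commutator $A := [L_l,L_m]-(l-m)L_{l+m}$ will be shown to commute with every $b_n$, and then (by the Fock structure) to act as a scalar, which is then computed on the vacuum. First I would verify $[L_m,b_n]=-nb_{m+n}$ directly: from the identification $b_n=\lO H^{u^n}$ of (\ref{Heisenbergbosonmodes}) and the earlier relation $[\bL^X,H^\phi]=H^{X(\phi)}$, one reads off $[L_m,b_n]=\lO H^{X_m(u^n)}=\lO H^{-n u^{m+n}}=-nb_{m+n}$. Expanding $[A,b_n]$ by Jacobi and using the classical bracket $[X_l,X_m]=(l-m)X_{l+m}$ among the vector fields $X_n=-u^{n+1}d/du$ produces three multiples of $b_{l+m+n}$ whose coefficients $n(m+n)$, $-n(l+n)$ and $n(l-m)$ sum to zero; hence $A$ commutes with every $b_n$.

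Next I would invoke the Fock structure. The representation $\lO\bW$ is generated from $\mathbf{1}$ by the creation operators $b_{-k}$, $k\ge 1$, while the annihilation operators $b_k$, $k\ge 1$, kill $\mathbf{1}$; the only vectors annihilated by all $b_k$ ($k\ge 1$) are multiples of $\mathbf{1}$. Since $A$ commutes with every mode, $A\mathbf{1}$ is annihilated by every $b_k$ ($k\ge 1$) and hence equals $c\,\mathbf{1}$ for some $c\in\bbC$. Combined with $[A,b_{-k}]=0$, this gives $A(b_{-k_1}\cdots b_{-k_p}\mathbf{1})=b_{-k_1}\cdots b_{-k_p}(c\mathbf{1})$, so $A=c\cdot\mathrm{Id}$.

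Finally I would identify the constant $c$ by evaluating degree-$0$ components on $\mathbf{1}$. From Definition \ref{deflOcX}, $L_n\mathbf{1}=\omega_{X_n}\in S^2\bK$ when $X_n$ is regular away from $O$ (i.e.\ $n\le 1$) and $L_n\mathbf{1}=0$ otherwise. A quick degree count shows the degree-$0$ component of $[L_l,L_m]\mathbf{1}-(l-m)L_{l+m}\mathbf{1}$ vanishes unless $l+m=0$; so $c=0$ when $l+m\ne 0$, consistent with the $\delta_{l+m}$ factor. For $l+m=0$, say $l\ge 2$ and $m=-l$, one has $L_m\mathbf{1}=\omega_{X_{-l}}\in S^2\bK$, and the degree-$0$ piece of $L_l(\omega_{X_{-l}})-L_{-l}(\omega_{X_l})$ is picked up purely by the $\cX^-$ part of each operator, i.e.\ by the residue $R_O(X;\alpha\otimes^s\beta)=\mathrm{Res}_O((i_X\alpha)\beta)$ of Definition \ref{deflOcX}. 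This reduces the computation to an explicit double residue of rational functions on $\bbP$.

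The main obstacle is this last residue. While the first two steps are formal consequences of what has already been set up, extracting $c$ requires unpacking $\omega_{X_{-l}}$ in the coordinate $u$ from its explicit formula, expanding the factors $1/(u_1-u_2)^2$ to the order needed, and tracking which monomial pairings $u_1^i\otimes^s u_2^j$ contribute to $R_O(X_{\pm l};\,\cdot\,)$. Carrying out this bookkeeping --- the standard central-charge calculation for the free boson --- yields the coefficient $(l^3-l)/12$, giving the claimed Virasoro relation \eqref{vircommute} with central charge one.
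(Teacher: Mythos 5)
Your proposal is correct and follows essentially the same route as the paper: establish $[L_m,b_n]=-nb_{m+n}$ from the identification $b_n=\lO H^{u^n}$ together with $[\bL^X,H^\phi]=H^{X(\phi)}$, conclude via Jacobi that $[L_l,L_m]-(l-m)L_{l+m}$ commutes with all modes, and evaluate on the vacuum. Your added justifications (the Fock-module argument that a mode-commuting operator is scalar, with $b_0$ understood as $0$ since constants act trivially, and the outline of the residue bookkeeping for the central term) merely fill in details the paper leaves implicit.
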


\subsection{(Virasoro) primary vectors}

Fix a point $O$; let $\cV_O$ denote the Lie algebra of meromorphic vector fields which are regular at $O$ and vanish there. The differential action on the fibre $K_z$ gives a linear map
\begin{equation*}
\begin{split}
\cV_O &\to \bbC\\
X=\xi(u)\frac{d}{du} &\mapsto \xi'(O) \equiv X'(O)
\end{split}
\end{equation*}
(This is independent of the coordinate $u$.)

\begin{definition} A vector $\ttv \in \lO\bW$ is said to be {\it primary of weight $\Delta$} (with respect to $O$) if 
$$
\lO \bL^X (\ttv) = \Delta X'(O) \ttv
$$
for all $X \in \cV_O$.
\end{definition}

One can check that $\mathbf{1}$ is primary. A more interesting primary vector is the one-form $\alpha_{{}_O}$ (unique up to a scalar factor), with a double pole at $O$ and regular elsewhere, which is primary of weight 1. To check this, refer back to the definition \ref{deflOcX}. Given a meromorphic vector field regular at $O$, we have
$$
\lO \bL^X \alpha_{{}_O} = \lO[\cL_X \alpha_{{}_O}]
$$
where $\lO[\cL_X \alpha_{{}_O}]$ is the projection to the part regular away from $O$. Write $X=X_r+X_s$ where $X_r$ is regular everywhere and $X_{s}$ vanishes to order 2 at $O$. Then
$\cL_{X_r} \alpha$ is regular away from $O$ and $\cL_{X_{s}} \alpha$ is regular at $O$, so
$$
\lO[\cL_X \alpha_{{}_O}] = \cL_{X_r} \alpha \underset{\#}=   X_r'(O) \alpha = X'(O) \alpha
$$
where the equality $\#$ is easily checked.
 
\section{\textbf{The boson current algebra with insertions}}\label{bosoninsertions}

We will deal with insertions at length when we consider current algebras. Here we show how an insertion corresponds to a ``zero mode'' and how the Heisenberg algebra gets modified.

We will be considering a modification of the boson current algebra, so we fix a point $P$. Suppose given $n$ complex numbers $\lambda_1,\dots,\lambda_n$ and $n$ distinct points $z_1,\dots,z_n$, all also distinct from $P$. We consider fields $\epsilon$ and $\iota$ as before:
\begin{enumerate}
\item $\epsilon(z): \bbW_- \to \bbW_- \otimes K_z$
\item $\iota(z): \bbW_- \to \bbW_- \otimes K_z$
\end{enumerate}
The definition of $\epsilon$ remains unchanged, but the definition of $\iota$ is modified as folllows:
\begin{equation*}
\begin{split}
[\iota(z), \balpha] &= -d\balpha(z),\ \  \balpha \in \mathfrak{m}_P^z\\
\iota(z) \mathbf{1} &= \sum_j \frac{\lambda_j}{u(z)-u(z_j)} \otimes \mathbf{1}du_z
\end{split}
\end{equation*}

Suppose first that there is only one point, which we denote $O$ rather than $z_1$, and let $\lambda$ (rather than $\lambda_1$) be the corresponding parameter. In this case, one can check that the mode expansion of \S \ref{bosoncurrentmodes} gets modified by the addition of an operator $j_0$, defined by
\begin{equation*}
j_0 (\balpha_1 \otimes^s \dots \otimes \balpha_p) =  \lambda (\balpha_1 \otimes^s  \dots \otimes^s \balpha_p)   
\end{equation*}
which commutes with all the $j_l$. As for the Heisenberg operators, the modification is as follows:
\begin{definition} $\lO H^\phi \equiv \lO\Phi: \lO\bbW_- \to \lO\bbW_-$:
\begin{equation*}
\begin{split}
\lO\Phi [\balpha_1 \otimes^s \dots \otimes \balpha_p]=-\sum_i  ({\balpha}_1 \otimes^s \dots \widehat{{{\balpha}_i}} \dots \otimes^s {\balpha}_p) \times Res_O(\phi d\balpha_i)\\
+
\begin{cases}
\begin{split}
&\{\lambda \phi(P)+(\phi-\phi(P)) \otimes^s\} \balpha_1 \otimes^s \dots \otimes^s \balpha_p\\
&\text{if $\phi$ is regular away from $O$}\\
&\ \ \ \ \ \ \ \ \ \ \ \text{and}\\
\end{split}\\
\begin{split}
&\lambda \phi(O) \balpha_1 \otimes^s \dots \otimes^s \balpha_p\\
&\text{if $\phi$ is regular at $O$}
\end{split}
\end{cases}
\end{split}
\end{equation*}
\end{definition}

\emph{Note that the constant function $\phi=1$ acts by the scalar $\lambda$.}

In the general case, with $n$ insertions, we have for each $l=1,\dots,n$, operators
\begin{definition} $H_\bL^\phi \equiv \Phi_j: \lbz\bbW_- \to \lbz\bbW_-$:
\begin{equation*}
\begin{split}
\Phi_l [\balpha_1 \otimes^s \dots \otimes \balpha_p]=-\sum_i  ({\balpha}_1 \otimes^s \dots \widehat{{{\balpha}_i}} \dots \otimes^s {\balpha}_p) \times Res_{z_l}(\phi d\balpha_i)\\
+
\begin{cases}
\begin{split}
&\{(\sum_j \lambda_j) \phi(P)-\sum_{j \ne l} \lambda_j \phi(z_j)+(\phi-\phi(P)) \otimes^s\}\balpha_1 \otimes^s \dots \otimes^s \balpha_p\\
&\text{if $\phi$ is regular away from $z_l$}\\
&\ \ \ \ \text{and}
\end{split}\\
\begin{split}
&\lambda_l \phi(z_l)  \balpha_1 \otimes^s \dots \otimes^s \balpha_p\\
&\text{if $\phi$ is regular at $z_l$.}
\end{split}
\end{cases}
\end{split}
\end{equation*}
\end{definition}

The following is straightforward to prove, and we will in fact state and prove a more general result in the context of curent algebras.

\begin{proposition} Let $\phi,\psi$ be given, and consider the corresponding operators $\Phi_l,\Psi_l$ acting on $\lbz \bbW_-$.
\begin{enumerate}
\item If $\phi=1$ (the constant function with value 1),  then 
$\Phi_l \mathbf{1}=\lambda_l \mathbf{1}$.
\item $[\Phi_l,\Psi_l]=-Res_{z_l} (\phi d\psi)$.
\item $[\Phi_l,\Psi_{l'}]=0$ if $l\ne l'$,
\item $\sum_l \Phi_l=\Phi_P|_{\lbz \bbW_-}$
\end{enumerate}
\end{proposition}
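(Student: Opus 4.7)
The proof breaks into (1), which is a direct calculation; (2) and (3), which share a common reduction; and (4), which is a residue-theorem identity.

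For (1), substitute $\phi=1$ into the definition of $\Phi_l$. The constant function is regular at every $z_l$, so the ``regular at $z_l$'' clause applies; the residues $Res_{z_l}(d\balpha_i)$ vanish because exact forms have zero residue, and on $\mathbf{1}$ the scalar piece collapses to $\lambda_l\cdot 1 \cdot\mathbf{1}=\lambda_l\mathbf{1}$.

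The key step for (2) and (3) is the reduction lemma that, for any $\balpha\in\lbz\mathfrak{m}_P$ viewed as a multiplication operator via $\balpha\otimes^s$,
\begin{equation*}
[\Phi_l,\balpha] = -Res_{z_l}(\phi\, d\balpha)\cdot\mathrm{Id}
\end{equation*}
holds regardless of which case of the definition applies: the derivation part of $\Phi_l$ contributes exactly the stated residue, while the scalar-identity term and the multiplication-by-$(\phi-\phi(P))$ term commute with $\balpha$ by commutativity of the symmetric product. Since $[\Phi_l,\balpha]$ and $[\Psi_{l'},\balpha]$ are scalars, $[[\Phi_l,\Psi_{l'}],\balpha]=0$, so $[\Phi_l,\Psi_{l'}]$ acts as a scalar on $\lbz\bbW_-$ and is determined by its value on $\mathbf{1}$. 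I would then evaluate that value by case analysis on whether each of $\phi,\psi$ is regular at or away from the respective insertion. For (2), the ``mixed'' and ``both regular at $z_l$'' subcases yield $-Res_{z_l}(\phi d\psi)\mathbf{1}$ directly from the definitions. The delicate subcase is ``both regular away from $z_l$'': direct computation (after the symmetric-product and constant pieces cancel) produces $-Res_{z_l}(\phi d\psi)+Res_{z_l}(\psi d\phi)$, which by integration by parts $d(\phi\psi)=\phi d\psi+\psi d\phi$ equals $-2Res_{z_l}(\phi d\psi)$; however, $\phi d\psi$ has poles only at $z_l$ and is regular at $P$ (where $\phi,\psi$ are finite-valued), so the residue theorem on $\bbP$ forces $Res_{z_l}(\phi d\psi)=0$, and both sides of the claim vanish in agreement. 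For (3), the analogous nontrivial subcase reduces to $-Res_{z_l}(\phi d\psi)+Res_{z_{l'}}(\psi d\phi)=0$, which follows from the residue theorem applied to $\phi d\psi$ (whose poles are confined to $\{z_l,z_{l'}\}$) together with integration by parts.

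For (4), I would decompose an arbitrary meromorphic $\phi$ by partial fractions as $\phi=\phi(P)+\sum_j\phi_{z_j}$ (with $\phi_{z_j}$ the principal part at $z_j$, vanishing at $P$ and at the other insertions), apply linearity of $\Phi_l$ in $\phi$, and sum over $l$. The residue contributions combine via the residue theorem $\sum_l Res_{z_l}(\phi d\balpha_i)+Res_P(\phi d\balpha_i)=0$ (the form $\phi d\balpha_i$ has poles only in $\{z_1,\ldots,z_n,P\}$ since $\balpha_i\in\lbz\mathfrak{m}_P$ forces $d\balpha_i$ to be regular at $P$) to produce the residue part of $\Phi_P$. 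The multiplicative contributions sum via $\sum_l\phi_{z_l}=\phi-\phi(P)$ to give the creation part $(\phi-\phi(P))\otimes^s$, and the $\lambda$-dependent scalar pieces reorganise (using the partial-fraction identities at each $z_l$) to match the remaining constant contribution of $\Phi_P$. The main obstacle is the residue-theorem subtlety in the ``both regular away'' subcase of (2), where a naive term-by-term computation appears to give $-2Res_{z_l}(\phi d\psi)$ and is reconciled with the claimed $-Res_{z_l}(\phi d\psi)$ only by an automatic vanishing of the residue; the bookkeeping for (4), while more elaborate, is routine residue calculus once the partial-fraction decomposition is set up.
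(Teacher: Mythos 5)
Your proposal is correct, and it follows essentially the strategy the paper itself adopts (the paper offers no direct proof of this proposition, deferring to the current-algebra analogues, where the same scheme is used: the commutator identity $[\Phi_l,\balpha]=-Res_{z_l}(\phi\,d\balpha)$ reduces everything to the action on $\mathbf{1}$, followed by a case analysis on where $\phi$ and $\psi$ are regular, with the residue theorem on $\bbP$ closing the nontrivial subcases). Your observation that the ``both singular at $z_l$'' subcase of (2) naively produces $-2\,Res_{z_l}(\phi\,d\psi)$ and is reconciled only because $\phi\,d\psi$ then has its poles confined to $z_l$, forcing the residue to vanish, is a genuine subtlety that the paper glosses over, and you resolve it correctly. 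The one place you should be pressed is (4): it is stated only as a plan (partial fractions plus residue theorem plus reorganisation of the $\lambda$-terms), and since the paper never writes down the insertion-modified $\Phi_P$ explicitly, carrying out that bookkeeping, including pinning down the constant $(\sum_j\lambda_j)\phi(P)-\sum_{j\neq l}\lambda_j\phi(z_j)$ against the zero-mode contribution at $P$, is the part of the argument that still needs to be written out in full.
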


See (\ref{zeromodes}) for a context in which zero modes occur ``naturally".

\section{\textbf{The boson: pairings}}

\subsection{Pairings}\label{bosonpairing}

Let $\gamma$ be a contour in $\bbP$,  separating domains $D$ and $D'$.  We define fields $e(z), \  i(z), \ z\in D$ as before, but consider them as acting on $\lD\bW$ and $\lD\bW^z$ respectively.  \emph{Note that $i(z)$ extends to points $z \in \bbP$; in fact, for $z \notin D$, $i(z)$ is defined on $\lD\bW=\lD\bW^z$.} For $z' \in D'$, define $e'(z')$ and $i'(z')$ similarly as fields acting on $\lDp\bW$ and $\lDp\bW^{z'}$ respectively.

The spaces $\lD\bK$ and $\lDp\bK$ are paired as follows. Given $\alpha=df \in \lD\bK$ and $\alpha'=df' \in \lDp\bK$,
\begin{equation}\label{integralpairing}
\begin{split}
\lDp\bK \times \lD\bK &\to \mathbf{C}\\
\langle \alpha',\alpha \rangle&=-\frac{1}{2 \pi i} \int_{\gamma} f' df\\
\end{split}
\end{equation}
(Note that $f$ is a meromorphic function with all poles in $D$ and $f'$ ia meromorphic function with all poles in $D'$.) Note that if $X$ is a regular vector field, 
\begin{equation*}
\begin{split}
\langle \alpha',\bL^X \alpha \rangle &=-\frac{1}{2 \pi i} \int_{\gamma} f' d i_X \alpha\\
&=\frac{1}{2 \pi i} \int_{\gamma} df'  i_X df\\
&=\frac{1}{2 \pi i} \int_{\gamma} df'(Xf)\\
\end{split}
\end{equation*}
On the other hand
\begin{equation*}
\langle \bL^X \alpha',\alpha \rangle=-\frac{1}{2 \pi i} \int_{\gamma} (Xf')df
\end{equation*}
which shows that
\begin{equation}\label{pairingX}
\langle \bL^X \alpha',\alpha \rangle=-\langle \alpha',\bL^X \alpha \rangle
\end{equation}
since $(Xf')df=df'(Xf)$.

It is easy to check that given $\alpha \in \lD\bK,\ \ \alpha' \in \lDp\bK$, we have
\begin{equation}\label{pairinglowestdegree}
\begin{split}  
\langle \alpha',e(z) \mathbf{1}\rangle &= \alpha'(z)=-\langle i'(z)\alpha', \mathbf{1}\rangle\ \text{(equality of forms regular in $D$)}\\
\langle e'(z')\mathbf{1'},\alpha\rangle &= \alpha(z')=- \langle \mathbf{1'},i(z')\alpha\rangle \ \text{(equality of forms regular in $D'$)}\\
\end{split}
\end{equation}
Also, for any natural number $N$,
\begin{equation*}
\begin{split}  
\langle \alpha',\bcL_X^N e(z) \mathbf{1}\rangle &= \{\cL_X^N\alpha'\}(z)=-\langle \bcL_X^N i'(z)\alpha', \mathbf{1}\rangle \\
\langle \bcL_X^N e'(z')\mathbf{1'},\alpha\rangle &= \{\cL_X^N \alpha\}(z')=- \langle \mathbf{1'},\bcL_X^N i(z')\alpha\rangle \\
\end{split}
\end{equation*}
We would like to extend the pairing between $\lD\bK$ and $\lDp\bK$ to a pairing between $\lD\bW$ and $\lDp\bW$. It is clear enough how to do this, since the latter are the respective symmetric algebras, but with the case of current algebras in mind, we formulate the following 

\begin{proposition}\label{propbosonpairing} There is a unique pairing between  $\lD\bW$ and $\lDp\bW$ such that $\langle {\mathbf 1}',\mathbf 1\rangle =1$, and if $\chi \in \lD\bW,\ \ \chi' \in \lDp\bW$, we have
\begin{itemize}
\item the equality sections of $K$ on $D$: 
\begin{equation}\label{pairing1}
\langle \chi',e(z) \chi\rangle =-\langle i'(z) \chi', \chi\rangle
\end{equation}
and
\item (for $z' \in D'$) the equality of meromorphic sections of $K$ on $D'$:
\begin{equation}\label{pairing2}
\langle e'(z')\chi',\chi\rangle =-\langle \chi',i(z') \chi\rangle 
\end{equation}
\end{itemize}
and such that 
\begin{equation*}
\begin{split}
\langle \chi',\bcL_X^N e(z) \chi\rangle &=\cL_X^N \langle \chi', e(z) \chi\rangle= 
-\cL_X^N \langle  i'(z) \chi', \chi\rangle=
-\langle\bcL_X^N  i'(z) \chi', \chi\rangle\\
\langle \bcL_X^N e'(z')\chi',\chi\rangle &=\cL_X^N \langle  e'(z')\chi',\chi\rangle =-
\cL_X^N \langle \chi',i(z') \chi\rangle=
\langle \chi',\bcL_X^Ni(z') \chi\rangle \\
\end{split}
\end{equation*}

\end{proposition}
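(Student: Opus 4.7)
My plan is to \emph{define} the pairing by the standard permanent formula on the symmetric algebra, verify the adjointness identities by a direct calculation, and obtain uniqueness by inducting on $\deg\chi' + \deg\chi$, using (\ref{pairing1})--(\ref{pairing2}) to reduce everything to $\langle \mathbf{1}', \mathbf{1}\rangle = 1$. For existence, I set $\langle\chi', \chi\rangle = 0$ between homogeneous components of unequal degree, and on equal degrees define
\begin{equation*}
\langle \alpha'_1 \otimes^s \cdots \otimes^s \alpha'_n,\;\alpha_1 \otimes^s \cdots \otimes^s \alpha_n\rangle \;=\; \sum_{\sigma \in S_n} \prod_{i=1}^n \langle \alpha'_i, \alpha_{\sigma(i)}\rangle,
\end{equation*}
using the underlying pairing (\ref{integralpairing}) and extending bilinearly.

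To verify (\ref{pairing1}), I observe that $e(z)\chi = \chi \otimes^s e_z$, so the permanent defining $\langle\chi', e(z)\chi\rangle$ may be expanded along the column introduced by $e_z$. Each entry $\langle \alpha'_i, e_z\rangle$ equals $\alpha'_i(z)$ by the first line of (\ref{pairinglowestdegree}), and the corresponding cofactor is $\langle \alpha'_1 \otimes^s \cdots \widehat{\alpha'_i} \cdots \otimes^s \alpha'_{n+1}, \chi\rangle$; summing these matches the definition of $-i'(z)\chi'$ as a signed evaluation derivation on $\lDp\bW$. Identity (\ref{pairing2}) is obtained by the symmetric computation. The derivative compatibility is then immediate: $\bcL_X^N e(z)$ is multiplication by $\cL_X^N e_z$, and the permanent formula is linear in that entry, so $\langle \chi', \bcL_X^N e(z)\chi\rangle = \cL_X^N \langle \chi', e(z)\chi\rangle$, and similarly for $e'$.

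For uniqueness, I would induct on $\deg\chi' + \deg\chi$. The base case is the normalisation. For the inductive step with $\deg\chi > 0$, note that $\lD\bK$ is spanned by $\{\cL_X^N e_z : z \in D,\; N \ge 0\}$ with $X = -d/du$: one checks that $\cL_X^N e_z$ is proportional to $(u-u(z))^{-N-2}du$, and every $\alpha \in \lD\bK$ has a partial-fraction decomposition in such terms for $z \in D$. Hence $\chi$ is a finite linear combination of vectors $\bcL_X^N e(z)\tilde\chi$ with $\deg\tilde\chi = \deg\chi - 1$, and (\ref{pairing1}) together with the derivative identity reduces $\langle\chi', \chi\rangle$ to pairings of total degree $\deg\chi' + \deg\chi - 2$, covered by the induction. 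If $\deg\chi = 0$ and $\deg\chi' > 0$, the symmetric spanning argument on the $D'$ side together with (\ref{pairing2}) gives $\langle \bcL_X^N e'(z')\tilde\chi', \mathbf{1}\rangle = -\langle \tilde\chi', \bcL_X^N i(z')\mathbf{1}\rangle = 0$ since $i(z')\mathbf{1} = 0$, so $\langle \chi', \mathbf{1}\rangle$ vanishes.

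The main obstacle will be this spanning claim. The adjointness identities are stated as equalities of (meromorphic) sections of $K$ on $D$ or $D'$, so one must be careful about allowing Lie derivatives along vector fields that are only meromorphic on $\bbP$. This is resolved by observing that for any fixed $z \in D$, (\ref{pairing1}) is a scalar equality in $K_z$, and $X = -d/du$ is regular at that $z$, so $\bcL_X^N e(z)$ is a well-defined operator on $\lD\bW$; the partial-fraction argument then produces every form in $\lD\bK$, and symmetrically on $D'$. The remainder of the proof is routine bookkeeping on the symmetric algebra.
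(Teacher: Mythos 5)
Your proposal is correct, and the uniqueness half is essentially the paper's argument: both induct on degree, using \eqref{pairing1}--\eqref{pairing2} plus the derivative identities to strip off one factor at a time, together with the fact that $\lD\bK$ is spanned by $e_z$ and its $z$-derivatives for $z\in D$ (the paper phrases this as ``differentiating with respect to $z$ and adding terms corresponding to different choices of $z$''). Where you genuinely diverge is existence. You write down the closed permanent formula on $S^n\lD\bK\times S^n\lDp\bK$ built from \eqref{integralpairing} and verify adjointness by expanding along the column of $e_z$; the paper instead \emph{constructs} the pairing by the same degree induction used for uniqueness and reduces existence to the consistency statement of Lemma \ref{lemmaconsistentpairing}, namely $\langle i'(z)e'(z')\chi',\chi\rangle=\langle\chi',i(z')e(z)\chi\rangle$, which is checked via the commutator \eqref{iecommute}. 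Your route is more explicit and arguably cleaner for the free boson, but the paper announces its motive at the start of its proof: it deliberately uses only the filtration and not the grading, because in the current-algebra analogue (Proposition \ref{proposition-no-insertions}, Lemma \ref{pairingdef}) the state space is a universal enveloping algebra with no symmetric-power grading and no permanent formula, while the inductive-plus-consistency scheme carries over verbatim. So your proof buys concreteness at the cost of the generality the paper is setting up; it would be worth recording the consistency identity anyway, since it is exactly what makes your column expansion along $e_z$ compatible with the expansion along $e'_{z'}$.
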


\begin{proof} As preparation for dealing with current algebra (where we deal with universal enveloping algebra rather than the symmetric algebra) we will use only the filtrations by degrees on $\lD\bW$ and $\lD\bW'$, but \emph{not} the gradings. Suppose that the pairing $\langle \chi',\chi\rangle $ is given for $degree\ \chi  \le N$ and  $degree\ \chi'  \le N'$, with $N,N'$ nonnegative integers. Let $degree\ \chi  = N$ and  $degree\ \chi'  = N'$, and consider the pairing (for $z \in D$) $\langle \chi',e(z) \chi\rangle $; by (\ref{pairing1}) above, this should be equal to $\langle i'(z) \chi', \chi\rangle $. But $degree\ i'(z') \chi' \le N'$ - in fact strict inequality holds, though this will be not the case in the case of current algebras - so that the pairing $\langle \chi',e(z) \chi\rangle $ is determined. Differentiating with respect to $z$ and adding terms corresponding to different choices of $z$ allows us to determine all pairings $\langle \chi',\chi\rangle $ with $degree\ \chi = N+1$. Exchanging roles of $\chi$ and $\chi'$, we can similarly increment $N'$ by 1.
Thus we have shown that the pairing, if it exists, is unique.

The existence follows from the next lemma.
\end{proof}

\begin{lemma}\label{lemmaconsistentpairing}Suppose that a pairing as in the proposition exists for all $\chi \in \lD\bW$ with $degree\ \chi \le N$ and $\chi' \in \lDp\bW$ with $degree\ \chi' \le N'$. Then, given such $\chi,\ \chi'$, and for $z \in D$ and $z' \in D'$, we have 
\begin{equation*}
\langle i'(z) e'(z') \chi', \chi\rangle =\langle \chi', i(z') e(z)\chi\rangle 
\end{equation*}
\end{lemma}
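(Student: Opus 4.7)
The plan is to symmetrize both sides by commuting the $i$-operators past the $e$-operators, then apply the inductive hypothesis in the form of relations~(\ref{pairing1}) and~(\ref{pairing2}) to reduce both sides to the same quantity.

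On the right hand side, the commutation relation~(\ref{iecommute}) gives $i(z')\circ e(z)=e(z)\circ i(z')+\omega(z',z)$, where $\omega$ is the invariant bidifferential of~(\ref{invbidiff}) acting by scalar multiplication (valued in $K_z\otimes K_{z'}$). Since $i(z')\chi\in\lD\bW$ has strictly smaller degree than $\chi$, the inductive hypothesis applies, and~(\ref{pairing1}) with $\chi$ replaced by $i(z')\chi$ yields
\begin{equation*}
\langle\chi',\,i(z')e(z)\chi\rangle
=-\langle i'(z)\chi',\,i(z')\chi\rangle+\omega(z',z)\,\langle\chi',\chi\rangle .
\end{equation*}
I would then run the mirror-image argument on the left hand side: the same commutation relation for the primed fields gives $i'(z)\circ e'(z')=e'(z')\circ i'(z)+\omega(z,z')$, and~(\ref{pairing2}) applied with $\chi'$ replaced by $i'(z)\chi'$ (which has strictly smaller degree than $\chi'$) produces
\begin{equation*}
\langle i'(z)e'(z')\chi',\,\chi\rangle
=-\langle i'(z)\chi',\,i(z')\chi\rangle+\omega(z,z')\,\langle\chi',\chi\rangle .
\end{equation*}

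Subtracting the two expressions, the lemma reduces to the symmetry $\omega(z,z')=\omega(z',z)$, which is immediate from the explicit formula $\omega=du_1\otimes^s du_2/(u_1-u_2)^2$ (both factors are symmetric under swap). I do not anticipate any genuine obstacle beyond bookkeeping: I must check that the intermediate vectors $i(z')\chi$ and $i'(z)\chi'$ have degrees strictly below $N$ and $N'$ respectively so that the inductive hypothesis legitimately applies (which is immediate in the symmetric algebra setting, since the $i$-operators strictly lower degree), and that the scalar-valued factor $\omega(z,z')$ on one side genuinely represents the same element of $K_z\otimes K_{z'}$ as on the other, which is exactly the content of the symmetry just noted. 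The pleasant feature is that the argument is purely formal and will carry over, with minor adjustments, to the current-algebra analogue later---where one must work with a filtration rather than a grading because the primed analogues of $\iota$ no longer strictly lower degree.
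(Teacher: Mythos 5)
Your proof is correct and uses exactly the same ingredients as the paper's: the commutation relation~(\ref{iecommute}), the adjointness relations~(\ref{pairing1})--(\ref{pairing2}) applied to the lower-degree vectors $i(z')\chi$ and $i'(z)\chi'$, and the symmetry of $\omega$. The only difference is bookkeeping --- you reduce both sides to the common expression $-\langle i'(z)\chi', i(z')\chi\rangle+\omega\langle\chi',\chi\rangle$, whereas the paper chains from the left side to the right in a single pass; the two are the same argument.
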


\begin{proof} Note that $degree\ i'(z) e'(z') \chi' = degree\ \chi'$ and $degree\ i(z') e(z)\chi = degree\ \chi$, so that both sides of the above equation are determined by assumption of the Lemma. The proof itself is a computation that uses (\ref{iecommute}) (choosing a coordinate, we write $i(z)=\hi(z)du_z$, etc.):
\begin{equation*}
\begin{split}
\langle \hi'(z) \he'(z') \chi', \chi\rangle &=\frac{1}{(u(z)-u(z'))^2}\langle \chi', \chi\rangle +\langle  \he'(z') \hi'(z) \chi', \chi\rangle \\
&=\frac{1}{(u(z)-u(z'))^2}\langle \chi', \chi\rangle +\langle  \chi', \he(z) \hi(z') \chi\rangle \\
&=\langle  \chi',\hi(z') \he(z) \chi\rangle \\
\end{split}
\end{equation*}
\end{proof}

Let $\phi$ be a meromorphic function regular along $\gamma$. A formal computation, which can easily be made rigorous, shows:

\begin{proposition} With respect to the pairing between $\lDp\bW$ and $\lD\bW$ defined above, we have
\begin{equation*}
\langle \lDp\Phi (\chi'), \chi \rangle = \langle  \chi', \lD\Phi (\chi) \rangle
\end{equation*}
\end{proposition}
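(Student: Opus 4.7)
The plan is to reduce the claim to the pointwise adjointness of $b=e+i$ and $b'=e'+i'$ expressed in the pairing axioms \eqref{pairing1}--\eqref{pairing2}, and then to integrate. First, by bilinearity decompose $\phi=\phi_D+\phi_{D'}+c$, with $\phi_D$ having all poles in $D$, $\phi_{D'}$ having all poles in $D'$, and $c$ constant. Constants annihilate both $\lD\Phi$ and $\lDp\Phi$ (the contour integrals over $\gamma$ of second-kind forms against a constant vanish, and $dc=0$), so it suffices to verify the identity in the two cases $\phi\in\bK_{D'}$ and $\phi\in\bK_D$.

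In each case, I would unpack the definitions and invoke Lemma \ref{contourprojection} to obtain the formal integral representations
\[\lD\Phi\,\chi=\frac{1}{2\pi i}\int_\gamma\phi(z)\,b(z)\chi,\qquad \lDp\Phi\,\chi'=-\frac{1}{2\pi i}\int_\gamma\phi(z)\,b'(z)\chi',\]
where the sign on $\lDp\Phi$ reflects $\partial D'=-\gamma$, and the ``creation'' contributions from $e$ and $e'$ are to be read after projection onto $\lD\bW$ and $\lDp\bW$ respectively. Lemma \ref{contourprojection} shows that integrating $\phi(z)e_z$ around $\gamma$ produces $d\phi$ precisely when $d\phi$ lies in the target symmetric algebra and is killed by the projection otherwise; this reproduces the piecewise ``creation'' rule in the definitions of $\lD\Phi$ and $\lDp\Phi$ and the vanishing-or-nonvanishing alternative there.

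Given this, combining \eqref{pairing1} and \eqref{pairing2} yields the pointwise adjointness
\[\langle\chi',b(z)\chi\rangle=\langle\chi',e(z)\chi\rangle+\langle\chi',i(z)\chi\rangle=-\langle i'(z)\chi',\chi\rangle-\langle e'(z)\chi',\chi\rangle=-\langle b'(z)\chi',\chi\rangle,\]
as an equality of meromorphic sections of $K_z$ that are regular along $\gamma$ (all singularities of $\chi,\chi'$ lie off $\gamma$). Multiplying by $\phi(z)/(2\pi i)$, integrating around $\gamma$, and commuting the bilinear pairing with the convergent contour integral, we obtain
\[\langle\chi',\lD\Phi\chi\rangle=\frac{1}{2\pi i}\int_\gamma\phi\,\langle\chi',b(z)\chi\rangle=-\frac{1}{2\pi i}\int_\gamma\phi\,\langle b'(z)\chi',\chi\rangle=\langle\lDp\Phi\chi',\chi\rangle,\]
which is the claim.

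The main obstacle is the passage from the piecewise algebraic definitions of $\lD\Phi,\lDp\Phi$ to their formal integral representations, and in particular the justification that the $\lD$-projection of $\frac{1}{2\pi i}\int_\gamma\phi(z)e(z)\chi$ coincides (after pairing with $\chi'\in\lDp\bW$) with what one gets from the unprojected integrand. A clean alternative route avoiding this subtlety is an induction on $\deg\chi+\deg\chi'$: the base $\chi=\mathbf{1},\chi'=\mathbf{1}'$ is immediate by degree mismatch, and the inductive step uses the easily-verified scalar commutator $[\lD\Phi,\alpha\otimes^s\,]=-\frac{1}{2\pi i}\int_\gamma\phi\,\alpha$ (read off directly from the definition), together with \eqref{pairing1}, to shuffle a factor $\alpha\in\lD\bK$ out of $\chi$ and onto $\chi'$ under the pairing, lowering the total degree.
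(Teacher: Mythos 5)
Your argument is correct and is essentially the paper's: the paper's entire proof consists of the remark that ``a formal computation, which can easily be made rigorous'' yields the identity, together with the observation that the opposite orientations of $\partial D$ and $\partial D'$ account for the absence of a minus sign, and your main computation (pointwise adjointness $\langle\chi',b(z)\chi\rangle=-\langle b'(z)\chi',\chi\rangle$ from the pairing axioms, multiplied by $\phi$ and integrated over $\gamma$) is exactly that formal computation with the sign bookkeeping made explicit. The one thing you add beyond the paper is the closing alternative: the induction on $\deg\chi+\deg\chi'$ via the scalar commutator $[\lD\Phi,\alpha\otimes^s\,]=-\frac{1}{2\pi i}\int_{\partial D}\phi\alpha$ is a legitimate way to sidestep the vector-valued contour integral (whose Riemann sums do not live in $\bK_\gamma$), and it runs parallel to the inductive construction of the pairing itself in Proposition \ref{propbosonpairing}.
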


(Recall that the contour integral along the common boundary is taken in opposite directions for the two domains $D$ and $D'$. This explains the absence of a minus sign.)

Taking unions and intersections, one obtains a pairing between $\bW_P$ and $\lP\bW$.  Fix a coordinate $u$  (with a pole at $P$). For $l \ne 0$, define operators ${}_{l}\ttb:\lP\bW \to \lP\bW$ as follows:
\begin{equation*}
{}_l \ttb=
\begin{cases}
\begin{aligned}
& du^m \mapsto l \delta_{l+m} \ l \le -1,\ \text{extended as a derivation}\\
& du^l \otimes^s \ \ \ \ \ \ \ \ \ l \ge 1\\
\end{aligned}
\end{cases}
\end{equation*}
The operators ${}_l \ttb, \ l \ge 1$, acting on the vacuum, generate $\lP\bW$. The maps $b_l$ as well as the space $\lO \bW$ are defined in the section \S \ref{bosonmodes} above; $O$ is the point where the co-ordinate $u$ vanishes.   

\begin{proposition} There is a unique pairing between  $\bW_P$ and $\lP\bW$ such that $\langle {\mathbf 1}',\mathbf 1\rangle =1$, and if $\chi' \in {}_{P}\bW,\ \ \chi \in \bW_P$, we have
\begin{itemize}
\item for $z \ne P$, 
\begin{equation*}
\langle  \chi', e(z) \chi\rangle = -\langle i(z) \chi',\chi\rangle
\end{equation*}
\item and for $l \ge 1$ 
\begin{equation*}
\langle {}_{l}\ttb (\chi'),\chi\rangle =-\langle \chi', b_{l} (\chi)\rangle
\end{equation*}
\end{itemize}
If $\chi' \in {}_{P}\bW,\ \ \chi \in \lO \bW$, we have
\begin{itemize}
\item for $l \le 1$ 
\begin{equation*}
\langle {}_{l}\ttb (\chi'),\chi\rangle =-\langle \chi', b_{l} (\chi)\rangle
\end{equation*}
\end{itemize}
\end{proposition}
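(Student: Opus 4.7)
The strategy is to follow the inductive template of Proposition \ref{propbosonpairing}, and to realize the desired pairing as the common value of the family $\{\langle\cdot,\cdot\rangle_{D}\}$ constructed there in the limit as $D'$ shrinks to $\{P\}$ (equivalently, $D$ exhausts $\bbP\setminus P$).

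I would first establish uniqueness by induction on the total degree $\deg\chi'+\deg\chi$. The value on $(\mathbf{1}',\mathbf{1})$ is prescribed. Assume the pairing is determined on all pairs of smaller total degree. Any vector of degree $n{+}1$ in $\bW_P$ is a linear combination of vectors $e(z)\chi_0$ with $\deg\chi_0=n$ and $z\ne P$ (since $\bK_P$ is spanned by the values $e_z$ and their derivatives for $z\ne P$), and the relation $\langle\chi',e(z)\chi_0\rangle=-\langle i(z)\chi',\chi_0\rangle$, together with $\deg i(z)\chi'<\deg\chi'$, determines the new values. Symmetrically, ${}_P\bK$ is spanned by the forms $du^l$ with $l\ge 1$, so any vector of degree $n'{+}1$ in ${}_P\bW$ is reachable by applying ${}_l\ttb$, and the relation $\langle{}_l\ttb\chi'_0,\chi\rangle=-\langle\chi'_0,b_l\chi\rangle$ together with $\deg b_l\chi<\deg\chi$ determines the new value in the first slot.

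For existence I would appeal to Proposition \ref{propbosonpairing}. Given $\chi'\in{}_P\bW$ and $\chi\in\bW_P$, choose any disk $D'\ni P$ so small that the (finitely many) singularities of the components of $\chi$ all lie in $D=\bbP\setminus\overline{D'}$; then $\chi'\in{}_{D'}\bW$ and $\chi\in{}_D\bW$, and I set $\langle\chi',\chi\rangle:=\langle\chi',\chi\rangle_{D}$. The key technical point is independence from the choice of $D$: if $D_1\subset D_2$ are two admissible choices (so $D'_1\supset D'_2\ni P$), then both pairings $\langle\cdot,\cdot\rangle_{D_1}$ and $\langle\cdot,\cdot\rangle_{D_2}$ restrict to pairings on ${}_{D'_2}\bW\times{}_{D_1}\bW$ which satisfy the defining recursions (\ref{pairing1})--(\ref{pairing2}) of Proposition \ref{propbosonpairing} for every $z$ in the appropriate open sets, and the uniqueness clause of that proposition then forces them to coincide. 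I regard this compatibility-across-disks as the main obstacle, but it reduces entirely to the previous uniqueness statement and is essentially formal.

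The identities in the statement then follow. For $z\ne P$, choose $D$ with $z\in D$ or $z\in D'$ and read off $\langle\chi',e(z)\chi\rangle=-\langle i(z)\chi',\chi\rangle$ from (\ref{pairing1}) or (\ref{pairing2}) respectively. The mode identities follow from the adjointness $\langle{}_{D'}\Phi\chi',\chi\rangle=\langle\chi',{}_D\Phi\chi\rangle$ of the Heisenberg operators with test function $\phi=u^l$, passing to the limit $D'\downarrow\{P\}$: using the identifications (\ref{Heisenbergbosonmodes}) and their counterparts at $P$, for $l\ge 1$ one finds ${}_{D'}H^{u^l}\to{}_l\ttb$ and ${}_{D}H^{u^l}\to b_l$. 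The second assertion, for $\chi\in\lO\bW$, extends the relation to the remaining nonzero $l$ (i.e.\ $l\le -1$), for which $\phi=u^l$ has its pole at $O$ rather than $P$; the same Heisenberg adjointness then identifies the limits as ${}_l\ttb$ (now a derivation on ${}_P\bW$) and $b_l$ (creation on $\lO\bW$, which is indeed preserved).
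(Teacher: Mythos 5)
Your overall strategy is exactly the one the paper intends (the paper itself offers no proof beyond the phrase ``taking unions and intersections''): realize the pairing as the stable value of the disk pairings of Proposition \ref{propbosonpairing} as $D$ exhausts $\bbP\setminus P$, with uniqueness by the same degree induction and compatibility across disks reduced to the uniqueness clause. That part of your write-up is sound, and the reduction of disk-compatibility to uniqueness on ${}_{D_2'}\bW\times{}_{D_1}\bW$ does go through because $e(z)$, $z\in D_1$, and $e'(z')$, $z'\in D_2'$, generate the respective factors.

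The genuine gap is in your last step, and it is a sign, not a formality. The Heisenberg adjointness you invoke is $\langle \lDp\Phi(\chi'),\chi\rangle=\langle\chi',\lD\Phi(\chi)\rangle$ \emph{without} a minus sign (the orientation reversal of the common boundary is already absorbed there), and your identifications ${}_{D'}H^{u^l}\to{}_l\ttb$, ${}_{D}H^{u^l}\to b_l$ are correct; so your argument proves $\langle{}_l\ttb\chi',\chi\rangle=+\langle\chi',b_l\chi\rangle$, which is \emph{not} the stated identity. Worse, the statement as printed is internally inconsistent: taking $\chi'=\mathbf 1'$ and $\chi=e_z$, the first bullet gives $\langle du^l,e_z\rangle=(du^l)(z)=l\,u(z)^{l-1}du_z$, while the second bullet gives $\langle {}_l\ttb\mathbf 1',e_z\rangle=-\langle\mathbf 1',b_l e_z\rangle=-Res_P(u^l e_z)=-l\,u(z)^{l-1}du_z$, so no pairing satisfies both bullets with the printed signs. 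You should have detected this, since your own derivation lands on the opposite sign from the claim; the honest conclusion is that the pairing exists and is unique subject to the first bullet (equivalently, as the limit of the disk pairings), and that the mode identities then hold with $+$ rather than $-$ (consistently with the subsequent proposition $\langle\lP\Phi(\chi'),\chi\rangle=\langle\chi',\Phi_P(\chi)\rangle$). A minor further point: the third bullet's ``$l\le 1$'' must be read as $l\le -1$, as you implicitly do.
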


It is straightforward to check that the pairing between  $\bW_P$ and $\lP\bW$ is independent of the choice of coordinate $u$.

\begin{proposition} With respect to the pairing between $\lP\bW$ and $\bW_P$ defined above, we have
\begin{equation*}
\langle \lP\Phi (\chi'), \chi \rangle = \langle  \chi', \Phi_P (\chi) \rangle
\end{equation*}
\end{proposition}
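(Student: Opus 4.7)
\medskip

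\noindent\textbf{Proof plan.} The plan is to deduce this proposition from the analogous statement for discs that has already been stated, namely $\langle \lDp\Phi(\chi'),\chi\rangle = \langle \chi', \lD\Phi(\chi)\rangle$, by choosing the disc $D'$ to be a small neighbourhood of $P$ adapted to the given data. Given $\chi\in\bW_P$, $\chi'\in\lP\bW$, and a meromorphic function $\phi$, let $S\subset\bbP\setminus\{P\}$ be the (finite) set of points at which the forms appearing in a symmetric-tensor expression for $\chi$ have singularities, and let $S_\phi$ be the polar set of $\phi$. Choose an open disc $D'\ni P$ whose closure is disjoint from $S\cup(S_\phi\setminus\{P\})$, and set $D=\bbP\setminus\overline{D'}$. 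Then $\chi\in\lD\bW$, $\chi'\in\lDp\bW$, and $\phi$ is regular along $\partial D=\partial D'$.

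\smallskip

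\noindent The first step is to identify the pairings. The algebraic pairing on $\lP\bW\times\bW_P$ was defined in the text by ``taking unions and intersections'' from the analytic pairings on $\lDp\bW\times\lD\bW$; more intrinsically, both pairings are characterised inductively by $\langle\mathbf{1}',\mathbf{1}\rangle=1$ together with the $e$--$i$ compatibility relations \eqref{pairing1}, \eqref{pairing2}, and the uniqueness portion of Proposition~\ref{propbosonpairing} (adapted to the present pair of spaces) yields
\begin{equation*}
\langle \chi',\chi\rangle_{\lP,P}=\langle \chi',\chi\rangle_{\lDp,\lD}.
\end{equation*}

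\smallskip

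\noindent The second step is to identify the operators. By linearity in $\phi$ it suffices to handle $\phi$ regular at $P$ and $\phi$ regular away from $P$ separately. In each sub-case I would compare the defining formulas of $\Phi_P$ and $\lD\Phi$ applied to $\chi$: the creation terms agree because the two conditions ``$\phi$ regular at $P$'' and ``$\phi$ regular in $D'$'' coincide for our choice of $D'$ (and similarly for the other sub-case), and the annihilation/residue terms agree by the residue theorem, since the only pole of $\phi\alpha_i$ inside $D'$ is (at most) the point $P$ itself, so that
\begin{equation*}
-\frac{1}{2\pi i}\int_{\partial D}\phi\alpha_i \;=\; Res_P(\phi\alpha_i),
\end{equation*}
the sign coming from the fact that $\partial D$ is oriented as the boundary of $D$, which is opposite to the counterclockwise orientation around $P$. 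Hence $\lD\Phi(\chi)=\Phi_P(\chi)$. An entirely parallel argument, using instead that $\chi'$ has all singularities at $P$ and that $\partial D'$ is oriented counterclockwise around $P$, gives $\lDp\Phi(\chi')=\lP\Phi(\chi')$.

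\smallskip

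\noindent Combining these identifications with the disc version of the statement,
\begin{equation*}
\langle \lP\Phi(\chi'),\chi\rangle=\langle \lDp\Phi(\chi'),\chi\rangle_{\lDp,\lD}=\langle \chi',\lD\Phi(\chi)\rangle_{\lDp,\lD}=\langle \chi',\Phi_P(\chi)\rangle,
\end{equation*}
which is the claim.

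\smallskip

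\noindent The main obstacle is the second step: one must track the sign and orientation conventions of $\partial D$ versus $\partial D'$, and verify in each of the two sub-cases (governed by the regularity of $\phi$ at $P$) that the creation-term clauses in the definitions of $\lD\Phi,\lDp\Phi$ on one hand and of $\Phi_P,\lP\Phi$ on the other coincide under the chosen positioning of the poles of $\phi$ relative to $D$ and $D'$. Once the bookkeeping in this step is pinned down, the remainder of the argument is formal.
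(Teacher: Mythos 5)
The paper states this proposition without proof, and your reduction to the already-stated disc version --- choosing $D'\ni P$ adapted to $\chi$, $\chi'$ and $\phi$ so that the case clauses in the definitions of $\lD\Phi,\lDp\Phi$ and $\Phi_P,\lP\Phi$ line up, and using the residue-theorem identity $-\frac{1}{2\pi i}\int_{\partial D}\phi\alpha_i = Res_P(\phi\alpha_i)$ --- is precisely the argument the paper's constructions (``taking unions and intersections'', exhausting $\bbP\setminus P$ by discs) implicitly intend, with the orientation bookkeeping done correctly. The one caveat is that the disc-level proposition you invoke is itself only asserted in the paper via ``a formal computation, which can easily be made rigorous'', so your proof is complete modulo that input, which could alternatively be bypassed by running the same induction on degree as in Proposition \ref{propbosonpairing} directly against the defining formulas of $\lP\Phi$ and $\Phi_P$.
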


\subsection{Hermitian Pairing}\label{subsection: Hermitian Pairing}

We fix an antiholomorphic involution $C:\bbP \to \bbP$. Then a choice of $P$ outside the fixed-point set of $C$ yields natural choices of the rest of the data $\gamma$ (the fixed point set), $D'$ (the component of the complement of $\gamma$ containing $P$) and $D$ (the other component, with $D'=C(D)$). In terms of adapted coordinates $C(u)=\frac{1}{\overline u}$.

When $D$ and $D'$ are related by a conjugation $C$ as above, (\ref{integralpairing}) is a perfect pairing. To see this, consider $\alpha, \beta \in \lD\bK$. These forms are regular on $D'$ (in fact on the closure); their (Bergman space) inner product is 
$(\alpha,\beta)=\frac{1}{2 \pi i} \int_{D'} \overline{\alpha} \beta$. On the other hand,
$$
\alpha \to -C^*\overline{\alpha},\ \alpha' \to -C^*\overline{\alpha'}
$$
sets up an antilinear bijection $\lD\bK \leftrightarrow \lDp\bK$, and we claim that
\begin{equation*}
\langle C^* \overline{\alpha},\beta \rangle=(\alpha,\beta)
\end{equation*}
which yields the claimed nondegeneracy. (The minus sign is introduced for consistency with later choices.) To check the claim, write $\alpha=df$. Then
\begin{equation*}
\begin{split}
\langle C^*\overline{\alpha},\beta \rangle &= -\frac{1}{2 \pi i} \int_{\gamma} C^*\overline{f} \beta\\
&=-\frac{1}{2 \pi i} \int_{\gamma} \overline{f}  \beta\\
&=\frac{1}{2 \pi i} \int_{D'} \overline{df} \beta\\
&=\frac{1}{2 \pi i} \int_{D'} \overline{\alpha} \beta\\
&=(\alpha,\beta)\\
\end{split}
\end{equation*}
Here we use the fact that $C^*\overline{f}=\overline{f}$ on $\gamma$.  

\begin{proposition}\label{prophermitianbosonlD} There is a unique hermitian structure $(,)$ on  $\lD\bW$  such that $({\mathbf1},\mathbf{1}) =1$, and if  $\chi,\psi \in \lD\bW$, we have
\begin{itemize}
\item (for $z \in D$) the equality of meromorphic sections of $K$ on $D$:
\begin{equation}\label{hermitianpairing1}
\begin{split}
(\chi,e(z) \psi) &=-C^*(i(C(z))\chi, \psi), \textrm{or equivalently}\\
u(z)^2 (\chi,\he(z)\psi) &=(\hi(C(z))\chi, \psi)\\
\end{split}
\end{equation}
and
\item (for $z' \in D'$) the equality of meromorphic sections of $K$ on $D'$:
\begin{equation}\label{hermitianpairing2}
\begin{split}
(\chi,i(z') \psi) &=-C^*(e(C(z'))\chi,\psi), \textrm{or equivalently}\\
u(z')^2(\chi,\hi(z') \psi)& =(\he(C(z'))\chi,\psi)\\
\end{split}
\end{equation}
\end{itemize}
\end{proposition}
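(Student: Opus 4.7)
I will produce the hermitian structure explicitly, by combining the antilinear isomorphism $\lD\bK \to \lDp\bK$, $\alpha \mapsto C^*\overline{\alpha}$, extended multiplicatively to an antilinear algebra isomorphism $\Theta\colon \lD\bW \to \lDp\bW$ with $\Theta(\mathbf{1}) = \mathbf{1}'$, together with the pairing $\langle\cdot,\cdot\rangle$ of Proposition \ref{propbosonpairing}. Setting
\[
(\chi,\psi) \;:=\; \langle \Theta(\chi),\psi\rangle
\]
gives a form antilinear in $\chi$, linear in $\psi$, and satisfying $(\mathbf{1},\mathbf{1}) = 1$.

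The key calculation is a pair of intertwining relations
\[
\Theta \circ \hi(C(z)) \;=\; -u(z)^2\, \hi'(z) \circ \Theta, \qquad \Theta \circ \he(C(z')) \;=\; -u(z')^2\, \he'(z') \circ \Theta.
\]
In adapted coordinates, where $C(u) = 1/\overline{u}$ and hence $C^*(d\overline{u}) = -du/u^2$, a direct calculation establishes both identities on $\lD\bK$; they then propagate to all of $\lD\bW$ because $\Theta$ is an algebra homomorphism while $\he, \hi$ act respectively as a multiplication and as a derivation annihilating the vacuum. Granting these, the two defining relations are one-line consequences of (\ref{pairing1}) and (\ref{pairing2}); for instance
\[
u(z)^2 (\chi,\he(z)\psi) = -u(z)^2 \langle \hi'(z)\Theta\chi,\psi\rangle = \langle \Theta(\hi(C(z))\chi),\psi\rangle = (\hi(C(z))\chi,\psi),
\]
which is (\ref{hermitianpairing1}); the symmetric manipulation via (\ref{pairing2}) yields (\ref{hermitianpairing2}).

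For uniqueness I would run an induction on the bidegree $(\deg\chi,\deg\psi)$, parallel to the proof of Proposition \ref{propbosonpairing}. Assuming the pairing is pinned down on pairs of degrees $\le(M,N)$, relation (\ref{hermitianpairing2}) (where $\hi(z')$ lowers the degree of $\psi$) determines it on pairs of degrees $\le(M+1,N)$, because every vector of $\lD\bW$ is built from $\mathbf{1}$ by values $e(z)\mathbf{1} = e_z$, $z\in D$, and their derivatives; (\ref{hermitianpairing1}) plays the symmetric role in the other slot. The base case is $(\mathbf{1},\mathbf{1}) = 1$.

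The main technical hurdle I anticipate is the consistency analogue of Lemma \ref{lemmaconsistentpairing}: one must confirm that applying (\ref{hermitianpairing1}) and (\ref{hermitianpairing2}) in the two possible orders produces the same value, which reduces once more to the commutator identity (\ref{iecommute}) between $\hi'(z)$ and $\he(C(z'))$. For the explicit construction $(\chi,\psi) = \langle \Theta\chi,\psi\rangle$ this consistency is automatic, but it is the genuine content that makes the inductive prescription well-posed.
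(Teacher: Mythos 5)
Your proposal is correct, and it fleshes out precisely the alternative that the paper names in one sentence and then declines to pursue: the paper's proof says the proposition ``is a consequence of Proposition \ref{propbosonpairing}, but it is simpler to prove it directly,'' and its actual argument is the direct induction, with Lemma \ref{lemmaconsistentpairing} replaced by its hermitian analogue (Lemma \ref{lemmaconsistenthermitianpairing}, proved by the same computation with (\ref{iecommute}) that you anticipate as the ``technical hurdle''). Your primary construction $(\chi,\psi)=\langle\Theta\chi,\psi\rangle$, with $\Theta$ the multiplicative antilinear extension of $\alpha\mapsto C^*\overline{\alpha}$, is the route the paper skips; your intertwining identities $\Theta\circ\hi(C(z))=-u(z)^2\,\hi'(z)\circ\Theta$ and $\Theta\circ\he(C(z'))=-u(z')^2\,\he'(z')\circ\Theta$ do check out in the adapted coordinate (using $C^*(d\overline{u})=-du/u^2$), and they propagate to all of $\lD\bW$ for the reason you give. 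What your route buys: consistency of the inductive prescription is automatic, and hermiticity and positivity become transparent, since in degree one the form is visibly the Bergman inner product $\langle C^*\overline{\alpha},\beta\rangle=(\alpha,\beta)$ already computed in \S\ref{subsection: Hermitian Pairing} (note your $\Theta$ correctly omits the minus sign the paper attaches to the bijection $\alpha\mapsto-C^*\overline{\alpha}$; including it would flip the sign of the form in odd degrees). What the paper's direct route buys is independence from the existence half of Proposition \ref{propbosonpairing}. The only point neither you nor the paper spells out is that the sesquilinear form so constructed is actually hermitian; this follows either from positivity of your explicit formula or by observing that $\overline{(\psi,\chi)}$ satisfies the same two relations (using $u(C(z))=1/\overline{u(z)}$) and invoking uniqueness.
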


\begin{proof}
This is a consequence of Proposition \ref{propbosonpairing}, but it is simpler to prove it directly. Lemma \ref {lemmaconsistentpairing} gets replaced by its hermitian analogue, which we state below.
\end{proof}

\begin{lemma}\label{lemmaconsistenthermitianpairing}Suppose that an inner product as in the proposition exists for all $\psi \in \lDp\bW$ with $degree\ \psi \le N$ and $\chi \in \lDp\bW$ with $degree\ \chi \le N$. Then, given such $\chi,\ \chi$, and for $w,z \in D$, we have 
\begin{equation*}
\begin{split}
C^*_{w}(\chi, i(C(w)) e(z) \psi)&=C^*_{z}(i(C(z)) e(w) \chi, \psi), 
\textrm{or equivalently}\\
u(z)^2(\chi, \hi(C(w)) \he(z) \psi)&=\overline{u}(w)^2(\hi(C(z))\he(w)\chi,\psi)\\ 
\end{split}
\end{equation*}
\end{lemma}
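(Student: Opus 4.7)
The plan is to imitate the proof of Lemma \ref{lemmaconsistentpairing}, transposed to the hermitian setting. The crucial facts are: (i) since $w,z\in D$ we have $C(w),C(z)\in D'$, so in both products $i(C(w))e(z)$ and $i(C(z))e(w)$ the two insertion points are distinct, and the commutation relation (\ref{iecommute}) applies, producing only a scalar commutator term; (ii) the hermitian defining relations (\ref{hermitianpairing1})--(\ref{hermitianpairing2}) hold on vectors of degree $\le N$ by the inductive hypothesis; and (iii) the bidifferential $\omega$ defined in (\ref{invbidiff}) is symmetric in its two arguments.

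First I would process the left-hand side. The commutation relation gives
\[
i(C(w))e(z)\psi \;=\; e(z)\,i(C(w))\psi \;+\; \omega(C(w),z)\,\psi,
\]
and since $i(C(w))\psi$ has degree strictly less than that of $\psi$, the relation (\ref{hermitianpairing1}) is available at the inductive level, yielding
\[
\bigl(\chi,\,e(z)\,i(C(w))\psi\bigr) \;=\; -\,C^*_z\bigl(i(C(z))\chi,\,i(C(w))\psi\bigr).
\]
The resulting ``core'' pairing $(i(C(z))\chi,\,i(C(w))\psi)$ involves only vectors of degree $\le N$ and is therefore well defined by hypothesis. The left-hand side of the Lemma thus reduces, after applying the outer $C^*_w$, to $-C^*_w C^*_z(i(C(z))\chi,\,i(C(w))\psi) + C^*_w[\omega(C(w),z)]\,(\chi,\psi)$.

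Next I would reduce the right-hand side in the symmetric way: I use conjugate-symmetry of $(\cdot,\cdot)$ to swap the roles of the two slots, commute $i(C(z))$ past $e(w)$ by (\ref{iecommute}) (applicable because $w\ne C(z)$), and invoke (\ref{hermitianpairing1}) once more. The outcome is the same core pairing $(i(C(z))\chi,\,i(C(w))\psi)$ together with a scalar residue $C^*_z[\omega(C(z),w)]\,(\chi,\psi)$.

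Comparing the two reductions, the core pairings agree tautologically, and the two scalar residues coincide by the symmetry of $\omega$ in its two arguments. The main obstacle, as in the analogous reduction of \S \ref{bosonpairing}, will be to account correctly for the interplay between the antiholomorphic pullback $C^*$ and the complex conjugation implicit in the conjugate-symmetry of $(\cdot,\cdot)$: one must check that the $C^*_wC^*_z$ and $\overline{C^*_w}C^*_z$ that appear naturally on the two sides match on a $(1,0)$-bidifferential in $(w,z)$. This is pure bookkeeping, but it is the only step at which sign or normalisation errors can enter; no further algebraic idea is needed beyond the commutation (\ref{iecommute}) and the inductive hypothesis.
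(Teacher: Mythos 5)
Your argument is correct and matches the paper's proof in substance: both rest on commuting $i$ past $e$ via (\ref{iecommute}) and applying the adjointness relations at the inductive (degree $\le N$) level, with the leftover scalar terms cancelling because the kernel $1/(1-u(z)\overline{u}(w))^2$ is symmetric under exchange-plus-conjugation. The only cosmetic differences are that you reduce both sides to the common pairing $(\hi(C(z))\chi,\hi(C(w))\psi)$ rather than transforming the left side directly into the right, and that you invoke conjugate-symmetry of $(\cdot,\cdot)$ where the paper instead reads (\ref{hermitianpairing2}) from right to left.
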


\begin{proof} Set $C(w)=z'$. The earlier computation gets replaced by:
\begin{equation*}
\begin{split}
(\chi, \hi(C(w)) \he(z) \psi)&=(\chi, \hi(z') \he(z) \psi)=\frac{1}{(u(z')-u(z))^2}(\chi, \psi) + (\chi,\he(z) \hi(z')\psi)\\
&=\frac{1}{(u(z')-u(z))^2}(\chi, \psi) +(\he(C(z')) \hi(C(z)) \chi,\psi) \frac{1}{u(z')^2 u(z)^2}\\
&=(\hi(C(z)) \he(C(z))\chi,\psi) \frac{1}{u(z')^2 u(z)^2}\\
&=(\hi(C(z)) \he(w) \chi,\psi) \frac{1}{u(z')^2 u(z)^2} \\
&=(\hi(C(z)) \he(w) \chi,\psi) \frac{\overline{u}(w)^2}{u(z)^2} \\
\end{split}
\end{equation*}
since
\begin{equation*}
\begin{split}
(\he(C(z)) \hi(C(z)) \chi,\psi)&-(\hi(C(z)) \he(C(z))\chi,\psi)\\&=-(\frac{\overline{u}(z)^2\overline{u}(z)^2}{(\overline{u}(z)-\overline{u}(z))^2} \chi,\psi)
\\
&=-\frac{u(z)^2u(z)^2}{(u(z)-u(z))^2}(\chi,\psi)\\
\end{split}
\end{equation*}
\end{proof}

Consider states ``supported at'' $O$, ie., $\lO \bW \subset \lD\bW$.

\begin{proposition}\label{prophermitianbosonlO} There is a unique hermitian structure $(,)$ on  $\lO \bW$  such that $({\mathbf1},\mathbf{1}) =1$, and if  $\chi,\psi \in \lO \bW$, we have
$$
(\chi, b_l \psi) = (b_{-l} \chi, \psi),\ l \in \bbZ
$$
\end{proposition}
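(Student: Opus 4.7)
The plan is to exploit the fact, established in \S\ref{bosonmodes}, that $\lO\bW$ is freely generated as a commutative polynomial algebra on $\mathbf{1}$ by the operators $\{b_{-l}:l\geq 1\}$, since the elements $\{u^{-l-1}du : l\geq 1\}$ form a basis of $H^{0}(\bbP\setminus O,K)$. Combined with the annihilation property $b_{l}\mathbf{1}=0$ for $l\geq 1$ and the Heisenberg commutation relations $[b_{l},b_{m}]=l\,\delta_{l+m,0}$, this reduces both uniqueness and existence to a standard Fock-space computation.

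For uniqueness, I write arbitrary $\chi,\psi\in\lO\bW$ as $\chi=b_{-i_{1}}\cdots b_{-i_{n}}\mathbf{1}$ and $\psi=b_{-j_{1}}\cdots b_{-j_{m}}\mathbf{1}$ with $i_{k},j_{k}\geq 1$. The required adjoint relation lets me transfer each $b_{-i_{k}}$ across the pairing, turning it into $b_{i_{k}}$. Commuting the resulting positive modes past the remaining $b_{-j_{k}}$ via $[b_{i_{k}},b_{-j_{k}}]=i_{k}\delta_{i_{k},j_{k}}$ produces a finite sum whose non-scalar terms all end in $b_{l}\mathbf{1}=0$ for some $l\geq 1$, leaving a c-number multiple of $(\mathbf{1},\mathbf{1})=1$. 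Hence $(\chi,\psi)$ is completely determined by the two normalisation data in the statement.

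For existence, the plan is to define the Hermitian form directly by the Wick prescription: identifying $\lO\bW$ with the symmetric algebra on $\lO\bK$, declare $\{b_{-l}\mathbf{1}\}_{l\geq 1}$ mutually orthogonal with $\|b_{-l}\mathbf{1}\|^{2}=l$, and extend multiplicatively by the standard symmetric-algebra formula. Conjugate symmetry is manifest, and positive-definiteness is built in. It remains to verify the adjoint identity $(b_{-l}\chi,\psi)=(\chi,b_{l}\psi)$ for $l\geq 1$; by sesquilinearity it suffices to check this on monomials, which follows by induction on the degree of $\psi$ using the commutation relations together with $b_{l}\mathbf{1}=0$. The case $l<0$ then drops out by Hermitian conjugation of the case $l>0$, and $l=0$ is vacuous because no zero mode is present here (cf.\ \S\ref{bosoninsertions}).

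The potential obstacle is the well-definedness of the Wick prescription: a given element of $\lO\bW$ can be written as a polynomial in the $b_{-l}$'s in several ways differing by reordering the generators. But the operators $\{b_{-l}:l\geq 1\}$ mutually commute, so any such polynomial representation applied to $\mathbf{1}$ yields the same vector, and the Wick-based inner product of two monomials depends only on their symmetric-algebra class; this disposes of the obstruction. Alternatively, existence can be obtained by restricting the Hermitian structure of Proposition \ref{prophermitianbosonlD} to $\lO\bW\subset\lD\bW$ (for any disc $D\ni O$ bounded by the $C$-fixed circle) and reinterpreting the adjoint of $b_{l}$ via contour deformation to a small loop around $O$, which recovers $b_{-l}$ as predicted by the Heisenberg representation in (\ref{Heisenbergbosonmodes}).
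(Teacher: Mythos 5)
Your argument is correct, and it is worth noting that the paper itself supplies no written proof of this proposition: it is stated immediately after the remark that $\lO\bW\subset\lD\bW$, so the intended justification is evidently either the restriction of the hermitian structure of Proposition \ref{prophermitianbosonlD} (your ``alternative'' route) or a rerun of the same degree-induction scheme used for Propositions \ref{propbosonpairing} and \ref{prophermitianbosonlD}, now with the mode operators $b_{\pm l}$ playing the roles of $e$ and $i$. Your primary route --- uniqueness by commuting positive modes to the right until they annihilate $\mathbf{1}$, existence by the Wick/permanent prescription on the symmetric algebra with $\|b_{-l}\mathbf{1}\|^{2}=l$ --- is the standard Fock-space computation and is self-contained; it has the advantage of making positive-definiteness immediate, which the inductive uniqueness scheme of the paper does not by itself provide (the paper defers positivity to the separate reflection-positivity section). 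You correctly dispose of the only real subtlety, namely well-definedness under reordering of the commuting creation operators, and your normalisation $\|b_{-l}\mathbf{1}\|^{2}=l$ is consistent with the restricted $\lD\bW$ structure: expanding $\he(z)\mathbf{1}=\sum_{l\ge1}b_{-l}\mathbf{1}\,u(z)^{l-1}$ reproduces the kernel $(1-u(z)\bar{u}(y))^{-2}$ computed in the reflection-positivity paragraph, so the two constructions agree. The only cosmetic caveat is that the statement's quantifier ``$l\in\bbZ$'' should be read as $l\ne 0$, since no zero mode is defined on $\lO\bW$ in the absence of insertions, exactly as you observe.
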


\subsection{Reflection positivity}

Let us prove that the hermitian structure we constructed is positive. In fact we will be able to check that this is essentially the inner product on the symmetric algebra induced from the Bergman inner product.

Consider sequences of distinct points $y_1,\dots,y_l$ and $z_1,\dots,z_m$ in $D$. Then (\ref{hermitianpairing1}) gives
\begin{equation*}
\begin{split}
(\he(y_1)\dots \he(y_l)\mathbf{1},\he(z_1)\dots \he(z_m)\mathbf{1})=
u^2(z_1) &(\hi(C(z_1))\he(y_1)\dots \he(y_l)\mathbf{1},\he(z_1)\dots \he(z_m)\mathbf{1})\\
=u^2(z_1)\sum_{p}\frac{1}{(\bar{u}(C(z_1)-\bar{u}(y_p))^2}&(\he(y_1)\dots \widehat{\he(y_p)} \dots \he(y_l)\mathbf{1},\he(z_2)\dots \he(z_m)\mathbf{1})\\
=\sum_{p}\frac{1}{(1-u(z_1)\bar{u}(y_p))^2}&(\he(y_1)\dots \widehat{\he(y_p)} \dots \he(y_l)\mathbf{1},\he(z_2)\dots \he(z_m)\mathbf{1})\\
\end{split}
\end{equation*}
which yields by induction

\begin{proposition}
\begin{equation*}
(\he(y_1)\dots \he(y_l)\mathbf{1},\he(z_1)\dots \he(z_m)\mathbf{1})=
\begin{cases}
0 \ \ \ \text{if $l \ne m$ and}\\
\sum_{\sigma} \frac{1}{(1-u(z_1){\bar u}(y_{\sigma(1)}))^2} \dots \frac{1}{(1-u(z_l)\bar{u}(y_{\sigma(l)}))^2}
\end{cases} 
\end{equation*}
where the sum runs over all permutations $\sigma$ of $(1,\dots,l)$

\end{proposition}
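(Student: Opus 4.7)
The plan is to prove the proposition by induction on $l+m$, using the recursion established in the displayed calculation immediately preceding the statement, namely
\begin{equation*}
(\he(y_1)\cdots\he(y_l)\mathbf{1},\he(z_1)\cdots\he(z_m)\mathbf{1}) \;=\; \sum_{p=1}^{l}\frac{(\he(y_1)\cdots\widehat{\he(y_p)}\cdots\he(y_l)\mathbf{1},\,\he(z_2)\cdots\he(z_m)\mathbf{1})}{(1-u(z_1)\bar u(y_p))^2},
\end{equation*}
obtained by using the adjoint relation (\ref{hermitianpairing1}) to replace $\he(z_1)$ in the right slot by $\hi(C(z_1))$ in the left slot, then commuting $\hi(C(z_1))$ through each $\he(y_p)$ via (\ref{iecommute}), and killing the leftover term via $\hi(C(z_1))\mathbf{1}=0$.

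The base case $l=m=0$ is trivial: $(\mathbf{1},\mathbf{1})=1$, matching the unique empty permutation. For the inductive step with $l+m\ge 1$, first reduce to the case $m\ge 1$: if $m=0$ and $l\ge 1$, conjugate symmetry $(\chi,\psi)=\overline{(\psi,\chi)}$ swaps the roles of the $y$'s and $z$'s, and the kernel transforms correctly since $\overline{(1-u(z)\bar u(y))^{-2}}=(1-u(y)\bar u(z))^{-2}$.

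Now assume $m\ge 1$ and apply the recursion. If $l=0$ the outer sum is empty, so the pairing vanishes, consistent with the claim. If $l\ge 1$, each term on the right is a pairing with $l-1$ and $m-1$ slots. For $l\ne m$ these slot counts remain unequal, so by the inductive hypothesis each term vanishes. For $l=m$ the inductive hypothesis gives
\begin{equation*}
(\he(y_1)\cdots\widehat{\he(y_p)}\cdots\he(y_l)\mathbf{1},\,\he(z_2)\cdots\he(z_l)\mathbf{1}) \;=\; \sum_{\tau}\prod_{k=2}^{l}\frac{1}{(1-u(z_k)\bar u(y_{\tau(k)}))^2},
\end{equation*}
where $\tau$ ranges over bijections $\{2,\dots,l\}\to\{1,\dots,l\}\setminus\{p\}$. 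Bundling each pair $(p,\tau)$ into the permutation $\sigma$ of $\{1,\dots,l\}$ with $\sigma(1)=p$ and $\sigma(k)=\tau(k)$ for $k\ge 2$ yields the sum over all permutations $\sigma$ in the claimed formula.

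The main (really the only) obstacle is verifying the recursion display cleanly: one must check that the factor $u(z_1)^{-2}$ produced by (\ref{hermitianpairing1}) combines with the complex conjugate of the commutator scalar $(u(C(z_1))-u(y_p))^{-2}$---which appears conjugated because it is pulled out of the first slot of a Hermitian form---to give exactly $(1-u(z_1)\bar u(y_p))^{-2}$. Explicitly, using $u(C(z_1))=1/\bar u(z_1)$ one has $\overline{(u(C(z_1))-u(y_p))^{-2}}=u(z_1)^{2}(1-u(z_1)\bar u(y_p))^{-2}$, and the $u(z_1)^{\pm 2}$ factors cancel. Once the recursion is in hand, the induction is purely formal.
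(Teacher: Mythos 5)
Your proof is correct and takes essentially the same route as the paper: the displayed computation immediately preceding the proposition is exactly your recursion (reading $\he(z_1)$ over to $\hi(C(z_1))$ via (\ref{hermitianpairing1}), commuting through the $\he(y_p)$ and killing the vacuum term), and the paper then concludes with ``which yields by induction.'' You have merely filled in the inductive bookkeeping (the $l\ne m$ vanishing, the assembly of $(p,\tau)$ into $\sigma$) and, in your final paragraph, tracked the cancellation of the $u(z_1)^{\pm 2}$ factors correctly.
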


\section{\textbf{The boson on a disc}}

Let us briefly explore an analytic (as opposed to algebraic) version of the chiral boson.

Let $\bbD$ be a simply connected Riemann surface biholomorphic to the unit disc in $\bbC$. We let $K$ denote the canonical bundle.

Consider the space of holomorphic vector fields that integrate to give automorphisms of $\bbD$.  This is a three dimensional real vector space, whose complexification is a space of holomorphic vector fields without base-points. Thus $\bbD$ can be canonically embedded as a domain in a conic $\mathbb{P}$ in a $\mathbb{P}^2$ with a real structure which induces a canonical antiholomorphic involution $C:\mathbb{P} \to \mathbb{P}$. We let $\bbD'$ denote the complement of the closure $\overline{\bbD}$. The involution $C:\mathbb{P} \to \mathbb{P}$ exchanges $\bbD$ with $\bbD'$ and acts as identity on the boundary $\partial \bbD$. We will often identify $\bbD$ with the unit disc $\{u \in \mathbb{C}| |u|<1\}$; w.r.to this coordinate,  $C(u)=\frac{1}{\overline{u}}$.

We let $V$ denote the Hilbert space of square-integrable holomorphic one-forms on $\bbD$. If $\bbD$ is identified with the unit-disc, a typical element of $V$ is $\alpha=a(u) du$, where $a$ is a holomorphic function defined for $|u|<1$ such that the area integral
\begin{equation*}
||\alpha||^2=\frac{1}{2\pi i} \int_\bbD \overline{\alpha} \alpha=\frac{1}{\pi} \int_\bbD |a(u)|^2 dx dy < \infty
\end{equation*}
where $u=x+iy$. Thus $V$ is the Bergman space $A^2(\bbD)$. We let $\overline{V}$ denote the space of square integrable \emph{anti}holomorphic one-forms; complex conjugation is an complex antilinear isometry $V \to \overline{V}$, and the equation
\begin{equation*}
(\alpha,\beta)=<\overline{\alpha},\beta>=\frac{1}{2\pi i} \int_\bbD \overline{\alpha} \beta
\end{equation*}
relates the inner product on $V$ and the duality pairing between $V$ and $\overline{V}$.

Given $z \in \bbD$, the evaluation map $ev_z:V \to K_z$ is bounded. Therefore there exists a unique vector in $e_z \in \overline{V} \otimes K_z$ such that
\begin{equation*}
<e_z,\beta>=\beta (z)
\end{equation*}
In fact $e_z$ is the Bergman kernel; trivialising $K_z$ by $du_z$, we have
\begin{equation*}
e_z= \frac{1}{(1-u(z)\overline{u}_1)^2}d\overline{u}_1 \otimes du_z \equiv \gamma_z du_z
\end{equation*}
(where $\gamma_z$, for fixed $z$, is an antiholomorphic $(0,1)$-form). Note that as $z$ varies $\gamma_z$ is a holomorphic family of vectors in $\overline{V}$. 

We will also need to consider evaluation at points \emph{outside} $\bbD$. For $z \in \mathbb{P}$ let $V^z$ denote the subspace of $V$ consisting of \emph{rational} forms regular at $z$; let $\overline{V}^z$ be defined analogously. Consider the evaluation map $ev_z:\overline{V}^z \to \overline{K}_z$; it is clear that given an element $\overline{\alpha}$ in $\overline{V}$ that is defined in a neighbourhood of $z$, the evaluation $ev_z(\overline{\alpha})=\overline{\alpha}(z)$ varies \emph{anti}-holomorphically with $z$; to compensate, we compose with the pull-back by the complex conjugation map.

Now to a precise definition. For $z \in \bbD$ define $i_z:\overline{V}^z \to K_z$ by $i_z=-C^* \circ ev_{C(z)}$ (up to a convenient sign choice, ``evaluate at $\frac{1}{\overline{z}}$ and pull back by $C$, in the process changing a $(0,1)$-form to a $(1,0)$-form."). If we identify $\bbD$ with the unit disc, the map $i_z$ is given by the formula:
\begin{equation*}
i_z(\overline{\alpha})=\overline{a}(\frac{1}{\overline{u}(z)}) {\frac{1}{u(z)^2}}du_z
\end{equation*}
Note that $e_z$ and $i_z$ would be formal adjoints, but for the complex conjugation involved in the definition of $i$.

From now on, we only consider rational forms on $\mathbb{P}$ that are regular on $\bbD$, or their conjugates.  We let $\bar{\bK}_\bbD \subset \overline{V}$ be defined by
$$
\bar{\bK}_\bbD \equiv \{\overline{\alpha}|\alpha\ a\ meromorphic\ form\ on\ \mathbb{P}\ regular\ on\ \bbD \}
$$
Similarly, we set, for $z \in \bbD$
$$
\bar{\bK}_\bbD^z  \equiv \{\overline{\alpha}|\alpha\ a\ meromorphic\ form\ on\ \mathbb{P}\ regular\ on\ \bbD\ and\ near\ C(z) \}
$$

We consider the ``bosonic Fock space over $\bar{\bK}_\bbD$", namely the symmetric algebra
\begin{equation*}
\mathcal{H}=\mathbb{C} \oplus \bar{\bK}_\bbD \oplus S^2 \bar{\bK}_\bbD \oplus ......
\end{equation*}
and (for $z \in \bbD$) the subspace
\begin{equation*}
\mathcal{H}^z=\mathbb{C} \oplus \bar{\bK}_\bbD^z \oplus S^2 \bar{\bK}_\bbD^z \oplus ......
\end{equation*}
We will let $\mathbf{1}$ denote the vacuum vector in the first summand, $1 \in \mathbb{C}$. 

We now elevate $e_z$ and $i_z$ to the level of operators on Fock space, by defining $e(z): \mathcal{H} \to \mathcal{H} \otimes K_z$ as
\begin{equation*}
e(z) (\overline{\alpha}_l \otimes^s \dots \otimes^s \overline{\alpha}_l) = \overline{\alpha}_l \otimes^s \dots \otimes^s  \overline{\alpha}_l \otimes^s  e_z 
\end{equation*}
where we mean the symmetric tensor product, and $i(z): \mathcal{H}^z \to \mathcal{H} \otimes K_z$ by the formula
\begin{equation*}
i(z) (\overline{\alpha}_l \otimes^s \dots \otimes^s \overline{\alpha}_l) = \sum_i \overline{\alpha}_l \otimes^s \widehat{{\overline{\alpha}_i}} \otimes^s \dots \otimes^s \overline{\alpha}_l \otimes i_z(\overline{\alpha}_i) 
\end{equation*}
The hat marks a term to be omitted; we set
\begin{equation*}
b(z)=i(z)+e(z)
\end{equation*}
with the domain $\mathcal{H}^z$.

Given two distinct points $z_1$ and $z_2$, the operators $i(z_1), \ e(z_1)$ and $i(z_2), \ e(z_2)$ can be composed on $\mathcal{H}^{z_1} \cap \mathcal{H}^{z_2}$. Clearly $i(z_1),\ i(z_2)$ commute as do $e(z_1),\ e(z_2)$; on the other hand
\begin{equation*}
i(z_1) \circ e(z_2) - e(z_2) \circ i(z_1) = \frac{1}{(u(z_1)-u(z_2))^2} du_{z_1} \otimes^s du_{z_2}
\end{equation*}
For distinct points $z_1$ and $z_2$, the operators $b(z_1), \ b(z_2)$ commute.

Before we return to a more algebraic point of view, we note that if we consider the Hilbert space completion of $\cH$, the operator $i(z)$ (for $z \in \bbD$) is densely defined but  \emph{not closeable}. On the other hand, one can define the Heisenberg operator $\Phi$ associated to a meromorphic function $\phi$ on $\bbD$ regular near the boundary, and \emph{these operators are densely defined and closeable}.

\pagebreak

\part{\textbf{{\Large The boson: vertex algebra package}}}\label{bosonvertexalgebra}

\section{\textbf{Preliminaries}}

Once a base-point $P$ is chosen at infinity, we have the sequence of groups:
\begin{equation}\label{AutP}
\text{Translations of }\ \bbP \setminus P \lhd Aut_P\ \bbP \twoheadrightarrow \bbC^*
\end{equation}
the map to $\bbC^*$ being given by the inverse of the differential action at $P$. Since translations are central, the quotient $\bbC^*$ acts on translations; with our conventions, this is the standard action of $\bbC^*$ on a one-dimensional vector space.
Given a translation $v$, we will use the notation
$$
(z,v) \mapsto z+v
$$
to describe its action at the point $z$; and if a coordinate $u$ is chosen, we will define $u(v)$ by $u(z+v)=u(z)+u(v)$. We denote by $\sfT_v$ the operator on meromorphic 1-forms corresponding to pulling back by $-v$:
$$
\sfT_v \{a(u)du\} = a(u-u(v))du
$$
and keep the same notation for induced operators
\begin{itemize}
\item $\sfT_v: \bW \to \bW$,
\item $\sfT_v: \bW_P \to \bW_P$,
\item $\sfT_v: {}_{O}\bW \to {}_{O+v}\bW$,
\end{itemize}
More generally, $Aut_P \ \bbP$ acts on $\bW$ preserving $\bW_P$, but it is convenient to describe this action slightly less canonically. Given $O \ne P$, the sequence (\ref{AutP}) splits, and we have 
$$
Aut_P \bbP = \{\text{translations of $\bbP \setminus P$}\} \rtimes \{\bbC^* =Aut_{O,P}\} 
$$
In terms of a coordinate $u$ adapted to $O$ and $P$, the action of $\bbC^*$ is
$$
u(\lambda(z))=\lambda u(z)
$$
We have an induced action of $\bbC^*$ on the flag of spaces $\lO \bW \subset \bW_P \subset \bW$; we will denote by $\sfR_{\lambda}$ the operator corresponding to pullback by the map $z \mapsto \lambda^{-1}(z)$. As in the case of translations $\sfR_{\lambda}$ acts on $\bW$ preserving $\bW_P$ and $\lO\bW$, and takes ${}_{z}\bW$ to ${}_{\lambda(z)}\bW$.  Because this is part of the $Aut_P \ \bbP$-action, we have
$$
\sfR_{\lambda} \circ \sfT_v \circ \sfR_{\lambda^{-1}} = \sfT_{\lambda v}
$$
Recall the definition of $\lO \bW$:
\begin{equation*}
\lO \bW=\bbC \oplus H^0(\bbP \setminus O, K) \oplus S^2 H^0(\bbP \setminus O, K) \oplus ......
\end{equation*}
We make explicit the action of $\bbC^*$ on $H^0(\bbP \setminus O, K)$:
$$
\lambda (du^{-l})=\lambda^{l} du^{-l}, \ l\ge 1
$$
The action on $\lO\bW$ is the induced action on the symmetric algebra; we write
$$
\lO \bW=\underset{m \ge 0}{\oplus} \lO\bW_m
$$
with $\lO\bW_m$ the subspace on which $\lambda$ acts by the scalar $\lambda^{m}$. We remark that $\lO\bW_0$ is spanned by the vacuum vector $\mathbf{1}$ and for $m>0$,  $\lO\bW_m$ is spanned by the vectors
\begin{equation*}
b_{-l_1}b_{-l_2} \dots {b_{-l_p}}\mathbf{1}=du^{-l_1} \otimes^s du^{-l_2}  \otimes^s  \dots  \otimes^s du^{-l_p}
\end{equation*}
with $l_j > 0$ and $\sum_j l_j=m$.

\section{\textbf{Definition of a vertex algebra structure}}

\textbf{\emph{Until further notice, we will fix a point $P$ at infinity, and work in its complement. To keep notation simple, we set $\bK \equiv \bK_P$ and $\bW \equiv \bW_P$}}.   Suppose given a sequence of points $\bz=(z_1,\dots,z_n)$ distinct from each other and from $P$. We let $|\bz|$ denote the underlying set $\{z_1,\dots,z_n\}$. Let $\lbz \bW=S^* \lbz \bK$, where $\lbz\bK$ is the space of forms (of the second kind) regular outside the $z_i$. Given a vector $\ttv \in \bW$, let $Sing(\ttv)$ denote the smallest set $|\bz|$ such that $\ttv \in \lbz \bW$. (Thanks to N. Nitsure for insisting on the introduction of this notation.)

Given a ``variable'' translation vector $v$, we set $\bW^{|\bz|+v}=S^*\bK^{|\bz|+v}$, where $\bK$ is space of meromorphic forms of the second kind regular at ($P$ as well as) points of the set $\{|\bz|+v\}$.

\begin{definition}\label{defnvertexalgebra} A \textit{vertex algebra structure} on the chiral boson is a linear map that assigns to each state $\ttv \in \lbz \bW$, and translation $v$, an operator
\begin{equation*}
Y(\ttv,v):\bW^{|\bz|+v} \to  \bW
\end{equation*}
such that:
\begin{enumerate}
\item If $\ttv' \in {}_{|\bz'|}\bW \cap \bW^{|\bz|+v}$, then $Y(\ttv,v)\ttv' \in {}_{|\bz'|\cup(|\bz|+v)}\bW$. (That is, $Sing(Y(\ttv,v)\ttv') \subset Sing(\ttv') \cup (Sing(\ttv)+v)$. So to speak, acting on any state, $Y(\ttv,v)$ adds singularities at the singular points of $\ttv$, translated by $v$.)
\item $Y(\mathbf{1},v)=Id$.
\item $Y(\ttv,0)\mathbf{1} = \ttv$.
\item \textbf{Given $\ttv \in \lbz \bW$ and $\ttv' \in {}_{|\bz'|} \bW$, the operators $Y(\ttv,v)$ and $Y(\ttv',v')$ commute if $|\bz|+v$ is disjoint from $|\bz'|+v'$}.
\item Given translations $v,v'$,
$$
Y(\ttv,v+v')= \sfT_{v'} \circ Y(\ttv,v) \circ \sfT_{-v'}
$$
\item  The assignment
$$
(\ttv, v) \mapsto Y(\ttv,v)
$$
is $Aut_P \ \bbP$-covariant. Explicitly,
\begin{itemize}
\item given a translation $v'$, $Y(\sfT_{v'}\ttv,v)=\sfT_{v'} Y(\ttv,v) \sfT_{-v'}$, and
\item given a rotation $z \mapsto \lambda (z)$ around a point $O$,
$Y(\sfR_{\lambda}\ttv,\lambda v)= \sfR_{\lambda} Y(\ttv,v) \sfR_{\lambda^{-1}}$.
\end{itemize}
(\emph{We will see below that covariance with respect to translations is guaranteed by the properties (3)-(5).})
\item Fix a point $O$. The span of $Y(\ttv_n,v_n) \dots Y(\ttv_1,v_1)\mathbf{1}$, for all sequences of distinct translation vectors $(v_1,\dots, v_n)$, and states $(\ttv_1,\dots,\ttv_n)$, \emph{all belonging to $\lO\bW$}, is $\bW_P$.
\end{enumerate}
\end{definition}

\subsection{General Results} Before exhibiting any vertex algebra structures, we turn to some basic results in the theory of vertex algebras, (re)stated and (re)proved in the context of the boson. (I expect that these will hold in our other examples of field theories.) Because we are not constrained to work with formal power series, both the statements and proofs are quite simple. 

The trick used in the starred equation in the proof of the ``Skew-symmetry" Proposition below (and again in the proof of ``Associativity'') is key to the proof of the Goddard Uniqueness Theorem, a fundamental statement about vertex algebras.

\begin{theorem} We have, for any $v\in \lbz\bW$,
$$
Y(\ttv,v)\mathbf{1}=\sfT_v(\ttv)
$$
\end{theorem}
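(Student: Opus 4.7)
The plan is to derive the identity directly from axioms (3) and (5), without invoking commutativity or any deeper structure. Applying property (5) with the decomposition $v = 0 + v$ (i.e., take the first argument to be $0$ and the translation parameter to be $v$) yields the operator identity
\begin{equation*}
Y(\ttv, v) \;=\; \sfT_v \circ Y(\ttv, 0) \circ \sfT_{-v}
\end{equation*}
on $\bW^{|\bz|+v}$. Next I would evaluate both sides on the vacuum. The right-hand side simplifies in two steps: first, $\sfT_{-v}(\mathbf{1}) = \mathbf{1}$ because $\mathbf{1}$ is the constant $1 \in \bbC \subset \bW$, which is fixed by every pull-back in $Aut_P\,\bbP$; second, $Y(\ttv,0)\mathbf{1} = \ttv$ by axiom (3). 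Combining gives $Y(\ttv,v)\mathbf{1} = \sfT_v(\ttv)$, which is the claim.

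The only bookkeeping to check is that $\mathbf{1}$ lies in the domain of $Y(\ttv,v)$, namely $\bW^{|\bz|+v}$, which is immediate since constants are regular everywhere, and that the types match: $\sfT_{-v}:\bW^{|\bz|+v}\to \bW^{|\bz|}$ sends $\mathbf{1}$ to $\mathbf{1}$, then $Y(\ttv,0):\bW^{|\bz|}\to \bW$ sends $\mathbf{1}$ to $\ttv \in {}_{|\bz|}\bW$, and finally $\sfT_v:{}_{|\bz|}\bW \to {}_{|\bz|+v}\bW$ sends $\ttv$ to $\sfT_v\ttv$; the resulting singular set $|\bz|+v$ agrees with the bound given by axiom (1). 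There is no real obstacle.

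I would also record the remark that this short argument uses \emph{only} axioms (3) and (5); the commutativity axiom (4) is not needed here. Together with (4), however, this identity is the essential input for deducing the translation half of axiom (6) from (3)--(5), in line with the parenthetical remark in Definition \ref{defnvertexalgebra}: once one knows $Y(\ttv,v)\mathbf{1} = \sfT_v\ttv$, the standard Goddard-type uniqueness argument, applied in the rational rather than formal-power-series setting, forces $Y(\sfT_{v'}\ttv, v) = \sfT_{v'}\circ Y(\ttv,v)\circ \sfT_{-v'}$.
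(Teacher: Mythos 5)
Your proof is correct and is essentially identical to the paper's: both apply the translation-covariance axiom in the form $Y(\ttv,v)=\sfT_v\circ Y(\ttv,0)\circ\sfT_{-v}$, then evaluate on $\mathbf{1}$ using $\sfT_{-v}\mathbf{1}=\mathbf{1}$ and the vacuum axiom $Y(\ttv,0)\mathbf{1}=\ttv$. Your additional domain bookkeeping and the closing remark are fine but not needed beyond what the paper records.
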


\begin{proof} From Definition \ref{defnvertexalgebra}, (4)
\begin{equation*}
\begin{split}
Y(\ttv,v)\mathbf{1} &= \sfT_{v} \circ Y(\ttv,0) \circ \sfT_{-v}\mathbf{1}\\
&=\sfT_{v} \circ Y(\ttv,0) \mathbf{1}\\
&=\sfT_{v} (\ttv)\\
 \end{split}
\end{equation*}
\end{proof}

\begin{proposition}\textup{\textbf{``Skew-symmetry''.}} Given states $\ttv_1 \in {}_{|\bz_1|} \bW$ and $\ttv_2 \in {}_{|\bz_2|} \bW$, we have
$$
Y(\ttv_1,v)\ttv_2=\sfT_{v} Y(\ttv_2,-v)\ttv_1
$$
\end{proposition}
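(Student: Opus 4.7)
The plan is to follow the classical Goddard-uniqueness trick, which in this rational setup reduces cleanly to the axioms already established: creation $Y(\ttv,0)\mathbf{1}=\ttv$ (axiom (3)), commutativity (axiom (4)), translation covariance (axiom (5)), and the corollary just proved that $Y(\ttv,v)\mathbf{1}=\sfT_v\ttv$. No analytic continuation or formal-series manipulation is needed, because the ``commute when singular sets are disjoint'' property is built directly into axiom (4).

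First I would observe that the left-hand side $Y(\ttv_1,v)\ttv_2$ is defined only when $\ttv_2\in\bW^{|\bz_1|+v}$, i.e.\ when $|\bz_1|+v$ is disjoint from $|\bz_2|$; the same disjointness ensures that $Y(\ttv_2,-v)\ttv_1$ on the right-hand side is also defined, so the two sides live in the same space $\bW$. Under this hypothesis, using the creation property, I would write
$$
Y(\ttv_1,v)\ttv_2 \;=\; Y(\ttv_1,v)\,Y(\ttv_2,0)\mathbf{1}.
$$
Since the singular sets $|\bz_1|+v$ and $|\bz_2|+0=|\bz_2|$ are disjoint, axiom (4) lets me interchange the two operators:
$$
Y(\ttv_1,v)\,Y(\ttv_2,0)\mathbf{1} \;=\; Y(\ttv_2,0)\,Y(\ttv_1,v)\mathbf{1}.
$$
Here I must check that the intermediate vector $Y(\ttv_1,v)\mathbf{1}$, which lies in ${}_{|\bz_1|+v}\bW$ by axiom (1), is in the domain $\bW^{|\bz_2|}$ of $Y(\ttv_2,0)$; this is again the same disjointness condition.

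Next I would invoke the corollary $Y(\ttv_1,v)\mathbf{1}=\sfT_v\ttv_1$ to rewrite the expression as $Y(\ttv_2,0)\,\sfT_v\ttv_1$. The last step is to absorb the translation: applying axiom (5) with base translation $-v$ and shift $v$ gives
$$
Y(\ttv_2,0) \;=\; Y(\ttv_2,-v+v) \;=\; \sfT_v\circ Y(\ttv_2,-v)\circ \sfT_{-v},
$$
so that
$$
Y(\ttv_2,0)\,\sfT_v\ttv_1 \;=\; \sfT_v\circ Y(\ttv_2,-v)\circ \sfT_{-v}\circ \sfT_v(\ttv_1) \;=\; \sfT_v\,Y(\ttv_2,-v)\ttv_1,
$$
since $\sfT_{-v}\circ\sfT_v=\mathrm{Id}$. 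Chaining these equalities produces the claimed identity.

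The only point requiring genuine care is the bookkeeping of singular sets at each step, to ensure every operator acts on a vector inside its stated domain. I expect no essential obstacle here: the disjointness $(|\bz_1|+v)\cap|\bz_2|=\emptyset$ that is implicit in the left-hand side being defined is exactly what propagates through and legitimizes each of the three moves above.
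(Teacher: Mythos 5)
Your proof is correct and follows essentially the same route as the paper's: reduce to the vacuum via $Y(\ttv_2,0)\mathbf{1}=\ttv_2$, commute the two operators using the locality axiom, substitute $Y(\ttv_1,v)\mathbf{1}=\sfT_{v}\ttv_1$, and then conjugate the translation back out with $Y(\ttv_2,0)=\sfT_{v}Y(\ttv_2,-v)\sfT_{-v}$. The domain bookkeeping you emphasize is precisely the preliminary check the paper carries out before its chain of equalities.
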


\begin{proof} Note that for the left hand side to make sense, $\ttv_2$ should belong to $\bW^{|\bz_1|+v}$, which is equivalent to demanding that $|\bz_2| \cap (|\bz_1|+v) = \emptyset$. For the right hand side to make sense, $\ttv_1$ must belong to $\bW^{|\bz_2|-v}$, which is equivalent to $|\bz_1|\cap (|\bz_2|-v) = \emptyset$ -- and this amounts to the same thing. The left hand side belongs to ${}_{|\bz_2| \cup (|\bz_1|+v)}\bW$, and the right hand side to $\sfT_{v} \{{}_{|\bz_1| \cup (|\bz_2|-v)}\bW\}={}_{(|\bz_1|+v)  \cup  |\bz_2|}\bW$.  Having got this out of the way, we check 
\begin{equation*}
\begin{split}
Y(\ttv_1,v)\ttv_2&\underset{*}=Y(\ttv_1,v)Y(\ttv_2,0)\mathbf{1}\\
&=Y(\ttv_2,0)Y(\ttv_1,v)\mathbf{1}\\
&=Y(\ttv_2,0)\sfT_{v}\ttv_1\\
&=\sfT_{v}\sfT_{-v}Y(\ttv_2,0)\sfT_{v}\ttv_1\\
&=\sfT_{v} Y(\ttv_2,-v)\ttv_1\\
\end{split}
\end{equation*}
\end{proof}

As promised, covariance with respect to translations follows from the other defining properties of a vertex algebra:
\begin{theorem} Given a translation $v'$, $Y(\sfT_{v'}\ttv,v)=\sfT_{v'} Y(\ttv,v) \sfT_{-v'}$ 
\end{theorem}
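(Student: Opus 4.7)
The plan is to verify the identity by evaluating both sides on an arbitrary state $\ttw \in \bW^{|\bz|+v+v'}$, where $|\bz| = Sing(\ttv)$, and then stringing together the creation formula $Y(\ttu,u)\mathbf{1} = \sfT_u\ttu$ (just proved), skew-symmetry (just proved), and axiom~(5) of the definition. The guiding device is exactly the starred trick from the skew-symmetry proof: whenever a state happens to be of the form $\sfT_{v'}\ttv$, it gets rewritten as $Y(\ttv,v')\mathbf{1}$ so that the surrounding vertex operators can be reshuffled.

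Concretely, the chain of equalities goes as follows. Creation converts the left-hand side into $Y(\sfT_{v'}\ttv, v)\ttw = Y(Y(\ttv,v')\mathbf{1},\, v)\,\ttw$; skew-symmetry with $\ttv_1 = Y(\ttv,v')\mathbf{1}$, $\ttv_2 = \ttw$, and translation $v$ then gives $\sfT_v\, Y(\ttw,-v)\, Y(\ttv,v')\mathbf{1}$, and creation applied once more yields $\sfT_v\, Y(\ttw,-v)\, \sfT_{v'}\,\ttv$. Axiom~(5) in the form $Y(\ttw,-v-v') = \sfT_{-v'}\, Y(\ttw,-v)\, \sfT_{v'}$ rearranges to $Y(\ttw,-v)\, \sfT_{v'} = \sfT_{v'}\, Y(\ttw,-v-v')$, so the expression becomes $\sfT_{v+v'}\, Y(\ttw,-v-v')\,\ttv$. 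Running skew-symmetry in reverse with $\ttv_1=\ttv$, $\ttv_2=\ttw$, and translation $v+v'$ recognises this as $Y(\ttv,\,v+v')\,\ttw$, and a final use of axiom~(5) rewrites $Y(\ttv, v+v')$ as $\sfT_{v'}\, Y(\ttv,v)\, \sfT_{-v'}$, delivering the right-hand side.

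The one step demanding genuine care --- the main, if modest, obstacle --- is the bookkeeping of singular-support conditions. Skew-symmetry requires $Sing(\ttv_2)\cap(Sing(\ttv_1) + \text{translation}) = \emptyset$; both of its uses above reduce to the single hypothesis $Sing(\ttw)\cap(|\bz|+v+v') = \emptyset$, which is precisely the condition $\ttw\in\bW^{|\bz|+v+v'}$ needed for both sides of the original identity to be defined, and the two invocations of axiom~(5) carry no disjointness hypotheses of their own. Once those compatibilities are recorded at each step, the identity follows on the common domain, hence as an equality of operators, as claimed.
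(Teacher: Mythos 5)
Your proof is correct and follows essentially the same route as the paper: both reduce the claim to $Y(\sfT_{v'}\ttv,v)=Y(\ttv,v+v')$ and establish it by the Goddard-uniqueness trick of rewriting translated states as $Y(\cdot,\cdot)\mathbf{1}$ and reshuffling via skew-symmetry and axiom (5). The only (immaterial) difference is that you invoke skew-symmetry twice where the paper uses it once together with commutativity and the vacuum axiom, and your bookkeeping of the singular-support conditions matches the paper's.
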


\begin{proof} We need to prove that
$$
Y(\sfT_{v'}\ttv,v)=Y(\ttv,v+v')
$$
Let us check that both sides act on the same space. If $\ttv \in \lbz \bW$, $\sfT_{v'} \ttv \in {}_{|\bz|+v'} \bW$, so the operator on the left acts on $\bW^{|\bz|+v'+v}$, as clearly does the operator on the right. Let $\ttv_1 \in \bW^{|\bz|+v'+v}$. Then,
\begin{equation*}
\begin{split}
Y(\sfT_{v'}\ttv,v) \ttv_1 &= \sfT_vY(\ttv_1,-v)\sfT_{v'}\ttv\\
&=\sfT_vY(\ttv_1,-v)\sfT_{-v}\sfT_{v+v'}\ttv\\
&=Y(\ttv_1,0)Y(\ttv, v+v') \mathbf{1}\\
&=Y(\ttv,v+v')\ttv_1
\end{split}
\end{equation*}
\end{proof}

\begin{theorem}(\textup{\textbf{``Associativity''/Closure under composition.}}) Given states $\ttv_1,\ttv_2$ and translations $v_1,v_2$,
$$
Y(\ttv_1,v_1) \circ Y(\ttv_2,v_2)=Y(Y(\ttv_1,v_1-v_2)\ttv_2,v_2),
$$ 
\end{theorem}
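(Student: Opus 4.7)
The plan is to emulate the ``apply-to-vacuum plus locality'' trick already employed in the proofs of Skew-symmetry and translation covariance. I would first check that both sides have the same domain of definition, then verify the identity on $\mathbf{1}$, and finally bootstrap from $\mathbf{1}$ to an arbitrary test vector $\ttw$ via the tautology $\ttw = Y(\ttw,0)\mathbf{1}$ together with axiom (4).

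Taking $\ttv_i \in {}_{|\bz_i|}\bW$ for $i=1,2$ and a test vector $\ttw \in {}_{|\bz_3|}\bW$, axiom (1) shows that the left-hand side is defined precisely when the three sets $|\bz_3|$, $|\bz_1|+v_1$, $|\bz_2|+v_2$ are pairwise disjoint; this same condition guarantees that the inner application $Y(\ttv_1,v_1-v_2)\ttv_2$ is defined, that it lies in ${}_{|\bz_2|\cup(|\bz_1|+v_1-v_2)}\bW$, and hence that the right-hand side makes sense on $\ttw$. Applying both sides to $\mathbf{1}$ and using axiom (3) together with the translation-covariance theorem just established,
\begin{align*}
\text{LHS}\cdot\mathbf{1} &= Y(\ttv_1,v_1)\,\sfT_{v_2}\ttv_2 \\
&= \sfT_{v_2}\,Y(\ttv_1,v_1-v_2)\,\sfT_{-v_2}\sfT_{v_2}\ttv_2 \\
&= \sfT_{v_2}\,Y(\ttv_1,v_1-v_2)\ttv_2 \\
&= Y\bigl(Y(\ttv_1,v_1-v_2)\ttv_2,\,v_2\bigr)\mathbf{1} \;=\; \text{RHS}\cdot\mathbf{1}.
\end{align*}

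For a general $\ttw$, I would write $\ttw = Y(\ttw,0)\mathbf{1}$ and invoke axiom (4) to commute $Y(\ttw,0)$ past $Y(\ttv_1,v_1)$, past $Y(\ttv_2,v_2)$, and past the composite field $Y(Y(\ttv_1,v_1-v_2)\ttv_2,v_2)$; the disjointness conditions on singular sets recorded above are exactly what axiom (4) requires. Then
\begin{align*}
\text{LHS}\cdot\ttw &= Y(\ttw,0)\,Y(\ttv_1,v_1)\,Y(\ttv_2,v_2)\mathbf{1} \\
&= Y(\ttw,0)\,\sfT_{v_2}\,Y(\ttv_1,v_1-v_2)\ttv_2 \\
&= Y\bigl(Y(\ttv_1,v_1-v_2)\ttv_2,\,v_2\bigr)\,Y(\ttw,0)\mathbf{1} \\
&= \text{RHS}\cdot\ttw,
\end{align*}
which completes the argument. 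The main obstacle, modest in the present bosonic case but likely to matter in the richer examples (especially current algebras), is the bookkeeping required to check that the singular set of the inner state $Y(\ttv_1,v_1-v_2)\ttv_2$, translated by $v_2$, coincides with the union $(|\bz_1|+v_1)\cup(|\bz_2|+v_2)$ that governs the commutation of $Y(\ttw,0)$ with the left-hand composition; here this is an immediate consequence of axiom (1), but in settings where the analogue of (1) is subtler, it will demand a careful tracking of the singularities produced by the vertex operation.
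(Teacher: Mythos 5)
Your proposal is correct and follows essentially the same route as the paper: both rest on the Goddard-uniqueness trick of writing the test vector as $Y(\ttw,0)\mathbf{1}$, commuting via the locality axiom, and reducing to the action on the vacuum, where axiom (5) and the identity $Y(\ttv,v)\mathbf{1}=\sfT_v\ttv$ finish the computation. The only cosmetic difference is that you verify the identity on $\mathbf{1}$ first and then bootstrap, whereas the paper runs a single chain of equalities (invoking skew-symmetry at one step instead of axiom (5) directly); your remark on tracking singular supports via axiom (1) is exactly the bookkeeping the paper leaves implicit.
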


\begin{proof} Yet again we use the strategy of the proof of Goddard Uniqueness Theorem.
\begin{equation*}
\begin{split}
Y(Y(\ttv_1,v_1-v_2)\ttv_2,v_2) \ttv_3 &= Y(Y(\ttv_1,v_1-v_2)\ttv_2,v_2)Y(\ttv_3,0) \mathbf{1}\\
&= Y(\ttv_3,0) Y(Y(\ttv_1,v_1-v_2)\ttv_2,v_2) \mathbf{1}\\
&= Y(\ttv_3,0) \sfT_{v_2 }Y(\ttv_1,v_1-v_2)\ttv_2\\
&=  \sfT_{v_2 } \sfT_{-v_2 }Y(\ttv_3,0) \sfT_{v_2 }Y(\ttv_1,v_1-v_2)\ttv_2\\
&=  \sfT_{v_2 } Y(\ttv_3,-v_2)Y(\ttv_1,v_1-v_2)\ttv_2\\
&=  \sfT_{v_2 } Y(\ttv_1,v_1-v_2) Y(\ttv_3,-v_2) \ttv_2\\
&=  \sfT_{v_2 } Y(\ttv_1,v_1-v_2)\sfT_{-v_2}Y(\ttv_2,v_2) \ttv_3\\
&=  Y(\ttv_1,v_1)Y(\ttv_2,v_2) \ttv_3\\
\end{split}
\end{equation*}
\end{proof}

There are infinitesimal versions of some of the above definitions/results, of which we give an example below.  Given a translation vector $v$, it can also be regarded as a regular vector field on $\bbP$  vanishing only at $P$; $\bL^v$ denotes the corresponding operator in the Virasoro algebra. Since $v$ is everywhere regular, this is just the Lie derivative on meromorphic forms extended to the symmetric algebra. Let us carefully work out its relation with $\sfT_v$. Given a one-form $\alpha$, $\sfT_v \alpha (z) = \alpha (z-v)$, where we use the canonical trivialisation of $K$ on $\bbP \setminus P$. For any regular point $z$ of $\alpha$
$$
\{{\frac{d}{ds}}\big|_{s=0} \sfT_{sv} \alpha \} (z) \underset{definition}= \frac{d}{ds}\big|_{s=0} \alpha(z-sv)=-\cL_v \alpha = -\bL^{v} \alpha
$$
If a coordinate $u$ is chosen, the vector field $-d/du$ is one such, and the corresponding operator is $L_{-1}$. Thus $L_{-1} (\alpha) = \bL^{-d/du}$ and formally
$$
\sfT_v = \exp{(u(v)L_{-1})}
$$
We also have an action of $\bL^v$ on ${}_O\bW$, although of course this action does not integrate to an action of translations. 

When we fix a point $O \ne P$, we have a second, entirely canonical, vector field vanishing at both $P$ and $Q$ (and with residue at $P$ equal to $1$); in terms of a coordinate adapted to $P$ and $O$, this is $-u(d/du)$. This is the operator $L_0$. Given a one-form $\alpha=a(u)du$,
we have
$$
L_{-1} \alpha=-a'(u)du, \ \ L_0 \alpha=-ua'(u)du
$$

\begin{proposition}
$$
Y(L_{-1}(\ttv),v)=\frac{d}{du}Y(\ttv,v(u))
$$
\end{proposition}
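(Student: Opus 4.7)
The plan is to follow the Goddard-uniqueness-style strategy used in the proofs of skew-symmetry and associativity just above: verify the identity first on the vacuum vector $\mathbf{1}$, and then transport it to an arbitrary state using commutativity (axiom 4) together with the creation property $Y(\ttv_1,0)\mathbf{1}=\ttv_1$. The key infinitesimal input will be the identity
\begin{equation*}
\tfrac{d}{du(v)}\sfT_v \;=\; \sfT_v\circ L_{-1},
\end{equation*}
which is precisely what it means to say that $L_{-1}$ generates the one-parameter family of translations $\sfT_v$.

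First I would check the identity on $\mathbf{1}$. Using $Y(\ttv,v)\mathbf{1}=\sfT_v\ttv$ (established above), the statement on the vacuum reduces to
\begin{equation*}
\sfT_v(L_{-1}\ttv) \;=\; \tfrac{d}{du(v)}(\sfT_v\ttv).
\end{equation*}
I would verify this on a one-form $\alpha=a(u)\,du$ by a direct computation --- $\sfT_v\alpha=a(u-u(v))\,du$ and $L_{-1}\alpha=-a'(u)\,du$ agree after differentiating the first in $u(v)$ --- and then extend to all of $\lO\bW$ using that both $\sfT_v$ and $L_{-1}$ act as derivations on the symmetric algebra.

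Next I would extend to an arbitrary $\ttv_1\in\bW^{|\bz|+v}$. Since $L_{-1}$ does not move singularities, $Sing(L_{-1}\ttv)\subseteq|\bz|$, so $(Sing(L_{-1}\ttv)+v)\cap Sing(\ttv_1)=\emptyset$, and axiom (4) applies. Combined with the creation property it gives
\begin{equation*}
Y(L_{-1}\ttv,v)\,\ttv_1 \;=\; Y(L_{-1}\ttv,v)\,Y(\ttv_1,0)\mathbf{1} \;=\; Y(\ttv_1,0)\,Y(L_{-1}\ttv,v)\mathbf{1},
\end{equation*}
and similarly, pulling the $u(v)$-independent operator $Y(\ttv_1,0)$ past the derivative,
\begin{equation*}
\tfrac{d}{du(v)}\,Y(\ttv,v)\,\ttv_1 \;=\; \tfrac{d}{du(v)}\,Y(\ttv_1,0)\,Y(\ttv,v)\mathbf{1} \;=\; Y(\ttv_1,0)\,\tfrac{d}{du(v)}\,Y(\ttv,v)\mathbf{1}.
\end{equation*}
By the vacuum computation the right-hand sides coincide, which yields the proposition. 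The only conceptual point to be careful about is the meaning of $\frac{d}{du(v)}Y(\ttv,v)$, but after restriction to any finite-degree piece of the filtered domain the operator depends rationally on $u(v)$, so differentiation is just the ordinary derivative of a rational function; no analytic issues arise, and this will be the mildest of the steps rather than a real obstacle.
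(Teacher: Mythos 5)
Your proof is correct. The paper states this proposition without giving a proof; your argument --- verifying the identity on $\mathbf{1}$ via $Y(\ttv,v)\mathbf{1}=\sfT_v\ttv$ together with the generator relation $\frac{d}{du(v)}\sfT_v=\sfT_v\circ L_{-1}$, and then transporting it to a general state $\ttv_1$ by writing $\ttv_1=Y(\ttv_1,0)\mathbf{1}$ and invoking the commutativity axiom --- is exactly the Goddard-uniqueness-style strategy the paper uses for skew-symmetry, translation covariance and associativity, so it is the natural and intended completion.
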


\subsection{Vertex algebra structures}

From now on we will assume that an identification of the group of translations with $\bbC$ is chosen. Equivalently, we choose a trivialisation $du$ of $K$ on $\bbP \setminus P$ or a coordinate $u$ up to addition of a constant. We will write $e(z)=\he(z) du_z$, etc. For any positive integer $l$,  we set
$$
\he^{[l]} (z) = [\frac{d}{du}]^{l-1} \he(z)
$$
We set $\he^{[0]} (z)=Identity$. (In the notation of \S \ref{subsection: Derivatives}, the derivative $\frac{d}{du}$ would be denoted $\bcL_{\frac{d}{du}}$.)

 We will let $\ba$ denote a sequence $(a^1,\dots,a^n)$, where each $a^j$ in turn is an infinite sequence of integers
$$
a^j=(a^j_1 \ge a^j_2 \ge \dots )
$$
with all but finitely many entries 0. We let $| a^j|$ denote number of non-zero entries in the sequence $a^j$, and set $|\ba|=\sum_j | a^j|$.

\begin{proposition} The symmetric algebra $\lbz\bW$ over $\lbz \bK$ has a basis labelled by sequences $\ba$, consisting of vectors
$$
\ttv^e_{\ba} = \he^{[a^1]} (z_1) \circ \dots \circ  \he^{[a^n]} (z_n) \mathbf{1}
$$
where 
$$
\he^{[a^j]} (z_j) = \he^{[a^j_1]} (z_j) \circ \dots \circ \he^{[a^j_2]} (z_j) \dots
$$
\end{proposition}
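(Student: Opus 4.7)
The plan is to reduce everything to a standard partial fractions decomposition for forms of the second kind on $\bbP$. First I would compute explicitly that, for $l \ge 1$,
\begin{equation*}
\he^{[l]}(z_j) \mathbf{1} \; = \; -\frac{l!}{(u-u(z_j))^{l+1}}\, du,
\end{equation*}
by iterated application of $\frac{d}{du}$ to $\he(z_j)\mathbf{1} = -\frac{du}{(u-u(z_j))^2}$. Call this form $f^{j,l} \in \lbz\bK$. Since a meromorphic form of the second kind on $\bbP$ (with pole set in $|\bz|$, and regular at $P$) has no nonzero regular part and is determined by its principal parts at the $z_j$, and since at each $z_j$ the allowed principal parts are spanned by $du/(u-u(z_j))^{l+1}$ for $l \ge 1$ (the $l=0$ term is excluded since it would give a simple pole, hence a residue), the family $\{f^{j,l} : 1 \le j \le n,\ l \ge 1\}$ is a basis of $\lbz\bK$.

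Next I would use the fact that $\he^{[l]}(z_j)$ is simply symmetric multiplication by $f^{j,l}$ (as the derivative of a multiplication operator by a function of $z_j$ alone is again multiplication by the corresponding derivative form in $z_j$). Therefore all the operators $\{\he^{[l]}(z_j)\}_{j,l}$ pairwise commute, and one has
\begin{equation*}
\ttv^e_{\ba} \; = \; \bigotimes_{j,k}{}^{\!s}\, f^{j,a^j_k}
\end{equation*}
where the symmetric product ranges over all pairs $(j,k)$ with $a^j_k > 0$ (recall $\he^{[0]}(z) = \mathrm{Id}$, so zero entries drop out harmlessly).

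Finally, a sequence $a^j = (a^j_1 \ge a^j_2 \ge \cdots)$ of nonnegative integers with finite support is exactly the data of a multiset of positive integers (its nonzero entries), so $\ba$ is in canonical bijection with the set of functions $\{f^{j,l}\} \to \bbZ_{\ge 0}$ of finite support, i.e.\ with the monomial basis of the symmetric algebra $S^*(\lbz\bK) = \lbz\bW$ associated with the basis $\{f^{j,l}\}$ of $\lbz\bK$. This completes the proof. The whole argument is essentially bookkeeping; the only non-trivial input is the partial-fractions/Mittag--Leffler fact in the first step, which itself is immediate on $\bbP$ because there are no nonzero global regular differentials.
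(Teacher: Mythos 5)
Your proof is correct and follows essentially the same route as the paper: identify $\he^{[l]}(z_j)\mathbf{1}$ with the explicit form $-l!\,(u-u(z_j))^{-(l+1)}du$ (the paper writes this as $(l-1)!\,d_u(u-u(z_j))^{-l}$, which is the same thing), observe via partial fractions that these form a basis of $\lbz\bK$, and then pass to monomials in the symmetric algebra. You merely make explicit the Mittag--Leffler justification and the combinatorial bookkeeping that the paper leaves implicit.
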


\begin{proof} Note that $\lbz \bK$ has a basis  given by the vectors
$$
\he^{[l]} (z) \mathbf{1} = (l-1)! d_u(u-u(z))^{-l}, \ l  \ge 1, \ z \in |\bz|
$$
where $d_u$ signifies taking the exterior derivative with respect to the variable $u$.
Therefore a basis of $S^p \lbz \bK$ is given by
$$
\he^{[l_1]} (y_1) \dots \he^{[l_p]} (y_p) \mathbf{1}
$$
where the points $y_1, \dots, y_p$  belong to $|\bz|$ but are not necessarily distinct, and $l_p \ge 1$. Grouping the elements of the symmetric product into factors corresponding to distinct points $z_1,\dots, z_n$ yields the basis vectors $\ttv^e_\ba$, where $p=|\ba|$.
\end{proof}

Set $b(z)=\hb(z) du_z$. For any positive integer $l$,  set
$$
\hb^{[l]} (z) = [\frac{d}{du(z)}]^{l-1} \hb(z)
$$
and $\hb^{[0]} (z)=Identity$.  Given a sequence of integers
$$
a^j=(a^j_1 \ge a^j_2 \ge \dots )
$$
as above with all but finitely many entries 0, set
$$
\hb^{[a^j]} (z_j) = \bdots \hb^{[a^j_1]} (z_j) \circ \dots \circ \hb^{[a^j_2]} (z_j)\bdots  \dots
$$
where $\bdots \bdots $ denotes the renormalised product. Then

\begin{proposition} The symmetric algebra $\lbz\bW$ over $\lbz \bK$ has a basis labelled by sequences $\ba$, consisting of vectors
$$
\ttv^b_{\ba} = \hb^{[a^1]} (z_1) \circ \dots \circ  \hb^{[a^n]} (z_n) \mathbf{1}
$$
\end{proposition}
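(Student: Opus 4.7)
The plan is a triangularity argument against the basis $\{\ttv^e_\ba\}$ provided by the previous proposition, with respect to the polynomial grading $\lbz\bW = \bigoplus_{p\ge 0} S^p\lbz\bK$. Concretely, I aim to show
\[
\ttv^b_\ba \;=\; \ttv^e_\ba \;+\; w_\ba, \qquad w_\ba \in \bigoplus_{p<|\ba|} S^p\lbz\bK.
\]
Since the $\ttv^e_\ba$ with $|\ba|=p$ form a basis of $S^p\lbz\bK$, such a unitriangular expression, read degree by degree, is enough to conclude that $\{\ttv^b_\ba\}$ is itself a basis of $\lbz\bW$.

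The main input is the decomposition $\hb=\he+\hi$: the operator $\he(z)$ raises polynomial degree by $1$, whereas $\hi(z)$ lowers it by $1$ and annihilates $\mathbf{1}$; the commutator $[\hi^{[l]}(z),\he^{[m]}(w)]$ is a scalar (a derivative of $(u(z)-u(w))^{-2}$). First I analyse the single-point renormalised product $\hb^{[a^j]}(z_j) = \bdots \hb^{[a^j_1]}(z_j)\circ\cdots\circ \hb^{[a^j_{|a^j|}]}(z_j)\bdots$. Expanding each $\hb$ as $\he+\hi$ and iteratively moving every $\hi^{[l]}$ past the $\he^{[m]}$'s to its right, the resulting scalar commutators are precisely the singular terms subtracted by the renormalisation prescription of \S\ref{renorm}. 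The outcome is that $\hb^{[a^j]}(z_j)$ equals a normal-ordered polynomial in $\{\he^{[l]}(z_j),\hi^{[m]}(z_j)\}$, whose unique ``pure-$e$'' summand is $\he^{[a^j]}(z_j)$ and whose remaining summands each contain at least one trailing $\hi^{[m]}(z_j)$.

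I then compute $\ttv^b_\ba$ by reading the composition $\hb^{[a^1]}(z_1)\circ\cdots\circ\hb^{[a^n]}(z_n)\mathbf{1}$ from right to left. Inductively, suppose $\hb^{[a^j]}(z_j)\circ\cdots\circ\hb^{[a^n]}(z_n)\mathbf{1}$ equals $\he^{[a^j]}(z_j)\cdots\he^{[a^n]}(z_n)\mathbf{1}$ modulo strictly smaller polynomial degree. Applying $\hb^{[a^{j-1}]}(z_{j-1})$ in its normal-ordered form: the pure-$e$ summand contributes the desired top-degree term $\he^{[a^{j-1}]}(z_{j-1})\he^{[a^j]}(z_j)\cdots\he^{[a^n]}(z_n)\mathbf{1}$, while any summand containing a trailing $\hi^{[l]}(z_{j-1})$ can be pushed past the $\he^{[m]}(z_k)$'s (for $k\ge j$, all at points $z_k\ne z_{j-1}$) via the scalar commutators, eventually acting on $\mathbf{1}$ and producing a vector of strictly smaller polynomial degree. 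Iterating down to $j=1$ gives the claimed unitriangular identity and completes the proof.

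The main obstacle is the normal-ordering of the iterated single-point product $\hb^{[a^j]}(z_j)$: the renormalisation in \S\ref{renorm} is stated for two fields, so one needs an inductive extension to $|a^j|$ coincident fields. This is standard vertex-algebra bookkeeping (a form of Wick's theorem combined with Dong's lemma), but the verification inside the paper's rational-function framework is where the main technical work lies.
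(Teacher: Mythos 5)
Your proposal is correct and follows essentially the same route as the paper: the paper's (one-line) proof observes that $\ttv^b_\ba$ lies in $\oplus_{p\le|\ba|}S^p\lbz\bK$ and argues by induction on the filtration, which is precisely the unitriangularity against $\{\ttv^e_\ba\}$ that you spell out. Your version is simply a more explicit account of that induction, including the identification of the leading term as $\ttv^e_\ba$.
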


\begin{proof} Clearly the vector $\ttv^b_{a}$ belongs to $\underset{p \le |\ba|}\oplus S^p \lbz \bK$, where $p=|\ba|$. One argues by by induction that the vectors $\{\ttv^b_{\ba}||\ba| \le p\}$ form a basis of  $\underset{p \le |\ba|}\oplus S^p \lbz \bK$.
\end{proof}

We can immediately define a trivial (``commutative'') vertex algebra structure by setting
$$
Y(\ttv^e_{\hmu},v)=\he^{[a^1]}(z_1+v) \circ \dots \circ \he^{[a^n]}(z_n+v) 
$$
A \emph{second} vertex algebra structure is obtained by setting
$$
Y'(\ttv^b_{\hmu},v)=\ \ \bdots \hb^{[a^1]}(z_1+v)\bdots  \circ \dots \circ \bdots \hb^{[a^n]}(z_n+v)\bdots 
$$
 
\pagebreak

\part{\textbf{{\Large Current algebra on the projective line}}}

\section{\textbf{Current algebra: the fields}}\label{section: Current algebra: the fields}

\subsection{Current algebra : preliminaries}

Let $\bg$ be a complex lie algebra. We will assume given a nondegenerate symmetric bilinear form $(,)$ that is invariant, i.e., satisfying $([v,v_1], v_2)+ (v_1, [v,v_2]) =0\ \forall \ v,v_1,v_2 \in \bg$. This forces $\bg$ to be reductive. If it is simple,  such a bilinear form is upto a normalisation (to be fixed later) the Killing form.

For later use, we fix a complex antilinear involution $v \mapsto -v^{\dagger}$ such that the bilinear form restricted to the corresponding real form is real-valued and negative-definite. We fix a commuting linear involution $v \mapsto -v^t$ so that the composite antilinear involution  $v \mapsto (v^t)^\dagger$ defines a split real form of  $\bg$.

Given a (finite-dimensional) complex representation of  $\bg$ on a vector space $W$:
\begin{equation*}
\bg \times W \ni (v,\ttw) \mapsto v(\ttw) \in W
\end{equation*}
the dual representation on the dual space $W'$ is defined by
\begin{equation*}
v(\ttw') [\ttw] = -\ttw'[v(\ttw)]
\end{equation*}
A necessary and sufficient condition for $W$ and $W'$ to be equivalent representations is the existence of a nondegenerate bilinear form $\langle , \rangle$ on $W$ such that $\langle v(\ttw_1), \ttw_2 \rangle+\langle \ttw_1, v(\ttw_2) \rangle=0$. One can instead ask for a nondegenerate bilinear form (which we again denote) $\langle , \rangle$ on $W$ such that $\langle v(\ttw_1), \ttw_2 \rangle=\langle \ttw_1, v^t(\ttw_2) \rangle=0$. Such a form -- in fact one that is symmetric -- exists (and is unique up to normalisation if $W$ is irreducible) by the theory of highest/lowest weight representations. This theory also guarantees that representations are \emph{unitary} -- in other words, there exists a nondegenerate hermitian form $\langle , \rangle_h$ on $W$ such that $\langle v(\ttw_1), \ttw_2 \rangle_h=\langle \ttw_1, v^\dagger(\ttw_2) \rangle_h$. 

\subsection{Current algebra : the basic fields}

Let $\fg \equiv \bg \otimes \cK$. This is the Lie algebra of meromorphic maps $\balpha:\bbP \to \bg$. 

Fix a base-point $P$. We set $\fg_P \equiv \bg \otimes \cO_P$. This is the Lie algebra of meromorphic maps $\balpha:\bbP \to \bg$ such that  $\balpha$ is regular at $P$.  Consider also the Lie algebra $\fg_{P,-} \equiv \bg \otimes \mathfrak{m}_P$. This is the Lie algebra of meromorphic maps $\balpha:\bbP \to \bg$ such that
\begin{itemize}
  \item $\balpha$ is regular at $P$, and
  \item $\balpha(P)=0$
\end{itemize}
\emph{When no confusion is likely, we set $\fg_- \equiv \fg_{P,-}$, etc.}
Let $U\fg_-$ denote the corresponding universal enveloping algebra. (We will follow this convention for other Lie algebras as well.)  The Lie algebra $\bg$ is contained in $\fg_P$ (as the subalgebra of constant maps to $\bg$), and we have
\begin{equation*}
\fg_{P} = \fg_- \oplus \bg
\end{equation*}
Note that $\fg_-$ is an ideal, and there is therefore an (``adjoint") action of $\bg$ on $\fg_-$, inducing one (by algebra automorphisms, preserving the filtration by order) on $U\fg_-$ as well.

We will now define fields that act on $U\fg_-$. We set $\mathbf{1}=1 \in \bbC \subset U\fg_-$.

Given $\nu \in  \fg$ and $z \in \bbP\setminus \{P,\text{poles of $\nu$}\}$, we define $\epsilon_z^{\nu(z)} \in \fg_- \otimes K_z$ by
\begin{equation*}
    \epsilon_z^{\nu(z)} = \frac{\nu(z)}{u-u(z)} \otimes du_z
\end{equation*}
and we define $\epsilon^{\nu(z)}(z):U\fg_-  \to U\fg_- \otimes K_z$ to be multiplication on the left by $\epsilon_z^{\nu(z)}$ (and ``commuting $du_z$ to the right"). 

Given $z \in \bbP \setminus  P$, set $\fg_-^z=\bg \otimes \mathfrak{m}_P^z$. (Recall that $\mathfrak{m}_P^z=\mathfrak{m}_P \cap \cO_z$.) Given $\nu \in \fg$ and $z \in \bbP\setminus \{P,\text{poles of $\nu$}\}$, we wish to define define a map $\iota^{\nu(z)}(z): U\fg_-^z \to  U\fg_-^z \otimes K_z$. We do this as follows.  Define a map $\iota^{\nu(z)}(z)$ at the tensor algebra  level inductively by setting $\iota^{\nu(z)}(z)(\mathbf{1})=0$ and requiring
\begin{equation*}
\begin{split}
\iota^{\nu(z)}(z) (\balpha \otimes \bbeta) - \balpha \otimes \iota^{\nu(z)}(z) \bbeta
&=-[\epsilon^{\nu(z)}(z), \balpha] \otimes \bbeta-(\nu(z), d\balpha(z)) \bbeta\\
&+ \iota^{[\nu(z), \balpha(z)]}(z) \bbeta +\epsilon^{[\nu(z), \balpha(z)]}(z) \bbeta
\end{split}
\end{equation*}
where $\balpha \in \fg_-^z$ and $\bbeta$ is an arbitrary element of the tensor algebra over $\fg_-^z$. In other words,
\begin{equation}\label{iotaalphacommute}
\begin{split}
\iota^{\nu(z)}(z) \circ \balpha  - \balpha \circ \iota^{\nu(z)}(z)&=-[\epsilon^{\nu(z)}(z), \balpha]-({\nu(z)}, d\balpha(z))\\
&+ \iota^{[{\nu(z)}, \balpha(z)]}(z)++\epsilon^{[\nu(z), \balpha(z)]}(z)
\end{split}
\end{equation}
 
We claim that $\iota^{\nu(z)}(z)$ descends to a map $U\fg_-^z \to  U\fg_-^z \otimes K_z$. Consider, in the tensor algebra,  the commutator $[\iota^{\nu(z)}(z), \balpha \circ \bbeta]$. We have, by induction on degree,
\begin{equation*}
\begin{split}
[\iota^{\nu(z)}(z), \balpha \circ \bbeta] &= [\iota^{\nu(z)}(z), \balpha] \circ \bbeta + \balpha \circ [\iota^{\nu(z)}(z), \bbeta]\\
&=-[\epsilon^{\nu(z)}(z), \balpha] \circ \bbeta-({\nu(z)},d\balpha(z)) \circ \bbeta\\ 
&+ \iota^{[{\nu(z)}, \balpha(z)]}(z) \circ \bbeta +\epsilon^{[{\nu(z)}, \balpha(z)]}(z) \circ \bbeta\\
&-\balpha \circ [\epsilon^{\nu(z)}(z), \bbeta]- \balpha \circ ({\nu(z)},d\bbeta(z))\\ &+ \balpha \circ \iota^{[{\nu(z)}, \bbeta(z)]}(z) +\balpha \circ\epsilon^{[{\nu(z)}, \bbeta(z)]}(z)\\
 \end{split}
\end{equation*}
which yields
\begin{equation*}
\begin{split}
[\iota^{\nu(z)}(z), \balpha \circ \bbeta - \bbeta \circ \balpha] & =
-[\epsilon^{\nu(z)}(z), \balpha] \circ \bbeta-({\nu(z)},d\balpha(z)) \circ \bbeta\\
&+ \iota^{[{\nu(z)}, \balpha(z)]}(z) \circ \bbeta
+\epsilon^{[{\nu(z)}, \balpha(z)]}(z) \circ \bbeta\\
&-\balpha \circ [\epsilon^{\nu(z)}(z), \bbeta]- \balpha \circ ({\nu(z)},d\bbeta(z))\\ 
&+ \balpha \circ \iota^{[{\nu(z)}, \bbeta(z)]}(z)
+\balpha \circ\epsilon^{[{\nu(z)}, \bbeta(z)]}(z)\\
&+[\epsilon^{\nu(z)}(z), \bbeta] \circ \balpha+({\nu(z)},d\bbeta(z)) \circ \balpha\\
&-\iota^{[{\nu(z)}, \bbeta(z)]}(z) \circ \balpha
-\epsilon^{[{\nu(z)}, \bbeta(z)]}(z) \circ \balpha\\
&+\bbeta \circ [\epsilon^{\nu(z)}(z), \balpha] + \bbeta \circ ({\nu(z)},d\balpha(z))\\
&- \bbeta \circ \iota^{[{\nu(z)}, \balpha(z)]}(z)
- \bbeta \circ\epsilon^{[{\nu(z)}, \balpha(z)]}(z)\\
&=-[\epsilon^{\nu(z)}(z),[\balpha,\bbeta]]\\
&+ [\iota^{[{\nu(z)}, \balpha(z)]}(z),\bbeta]
+[\epsilon^{[{\nu(z)}, \balpha(z)]}(z),\bbeta]\\
&-[\iota^{[{\nu(z)}, \bbeta(z)]}(z),\balpha]
-[\epsilon^{[{\nu(z)}, \bbeta(z)]}(z) , \balpha]\\
&=-[\epsilon^{\nu(z)}(z),[\balpha,\bbeta]]\\
 &-([{\nu(z)}, \balpha(z)],d\bbeta(z))+\iota^{[{\nu(z)}, \balpha(z)],\bbeta(z)]}(z)+\epsilon^{[{\nu(z)}, \balpha(z)],\bbeta(z)]}(z)\\
&+([{\nu(z)}, \bbeta(z)],d\balpha(z))-\iota^{[{\nu(z)}, \bbeta(z)],\balpha(z)]}(z)-\epsilon^{[{\nu(z)}, \bbeta(z)],\balpha(z)]}(z)\\
&=-[\epsilon^{\nu(z)}(z),[\balpha,\bbeta]]-(\nu(z),d[\balpha,\bbeta](z))\\
&+\iota^{[\nu(z),[\balpha,\bbeta](z)]}(z)++\epsilon^{[\nu(z),[\balpha,\bbeta](z)]}(z)\\
&=[\iota^{\nu(z)}(z),[\balpha,\bbeta]]
\end{split}
\end{equation*}
and the claim is proved.

Having defined the fields $\epsilon^{\nu(z)}(z)$ and $\iota^{\nu(z)}(z)$, we add them to define the ``current":
\begin{equation*}
\boxed{j^{\nu(z)}(z)=\epsilon^{\nu(z)}(z) + \iota^{\nu(z)}(z):U\fg_-^z \to U\fg_- \otimes K_z}
\end{equation*}

The condition (\ref{iotaalphacommute}) can be rewritten in terms of $j$:
\begin{equation}\label{jalphacommute}
[j^{\nu(z)}(z),\balpha]=-({\nu(z)},d\balpha(z))+j^{[{\nu(z)},\balpha(z)]} (z)
\end{equation}

\subsection{Locality, OPE of the current}

\begin{notation} We will often consider \emph{constant} functions $\nu$ taking the value $v\in \bg$. In this case we write $j^\nu=j^v$.
\end{notation}

We now compute the commutators of the fields $\epsilon$ and $\iota$ at distinct points, and then put together the resulting formulae to study $j$.

In what follows, operators have to be composed on spaces $U\fg_-^{z_1,z_2}$ defined along the lines of the \S \ref{subsection: OPEetc.}. We write $du_{z_1} du_{z_2}= du_{z_2} du_{z_1} = du_{z_1} \otimes ^s du_{z_2}$, where the notation in the last expression is also explained in \S \ref{subsection: OPEetc.}. The next lemma follows from the definitions.

\begin{lemma} Let $z_1,z_2$ be distinct points on $\bbP$ and $v_1,v_2 \ \in \bg$. Then
\begin{equation*}
\begin{split}
&\epsilon^{v_1} (z_1)  \epsilon^{v_2} (z_2) - \epsilon^{v_2} (z_2)  \epsilon^{v_1} (z_1)\\ = &\frac {1}{u(z_1)-u(z_2)} (\epsilon^{[v_1,v_2]} (z_1) du_{z_2} - \epsilon^{[v_1,v_2]} (z_2)du_{z_1})
\end{split}
\end{equation*}
and
\begin{equation}\label{iotaepsiloncommute}
\begin{split}
\iota^{v_1} (z_1)  \epsilon^{v_2} (z_2) - \epsilon^{v_2} (z_2)  \iota^{v_1} (z_1)&=\frac{(v_1,v_2)du_{z_1}du_{z_2}}{(u(z_1)-u(z_2))^2}\\
+\frac{1}{u(z_1)-u(z_2)} \{\iota^{[v_1,v_2]}(z_1)du_{z_2}&+ \epsilon^{[v_1,v_2]}(z_2)du_{z_1}\}
\end{split}
\end{equation}
\end{lemma}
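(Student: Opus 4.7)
The two identities are computations. The first follows directly from the definition of $\epsilon$ as a left-multiplication operator; the second is the first nontrivial use of the inductive relation (\ref{iotaalphacommute}) that defined $\iota$ in the first place.

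First I would prove the $\epsilon$-$\epsilon$ identity. Since $\epsilon^{v_i}(z_i)$ is left multiplication by $\frac{v_i}{u-u(z_i)}\otimes du_{z_i}\in \fg_-\otimes K_{z_i}$, the commutator of the two operators equals left multiplication by the bracket in $\fg_-$,
\begin{equation*}
\left[\frac{v_1}{u-u(z_1)},\frac{v_2}{u-u(z_2)}\right]=\frac{[v_1,v_2]}{(u-u(z_1))(u-u(z_2))},
\end{equation*}
tensored with $du_{z_1}du_{z_2}$. The partial-fraction identity
\begin{equation*}
\frac{1}{(u-u(z_1))(u-u(z_2))}=\frac{1}{u(z_1)-u(z_2)}\left(\frac{1}{u-u(z_1)}-\frac{1}{u-u(z_2)}\right)
\end{equation*}
then rewrites the result in the asserted form, since each summand is exactly $\epsilon^{[v_1,v_2]}(z_i)$ up to the scalar $\frac{1}{u(z_1)-u(z_2)}$ and a factor $du_{z_{3-i}}$.

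For the $\iota$-$\epsilon$ identity I would apply (\ref{iotaalphacommute}) with $\balpha=\frac{v_2}{u-u(z_2)}\in\fg_-^{z_1}$, so that left multiplication by $\balpha \otimes du_{z_2}$ is exactly the operator $\epsilon^{v_2}(z_2)$. Then I would evaluate the four terms on the right of (\ref{iotaalphacommute}), using $\balpha(z_1)=\frac{v_2}{u(z_1)-u(z_2)}$ and $d\balpha(z_1)=-\frac{v_2\,du_{z_1}}{(u(z_1)-u(z_2))^2}$:
\begin{itemize}
\item the term $-[\epsilon^{v_1}(z_1),\balpha]\,du_{z_2}$ is computed by the $\epsilon$-$\epsilon$ calculation above and contributes $\frac{1}{u(z_1)-u(z_2)}\{-\epsilon^{[v_1,v_2]}(z_1)du_{z_2}+\epsilon^{[v_1,v_2]}(z_2)du_{z_1}\}$;
\item the term $-(v_1,d\balpha(z_1))\,du_{z_2}$ produces the central term $\frac{(v_1,v_2)\,du_{z_1}du_{z_2}}{(u(z_1)-u(z_2))^2}$;
\item the term $\iota^{[v_1,\balpha(z_1)]}(z_1)\,du_{z_2}$ equals $\frac{1}{u(z_1)-u(z_2)}\iota^{[v_1,v_2]}(z_1)\,du_{z_2}$ by linearity of $\iota^{(\cdot)}$ in its superscript;
\item the term $\epsilon^{[v_1,\balpha(z_1)]}(z_1)\,du_{z_2}$ similarly equals $\frac{1}{u(z_1)-u(z_2)}\epsilon^{[v_1,v_2]}(z_1)\,du_{z_2}$.
\end{itemize}
Summing the four contributions, the two copies of $\epsilon^{[v_1,v_2]}(z_1)\,du_{z_2}$ cancel with opposite signs, leaving exactly the right-hand side of (\ref{iotaepsiloncommute}).

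There is no real obstacle here; the inductive definition of $\iota$ was engineered precisely so that this OPE takes the current-algebra form. The only things to watch are the bookkeeping of the $K_{z_1}$ and $K_{z_2}$ factors (which commute without a sign), the consistent placement of $du_{z_2}$ on the right of each expression that came from multiplying by $\balpha\otimes du_{z_2}$, and the observation that the inductive relation (\ref{iotaalphacommute}) is legitimately applicable because $\balpha$ lies in $\fg_-^{z_1}$.
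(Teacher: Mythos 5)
Your proof is correct, and it is precisely the computation the paper intends when it asserts that "the next lemma follows from the definitions": the first identity is the commutator of left multiplications plus partial fractions, and the second is the defining relation (\ref{iotaalphacommute}) specialized to $\balpha=\frac{v_2}{u-u(z_2)}\in\fg_-^{z_1}$, with the two $\epsilon^{[v_1,v_2]}(z_1)\,du_{z_2}$ terms cancelling exactly as you describe.
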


\begin{proposition} Let $z_1,z_2$ be distinct points on $\bbP$ and $v_1,v_2 \ \in \bg$. The current operators $j^{v_1}(z_1)$ and $j^{v_2}(z_2)$ (can be composed and) commute.
As $z_2 \to z_1$ we have
\begin{equation*}
\begin{split}
j^{v_2}(z_2)j^{v_1}(z_1)&+\frac{(v_1,v_2)du_{z_1}du_{z_2}}{(u(z_1)-u(z_2))^2}-\frac{1}{u(z_2)-u(z_1)} du_{z_2}j^{[v_2,v_1]}(z_1)\\
\longrightarrow &\iota^{v_2}(z_1)\iota^{v_1}(z_1)
+\epsilon^{v_2} (z_1)  \epsilon^{v_1} (z_1)\\
+&\epsilon^{v_2} (z_1)  \iota^{v_1} (z_1)
+\epsilon^{v_1} (z_1)  \iota^{v_2} (z_1)\\
+& [d\hiota^{[v_2,v_1]}/dz](z_1)du_{z_1}^2\\
\end{split}
\end{equation*}
\end{proposition}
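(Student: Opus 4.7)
The plan is to expand $j^{v_2}(z_2)j^{v_1}(z_1)$ into its four $\iota$-$\iota$, $\iota$-$\epsilon$, $\epsilon$-$\iota$, $\epsilon$-$\epsilon$ components and bring them into ``normal-ordered'' form (all $\iota$'s to the right of $\epsilon$'s) using only the commutation relation of the preceding lemma. Commutativity at distinct points and the OPE then emerge from the same calculation, differing only in whether one tracks the full expression or only its singular part.

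Before that one needs composability: by inspection $\epsilon^v(z)$ preserves $U\fg_-^w$ for $w \ne z$, and by induction on the filtration degree the recursion (\ref{iotaalphacommute}) shows $\iota^v(z)$ does so as well, so $j^{v_2}(z_2)$ and $j^{v_1}(z_1)$ are composable on $U\fg_-^{z_1,z_2}$. For commutativity, expand $[j^{v_1}(z_1), j^{v_2}(z_2)]$ as the sum of four commutators: the first three are supplied by the lemma, and the missing $[\iota^{v_1}(z_1),\iota^{v_2}(z_2)]$ I would derive by checking that it and the candidate
\[
\frac{1}{u(z_1)-u(z_2)}\bigl(\iota^{[v_1,v_2]}(z_1)du_{z_2} - \iota^{[v_1,v_2]}(z_2)du_{z_1}\bigr)
\]
both annihilate $\mathbf{1}$ and satisfy the same commutation recursion against $\balpha \in \fg_-^{z_1,z_2}$, a consequence of applying (\ref{iotaalphacommute}) twice together with the Jacobi identity. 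Summing the four pieces, the $(v_1,v_2)$ double-pole terms coming from $[\iota,\epsilon]$ and $[\epsilon,\iota]$ cancel, while the simple-pole contributions reorganise into $\frac{1}{u(z_1)-u(z_2)}\bigl(j^{[v_1,v_2]}(z_1)du_{z_2}-j^{[v_1,v_2]}(z_2)du_{z_1}\bigr)$ with opposite signs, and the total vanishes.

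For the OPE, using (\ref{iotaepsiloncommute}) to reorder the only cross-term $\iota^{v_2}(z_2)\epsilon^{v_1}(z_1)$ yields
\[
j^{v_2}(z_2)j^{v_1}(z_1) = \iota^{v_2}(z_2)\iota^{v_1}(z_1) + \epsilon^{v_2}(z_2)\iota^{v_1}(z_1) + \epsilon^{v_1}(z_1)\iota^{v_2}(z_2) + \epsilon^{v_2}(z_2)\epsilon^{v_1}(z_1) + [\iota^{v_2}(z_2),\epsilon^{v_1}(z_1)].
\]
The four normal-ordered products are regular at $z_2 = z_1$ (by the composability step) and limit to the four non-derivative summands on the right of the stated OPE. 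All singularity is therefore carried by the commutator, which by (\ref{iotaepsiloncommute}) contains the double pole $\frac{(v_1,v_2)du_{z_1}du_{z_2}}{(u(z_2)-u(z_1))^2}$ plus the simple-pole pair $\frac{1}{u(z_2)-u(z_1)}\bigl\{\iota^{[v_2,v_1]}(z_2)du_{z_1}+\epsilon^{[v_2,v_1]}(z_1)du_{z_2}\bigr\}$. The final and most delicate step is to Taylor expand $\hat{\iota}^{[v_2,v_1]}(z_2) = \hat{\iota}^{[v_2,v_1]}(z_1) + (u(z_2)-u(z_1))\,[d\hat{\iota}^{[v_2,v_1]}/du](z_1) + O((u(z_2)-u(z_1))^2)$: its constant term combines with the $\epsilon^{[v_2,v_1]}(z_1)du_{z_2}$ contribution to form the simple-pole counterterm $\frac{du_{z_2}}{u(z_2)-u(z_1)}j^{[v_2,v_1]}(z_1)$ displayed on the left-hand side; the linear term produces the residue $[d\hat{\iota}^{[v_2,v_1]}/dz](z_1)du_{z_1}^2$ appearing on the right; and the remainder vanishes in the sheaf-theoretic sense of \S \ref{limits}. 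Justifying this last vanishing rigorously, on finite filtration pieces of $U\fg_-^{z_1}$ rather than as a formal Taylor identity, is the main technical hurdle.
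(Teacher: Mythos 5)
Your proposal is essentially sound, and the OPE half coincides with the paper's own computation: expand into the four $\iota/\epsilon$ products, reorder the single cross term $\iota^{v_2}(z_2)\epsilon^{v_1}(z_1)$ via (\ref{iotaepsiloncommute}), and extract the derivative term $[d\hiota^{[v_2,v_1]}/dz](z_1)du_{z_1}^2$ from the difference quotient $\frac{1}{u(z_2)-u(z_1)}(\iota^{[v_2,v_1]}(z_2)du_{z_1}-du_{z_2}\iota^{[v_2,v_1]}(z_1))$ — your Taylor-expansion phrasing is exactly this. Where you genuinely diverge is the commutativity argument. The paper does not compute the four elementary commutators; it works directly with $[j^{v_1}(z_1),j^{v_2}(z_2)]$, using (\ref{jalphacommute}) to show its commutator with any $\balpha$ is a sum of lower-order $[j,j]$ commutators, and reduces by induction to the action on the vacuum (a base case it explicitly postpones to \S\ref{insertions}). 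Your route instead establishes $[\iota^{v_1}(z_1),\iota^{v_2}(z_2)]$ as an independent operator identity (by the same ``both sides kill $\mathbf{1}$ and satisfy the same recursion'' device) and then verifies the cancellation of all four pieces. This costs you the extra verification of the $\iota$-$\iota$ commutator — which the paper gets for free afterwards as Corollary (\ref{iicommute}) — but buys a self-contained proof whose base cases are all trivial, so you avoid the postponement. Both are legitimate.

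One concrete error: your displayed candidate for $[\iota^{v_1}(z_1),\iota^{v_2}(z_2)]$ has the wrong sign. The correct identity, per (\ref{iicommute}), is
\begin{equation*}
[\iota^{v_1}(z_1),\iota^{v_2}(z_2)]=-\frac{1}{u(z_1)-u(z_2)}\bigl(\iota^{[v_1,v_2]}(z_1)du_{z_2}-\iota^{[v_1,v_2]}(z_2)du_{z_1}\bigr),
\end{equation*}
i.e.\ the negative of what you wrote. With your sign the simple-pole terms in the four-commutator sum do not cancel (the $\iota^{[v_1,v_2]}$ contributions double rather than vanish), so the claimed conclusion $[j^{v_1}(z_1),j^{v_2}(z_2)]=0$ would fail. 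The recursion check you describe would force the correct sign if carried out, so this is repairable, but as written the final cancellation step is broken. Also note that your claim that the four normal-ordered products are regular at $z_2=z_1$ needs slightly more than composability: you need that on each finite filtration piece the result varies regularly in $(z_1,z_2)$ across the diagonal, which holds because the coefficients produced by $\iota$ are evaluations of functions regular at the relevant points — you correctly identify this as the technical hurdle, and it is the content of \S\ref{limits}.
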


\begin{proof} Let $\balpha \in \fg_-^{z_1,z_2}$. Using (\ref{jalphacommute}), we get
\begin{equation*}
\begin{split}
[j^{v_1}(z_1)j^{v_2}(z_2),\balpha]&=[j^{v_1}(z_1),\balpha]j^{v_2}(z_2)
+ j^{v_1}(z_1)[j^{v_2}(z_2),\balpha]\\
&=[-(v_1,d\balpha(z_1))+j^{[v_1,\balpha(z_1)]}(z_1)] j^{v_2}(z_2)\\
&+j^{v_1}(z_1) [-(v_2,d\balpha(z_2))+j^{[v_2,\balpha(z_2)]}(z_2)]
\end{split}
\end{equation*}
which yields
\begin{equation*}
\begin{split}
[j^{v_1}(z_1)j^{v_2}(z_2)-j^{v_2}(z_2)j^{v_1}(z_1),\balpha]
&=[j^{[v_1,\balpha(z_1)]}(z_1),j^{v_2}(z_2)]+[j^{v_1}(z_1),  
j^{[v_2,\balpha(z_2)]}(z_2)]\\
\end{split}
\end{equation*}
and we see the desired commutativity by induction. the starting point of the induction remains to be checked; we postpone this to a later section (\S \ref{insertions}), where this will be done in a more general context.

To study the limit as $z_2 \to z_1$ of $j^{v_2}(z_2)j^{v_1}(z_1)$, write  
\begin{equation*}
\begin{split}
j^{v_2}(z_2)j^{v_1}(z_1)&=\iota^{v_2}(z_2)\iota^{v_1}(z_1)
+\epsilon^{v_2} (z_2)  \epsilon^{v_1} (z_1)\\
&+\epsilon^{v_2} (z_2)  \iota^{v_1} (z_1)
+\epsilon^{v_1} (z_1)  \iota^{v_2} (z_2)\\
&+\iota^{v_2} (z_2)  \epsilon^{v_1} (z_1)-\epsilon^{v_1} (z_1)  \iota^{v_2} (z_2)\\
\end{split}
\end{equation*}
and use the previous lemma to get
\begin{equation*}
\begin{split}
j^{v_2}(z_2)j^{v_1}(z_1)&=\iota^{v_2}(z_2)\iota^{v_1}(z_1)
+\epsilon^{v_2} (z_2)  \epsilon^{v_1} (z_1)\\
&+\epsilon^{v_2} (z_2)  \iota^{v_1} (z_1)
+\epsilon^{v_1} (z_1)  \iota^{v_2} (z_2)\\
&-\frac{(v_1,v_2)du_{z_1}du_{z_2}}{(u(z_1)-u(z_2))^2}\\
&+\frac{1}{u(z_2)-u(z_1)} \iota^{[v_2,v_1]}(z_2)du_{z_1}+\frac{1}{u(z_2)-u(z_1)} du_{z_2} \epsilon^{[v_2,v_1]}(z_1)\\
&=\iota^{v_2}(z_2)\iota^{v_1}(z_1)
+\epsilon^{v_2} (z_2)  \epsilon^{v_1} (z_1)\\
&+\epsilon^{v_2} (z_2)  \iota^{v_1} (z_1)
+\epsilon^{v_1} (z_1)  \iota^{v_2} (z_2)\\
&-\frac{(v_1,v_2)du_{z_1}du_{z_2}}{(u(z_1)-u(z_2))^2}\\
&+\frac{1}{u(z_2)-u(z_1)} (\iota^{[v_2,v_1]}(z_2)du_{z_1}-du_{z_2} \iota^{[v_2,v_1]}(z_1))\\
&+\frac{1}{u(z_2)-u(z_1)} (du_{z_2} \epsilon^{[v_2,v_1]}(z_1) +du_{z_2} \iota^{[v_2,v_1]}(z_1))\\
\end{split}
\end{equation*}
Gathering terms, we find:
\begin{equation*}
\begin{split}
j^{v_2}(z_2)j^{v_1}(z_1)&=-\frac{(v_1,v_2)du_{z_1}du_{z_2}}{(u(z_1)-u(z_2))^2}\\
&+\frac{1}{u(z_2)-u(z_1)} du_{z_2}j^{[v_2,v_1]}(z_1)\\
&+\iota^{v_2}(z_2)\iota^{v_1}(z_1)
+\epsilon^{v_2} (z_2)  \epsilon^{v_1} (z_1)\\
&+\epsilon^{v_2} (z_2)  \iota^{v_1} (z_1)
+\epsilon^{v_1} (z_1)  \iota^{v_2} (z_2)\\
&+\frac{1}{u(z_2)-u(z_1)} (\iota^{[v_2,v_1]}(z_2)du_{z_1}-du_{z_2} \iota^{[v_2,v_1]}(z_1))\\
\end{split}
\end{equation*}
and the Proposition follows.
\end{proof}

We note the following consequences:
\begin{corollary} Let $z_1,z_2$ be distinct points on $\bbP$ and $v_1,v_2 \ \in \bg$. Then
\begin{equation}\label{iicommute}
\begin{split}
\iota^{v_1}(z_1)\iota^{v_2}(z_2)-\iota^{v_2}(z_2)\iota^{v_1}(z_1)&+\\
\frac{1}{u(z_1)-u(z_2)}(\iota^{[v_1,v_2]}(z_1)du_{z_2}-du_{z_1} \iota^{[v_1,v_2]}(z_2)) &=0
\end{split}
\end{equation}
and
\begin{equation}\label{iotajcommute}
\begin{split}
\iota^{v_1} (z_1)  j^{v_2} (z_2) - j^{v_2} (z_2)  \iota^{v_1} (z_1)&=\frac{(v_1,v_2)du_{z_1}du_{z_2}}{(u(z_1)-u(z_2))^2}\\
+\frac{du_{z_1}}{u(z_1)-u(z_2)}j^{[v_1,v_2]}(z_2)\\
\end{split}
\end{equation}
\end{corollary}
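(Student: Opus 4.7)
Both identities are purely formal consequences of the preceding Proposition (that $j^{v_1}(z_1)$ and $j^{v_2}(z_2)$ commute) together with the two commutation relations already established in the preceding Lemma (the $[\epsilon,\epsilon]$ bracket, and the mixed $[\iota,\epsilon]$ bracket (\ref{iotaepsiloncommute})). The plan is therefore to expand $0=[j^{v_1}(z_1),j^{v_2}(z_2)]$ using $j=\iota+\epsilon$ as the sum of four commutators
\begin{equation*}
[\iota^{v_1}(z_1),\iota^{v_2}(z_2)] + [\iota^{v_1}(z_1),\epsilon^{v_2}(z_2)] + [\epsilon^{v_1}(z_1),\iota^{v_2}(z_2)] + [\epsilon^{v_1}(z_1),\epsilon^{v_2}(z_2)],
\end{equation*}
substitute the known formulas for the last three, and solve for the first. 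This gives (\ref{iicommute}) directly, and (\ref{iotajcommute}) then follows by adding (\ref{iicommute}) to (\ref{iotaepsiloncommute}).

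The only arithmetic that needs care is the treatment of the mixed commutator with indices swapped: one expresses $[\epsilon^{v_1}(z_1),\iota^{v_2}(z_2)]$ as $-[\iota^{v_2}(z_2),\epsilon^{v_1}(z_1)]$ and applies (\ref{iotaepsiloncommute}) with $1\leftrightarrow 2$. The two sign flips $u(z_2)-u(z_1)=-(u(z_1)-u(z_2))$ and $[v_2,v_1]=-[v_1,v_2]$ combine so that the result reads
\begin{equation*}
[\epsilon^{v_1}(z_1),\iota^{v_2}(z_2)] = -\frac{(v_1,v_2)du_{z_1}du_{z_2}}{(u(z_1)-u(z_2))^2} - \frac{1}{u(z_1)-u(z_2)}\{\iota^{[v_1,v_2]}(z_2)du_{z_1}+\epsilon^{[v_1,v_2]}(z_1)du_{z_2}\}.
\end{equation*}
Adding the three explicit brackets, the central terms $\pm(v_1,v_2)/(u(z_1)-u(z_2))^2$ cancel, as do both $\epsilon^{[v_1,v_2]}$ terms (the contribution from $[\iota,\epsilon]$ and $[\epsilon,\iota]$ each cancel against the corresponding contribution from $[\epsilon,\epsilon]$). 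What survives is precisely the $\iota^{[v_1,v_2]}$ terms of (\ref{iicommute}) with the opposite sign, yielding the claimed identity. For (\ref{iotajcommute}) one then adds (\ref{iicommute}) and (\ref{iotaepsiloncommute}); the $\iota^{[v_1,v_2]}(z_1)du_{z_2}$ contributions cancel, and the remaining $\iota^{[v_1,v_2]}(z_2)du_{z_1}$ and $\epsilon^{[v_1,v_2]}(z_2)du_{z_1}$ combine into $j^{[v_1,v_2]}(z_2)du_{z_1}$.

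The main obstacle is purely bookkeeping: making sure that when (\ref{iotaepsiloncommute}) is applied with swapped arguments, the resulting $\epsilon^{[v_1,v_2]}$ summands end up with signs that match those produced by $[\epsilon^{v_1}(z_1),\epsilon^{v_2}(z_2)]$ so as to eliminate every $\epsilon$ from the final expression. The identities stated are in this sense a sanity check on the existing formulas: they are exactly what is required for $\epsilon$-terms to drop out once the commutation $[j^{v_1}(z_1),j^{v_2}(z_2)]=0$ is imposed.
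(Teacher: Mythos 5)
Your proposal is correct and follows exactly the route the paper intends: the Corollary is stated as an immediate consequence of the Proposition (commutativity of the currents) together with the Lemma's $[\epsilon,\epsilon]$ and $[\iota,\epsilon]$ brackets, and your expansion of $0=[j^{v_1}(z_1),j^{v_2}(z_2)]$ into four commutators, solving for $[\iota,\iota]$ and then adding (\ref{iotaepsiloncommute}) to obtain (\ref{iotajcommute}), is precisely that derivation with the sign bookkeeping carried out correctly.
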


\subsection{Extending to the point at infinity}\label{globalcurrent}

\begin{remark}
By the PBW theorem, we have an isomorphism of left $U\fg_-$ modules
\begin{equation*}
U\fg_{P} = U\fg_- \oplus \ \ U\fg_-
\otimes_{\mathbb C} U^+\bg
\end{equation*}
where $U^+\bg$ is the augmentation ideal in $U\bg$. This gives natural isomorphisms of left $\fg_-$-modules:
\begin{equation*}
U\fg_- = 
U\fg_{P}/<U^+\bg>=U\fg\otimes_{U\bg} \bbC
\end{equation*}
where we mean by $<U^+\bg>$ the left ideal in $U\fg_{P}$ generated by $U^+\bg$. This is an also an isomorphism of (left) $\fg$-modules, where the action of $\bg$ on the left hand side is the adjoint action. Note that the subspace $U\bg/U^+\bg$ in the right hand side  corresponds to the ``vacuum sector'' $\bbC$ on the left.
\end{remark}

\noindent\textbf{Notation:} We set $\cV=U\fg\otimes_{U\bg} \bbC$ and $\cV^z=U\fg^z\otimes_{U\bg} \bbC$. Recall that $\fg$ is the Lie algebra of all meromorphic maps $\bbP \to \bg$; $\fg^z$ is the Lie subalgebra of maps regular at $z$.

\medskip

In this section, we will show how the current $j^{\nu}$ extends as a well-defined field 
\begin{equation*}
j^{\nu(z)}(z): \cV^z \to \cV \otimes K_z,\ \ z \in \bbP
\end{equation*}

To begin with, choose a second point $\tP \ne P$. This determines a coordinate $u$ (up to a nonzero scalar) such that $u(\tP)=0$ and $u(P)=\infty$; set $\tu=1/u$. Define fields $\tepsilon$ and $\tiota$ analogously to $\epsilon$ and $\iota$ by replacing the role of $P$ by $\tP$. Thus,
\begin{equation*}
    \tepsilon^{\nu(z)}_z = \frac{\nu(z) \otimes d\tu_z}{\tu-\tu(z)} 
\end{equation*}
and the field $\tepsilon^{\nu(z)}(z):U\fg_{\tP,-}  \to U\fg_{\tP,-} \otimes K_z$ acts by multiplication on the left by $\tepsilon^{\nu(z)}_z$.

On the other hand, $\tiota$ is defined by requiring that $\tiota^{\nu(z)}(z)$ annihilates $\mathbf{1} \in U\fg_{\tP,-}$ and
 \begin{equation*}
\tiota^{\nu(z)}(z) \circ \balpha  - \balpha \circ \tiota^{\nu(z)}(z)=-(\nu(z), d\balpha(z))
+ \tiota^{[v, \balpha(z)]}(z)
- [\tepsilon^{\nu(z)},\balpha-\balpha(z)] 
\end{equation*}
Finally, $\tj$ is defined as the sum $\tj^{\nu(z)}(z)=\tepsilon^{\nu(z)}(z)+\tiota^{\nu(z)}(z)$.

\begin{theorem} Under the identifications
$U\fg_P/<U^+\bg> = U\fg_{P,-}$, $U\fg_{\tP}/<U^+\bg> = U\fg_{\tP,-}$, we have on $\bbP \setminus \{P,\tP\}$, the equalities
\begin{equation*}
\begin{split}
 \tepsilon(z)^{\nu(z)} &= \epsilon^{\nu(z)}(z)-{\nu(z)}\frac{d\tu_z}{\tu(z)}:U\fg_{P,\tP}/<U^+\bg> \to \{U\fg_{P,\tP}/<U^+\bg>\} \otimes K_z\\
 \tiota(z)^{\nu(z)} &= \iota^{\nu(z)}(z) + {\nu(z)}\frac{d\tu_z}{\tu(z)} :U\fg^z_{P,\tP}/<U^+\bg> \to \{U\fg^z_{P,\tP}/<U^+\bg>\} \otimes K_z
\end{split}
\end{equation*}
In particular, $\tj^{\nu(z)}(z)=j^{\nu(z)}(z)$.
\end{theorem}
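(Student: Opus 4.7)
The plan is to prove the two displayed formulae separately; the assertion $\tj^{\nu(z)}(z)=j^{\nu(z)}(z)$ then drops out because the correction terms $\mp\,\nu(z)\cdot d\tu_z/\tu(z)$ cancel when $\tepsilon^{\nu(z)}(z)$ and $\tiota^{\nu(z)}(z)$ are added.

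For the $\tepsilon$-identity I would compute $\tepsilon_z^{\nu(z)}$ directly in $\fg\otimes K_z$, repeating the scalar calculation~(\ref{PtildeP}) with a factor of $\nu(z)\in\bg$ carried through. Substituting $\tu=1/u$, $d\tu_z = -du_z/u(z)^2$, and $\tu-\tu(z)=(u(z)-u)/(u\,u(z))$ and then splitting $u/(u-u(z)) = 1 + u(z)/(u-u(z))$ yields
\begin{equation*}
\tepsilon_z^{\nu(z)} \;=\; \frac{\nu(z)\,du_z}{u(z)} \,+\, \frac{\nu(z)\,du_z}{u-u(z)} \;=\; \epsilon_z^{\nu(z)} \,-\, \nu(z)\cdot\frac{d\tu_z}{\tu(z)}.
\end{equation*}
Since $\tepsilon^{\nu(z)}(z)$, $\epsilon^{\nu(z)}(z)$, and ``left multiplication by $\nu(z)$'' are all realised as left multiplication in $U\fg$ followed by passage to the quotient modulo $<U^+\bg>$ (and the left ideal $<U^+\bg>$ is stable under left multiplication by anything), this identity of elements in $\fg\otimes K_z$ promotes immediately to the claimed identity of operators on $U\fg_{P,\tP}/<U^+\bg>$.

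For the $\tiota$-identity, I would set $\iota_\circ^{\nu(z)}(z) := \iota^{\nu(z)}(z) + \nu(z)\cdot d\tu_z/\tu(z)$ and show that it satisfies the defining properties of $\tiota^{\nu(z)}(z)$ on $U\fg^z_{P,\tP}/<U^+\bg>$; uniqueness of the recursion will then force $\iota_\circ^{\nu(z)}(z) = \tiota^{\nu(z)}(z)$. Vanishing on $\mathbf{1}$ is automatic because $\nu(z)\cdot\mathbf{1}$ lies in $U^+\bg$ and therefore dies in the quotient. For the commutator with a test element $\balpha\in\fg^z$ regular at $P$ and vanishing at $\tP$, I would substitute the already-proved $\tepsilon$-identity into the $\tiota$-analogue of~(\ref{iotaalphacommute}), expand $[\nu(z)\cdot d\tu_z/\tu(z),\,\balpha-\balpha(z)] = [\nu(z),\balpha]\cdot d\tu_z/\tu(z) - [\nu(z),\balpha(z)]\cdot d\tu_z/\tu(z)$ (pointwise Lie bracket in $\fg$), and apply the inductive hypothesis $\tiota^{[\nu(z),\balpha(z)]}(z)\bbeta = (\iota^{[\nu(z),\balpha(z)]}(z) + [\nu(z),\balpha(z)]\cdot d\tu_z/\tu(z))\bbeta$ at lower degree in $\bbeta$. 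The two $[\nu(z),\balpha(z)]\cdot d\tu_z/\tu(z)$ pieces cancel, and the remainder agrees termwise with the commutator of $\iota_\circ^{\nu(z)}(z)$ with $\balpha$ read off from the rewrite of~(\ref{iotaalphacommute}) that folds $\epsilon^{[\nu(z),\balpha(z)]}(z)$ into $[\epsilon^{\nu(z)}(z),\balpha-\balpha(z)]$.

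The main obstacle is not the algebra but the bookkeeping for the common subspace $U\fg_{P,\tP}/<U^+\bg>$, which sits inside both $U\fg_P/<U^+\bg>=U\fg_{P,-}$ and $U\fg_{\tP}/<U^+\bg>=U\fg_{\tP,-}$ via two different PBW decompositions of $\fg_{P,\tP}$. One must check that each of $\iota^{\nu(z)}(z)$, $\tiota^{\nu(z)}(z)$, and left multiplication by the constant $\nu(z)\in\bg$ preserves this common subspace, so that the asserted equality is genuinely a statement about a single space of operators. The recursion being triangular with respect to the natural filtration by order then makes the induction routine, and the concluding identity $\tj^{\nu(z)}(z)=j^{\nu(z)}(z)$ follows by adding the two main formulae.
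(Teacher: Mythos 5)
Your proposal is correct and follows essentially the same route as the paper: the $\tepsilon$-identity is the calculation (\ref{PtildeP}) with the factor $\nu(z)$ carried along, and the $\tiota$-identity is obtained by checking that $\iota^{\nu(z)}(z)+\nu(z)\,d\tu_z/\tu(z)$ satisfies the defining recursion of $\tiota^{\nu(z)}(z)$ and invoking uniqueness, which is exactly the paper's ``key computation.'' Your explicit checks that $\nu(z)\mathbf{1}$ dies in the quotient modulo $<U^+\bg>$ and that all operators preserve the common subspace $U\fg_{P,\tP}/<U^+\bg>$ are points the paper leaves implicit, but they do not change the argument.
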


\begin{proof} One checks that $\iota^{\nu(z)}(z) + {\nu(z)}\frac{d\tu_z}{\tu(z)}$ satisfies properties demanded of $\tiota^{\nu(z)}(z)$. The key computation:  
\begin{equation*}
\begin{split}
&\{\iota^{\nu(z)}(z) + {\nu(z)}\frac{d\tu_z}{\tu(z)}\} \circ \balpha  - \balpha \circ \{\iota^{\nu(z)}(z) + {\nu(z)}\frac{d\tu_z}{\tu(z)}\}\\
&=-({\nu(z)}, d\balpha(z))
+ \iota^{[{\nu(z)}, \balpha(z)]}(z)+\epsilon^{[{\nu(z)}, \balpha(z)]}(z)-[\epsilon^{\nu(z)}(z),\balpha]+[{\nu(z)},\balpha] \frac{d\tu_z}{\tu(z)}\\
&=-({\nu(z)}, d\balpha(z))
+ \iota^{[{\nu(z)}, \balpha(z)]}(z)+\epsilon^{[{\nu(z)}, \balpha(z)]}(z)+[{\nu(z)}, \balpha(z)] \frac{d\tu_z}{\tu(z)}\\
&- [{\nu(z)}, \balpha(z)] \frac{d\tu_z}{\tu(z)}-[\epsilon^{\nu(z)}(z),\balpha]+[{\nu(z)},\balpha] \frac{d\tu_z}{\tu(z)}\\
&=-({\nu(z)}, d\balpha(z))\\
&+\iota^{[{\nu(z)}, \balpha(z)]}(z)+[{\nu(z)}, \balpha(z)] \frac{d\tu_z}{\tu(z)}\\
&+\epsilon^{[{\nu(z)}, \balpha(z)]}(z)-[{\nu(z)}, \balpha(z)] \frac{d\tu_z}{\tu(z)}\\
&-[\epsilon^{\nu(z)}(z)-{\nu(z)}\frac{d\tu_z}{\tu(z)},\balpha] \\
&=-({\nu(z)}, d\balpha(z))\\
&+\iota^{[{\nu(z)}, \balpha(z)]}(z)+[{\nu(z)}, \balpha(z)] \frac{d\tu_z}{\tu(z)}\\
&+\tepsilon^{[{\nu(z)}, \balpha(z)]}(z)\\
&-[\tepsilon^{\nu(z)}(z),\balpha] \\
\end{split}
\end{equation*}
\end{proof}

We record the remarkable consequence of the previous theorem:

\begin{theorem} We have a natural ($Aut(\bbP)$-covariant) field on $\bbP$
\begin{equation*}
j^{\nu(z)}(z)  :\cV^z \to \cV \otimes K_z
\end{equation*}
satisfying the OPE
\begin{equation}\label{OPEcurrent}
\begin{split}
j^{v_2}(z_2)j^{v_1}(z_1)&+\frac{(v_1,v_2)du_{z_1}du_{z_2}}{(u(z_1)-u(z_2))^2}-\frac{1}{u(z_2)-u(z_1)} du_{z_2}j^{[v_2,v_1]}(z_1)\\
\longrightarrow &\iota^{v_2}(z_1)\iota^{v_1}(z_1)
+\epsilon^{v_2} (z_1)  \epsilon^{v_1} (z_1)\\
+&\epsilon^{v_2} (z_1)  \iota^{v_1} (z_1)
+\epsilon^{v_1} (z_1)  \iota^{v_2} (z_1)\\
+& [d\hiota^{[v_2,v_1]}/dz](z_1)du_{z_1}^2\\
\end{split}
\end{equation}
where the limit can be calculated with respect to any choice of point at infinity $P \ne z_1$ (and is independent of this choice).
\end{theorem}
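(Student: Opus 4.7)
\medskip

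\noindent\textbf{Proof plan.} The substantive work has already been carried out in the proposition establishing commutativity and the OPE for $j^{v_1}(z_1), j^{v_2}(z_2)$ with respect to a fixed base-point $P$, and in the theorem of \S \ref{globalcurrent} comparing the constructions based at $P$ and at $\tP$. The remaining task is to package these results into a global, canonical statement. First I would use the PBW identification $U\fg_{P,-} \cong U\fg_P/\langle U^+\bg\rangle$ (recalled before the statement) to identify, for each choice of point at infinity $P$, the ambient space $U\fg_{P,-}$ with the canonical left $\fg$-module $\cV = U\fg \otimes_{U\bg}\bbC$, and similarly $U\fg_{P,-}^z$ with $\cV^z = U\fg^z \otimes_{U\bg}\bbC$. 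Under these identifications the $P$-dependent field $j^{\nu(z)}(z)$ becomes, for each $P \ne z$ and each $\nu$ regular at $z$, a map $\cV^z \to \cV \otimes K_z$.

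Next I would invoke the previous theorem, which establishes that on the overlap $\bbP\setminus\{P,\tP\}$ the fields $j$ and $\tj$ coincide as maps $\cV^z \to \cV\otimes K_z$ (the corrections $\pm\,\nu(z)\,d\tu_z/\tu(z)$ affecting $\iota$ and $\epsilon$ separately cancel in their sum). Since any two points $P,\tP\in\bbP$ distinct from $z$ fit this framework, and any chain of such comparisons returns the same field, one obtains by a Zariski gluing argument a single field $j^{\nu(z)}(z):\cV^z\to\cV\otimes K_z$ defined at every $z\in\bbP$ for every $\nu$ regular at $z$; in the absence of insertions this is manifestly $Aut(\bbP)$-covariant, because an automorphism $\phi\in Aut(\bbP)$ carries the local construction based at $P$ to the local construction based at $\phi(P)$, and these agree with the canonical one.

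For the OPE, I would simply observe that both sides of the formula make sense \emph{locally} once any $P$ distinct from $z_1$ (and, for the limit, from a neighbourhood of $z_1$) is chosen: the left-hand side is the globally defined quantity $j^{v_2}(z_2) j^{v_1}(z_1)$ corrected by two terms that depend on $u$ (hence on $P$), while the right-hand side is a normal-ordered combination of $\iota$ and $\epsilon$ which depends on $P$. The equality was proven as a proposition in the $P$-framework; once transferred via the identification $U\fg_{P,-}\cong \cV$ it becomes the asserted OPE on $\cV^z$. That the \emph{operator-valued} limit on the left exists and equals an intrinsically defined element of $\cV\otimes K_{z_1}^{\otimes 2}$ is immediate from the $P$-dependent version combined with the gluing, because a different choice of $P$ merely reshuffles the $\iota$--$\epsilon$ decomposition of the right-hand side without changing its total value.

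The main obstacle is conceptual rather than computational: one must verify that the two natural changes occurring when $P$ is varied --- the change of vector-space model ($U\fg_{P,-}\leadsto U\fg_{\tP,-}$) and the change of coordinate ($u\leadsto \tu$) --- compensate precisely, so that the underlying morphism $\cV^z\to\cV\otimes K_z$ is canonical. This is exactly the content of the previous theorem, which is therefore the only place where real work is done; everything else is formal patching and transport of structure.
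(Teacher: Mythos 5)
Your proposal is correct and follows exactly the route the paper intends: the paper states this theorem with no separate proof, introducing it as "the remarkable consequence of the previous theorem," i.e.\ the comparison $\tj^{\nu(z)}(z)=j^{\nu(z)}(z)$ together with the PBW identifications and the already-established OPE in the $P$-framework. Your write-up merely makes explicit the gluing and transport-of-structure steps that the paper leaves implicit.
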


\subsection{$n$-point functions}

As in the case of the chiral boson, we can compute the $n$-point functions $<j^{v_1}(z_1) \dots j^{v_n}(z_n)>$ directly, though the arguments are slightly more involved. The $n$-point functions are defined by:
\begin{equation*}
<j^{v_1}(z_1) \dots j^{v_n}(z_n)> \mathbf{1}=
\underset{\cV \to \ <\text{span of} \ \mathbf{1}>}{Projection}
\{j^{v_1}(z_1) \dots j^{v_n}(z_n) \mathbf{1}\}
\end{equation*}

We start with a series of lemmas.

\begin{lemma}
The one-point function vanishes: $<j^{\nu(z)}(z)>=0$. Given $z_1 \ne z_2$, and vectors $v_1,v_2 \in \bg$,
\begin{equation*}
<j^{v_1}(z_1) j^{v_2}(z_2)> = \frac{(v_1,v_2)du_{z_1}du_{z_2}}{(u(z_1)-u(z_2))^2}
\end{equation*}
\end{lemma}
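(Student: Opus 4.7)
The plan is to compute $j^{v_1}(z_1)\,j^{v_2}(z_2)\,\mathbf{1}$ directly, using the decomposition $j=\epsilon+\iota$, the creation/annihilation asymmetry $\iota^{\nu(z)}(z)\,\mathbf{1}=0$, and the commutation relation (\ref{iotaepsiloncommute}) as the mechanism by which scalar (vacuum-sector) contributions are produced. The PBW identification $\cV\simeq U\fg_{-}$ from \S\ref{globalcurrent} gives a canonical filtration whose degree-zero part is $\bbC\cdot\mathbf{1}$, so ``projection to $\mathbf{1}$'' is well-defined, and everything reduces to tracking which rearrangements of $\epsilon$'s and $\iota$'s can possibly land in degree zero.

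For the one-point function, a direct evaluation gives
\begin{equation*}
j^{\nu(z)}(z)\,\mathbf{1}=\epsilon^{\nu(z)}(z)\,\mathbf{1}+\iota^{\nu(z)}(z)\,\mathbf{1}=\frac{\nu(z)}{u-u(z)}\otimes du_z,
\end{equation*}
which lies entirely in the degree-one summand $\fg_{-}\otimes K_z$ of $\cV\otimes K_z$ and thus projects to $0$.

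For the two-point function, first apply $j^{v_2}(z_2)$ to $\mathbf{1}$, obtaining $\epsilon^{v_2}(z_2)\,\mathbf{1}$ since $\iota^{v_2}(z_2)\,\mathbf{1}=0$. Then apply $j^{v_1}(z_1)=\epsilon^{v_1}(z_1)+\iota^{v_1}(z_1)$. The piece $\epsilon^{v_1}(z_1)\epsilon^{v_2}(z_2)\,\mathbf{1}$ has PBW degree two and contributes nothing to the vacuum. For $\iota^{v_1}(z_1)\epsilon^{v_2}(z_2)\,\mathbf{1}$ I invoke (\ref{iotaepsiloncommute}): the commuted term $\epsilon^{v_2}(z_2)\iota^{v_1}(z_1)\,\mathbf{1}$ vanishes because $\iota^{v_1}(z_1)\,\mathbf{1}=0$; inside the commutator, $\iota^{[v_1,v_2]}(z_1)\,\mathbf{1}=0$ and $\epsilon^{[v_1,v_2]}(z_2)\,\mathbf{1}$ sits in degree one (hence contributes nothing to the vacuum projection); only the central scalar $(v_1,v_2)\,du_{z_1}du_{z_2}/(u(z_1)-u(z_2))^2$ survives, which is exactly the asserted value.

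The step requiring the most care is the filtration bookkeeping: one must confirm that no subsequent rearrangement can re-produce a $\mathbf{1}$-coefficient beyond the single central term identified above. This is ensured by the structure of (\ref{iotaalphacommute}) and (\ref{iotaepsiloncommute}): each $\epsilon^{w}(z)$ strictly raises PBW degree by one, while each $\iota^{w}(z)$ either raises it (through the $\iota^{[\,,\,]}$ and $\epsilon^{[\,,\,]}$ terms in (\ref{iotaepsiloncommute})) or produces a genuine scalar only through the single $(w_{1},w_{2})/(u(z_{1})-u(z_{2}))^{2}$ pole, so tracking that one central scalar at each step is both necessary and sufficient. This same mechanism drives the Wick-type pairing formula that governs the higher $n$-point functions developed in the lemmas that follow.
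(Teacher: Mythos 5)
Your proposal is correct and follows exactly the paper's route: the paper likewise reduces $\langle j^{v_1}(z_1)j^{v_2}(z_2)\rangle$ to $\langle \iota^{v_1}(z_1)\epsilon^{v_2}(z_2)\rangle$ (using $\iota\,\mathbf{1}=0$ and the fact that $\epsilon$ strictly raises degree) and then reads off the central scalar from the commutation relation (\ref{iotaepsiloncommute}), the remaining terms $\iota^{[v_1,v_2]}(z_1)\mathbf{1}$ and $\epsilon^{[v_1,v_2]}(z_2)\mathbf{1}$ contributing nothing to the vacuum projection. Your write-up merely spells out the degree bookkeeping that the paper compresses into ``now use (\ref{iotaepsiloncommute})''.
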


\begin{proof} The first equation is clear enough. As for the second, we have 
$<j^{v_1}(z_1) j^{v_2}(z_2)> = <\iota^{v_1}(z_1) \epsilon^{v_2}(z_2)>$; now use (\ref{iotaepsiloncommute}).
\end{proof}

\begin{lemma} Given distinct points $z_1,z_2,z_3$, and vectors $v_1,v_2, v_3 \in \bg$,
\begin{equation*}
<j^{v_1}(z_1) j^{v_2}(z_2) j^{v_3}(z_3)> 
=\frac{(v_1, [v_2,v_3])}{(u(z_1)-u(z_2))(u(z_1)-u(z_3))(u(z_2)-u(z_3))} du_{z_1}d_{z_2}du_{z_3}
\end{equation*}
\end{lemma}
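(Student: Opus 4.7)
The plan is to compute $\langle j^{v_1}(z_1) j^{v_2}(z_2) j^{v_3}(z_3)\rangle$ by successively commuting operators, exploiting two observations throughout: (i) $\iota^{v}(z)\mathbf{1}=0$, and (ii) any expression ending in $\epsilon^{w}(z)\mathbf{1}$ (or more generally, lying in strictly positive PBW-filtration degree on $\mathbf{1}$) has zero vacuum projection. The only tools needed are the identity (\ref{iotaepsiloncommute}) and its $\epsilon$-$\epsilon$ analogue from the preceding lemma, whose sum is precisely the commutator
\begin{equation*}
[j^{v}(z),\epsilon^{w}(z')]=\frac{(v,w)du_{z}du_{z'}}{(u(z)-u(z'))^2}+\frac{du_{z}}{u(z)-u(z')}\,j^{[v,w]}(z').
\end{equation*}

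First I would replace $j^{v_3}(z_3)\mathbf{1}$ by $\epsilon^{v_3}(z_3)\mathbf{1}$, then commute $j^{v_2}(z_2)$ past $\epsilon^{v_3}(z_3)$ using the boxed identity to obtain three summands:
\begin{equation*}
\epsilon^{v_3}(z_3)\epsilon^{v_2}(z_2)\mathbf{1},\qquad \frac{(v_2,v_3)\,du_{z_2}du_{z_3}}{(u(z_2)-u(z_3))^2}\mathbf{1},\qquad \frac{du_{z_3}}{u(z_2)-u(z_3)}\,\epsilon^{[v_2,v_3]}(z_2)\mathbf{1}.
\end{equation*}
Next I would apply $j^{v_1}(z_1)=\iota^{v_1}(z_1)+\epsilon^{v_1}(z_1)$ and project to the vacuum. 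The $\epsilon^{v_1}(z_1)$ part contributes nothing in any case. The middle (scalar) summand yields $j^{v_1}(z_1)\mathbf{1}=\epsilon^{v_1}(z_1)\mathbf{1}$, which also has zero projection, so it drops out.

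For the remaining two summands I would apply $\iota^{v_1}(z_1)$ and use (\ref{iotaepsiloncommute}) repeatedly, discarding at each stage any sub-term of strictly positive degree on $\mathbf{1}$. The first summand produces (through two applications of the commutator, only the scalar branch surviving each time) the contribution
\begin{equation*}
\frac{([v_1,v_3],v_2)\,du_{z_1}du_{z_2}du_{z_3}}{(u(z_1)-u(z_3))\,(u(z_1)-u(z_2))^2},
\end{equation*}
while the third summand yields
\begin{equation*}
\frac{(v_1,[v_2,v_3])\,du_{z_1}du_{z_2}du_{z_3}}{(u(z_2)-u(z_3))\,(u(z_1)-u(z_2))^2}.
\end{equation*}

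Finally I would invoke invariance of $(\,,\,)$ in the form $([v_1,v_3],v_2)=-(v_1,[v_2,v_3])$, factor out the common $(v_1,[v_2,v_3])\,du_{z_1}du_{z_2}du_{z_3}/(u(z_1)-u(z_2))^2$, and observe that the bracketed difference
\begin{equation*}
-\frac{1}{u(z_1)-u(z_3)}+\frac{1}{u(z_2)-u(z_3)}=\frac{u(z_1)-u(z_2)}{(u(z_1)-u(z_3))(u(z_2)-u(z_3))}
\end{equation*}
cancels exactly one factor of $u(z_1)-u(z_2)$ in the denominator, producing the totally symmetric expression claimed. The one delicate point is keeping track of signs and of the $du_z$-factors when moving commutators across; there is no genuine obstacle since everything follows mechanically from the single OPE identity (\ref{iotaepsiloncommute}).
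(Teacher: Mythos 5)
Your proposal is correct and is essentially the paper's own argument --- repeated use of (\ref{iotaepsiloncommute}) with $j=\iota+\epsilon$, discarding every term of strictly positive degree on $\mathbf{1}$, then invariance of $(\,,\,)$ and a partial-fraction identity --- differing only in the order in which the commutations are performed, so that your intermediate denominators carry $(u(z_1)-u(z_2))^2$ where the paper's carry $(u(z_1)-u(z_3))^2$. One slip worth fixing: the displayed commutator should read $[j^{v}(z),\epsilon^{w}(z')]=\frac{(v,w)\,du_{z}du_{z'}}{(u(z)-u(z'))^2}+\frac{du_{z'}}{u(z)-u(z')}\,j^{[v,w]}(z)$ (the bracket current sits at $z$, with the factor $du_{z'}$), which is in fact the version your subsequent three-term expansion and final answer rely on.
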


\begin{proof} We adopt the notation $j(z)=\hj(z)du_z$, etc. and make repeated use of (\ref{iotaepsiloncommute}):
\begin{equation*}
\begin{split}
 <\hj^{v_1}(z_1) \hj^{v_2}(z_2) \hj^{v_3}(z_3)> &= <\hiota^{v_1}(z_1) \hj^{v_2}(z_2) \he^{v_3}(z_3)>\\ 
&=<\hiota^{v_1}(z_1) \hiota^{v_2}(z_2) \he^{v_3}(z_3)>+ 
<\hiota^{v_1}(z_1) \he^{v_2}(z_2) \he^{v_3}(z_3)>\\
&=<\hiota^{v_1}(z_1) \he^{[v_2,v_3]}(z_3)>\frac{1}{u(z_2)-u(z_3)}\\
&+  <\hiota^{[v_1,v_2]}(z_1) \he^{v_3}(z_3)>\frac{1}{u(z_1)-u(z_2)}\\
&= (v_1, [v_2,v_3]) \frac{1}{(u(z_2)-u(z_3))(u(z_1)-u(z_3))^2}\\
&+  ([v_1,v_2], v_3)\frac{1}{(u(z_1)-u(z_2))(u(z_1)-u(z_3))^2}\\
&= (v_1, [v_2,v_3])\{ \frac{1}{(u(z_2)-u(z_3))(u(z_1)-u(z_3))^2}\\
&\ \ \ \ \ \ \ \ \ \ \ \ \ \ \ \ + \frac{1}{(u(z_1)-u(z_2))(u(z_1)-u(z_3))^2}\}\\
&= \frac{(v_1, [v_2,v_3])}{(u(z_1)-u(z_3))^2}\{ \frac{1}{u(z_2)-u(z_3)}+ \frac{1}{u(z_1)-u(z_2)}\}\\
&=\frac{(v_1, [v_2,v_3])}{(u(z_1)-u(z_2))(u(z_1)-u(z_3))(u(z_2)-u(z_3))}
\end{split}
\end{equation*}
\end{proof}
One checks that this is indeed symmetric in the three indices.

\begin{theorem} Given distinct points $z_1, \dots , z_n$, and vectors $v_1,\dots ,v_n \in \bg$,
\begin{equation*}
\begin{split}
<j^{v_1}(z_1) \dots j^{v_n}(z_n)>&=\sum_{l=2}^n \{\frac{(v_1,v_l)du_{z_1} du_{z_l}}{(u(z_1)-u(z_l))^2}<j^{v_2}(z_2) \dots \widehat{j^{v_l}(z_l)} \dots  j^{v_n}(z_n)> \\
 &+\frac{du_{z_1}}{u(z_1)-u(z_l)}<j^{v_2}(z_2) \dots j^{[v_1,v_l]}(z_l) \dots  j^{v_n}(z_n)>\}\\
\end{split}
\end{equation*}
The hat $\widehat{....}$ denotes a term to be omitted.
\end{theorem}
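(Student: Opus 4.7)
The plan is to mimic the strategy used for the chiral boson $n$-point function in \S \ref{Wick}, but applied to the more intricate commutator (\ref{iotajcommute}). First I would split $j^{v_1}(z_1) = \iota^{v_1}(z_1) + \epsilon^{v_1}(z_1)$ and argue that only the $\iota$ piece survives under vacuum projection. To make this precise, I would choose a base-point $P$ distinct from all the $z_l$ and identify $\cV$ with $U\fg_{P,-}$ via PBW, so that $\mathbf{1}$ corresponds to $1$ and the vacuum projection $\langle \cdot \rangle$ coincides with the augmentation $U\fg_{P,-} \twoheadrightarrow \bbC$. Under this identification, $\epsilon^{v_1}(z_1)$ is left multiplication by $\frac{v_1}{u-u(z_1)} \otimes du_{z_1} \in \fg_{P,-} \otimes K_{z_1}$ (the function $1/(u-u(z_1))$ vanishes at $P$), so its image lies in the augmentation ideal and
\begin{equation*}
\langle j^{v_1}(z_1) j^{v_2}(z_2) \dots j^{v_n}(z_n) \rangle = \langle \iota^{v_1}(z_1) j^{v_2}(z_2) \dots j^{v_n}(z_n) \rangle .
\end{equation*}

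Next I would commute $\iota^{v_1}(z_1)$ to the right using the identity (\ref{iotajcommute}), namely
\begin{equation*}
\iota^{v_1}(z_1)  j^{v_l}(z_l) = j^{v_l}(z_l)  \iota^{v_1}(z_1) + \frac{(v_1,v_l)\,du_{z_1}du_{z_l}}{(u(z_1)-u(z_l))^2} + \frac{du_{z_1}}{u(z_1)-u(z_l)} j^{[v_1,v_l]}(z_l).
\end{equation*}
Applying this successively for $l=2,\dots,n$ pushes $\iota^{v_1}(z_1)$ past every factor and finally onto $\mathbf{1}$, where it vanishes. The remaining contributions are exactly the two types of ``commutator residues'' inserted at position $l$: the scalar $(v_1,v_l)du_{z_1}du_{z_l}/(u(z_1)-u(z_l))^2$, which drops the factor at $z_l$ from the resulting $(n{-}2)$-point function (producing $\langle j^{v_2}(z_2) \dots \widehat{j^{v_l}(z_l)} \dots j^{v_n}(z_n)\rangle$), and the operator $\frac{du_{z_1}}{u(z_1)-u(z_l)} j^{[v_1,v_l]}(z_l)$, which replaces $j^{v_l}(z_l)$ by $j^{[v_1,v_l]}(z_l)$. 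Summing over $l$ yields the claimed formula.

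The main (and only) subtle point is the vanishing of the $\epsilon^{v_1}(z_1)$ contribution: one must invoke PBW to be sure that the $\bg$-factor in the denominator identification is truly in the augmentation ideal and that the vacuum projection is well-defined independent of the choice of $P$ (which follows from \S \ref{globalcurrent}, where the current was shown to be $Aut(\bbP)$-covariant and independent of $P$). Once this is settled, the rest of the argument consists of repeated application of (\ref{iotajcommute}) and bookkeeping, entirely analogous to the derivation of Wick's formula for the boson — though here each ``contraction'' contributes both a constant term \emph{and} a linear term in the currents, reflecting the non-abelian nature of $\bg$.
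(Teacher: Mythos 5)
Your proposal is correct and follows essentially the same route as the paper: drop the $\epsilon^{v_1}(z_1)$ term under vacuum projection, commute $\iota^{v_1}(z_1)$ to the right through the remaining currents using (\ref{iotajcommute}), and let it annihilate $\mathbf{1}$, collecting the scalar and $j^{[v_1,v_l]}$ contributions at each step. The extra care you take in justifying the vanishing of the $\epsilon$ contribution via the PBW identification and the augmentation ideal is a welcome elaboration of what the paper leaves implicit.
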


\begin{proof} We make use of (\ref{iotajcommute}).  
\begin{equation*}
\begin{split}
 <\hj^{v_1}(z_1) \hj^{v_2}(z_2)  \dots  \hj^{v_n}(z_n)>  &=  <\hiota^{v_1}(z_1) \hj^{v_2}(z_2)  \dots  \hj^{v_n}(z_n)> \\
&\sum_{l=2}^n <\hj^{v_2}(z_2) \dots [\hiota^{v_1}(z_1),\hj^{v_l}(z_l)] \dots  \hj^{v_n}(z_n)> \\
&=\sum_{l=2}^n \{\frac{(v_1,v_l)}{(u(z_1)-u(z_l))^2}<\hj^{v_2}(z_2) \dots \widehat{\hj^{v_l}(z_l)} \dots  \hj^{v_n}(z_n)> \\
 &+\frac{1}{u(z_1)-u(z_l)}<\hj^{v_2}(z_2) \dots \hj^{[v_1,v_l]}(z_l) \dots  \hj^{v_n}(z_n)>\}\\
\end{split}
\end{equation*}
\end{proof}

We record the particular case of the four-point function.
\begin{corollary} Given distinct points $z_1,..,z_4$, and vectors $v_1,..,v_4 \in \bg$,
\begin{equation*}
\begin{split}
<j^{v_1}(z_1) j^{v_2}(z_2) j^{v_3}(z_3) j^{v_4}(z_4)> 
&=\{\frac{(v_1,v_2)(v_3,v_4)}{(u(z_1)-u(z_2))^2 (u(z_3)-u(z_4))^2}\\
&+\frac{(v_1,v_3)(v_2,v_4)}{(u(z_1)-u(z_3))^2 (u(z_2)-u(z_4))^2}\\
&+\frac{(v_1,v_4)(v_2,v_3)}{(u(z_1)-u(z_4))^2 (u(z_2)-u(z_3))^2}\\
&+\frac{([v_1,v_2], [v_3,v_4])}{(u(z_1)-u(z_2))(u(z_2)-u(z_3))(u(z_2)-u(z_4))(u(z_3)-u(z_4))}\}\\
&-\frac{([v_1,v_3], [v_2,v_4])}{(u(z_1)-u(z_3))(u(z_2)-u(z_3))(u(z_2)-u(z_4))(u(z_3)-u(z_4))}\}\\
&+\frac{([v_1,v_4], [v_2,v_3])}{(u(z_1)-u(z_4))(u(z_3)-u(z_4))(u(z_2)-u(z_4))(u(z_2)-u(z_3))}\}\\
& \times du_{z_1}du_{z_2}du_{z_3}du_{z_4}\\
\end{split}
\end{equation*}
\end{corollary}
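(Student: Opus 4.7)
The plan is to apply the recursion formula of the preceding theorem with $n=4$, substitute the explicit two- and three-point formulas already computed, and then use ad-invariance of $(\,,\,)$ to reorganize the bracketed contributions. No new ideas are required; the content is entirely bookkeeping.

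Specifically, the recursion gives
\begin{equation*}
\begin{split}
<j^{v_1}(z_1) j^{v_2}(z_2) j^{v_3}(z_3) j^{v_4}(z_4)>
&= \sum_{l=2}^{4}\frac{(v_1,v_l)\,du_{z_1} du_{z_l}}{(u(z_1)-u(z_l))^2}\,<j^{v_2}\cdots \widehat{j^{v_l}}\cdots j^{v_4}>\\
&\quad+ \sum_{l=2}^{4}\frac{du_{z_1}}{u(z_1)-u(z_l)}\,<j^{v_2}\cdots j^{[v_1,v_l]}(z_l)\cdots j^{v_4}>.
\end{split}
\end{equation*}
For the first (``contraction'') sum, each inner expectation is a two-point function, and inserting the formula $<j^{v}(z)j^{w}(w)>=(v,w)du_z du_w/(u(z)-u(w))^2$ produces directly the three Wick-type terms
\begin{equation*}
\frac{(v_1,v_2)(v_3,v_4)}{(u_1-u_2)^2(u_3-u_4)^2},\qquad
\frac{(v_1,v_3)(v_2,v_4)}{(u_1-u_3)^2(u_2-u_4)^2},\qquad
\frac{(v_1,v_4)(v_2,v_3)}{(u_1-u_4)^2(u_2-u_3)^2},
\end{equation*}
which account for the three ``square'' terms in the corollary.

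For the second sum, each inner expectation is a three-point function, and substituting the preceding lemma yields an expression of the form $(v_a,[v_b,[v_1,v_l]])$ (or a cyclic variant) divided by a product of four factors $u(z_i)-u(z_j)$. The only nontrivial step is to convert these nested-bracket symbols into the symmetric ``Killing-form of two brackets'' notation used in the statement. For this I would use the standard identity
\begin{equation*}
(w,[u,[x,y]])=-([x,y],[u,w])\qquad\text{and}\qquad (A,B)=(B,A),
\end{equation*}
which follows from ad-invariance $([u,a],b)+(a,[u,b])=0$ applied once. Running through $l=2,3,4$, the three resulting terms become
\begin{equation*}
\frac{([v_1,v_2],[v_3,v_4])}{(u_1-u_2)(u_2-u_3)(u_2-u_4)(u_3-u_4)},\ -\frac{([v_1,v_3],[v_2,v_4])}{(u_1-u_3)(u_2-u_3)(u_2-u_4)(u_3-u_4)},\ \frac{([v_1,v_4],[v_2,v_3])}{(u_1-u_4)(u_3-u_4)(u_2-u_4)(u_2-u_3)},
\end{equation*}
matching the last three summands in the corollary (the sign in the middle term comes precisely from a single application of ad-invariance).

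The only place one must be a little careful is the sign produced by rewriting $(v_a,[v_b,[v_1,v_l]])$ in terms of $([v_1,v_l],[\,\cdot\,,\,\cdot\,])$: this is the single ``obstacle'', and it is entirely mechanical. Multiplying through by $du_{z_1}du_{z_2}du_{z_3}du_{z_4}$ to account for the operator-valued one-form normalization then gives the stated formula.
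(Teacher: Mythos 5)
Your proposal is correct and is exactly the argument the paper intends: the corollary is simply the case $n=4$ of the recursion theorem, with the two-point function supplying the three Wick-type terms and the three-point function (rewritten via one application of ad-invariance of $(\,,\,)$) supplying the three bracket terms, including the minus sign on the $([v_1,v_3],[v_2,v_4])$ term. The paper offers no written proof beyond ``we record the particular case,'' so there is nothing further to compare.
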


\subsection{Current algebra modes}\label{currentmodes}

Suppose a coordinate $u$ is chosen, vanishing at a point which we denote $O$, and with a pole at $P$. 

Let $\lO U\fg_-=\bbC \oplus H^0(\bbP \setminus O, \bg \otimes \mathfrak{m}_P) \oplus S^2 H^0(\bbP \setminus O, \bg \otimes \mathfrak{m}_P) \oplus ......\ $, where $H^0(\bbP \setminus O, \bg \otimes \mathfrak{m}_P)$ is the ideal of \emph{algebraic} functions $\bbP \setminus O \to \bg$ vanishing at $P$.

For $v \in \bg$ and $l \ge 1$, define operators $j^{v}_{-l}: U\fg_- \to  U\fg_-$ by
\begin{equation*}
j^{v}_{-l}[\bbeta] =  vu^{-l} \times \bbeta, \ \bbeta \in \lO U\fg_-
\end{equation*}
where the multiplication is the universal enveloping algebra $U\fg_-$. Note that $j^{v}_l$ preserves $\lO U\fg_- \subset U\fg_-$ and in fact the operators $j^{v}_l$ acting on the vacuum generate $\lO U\fg_-$. For $v \in \bg$ and $l \ge 0$, define operators $j^{v}_{l}:  \lO U\fg_- \to  \lO U\fg_-$ inductively by setting $j^{v}_l(\mathbf{1})=0$ and
\begin{equation*}
[j^{v}_l,j^{v'}_{-m}]=
\begin{cases}
j^{[v,v']}_{l-m}, \ m \ne l \\
l(v,v') \ m =l \\
\end{cases}
\end{equation*}
 The operators $j^{v}_l$ are all defined on $\lO U\fg_-$ and satisfy the commutation relations $[j^{v}_l,j^{v'}_m]=l(v,v')\delta_{l+m,0} + j^{[v,v']}_{l+m}$. \emph{Note that in contrast to the case of the (abelian) current algebras encountered earlier (\S \ref{bosoncurrentmodes}), the operator $j_\bL^{v}$ is defined for $l=0$ as well, even in the absence of insertions.}

Let $z \in \bbP\setminus \{O,P\}$ and set $u(z)=w \in \bbC$.  
The series $\sum_{l \ge 1} j^{v}_{-l} w^{l-1}[\mathbf{1}]$ converges uniformly on the subsets $\{u||u|\ge |w|+\delta\}$ (for $\delta >0$) to $\frac{v}{(u-w)} \in \fg_-$. Formally
\begin{equation*}
\sum_{l \ge 1} j_{-l} w^{l-1}   =  \hepsilon^{v}(z)
\end{equation*}
Turning next to the formal sum $\sum_{l \ge 0} j^{v}_{l} w^{-l-1}$, this is clearly well-defined inductively on $\lO U\fg_-$. We have, for $\balpha$ of the form $v'u^{-m}$ ($v' \in \bg, \ m \ge 1$)
\begin{equation*}
\begin{split}
[\sum_{l \ge 0} j^{v}_{l} w^{-l-1},\balpha]&=[\sum_{l \ge 0} j^{v}_{l} w^{-l-1},j^{v'}_{-m}]\\
&=\sum_{l > m} j^{[v,v']}_{l-m} w^{-l-1}+m(v,v') w^{-m-1}+\sum_{0 \le l < m} j^{[v,v']}_{l-m} w^{-l-1}\\
&=\sum_{l \ge 1} j^{[v,v'w^{-m}]}_{l} w^{-l-1}+m(v,v') w^{-m-1}+\sum_{0 \le l < m} [v,v']u^{l-m} w^{-l-1}\\
\end{split}
\end{equation*}
so that for $\balpha \in H^0(\bbP \setminus O, \bg\otimes \mathfrak{m}_P)$, we have
\begin{equation*}
[\sum_{l \ge 0} j_{l} w^{-l-1}, \balpha] = -(v,d\balpha(w)/dw)+\hiota^{[v,\balpha(z)]} (z) - [\frac{v}{u-w},\balpha(u)-\balpha(w)]
\end{equation*}
Thus
\begin{equation}
\sum_{l \ge 0} j_{l} w^{-l-1}  = \hat{\iota}(z)|_{{}_{\lO U\fg_-\subset U\fg^z_-}} : \lO U\fg_- \to \lO U\fg_-
\end{equation}
We have used the notation $\iota^{v}(z)=\hiota^{v}(z) du_z, \  \epsilon^{\nu(z)}(z)=\hepsilon^(z) du_z$.

\subsection{``Insertions"}\label{insertions}

For each $j=1,\dots,n$ let an irreducible representation $W_j$ of $\bg$ be given. Let $\bz$ be a sequence of distinct points $(z_1, \dots ,z_n)$ on $\bbP$; we denote by $|\bz|$ the set $\{z_1, \dots ,z_n\}$.  Let $\fg$ and $\fg^z$  be defined as before. Let $\cW=U\fg\otimes_{U\bg} \{W_1 \otimes \dots \otimes W_n\}$.  Let $\cW^z=U\fg^z\otimes_{U\bg} \{W_1 \otimes \dots \otimes W_n\}$.  

Choose a reference point $P$ disjoint from $|\bz|$. We will now define, for $z \ne \{P,\text{poles of $\nu$}\}$ and (in the case of $\iota$, also distinct from each $z_j$), operators $\iota^{\nu(z)}(z)$ and $\epsilon^{\nu(z)}(z)$ which generalise those encountered previously.

Let $\fg_-$ and $\fg^z_-$  be defined as before. Let $\cW_P=U\fg_- \otimes \{W_1 \otimes \dots \otimes W_n\}$, all tensor products being taken over $\bbC$.  Let $\cW^z_P=U\fg^z_- \otimes \{W_1 \otimes \dots \otimes W_n\}$.

We define $\epsilon^{\nu(z)}(z):\cW_P \to \cW_P \otimes K_z$ as before, by multiplying on the left by
\begin{equation*}
    \epsilon_z^{\nu(z)} = v \otimes \frac{1}{u-u(z)} \otimes du_z
\end{equation*}
Define $\iota^{\nu(z)}(z):\cW^z_P \to \cW^z_P \otimes K_z$ by requiring
\begin{equation*}
\begin{split}
\iota^{\nu(z)}(z) (\ttw_1\otimes \dots \otimes \ttw_n)=\sum_j \ttw_1 \otimes \dots \otimes \frac{v(\ttw_j)du_{z}}{u(z)-u(z_j)} \otimes \dots \otimes \ttw_n\\
\end{split}
\end{equation*}
and
\begin{equation*}
\iota^{\nu(z)}(z) \circ \balpha  - \balpha \circ \iota^{\nu(z)}(z)=-(v, d\balpha(z))
+ \iota^{[v, \balpha(z)]}(z)
-[\epsilon^{\nu(z)}(z),\balpha-\balpha(z)]
\end{equation*}
for $\balpha \in \fg^z_-$. 

One checks as before that $\iota$ descends to the tensor product. 

As before, we can add $\epsilon$ and $\iota$ to define the field $j$, and this patches to yield

\begin{theorem} We have a field on $\bbP$ (defined for $z$ distinct from each $z_j$):
\begin{equation*}
j^{\nu(z)}(z)  :\cW^z \to \cW \otimes K_z
\end{equation*}
satisfying the same OPE (\ref{OPEcurrent}) as before.
 \end{theorem}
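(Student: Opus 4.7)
The plan is to mirror the strategy of Section~\ref{section: Current algebra: the fields} and Section~\ref{globalcurrent}, carrying along the extra bookkeeping imposed by the insertions. First I would verify that $\iota^{\nu(z)}(z)$ is well-defined on $\cW^z_P$ by defining it first on the tensor algebra $T\fg^z_-\otimes (W_1\otimes\dots\otimes W_n)$ using the stated action on the vacuum together with the inductive commutator rule, and then checking descent. The descent check is exactly the computation used earlier to verify that $\iota^{\nu(z)}(z)\circ(\balpha\circ\bbeta-\bbeta\circ\balpha)-\iota^{\nu(z)}(z)\circ[\balpha,\bbeta]$ vanishes modulo the tensor relations; none of the steps there is disturbed by the presence of $W_1\otimes\cdots\otimes W_n$ since the "insertion part" of $\iota^{\nu(z)}(z)\mathbf{1}$ is a function of $z$ alone and commutes past everything in $U\fg^z_-$. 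The only new ingredient is that when the Jacobi identity in $\bg$ is invoked, one must also use that $W_j$ carries an honest representation of $\bg$ so the diagonal action of $\bg$ on $\bigotimes_j W_j$ is a Lie algebra map.

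Next I would establish $P$-independence. Choose $\tP\ne P$, both disjoint from $|\bz|$, pick the adapted coordinates $u,\tu=1/u$, and define $\tepsilon,\tiota,\tj$ by the analogous formulas based at $\tP$. Using the PBW identifications
\begin{equation*}
\cW_P \;\cong\; U\fg\otimes_{U\bg}(W_1\otimes\dots\otimes W_n)\;\cong\;\cW_{\tP}
\end{equation*}
I would show, by the same algebraic manipulation as in the theorem of Section~\ref{globalcurrent}, that
\begin{equation*}
\tepsilon^{\nu(z)}(z)=\epsilon^{\nu(z)}(z)-\nu(z)\tfrac{d\tu_z}{\tu(z)},\qquad \tiota^{\nu(z)}(z)=\iota^{\nu(z)}(z)+\nu(z)\tfrac{d\tu_z}{\tu(z)},
\end{equation*}
where the correction term $\nu(z)d\tu_z/\tu(z)$, being a constant element of $\bg$ (for fixed $z$) times a scalar, acts on $\cW$ via the embedding of constant functions into $\fg$. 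Summing gives $\tj=j$, so $j$ is a well-defined field on $\bbP\setminus|\bz|$ by ranging $P$ over $\bbP\setminus(|\bz|\cup\{z\})$.

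Finally, for the OPE I would reuse the computation already carried out for the no-insertion case, splitting $j=\epsilon+\iota$ and applying the commutator identities (\ref{iotaepsiloncommute}) and (\ref{iicommute}); these are local statements proved from the definitions of $\epsilon,\iota$ on $\cW_P$, and they hold verbatim here because the insertion contribution to $\iota^{\nu(z)}(z)$ is the regular-in-$z$ expression $\sum_j \ttw_1\otimes\cdots\otimes \frac{v(\ttw_j)du_z}{u(z)-u(z_j)}\otimes\cdots\otimes\ttw_n$, which contributes nothing to the singular part as $z_2\to z_1$ away from the $z_j$'s. The singular terms and the regular limit on the right-hand side of (\ref{OPEcurrent}) are therefore identical to those of the insertion-free case.

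The main obstacle is Step~2, the $P$-independence. The algebra of Section~\ref{globalcurrent} uses $\iota(z)\mathbf{1}=0$ in an essential way, whereas now the vacuum is not annihilated. What rescues the argument is precisely the identification $\cW=U\fg\otimes_{U\bg}\bigotimes_j W_j$: the correction term $\nu(z)d\tu_z/\tu(z)$, regarded as a constant element of $\fg$, may be moved across the $\otimes_{U\bg}$ at the cost of acting diagonally on $\bigotimes_j W_j$, and this diagonal piece is exactly what is needed to make the shifts in $\epsilon$ and $\iota$ cancel at the insertion points. Keeping this bookkeeping straight — especially that the "extra" terms from $\iota^{\nu(z)}(z)\mathbf{1}$ transform correctly — is the main technical care required.
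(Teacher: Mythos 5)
Your overall architecture (descent to the enveloping algebra, $P$-independence via comparison with $\tP$, then the OPE) matches the paper's, and your observation in Step 2 --- that the correction term $\nu(z)\,d\tu_z/\tu(z)$ must be moved across $\otimes_{U\bg}$ and reappear as the diagonal action on $\bigotimes_j W_j$ at the insertion slots --- is exactly the right bookkeeping. But there is a genuine gap in Step 3. The one step the paper explicitly postponed from the earlier locality proposition to precisely this theorem is the \emph{base case of the commutativity induction}: the verification that
\begin{equation*}
\bigl(j^{v_1}(z_1)j^{v_2}(z_2)-j^{v_2}(z_2)j^{v_1}(z_1)\bigr)\,(\ttw_1\otimes\cdots\otimes\ttw_n)=0 ,
\end{equation*}
and this computation is the entire written content of the paper's proof. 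It is nontrivial here because $\iota$ no longer annihilates the vacuum: one must check that the diagonal terms of $[\iota^{v_1}(z_1),\iota^{v_2}(z_2)]$ acting on the insertion slots, namely $\sum_j \cdots\otimes [v_1,v_2](\ttw_j)\,\bigl((u(z_1)-u(z_j))(u(z_2)-u(z_j))\bigr)^{-1}\otimes\cdots$, cancel against the $\frac{1}{u(z_1)-u(z_2)}\{\iota^{[v_1,v_2]}(z_1)du_{z_2}-\iota^{[v_1,v_2]}(z_2)du_{z_1}\}$ terms left over from the $\epsilon$--$\iota$ cross-commutators; this is a short partial-fractions identity, but it must actually be carried out.

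Your justification --- that the insertion contribution to $\iota^{\nu(z)}(z)\mathbf{1}$ is regular in $z$ and so ``contributes nothing to the singular part'' --- controls only the pole structure of the OPE as $z_2\to z_1$; it does not establish the exact vanishing of the commutator at \emph{separated} points, which is what locality requires and what makes the field well defined on $\cW^z$ rather than merely having the right singularities. Moreover, citing (\ref{iicommute}) as ``holding verbatim'' is circular at this stage: in the paper that identity is a corollary of the commutativity proposition whose base case is exactly what is being deferred to this theorem, so it must be re-verified on $\ttw_1\otimes\cdots\otimes\ttw_n$ rather than quoted. (The identity (\ref{iotaepsiloncommute}) is fine, since it is an operator identity independent of the action on the vacuum.) Once you supply the base-case computation, the rest of your outline goes through.
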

 
\begin{proof} We now complete the part of the proof that was postponed. Set
$$
\bw=\ttw_1\otimes \dots \otimes \ttw_n
$$
Consider, for $z_1 \ne z_2$, $j^{v_1}(z_1)j^{v_2}(z_2)\bw$.  We have
\begin{equation*}
\begin{split}
(j^{v_1}(z_1)j^{v_2}(z_2)-j^{v_2}(z_2)j^{v_1}(z_1))\bw
&=\frac {1}{u(z_1)-u(z_2)} \{\epsilon^{[v_1,v_2]} (z_1) du_{z_2} - du_{z_1} \epsilon^{[v_1,v_2]} (z_2)\} \bw\\
+\frac{(v_1,v_2)du_{z_1}du_{z_2}}{(u(z_1)-u(z_2))^2}
&+\frac{1}{u(z_1)-u(z_2)} \{\iota^{[v_1,v_2]}(z_1)du_{z_2}+ \epsilon^{[v_1,v_2]}(z_2) du_{z_1}\} \bw\\
-\frac{(v_2,v_1)du_{z_1}du_{z_2}}{(u(z_2)-u(z_1))^2}
&-\frac{1}{u(z_2)-u(z_1)} \{\iota^{[v_2,v_1]}(z_2)du_{z_1}+ \epsilon^{[v_2,v_1]}(z_1)du_{z_2}\}\bw\\
&+(i^{v_1}(z_1)i^{v_2}(z_2)-i^{v_2}(z_2)i^{v_1}(z_1))\bw\\
&=\frac{1}{u(z_1)-u(z_2)} \{\iota^{[v_1,v_2]}(z_1)du_{z_2}-\iota^{[v_1,v_2]}(z_2)du_{z_1}\} \bw\\
&+(i^{v_1}(z_1)i^{v_2}(z_2)-i^{v_2}(z_2)i^{v_1}(z_1))\bw\\
=\frac{du_{z_1}du_{z_2}}{u(z_1)-u(z_2)} &\sum_j \ttw_1 \otimes \dots \otimes [v_1,v_2](\ttw_j) \{\frac{1}{u(z_1)-u(z_j)}+ \frac{1}{u(z_2)-u(z_j)}\}\otimes \dots \otimes \ttw_n \\
+du_{z_1}du_{z_2}&\sum_j \ttw_1 \otimes \dots \otimes \frac{[v_1,v_2](\ttw_j)}{(u(z_1)-u(z_j))(u(z_2)-u(z_j))} \otimes \dots \otimes \ttw_n \\
&=0
\end{split}
\end{equation*}

\end{proof}

\section{\textbf{Current algebras: the states and symmetries}}\label{section: Current algebras: the states and symmetries}

We now introduce subspaces of $\cW$ associated to subsets of $\bbP$.

Given a subset (eg., a domain in the analytic topology) $D \subset \bbP$, we denote by  $\fg_D$ the space of meromorphic maps $\bbP \to \bg$ which are regular at all points of $D$, and by $\lD\fg$ the space of such maps with all singularities contained in $D$. If $D$ consists of a single point $P$, we set $\fg_P=\fg_{\{P\}}$.  These conventions extend to $\cW$.

Suppose given a sequence of points $\bz=(z_1,\dots,z_n)$ distinct from each other and from $P$. We let $|\bz|$ denote the underlying set $\{z_1,\dots,z_n\}$ and $\lbz\fg$ will denote the space of functions $\bbP \to \bg$ regular outside the $z_i$.

\begin{proposition} The space $\lbz\cW$ over $\lbz\cW$ is generated (from $\mathbf{1}$) by the operators $\epsilon^{\nu(z)}(z_i)$ and their derivatives.  
\end{proposition}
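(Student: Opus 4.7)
The plan is to argue in two stages: first, identify $\hepsilon^v(z)\mathbf{1}$ together with its $z$-derivatives as explicit vectors in $\lbz\fg\cdot\mathbf{1} \subset \cW$; second, bootstrap by PBW to generate all products in $U\lbz\fg$ applied to $\mathbf{1}$.

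For the first stage, I would observe that by construction the field $\hepsilon^v(z)$ acts on $\cW$ by left multiplication in $U\fg$ by $v/(u-u(z)) \in \fg$, so that $\hepsilon^v(z)\mathbf{1}$ is just $v/(u-u(z))$ viewed in $\fg \subset \cW$. Taking $k-1$ derivatives in $u(z)$, which is the relevant field-derivative from \S \ref{subsection: Derivatives}, produces $(k-1)!\,v/(u-u(z))^k$, and specialising to $z = z_i$ realises the partial-fraction monomial $v/(u-u(z_i))^k$ as a vector in the span, for every $v \in \bg$, $1 \le i \le n$ and $k \ge 1$. A classical partial-fraction decomposition on $\bbP$ then shows that these monomials, together with the constants in $\bg \subset \lbz\fg$, span $\lbz\fg$ as a vector space.

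For the second stage, iterating gives that a composition of several $\epsilon$-operators and their derivatives applied to $\mathbf{1}$ is by definition the left-multiplication of $\mathbf{1}$ by an ordered monomial in $U\lbz\fg$. An induction on the PBW filtration of $U\lbz\fg$ then exhibits every vector of $U\lbz\fg \cdot \mathbf{1} = \lbz\cW$ as a linear combination of such monomials, any reordering contributing only commutator corrections of strictly smaller PBW degree that the induction hypothesis already handles. The main obstacle is the handling of the constants $\bg \subset \lbz\fg$: these never arise as $\epsilon$ or derivatives of $\epsilon$, which always produce a genuine pole at some $z_i$. One must therefore interpret ``generating from $\mathbf{1}$'' as allowing the vacuum vector to range over the full insertion sector $1 \otimes (W_1 \otimes \dots \otimes W_n)$, on which the constant subalgebra acts through the $U\bg$-tensor in the definition of $\cW$; this absorbs the constant part of $\lbz\fg$ and closes the argument.
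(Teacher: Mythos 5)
The paper states this proposition without any proof, so there is nothing to compare your argument against; what you have written is, as far as I can tell, a correct and complete argument, and it is the one the author presumably has in mind (for the boson the analogous claim is dismissed in the introduction as ``easy to show''). Your two stages are both sound: partial fractions in the coordinate $u$ (with its pole at $P\notin|\bz|$) show that the principal parts $v/(u-u(z_i))^k$, which are exactly $\epsilon^v(z_i)\mathbf{1}$ and its $(k-1)$-fold $u(z_i)$-derivatives, span the codimension-$\bg$ subspace $\lbz\fg\cap\fg_{P,-}$ of functions vanishing at $P$; and since compositions of $\epsilon$'s are literal left multiplications in $U(\lbz\fg\cap\fg_{P,-})$, the standard PBW filtration induction (reordering costs only terms of strictly lower filtration degree, and $\lbz\fg\cap\fg_{P,-}$ is closed under the bracket) yields all of $U(\lbz\fg\cap\fg_{P,-})$ applied to the insertion sector. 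Your closing observation is the one point of genuine substance beyond routine bookkeeping: the constants $\bg\subset\lbz\fg$ are never produced by $\epsilon$ or its derivatives, so the proposition as literally stated (``generated from $\mathbf{1}$'') only makes sense if ``$\mathbf{1}$'' is read as the whole subspace $1\otimes(W_1\otimes\dots\otimes W_n)$, over which the constant subalgebra acts through the $\otimes_{U\bg}$ in the definition of $\lbz\cW$; equivalently one uses the PBW splitting $U\lbz\fg=U(\lbz\fg\cap\fg_{P,-})\otimes U\bg$. This is a necessary clarification of the paper's loosely phrased statement, not a defect of your proof.
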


\subsection{The affine lie algebra}

Given a meromorphic function $\nu:\bbP \to g$, we can integrate $j^{\nu(z)}(z)$ around a contour $\gamma$ to define an operator $J_\gamma^\nu$, informally:
\begin{equation}\label{currentintegral}
J_\gamma^\nu = \frac{1}{2\pi i} \int_{\gamma} j^{\nu} (z)
\end{equation}
Just as with the chiral boson, we will define operators $\lD J^\nu$, $J^\nu_P$ and $\lP J^\nu$.

\subsection{A formal computation} 

Anticipating definitions, let us formally compute the commutator $[J_P^{\nu}, J_P^{\mu}]$. It is convenient at this point to choose a basis $\{ \sfv _l \}$ for $\bg$, and write $\nu=\sum_a \nu_a \sfv _a, \mu=\sum_b \mu_b \sfv _b$. Then (writing $u=u(z)$, etc.)
\begin{equation*}
J_P^{\mu} \circ J_P^{\nu} = \frac{1}{2\pi i}\int_{\gamma_1} du_1 \frac{1}{2\pi i}\int_{\gamma} du \ \sum_a \sum_b \mu_b(u_1) \nu_a(u) \hj^{\sfv _b} (u_1) \hj^{\sfv _a} (u) 
\end{equation*}
with $\gamma_1$ \emph{outside} (closer to $P$ than) $\gamma$. On the other hand,
\begin{equation*}
J_P^{\nu} \circ J_P^{\mu} = \frac{1}{2\pi i}\int_{\gamma_1} du_1 \frac{1}{2\pi i}\int_{\gamma} du \ \sum_a \sum_b \mu_b(u_1) \nu_a(u) \hj^{\sfv _b} (u_1) \hj^{\sfv _a} (u) 
\end{equation*}
with $\gamma_1$ \emph{inside} $\gamma$. We will now use the fact that as $u_1 \to u$,
\begin{equation*}
\hj^{\sfv _b}(u_1)\hj^{\sfv _a}(u)+\frac{(\sfv _a,\sfv _b)}{(u-u_1)^2}-\frac{1}{u_1-u}\hj^{[\sfv _b,\sfv _a]}(u)
\end{equation*}
 has a regular limit. Therefore, moving $\gamma_1$ across $\gamma$ from outside to inside, we pick up the residue (for each $u \in \gamma$)
\begin{equation*}
\begin{split}
\sum_a \sum_b \mu_b(u)\nu_a(u)j^{[\sfv _b,\sfv _a]} (u) &- \sum_a \sum_b \nu_a(u) d\mu_b(u)/du  (\sfv _a,\sfv _b)\\
&=j^{[\mu,\nu](u)}(u)-(\nu(u),d\mu (u)/du )
\end{split}
\end{equation*}
Doing the $u$-integral, we get
\begin{equation}\label{KMformal}
\begin{split}
[J_P^{\mu},J_P^{\nu}] =J_P^{[\mu,\nu]}+\frac{1}{2\pi i}\int_P (\mu,d\nu) = J_P^{[\mu,\nu]}+Res_P (\mu, d\nu)
\end{split}
\end{equation}
which shows that a central extension of $\fg$ determined by the residue at $P$ acts on $\cW_P$.

\subsubsection{Definitions of $\lD J^\nu$ and $J^\nu_P$}\label{DefKM}

We now turn to a careful definition of (\ref{currentintegral}).  Choose a base-point $P$. In what follows, $\gamma$ will be a contour winding clockwise aound $P$, and avoiding the points $z_i$. The domains $D$ and $D'$ will be defined as before.

We will define the operators $J^{\nu}_\gamma$ inductively. Assume that $\nu$ is regular along $\gamma$. Let $\balpha \in \fg_-$, and formally compute the commutator $[J^{\nu}_\gamma,\balpha]$:
\begin{equation*}
\begin{split}
[J^\nu_\gamma,\balpha] &=\frac{1}{2\pi i}\int_\gamma [\epsilon^{\nu(z)}(z) + \iota^{\nu(z)}(z),\balpha(.)] du_z\\
&= \frac{1}{2\pi i}\int_\gamma [\frac{\nu(z)}{u(.)-u(z)},\balpha(.)] du_z  \\
&+  \frac{1}{2\pi i}\int_\gamma \{-(\nu(z), d\balpha(z))+\iota^{[\nu(z),\balpha(z)]}-[\nu(z),\frac{\balpha(.)-\balpha(z)}{u(.)-u(z)}] du_z\}\\
\end{split}
\end{equation*}
which yields finally
\begin{equation}\label{deflDJone}
[J^\nu_\gamma,\balpha] = J^{[\nu,\balpha]}_\gamma+ \frac{1}{2\pi i} \int_\gamma (\nu, d\balpha)
\end{equation}
Consider now the expression $J^\nu_\gamma (\ttw_1 \otimes \dots \otimes \ttw_n)$.  Computing formally:
\begin{equation*}
\begin{split}
J^\nu_\gamma (\ttw_1 \otimes \dots \otimes \ttw_n) &=\frac{1}{2\pi i}\int_\gamma \{\epsilon^{\nu(z)}(z)  (\ttw_1 \otimes \dots \otimes \ttw_n) + \iota^{\nu(z)}(z)(\ttw_1 \otimes \dots \otimes \ttw_n) \} du_z\\
&=\frac{1}{2\pi i}\int_\gamma \frac{\nu(z)}{u(.)-u(z)} \otimes  (\ttw_1 \otimes \dots \otimes \ttw_n) du_z\\
&+ \frac{1}{2\pi i}\int_\gamma\sum_j (\ttw_1 \otimes \dots \otimes \frac{\nu(z) (\ttw_j)}{u(z)-u(z_j)} \otimes \dots \otimes \ttw_n)  du_z\\
\end{split}
\end{equation*}
The first integral represents an element of $\fg_- \otimes W_1\otimes \dots \otimes W_n$. This suggests that we take the point ``$.$'' to be in the domain $D'$. Given this choice, this term equals $(\nu(.) - \nu(P)) \otimes (\ttw_1 \otimes \dots \otimes \ttw_n)$; the second integral yields $ \sum_j (\ttw_1 \otimes \dots \otimes \nu(P) (\ttw_j) \otimes \dots \otimes \ttw_n) 
- \underset{z_j \in D'}\sum (\ttw_1 \otimes \dots \otimes \nu(z_j) (\ttw_j) \otimes  \dots \otimes \ttw_n)$, so we get

\begin{equation}\label{deflDJtwo}
J^\nu_\gamma (\ttw_1 \otimes \dots \otimes \ttw_n)= 
\begin{cases}
\begin{aligned}
& \nu \otimes_{U\bg} (\ttw_1 \otimes \dots \otimes \ttw_n)\\
-& \underset{z_j \in D'}\sum (\ttw_1 \otimes \dots \otimes \nu(z_j) (\ttw_j) \otimes  \dots \otimes \ttw_n) \\
& \ \ \ \ \ \ \text{if} \ \nu \ \text{is regular in}\ D'\\
& \underset{z_j \in D}\sum (\ttw_1 \otimes \dots \otimes \nu(z_j) (\ttw_j) \otimes  \dots \otimes \ttw_n) \\
& \ \ \ \ \ \  \text{if} \ \nu \ \text{is regular in}\ D\\
\end{aligned}
\end{cases}
\end{equation}
\emph{We let equations (\ref{deflDJone}) and (\ref{deflDJtwo}) together define $\lD J^{\nu}: \cW \to \cW$. This leaves invariant $\lD \cW$; and we use the same notation to denote the restricted map $\lD J^{\nu}: \lD \cW \to \lD\cW$}

\begin{proposition} We have $[\lD J^{\mu},\lD J^{\nu}]-\lD J^{[\mu,\nu]}=0$ if either
\begin{itemize}
\item both $\mu$ and $\nu$ are regular in $D'$, or
\item both $\mu$ and $\nu$ are regular in $D$.
\end{itemize}
In general,
\begin{equation}\label{KMrigorous}
[\lD J^{\mu},\lD J^{\nu}]-\lD J^{[\mu,\nu]}= \frac{1}{2 \pi i} \int_{\partial D} (\mu, d\nu)
\end{equation}
\end{proposition}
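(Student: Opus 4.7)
The plan is to prove the general identity (\ref{KMrigorous}) first, treating it as a cocycle calculation, and then deduce both vanishing statements as corollaries via the residue theorem. The operators $\lD J^\mu$ are built from the two inductive defining relations (\ref{deflDJone}) and (\ref{deflDJtwo}), so I will verify the identity in two steps: on a generating set (the ``vacuum'' insertion vector) and via an ad-computation that propagates it across the filtration by degree.

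\medskip

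First, write $A_{\mu,\nu} = [\lD J^{\mu},\lD J^{\nu}] - \lD J^{[\mu,\nu]} - \tfrac{1}{2\pi i}\int_{\partial D}(\mu,d\nu)\cdot\mathrm{Id}$. I will show by induction on the number of factors from $\lD\fg$ that $A_{\mu,\nu}$ annihilates $\lD\cW$. The key algebraic step is to compute
\begin{equation*}
\bigl[\,[\lD J^\mu,\lD J^\nu] - \lD J^{[\mu,\nu]},\,\balpha\,\bigr]
\end{equation*}
for $\balpha$ a generator of $\lD\fg$, using (\ref{deflDJone}) three times and the Jacobi identity to kill the $\lD J$-terms. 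What remains is the scalar cocycle
\begin{equation*}
\frac{1}{2\pi i}\int_{\partial D}\bigl\{(\mu,d[\nu,\balpha]) - (\nu,d[\mu,\balpha]) - ([\mu,\nu],d\balpha)\bigr\}.
\end{equation*}
Using $\mathrm{ad}$-invariance of the form (twice) together with the Leibniz rule, this integrand rearranges to the exact $1$-form $d\bigl(([\mu,\nu],\balpha)\bigr)$, which integrates to zero around $\partial D$ by Stokes. On the other side, $\tfrac{1}{2\pi i}\int_{\partial D}(\mu,d\nu)\cdot\mathrm{Id}$ is central. Hence $[A_{\mu,\nu},\balpha]=0$, and provided $A_{\mu,\nu}$ annihilates a cyclic generator, it annihilates all of $\lD\cW$.

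\medskip

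The base of the induction is to verify $A_{\mu,\nu}(\ttw_1\otimes\cdots\otimes\ttw_n)=0$ using the explicit formula (\ref{deflDJtwo}). Here one does a case analysis on whether $\mu$ and $\nu$ are regular in $D$ or $D'$ (the general case follows by linearity once one decomposes arbitrary $\mu\in\fg$ as a sum of such pieces). In each case, $\lD J^{[\mu,\nu]}(\ttw_1\otimes\cdots\otimes\ttw_n)$ must be computed by first decomposing $[\mu,\nu]$ into its part regular in $D'$ and its part regular in $D$; the ``transport'' contribution $\sum_{z_j}\bigl(\cdots\otimes[\mu(z_j),\nu(z_j)]\ttw_j\otimes\cdots\bigr)$ matches the analogous terms produced by iterating $\lD J^\mu$ and $\lD J^\nu$, while the remaining scalar boils down to $\tfrac{1}{2\pi i}\int_{\partial D}(\mu,d\nu)$ after one antisymmetrises and applies $d(\mu,\nu)=(\mu,d\nu)+(\nu,d\mu)$ under the contour integral.

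\medskip

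Finally, the two explicitly vanishing cases are immediate consequences: if both $\mu$ and $\nu$ are regular in $D'$ (resp.\ $D$), then the 1-form $(\mu,d\nu)$ is a meromorphic 1-form on $\bbP$ all of whose poles lie in $D$ (resp.\ $D'$); since the sum of residues on $\bbP$ of any meromorphic 1-form vanishes and the poles on the ``other side'' of $\partial D$ contribute nothing, we get $\tfrac{1}{2\pi i}\int_{\partial D}(\mu,d\nu)=0$. The main obstacle is bookkeeping: the decompositions needed in the base case (especially the treatment of the insertion points $z_j$ inside versus outside $D$, and the fact that $[\mu,\nu]$ typically has singularities in both $D$ and $D'$) are delicate, and one must take care that the splitting into regular-in-$D$ and regular-in-$D'$ parts is normalised so that the residue term emerges with the correct orientation and factor.
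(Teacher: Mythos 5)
Your proof is correct and is essentially the argument the paper intends: the proposition is stated there without a written proof, but your reduction --- use (\ref{deflDJone}) plus the Jacobi identity to show that the defect $A_{\mu,\nu}$ obeys the recursion $[A_{\mu,\nu},\balpha]=A_{[\mu,\balpha],\nu}+A_{\mu,[\nu,\balpha]}$ (the scalar cocycle terms combining into the exact form $d\bigl(([\mu,\nu],\balpha)\bigr)$, hence integrating to zero over $\partial D$), then verify the base case on $\ttw_1\otimes\cdots\otimes\ttw_n$ by the regular-in-$D$ versus regular-in-$D'$ case analysis and deduce the two vanishing cases from the residue theorem --- is precisely the strategy the paper carries out in detail for the neighbouring theorem $[J^{\nu_1}_1,J^{\nu_2}_2]=0$. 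The only cosmetic caveat is that $[A_{\mu,\nu},\balpha]$ is not literally zero until the induction closes; what your computation actually establishes is the recursion above, which together with the base case on the cyclic vector yields the statement.
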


As in the case of the Heisenberg algebra, we can exhaust the complement of $P$ by discs $D$, and \emph{define $J^\nu_P:\cW_P \to \cW_P$ by equations (\ref{defJPone}) and (\ref{defJPtwo}) below:
} \begin{equation}\label{defJPone}
[J^\nu_P,\balpha] = J^{[\nu,\balpha]}_P-Res_P(\nu, d\balpha), \ \balpha \in \bg_-
\end{equation}
and
\begin{equation}\label{defJPtwo}
J^\nu_P (\ttw_1 \otimes \dots \otimes \ttw_n)= 
\begin{cases}
\begin{aligned}
& (\nu(.) - \nu(P)) \otimes (\ttw_1 \otimes \dots \otimes \ttw_n)\\
& +   \sum_j (\ttw_1 \otimes \dots \otimes \nu(P) (\ttw_j) \otimes \dots \otimes \ttw_n) \\
&= \nu \otimes_{U\bg} (\ttw_1 \otimes \dots \otimes \ttw_n)\\
& \ \ \ \ \ \ \text{if} \ \nu \ \text{is regular at}\ P\ \text{and}\\
& \underset{z_j}\sum (\ttw_1 \otimes \dots \otimes \nu(z_j) (\ttw_j) \otimes  \dots \otimes \ttw_n) \\
& \ \ \ \ \ \  \text{if} \ \nu \ \text{is regular away from}\ P\\
\end{aligned}
\end{cases}
\end{equation}

We now define, for each $l=1,\dots,n$ such that $z_l \in D$,  an operator $\lD J^{\nu}_l$, by integrating $j^{\nu}$ around a contour (close enough and winding clockwise) around $z_l$.   \emph{We define this by equations (\ref{defJlone})  and (\ref{defJltwo}) below.}
\begin{equation}\label{defJlone}
[J^\nu_l,\balpha] =Res_{z_l}(\nu,d\balpha)+J^{[\nu,\balpha]}_l
\end{equation}
 \begin{equation}\label{defJltwo}
J^\nu_l (\ttw_1 \otimes \dots \otimes \ttw_n)= 
\begin{cases}
\begin{aligned}
& \nu \otimes_{U\bg} (\ttw_1 \otimes \dots \otimes \ttw_n)\\
& -  \sum_{j\ne l} (\ttw_1 \otimes \dots \otimes \nu(z_j) (\ttw_j)  \otimes \dots \otimes \ttw_n) \\
& \ \ \ \ \ \ \text{if} \ \nu \ \text{is regular away from}\ z_l\\
& \ttw_1 \otimes \dots \otimes \nu(z_l) (\ttw_l) \otimes \dots \otimes \ttw_n \\
& \ \ \ \ \ \  \text{if} \ \nu \ \text{is regular at}\ z_l\\
\end{aligned}
\end{cases}
\end{equation}

\begin{proposition} We have $[J^{\mu}_l,J^{\nu}_l]-J^{[\mu,\nu]}_l=0$ if either
\begin{itemize}
\item both $\mu$ and $\nu$ are regular at $z_l$, or
\item both $\mu$ and $\nu$ are regular away from $z_l$.
\end{itemize}
In general,
\begin{equation}
[J^{\mu}_l,J^{\nu}_l]-J^{[\mu,\nu]}_l=Res_{z_l}(\mu,d\nu)
\end{equation}
\end{proposition}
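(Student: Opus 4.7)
The plan is to mirror the strategy used for the $\lD J^\nu$ proposition and exhibited formally in (\ref{KMformal}): verify the operator identity by checking (i) its commutator with each $\balpha \in \fg_-$ and (ii) its action on the base vectors $\bw = \ttw_1 \otimes \dots \otimes \ttw_n$. Since $\cW$ is generated from such $\bw$ by $U\fg_-$ (via PBW applied to $\cW = U\fg \otimes_{U\bg} (W_1 \otimes \dots \otimes W_n)$), these two checks together determine the operator uniquely. The scalar $Res_{z_l}(\mu, d\nu)$ is central, so (i) reduces to showing $[[J^\mu_l, J^\nu_l] - J^{[\mu,\nu]}_l, \balpha] = 0$.

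For step (i), I would expand the inner commutators via (\ref{defJlone}), noting that all residue terms commute with $\balpha$, to obtain
\begin{equation*}
[[J^\mu_l, J^\nu_l], \balpha] = [J^{[\mu,\balpha]}_l, J^\nu_l] + [J^\mu_l, J^{[\nu,\balpha]}_l],
\end{equation*}
while on the other side $[J^{[\mu,\nu]}_l, \balpha] = Res_{z_l}([\mu,\nu], d\balpha) + J^{[[\mu,\nu],\balpha]}_l$. Proceeding by induction on a suitable pole-order filtration of the pair $(\mu,\nu)$, the two brackets on the right get replaced by $J^{[[\mu,\balpha],\nu]}_l + Res_{z_l}([\mu,\balpha], d\nu)$ and $J^{[\mu,[\nu,\balpha]]}_l + Res_{z_l}(\mu, d[\nu,\balpha])$. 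The Jacobi identity collapses the $J$-valued contributions, and what remains is the scalar identity
\begin{equation*}
Res_{z_l}([\mu,\balpha], d\nu) + Res_{z_l}(\mu, d[\nu,\balpha]) = Res_{z_l}([\mu,\nu], d\balpha),
\end{equation*}
which follows from the Leibniz rule $d[\nu,\balpha] = [d\nu,\balpha] + [\nu,d\balpha]$ combined with the invariance identity $([\mu,\balpha], d\nu) + ([\mu,d\nu], \balpha) = 0$ of the form $(,)$ applied pointwise.

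For step (ii), the pure cases are immediate: if both $\mu,\nu$ are regular at $z_l$ the identity reduces to $[\mu(z_l),\nu(z_l)] = [\mu,\nu](z_l)$ acting on the slot $\ttw_l$, while $(\mu,d\nu)$ is regular at $z_l$ so its residue vanishes; if both are regular away from $z_l$, one gets the analogous pointwise identities at each remaining insertion point together with a clean $U\fg_-$-action on the generating factor. I expect the main obstacle to lie in the mixed case ($\mu$ regular at $z_l$, $\nu$ regular away from $z_l$, or vice versa), where the residue genuinely appears: it emerges when $J^\mu_l$ is commuted past the factor $\nu \otimes_{U\bg}$ produced by the first branch of (\ref{defJltwo}) applied to $J^\nu_l\bw$, and (\ref{defJlone}) furnishes exactly the term $Res_{z_l}(\mu, d\nu)$. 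The delicate point is bookkeeping — tracking how the two branches of (\ref{defJltwo}) interact with the generating-side action once one linearly decomposes $\mu$ and $\nu$ into their regular-at-$z_l$ and regular-away-from-$z_l$ parts via a partial-fraction decomposition on $\bbP$.
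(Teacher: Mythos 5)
The paper states this proposition without proof (the ``formal computation'' of $[J_P^{\mu},J_P^{\nu}]$ earlier in the section is only heuristic motivation), so the fair comparison is with the paper's proof of the adjacent theorem $[J^{\nu_1}_1,J^{\nu_2}_2]=0$, which follows exactly your two-step plan: use (\ref{defJlone}) to strip off elements of $\fg_-$ and reduce to the action on $\ttw_1\otimes\dots\otimes\ttw_n$, then a case analysis according to where the functions are regular. Your expansion of the commutators, the scalar identity $Res_{z_l}([\mu,\balpha],d\nu)+Res_{z_l}(\mu,d[\nu,\balpha])=Res_{z_l}([\mu,\nu],d\balpha)$ (which indeed holds already at the level of forms, by Leibniz and invariance of $(,)$), and your identification of where the central term arises in the mixed case are all correct.

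There is, however, one step that fails as written: the ``induction on a suitable pole-order filtration of the pair $(\mu,\nu)$''. For $\balpha\in\fg_-$ the bracket $[\mu,\balpha]$ generically has \emph{larger} pole order than $\mu$, so there is no well-founded ordering of pairs of functions in which $([\mu,\balpha],\nu)$ precedes $(\mu,\nu)$, and you may not simply ``replace'' $[J^{[\mu,\balpha]}_l,J^{\nu}_l]$ by $J^{[[\mu,\balpha],\nu]}_l+Res_{z_l}([\mu,\balpha],d\nu)$ by appeal to a smaller case. The correct organization --- the one implicit in the paper's proof of the two-point theorem --- is to set $T(\mu,\nu)=[J^{\mu}_l,J^{\nu}_l]-J^{[\mu,\nu]}_l-Res_{z_l}(\mu,d\nu)$ and observe that your computation establishes precisely the recursion $[T(\mu,\nu),\balpha]=T([\mu,\balpha],\nu)+T(\mu,[\nu,\balpha])$; one then inducts on the filtration degree of the vector of $\cW_P=U\fg_-\otimes\{W_1\otimes\dots\otimes W_n\}$ being acted upon, \emph{simultaneously over all pairs} $(\mu,\nu)$, reducing everything to your step (ii) on the generators $\bw=\ttw_1\otimes\dots\otimes\ttw_n$. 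Two smaller repairs are also needed there: the definition (\ref{defJltwo}) covers only the two pure regularity classes, so $J^{\mu}_l$ must first be extended by linearity via the partial-fraction splitting you mention before ``the general case'' even makes sense; and in the case where both $\mu$ and $\nu$ are regular away from $z_l$ the term $Res_{z_l}(\mu,d\nu)$ is not absent from the computation --- it appears when $J^{\mu}_l$ is commuted past $\nu\otimes_{U\bg}$, and vanishes only because $(\mu,d\nu)$ then has its sole pole at $z_l$ and the residues of a meromorphic form on $\bbP$ sum to zero. With these repairs the argument is complete.
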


It is more interesting to compute the commutators of the loops around \emph{different} points, which we take to be labelled by indices 1 and 2.
\begin{theorem} We have $[J^{\nu_1}_1,J^{\nu_2}_2]=0$ \end{theorem}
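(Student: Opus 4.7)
Set $K = [J^{\nu_1}_1, J^{\nu_2}_2]$. The strategy is to show (i) that $K$ commutes with left multiplication by $\fg$ up to operators of the same form, and (ii) that $K$ annihilates the base tensor $\bw = \ttw_1 \otimes \cdots \otimes \ttw_n$. Combined with strong induction on the filtration degree of $\cW$, this forces $K \equiv 0$.

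The first step is formal: given $\balpha \in \fg$, apply the defining relation (\ref{defJlone}) to each factor and invoke the Jacobi identity to get
\begin{equation*}
[K,\balpha] \;=\; [J^{[\nu_1,\balpha]}_1, J^{\nu_2}_2] + [J^{\nu_1}_1, J^{[\nu_2,\balpha]}_2],
\end{equation*}
the scalar residue terms $\mathrm{Res}_{z_l}(\nu_l,d\balpha)$ commuting past everything and canceling. Now I would strong-induct on the degree $d$ of $\ttv \in \cW$ with respect to the natural filtration from $U\fg$. If $\ttv = \balpha\ttv'$ with $\deg \ttv' = d-1$, then $K(\balpha\ttv') = [K,\balpha]\ttv' + \balpha K\ttv'$; the second summand vanishes by the inductive hypothesis, and the first is a sum of two commutators of the same type $[J^{\mu_1}_1, J^{\mu_2}_2]$ applied to $\ttv'$, which also vanishes by the inductive hypothesis (the hypothesis being uniform in the choice of $\mu_1,\mu_2$). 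So everything reduces to the base case $d=0$, i.e.\ to showing $K\bw = 0$.

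For the base case I would exploit linearity of $J^\nu_l$ in $\nu$: any $\nu_l$ splits as $\nu_l = \nu_l^{at} + \nu_l^{away}$, with $\nu_l^{at}$ regular at $z_l$ and $\nu_l^{away}$ regular away from $z_l$, so it suffices to verify $K\bw = 0$ in the four pure cases. Case $(\nu_1^{at},\nu_2^{at})$ is immediate: each operator just acts by $\nu_l(z_l)$ on the $l$-th factor and the two actions commute trivially. Cases $(\nu_1^{at},\nu_2^{away})$ and $(\nu_1^{away},\nu_2^{at})$ reduce, after using (\ref{defJltwo}) and expanding $J^{\nu_1}_1(\nu_2\bw) = [J^{\nu_1}_1,\nu_2]\bw + \nu_2 J^{\nu_1}_1\bw$, to the identity $[\nu_1^{at}(z_1),\nu_2^{away}(z_1)] - \nu_1^{at}(z_1)\nu_2^{away}(z_1) + \nu_2^{away}(z_1)\nu_1^{at}(z_1) = 0$ inside $U\bg$, with all residue contributions vanishing because at least one of $\nu_1^{at}, d\nu_1^{at}$ is regular at $z_1$ (and similarly at $z_2$).

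The hard case, and where the main computation sits, is $(\nu_1^{away},\nu_2^{away})$: here $[\nu_1^{away},\nu_2^{away}]$ generically has simple poles at \emph{both} $z_1$ and $z_2$, so neither branch of (\ref{defJltwo}) applies directly to $J^{[\nu_1,\nu_2]}_l$. The key step will be to split this commutator along partial fractions at $z_1$ and $z_2$, write the resulting pieces in the two regular classes used by (\ref{defJltwo}), and then carry out the bookkeeping of the two sums $\sum_{j\ne l}\ttw_1\otimes\cdots\otimes\nu_l(z_j)\ttw_j\otimes\cdots$ together with the residue terms $\mathrm{Res}_{z_l}(\nu_l, d\nu_{l'})$ coming from the $[J^{\nu_l}_l, \nu_{l'}]$ brackets. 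I expect a global cancellation among the terms at each $z_j$ ($j\ne 1,2$), matched to the two-sided residue identity
\begin{equation*}
\mathrm{Res}_{z_1}(\nu_1,d\nu_2) + \mathrm{Res}_{z_2}(\nu_1,d\nu_2) = 0
\end{equation*}
valid because $\nu_1^{away}$ and $\nu_2^{away}$ are both regular at $P$ and at every $z_j$ with $j\neq 1,2$, so the sum of residues over $\bbP$ is zero. Once this identity is accounted for, all terms cancel pairwise, finishing the base case and hence the proof.
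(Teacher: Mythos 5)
Your proposal is correct and follows essentially the same route as the paper: reduce via $[K,\balpha]=[J^{[\nu_1,\balpha]}_1,J^{\nu_2}_2]+[J^{\nu_1}_1,J^{[\nu_2,\balpha]}_2]$ to the action on $\ttw_1\otimes\dots\otimes\ttw_n$, split into the four regularity cases, and in the hard case use the partial-fraction decomposition $[\nu_1,\nu_2]=\mu_1+\mu_2$ together with the vanishing of the total residue of $(\nu_1,d\nu_2)$. The paper writes the residue term as $Res_{z_1}(\nu_1,d\nu_2)-Res_{z_2}(\nu_2,d\nu_1)$, which coincides with your version since $(\nu_1,d\nu_2)+(\nu_2,d\nu_1)$ is exact.
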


\begin{proof} Using (\ref{defJlone}), we see that $[[J^{\nu_1}_1,J^{\nu_2}_2], \balpha]= [[J^{\nu_1}_1,\balpha],J^{\nu_2}_2]+[J^{\nu_1}_1,[J^{\nu_2}_2,\balpha]]=[J^{[\nu_1,\balpha]}_1,J^{\nu_2}_2]+[J^{\nu_1}_1,J^{[\nu_2,\balpha]}_2]$, which lets us reduce to case when the commutator is acting on $\ttw_1 \otimes \dots \otimes \ttw_n$. There are four cases to consider:

\medskip

\noindent (1) $\nu_1$ is regular at $z_1$, and $\nu_2$ is regular at $z_2$. In this case $J^{\nu_1}_1 \circ J^{\nu_2}_2 (\ttw_1 \otimes \dots \otimes \ttw_n) = \nu_1(z_1) (\ttw_1) \otimes \nu_2(z_2) \ttw_2 \otimes \dots \otimes \ttw_n = J^{\nu_2}_2 \circ J^{\nu_1}_1 (\ttw_1 \otimes \dots \otimes \ttw_n)$.

\medskip

\noindent (2) $\nu_1$ is regular at $z_1$, and $\nu_2$ away from $z_2$. In this case,
\begin{equation*}
\begin{split}
J^{\nu_1}_1 \circ J^{\nu_2}_2 (\ttw_1 \otimes \dots \otimes \ttw_n) &= J^{\nu_1}_1 \{\nu_2 \otimes_{U\bg} \ttw_1 \otimes \dots \otimes \ttw_n\\ 
&- \sum_{j \ne 2} \ttw_1 \otimes \dots \otimes \nu_2(z_j) (\ttw_j)  \otimes \dots \otimes \ttw_n\}\\
&=J^{[\nu_1,\nu_2]}_1 (\ttw_1 \otimes \dots \otimes \ttw_n)+\nu_2 \otimes_{U\bg} \nu_1(z_1)\ttw_1 \otimes \dots \otimes \ttw_n\\
&- \nu_1(z_1)\nu_2(z_2) \ttw_1 \otimes  \dots \otimes \ttw_n\\&-\sum_{j > 2} \nu_1(z_1) \ttw_1 \otimes \dots \otimes \nu_2(z_j) \ttw_j  \otimes \dots \otimes \ttw_n\\
&=[\nu_1,\nu_2] (z_1) \ttw_1 \otimes \dots \otimes \ttw_n)+\nu_2 \otimes_{U\bg} \nu_1(z_1)\ttw_1 \otimes \dots \otimes \ttw_n\\
&- \nu_1(z_1)\nu_2(z_2) \ttw_1 \otimes  \dots \otimes \ttw_n\\&-\sum_{j > 2} \nu_1(z_1) \ttw_1 \otimes \dots \otimes \nu_2(z_j) \ttw_j  \otimes \dots \otimes \ttw_n\\
\end{split}
\end{equation*}
On the other hand
\begin{equation*}
\begin{split}
J^{\nu_2}_2 \circ J^{\nu_1}_1 (\ttw_1 \otimes \dots \otimes \ttw_n) &= J^{\nu_2}_2 (\nu_1(z_1)\ttw_1 \otimes \dots \otimes \ttw_n)\\ 
&=\nu_2 \otimes_{u\bg} \nu_1(z_1)\ttw_1 \otimes \dots \otimes \ttw_n\\
&-\nu_2(z_2) \nu_1(z_1)\ttw_1 \otimes \dots \otimes \ttw_n\\
&-\sum_{j>2} \nu_1(z_1)\ttw_1 \otimes \dots \nu_2(z_j) (\ttw_j) \otimes \dots  \otimes \ttw_n\\
\end{split}
\end{equation*}
and again we see that the commutator kills $\ttw_1 \otimes \dots \otimes \ttw_n$

\medskip 

\noindent(3) $\nu_2$ is regular at $z_2$, and $\nu_1$ away from $z_1$. This case is dealt with as above.

\medskip

\noindent(4) $\nu_1$ is regular away from $z_1$, and $\nu_2$ away from $z_2$. In this case
\begin{equation*}
\begin{split}
J^{\nu_1}_1 \circ J^{\nu_2}_2 (\ttw_1 \otimes \dots \otimes \ttw_n) &= J^{\nu_1}_1 \{\nu_2 \otimes_{U\bg} \ttw_1 \otimes \dots \otimes \ttw_n\\ 
&- \sum_{j \ne 2} \ttw_1 \otimes \dots \otimes \nu_2(z_j) (\ttw_j)  \otimes \dots \otimes \ttw_n\}\\
&=Res_{z_1}(\nu_1,d\nu_2) \ttw_1 \otimes \dots \otimes \ttw_n\\
&+J^{[\nu_1,\nu_2]}_1 (\ttw_1 \otimes \dots \otimes \ttw_n)\\ 
&+\nu_2 J^{\nu_1}_1 (\ttw_1 \otimes \dots \otimes \ttw_n)\\
&- \sum_{j \ne 2} \nu_1 \otimes_{U\bg} \ttw_1 \otimes \dots \otimes \nu_2(z_j) (\ttw_j)  \otimes \dots \otimes \ttw_n\\
&+\sum_{j_1\ne 1,\ j_2 \ne 2}  \ttw_1 \otimes \dots \otimes \nu_1(z_{j_1}) (\ttw_{j_1}) \otimes \dots \nu_2(z_{j_2}) (\ttw_{j_2})  \otimes \dots \otimes \ttw_n\\
&+\sum_{j\ne 1,\  2}  \ttw_1 \otimes \dots \otimes \nu_1(z_j) \nu_2(z_j) (\ttw_{j})  \otimes \dots \otimes \ttw_n\\
&=Res_{z_1}(\nu_1,d\nu_2) \ttw_1 \otimes \dots \otimes \ttw_n\\
&+J^{[\nu_1,\nu_2]}_1 (\ttw_1 \otimes \dots \otimes \ttw_n)\\ 
&+\nu_2 \nu_1 \otimes_{U\bg} (\ttw_1 \otimes \dots \otimes \ttw_n)\\
&-\nu_2 \sum_{j\ne 1} \ttw_1 \otimes \dots \nu_1(z_j)(\ttw_j) \otimes \dots  \otimes \ttw_n\\
&- \nu_1 \sum_{j \ne 2}  \otimes_{U\bg} \ttw_1 \otimes \dots \otimes \nu_2(z_j) (\ttw_j)  \otimes \dots \otimes \ttw_n\\
&+\sum_{j_1\ne 1,\ j_2 \ne 2}  \ttw_1 \otimes \dots \otimes \nu_1(z_{j_1}) (\ttw_{j_1}) \otimes \dots \nu_2(z_{j_2}) (\ttw_{j_2})  \otimes \dots \otimes \ttw_n\\
&+\sum_{j\ne 1,\  2}  \ttw_1 \otimes \dots \otimes \nu_1(z_j) \nu_2(z_j) (\ttw_{j})  \otimes \dots \otimes \ttw_n\\
\end{split}
\end{equation*}
From this it follows that
\begin{equation*}
\begin{split}
[J^{\nu_1}_1,J^{\nu_2}_2] (\ttw_1 \otimes \dots \otimes \ttw_n) 
&=(Res_{z_1}(\nu_1,d\nu_2) - Res_{z_2}(\nu_2,d\nu_1)) \ttw_1 \otimes \dots \otimes \ttw_n\\
&+(J^{[\nu_1,\nu_2]}_1+J^{[\nu_1,\nu_2]}_2) (\ttw_1 \otimes \dots \otimes \ttw_n)\\ 
&-[\nu_1, \nu_2] \otimes_{U\bg} (\ttw_1 \otimes \dots \otimes \ttw_n)\\
&+\sum_{j\ne 1,\  2}  \ttw_1 \otimes \dots \otimes [\nu_1, \nu_2](z_j) (\ttw_{j})  \otimes \dots \otimes \ttw_n\\
\end{split}
\end{equation*}
The first term vanishes because the sum of the residues of the meromorphic form $(\nu_1,d\nu_2)$ is zero. Now write $[\nu_1,\nu_2] = \mu_1+\mu_2$, with $\mu_1$ regular away from $z_1$ and vanishing at $z_2$, and $\mu_2$ regular away from $z_2$ and vanishing at $z_1$. Then
\begin{equation*}
\begin{split}
(J^{[\nu_1,\nu_2]}_1+J^{[\nu_1,\nu_2]}_2) (\ttw_1 \otimes \dots \otimes \ttw_n)
&=(J^{\mu_1+\mu_2}_1+J^{\mu_1+\mu_2}_2) (\ttw_1 \otimes \dots \otimes \ttw_n)\\
&=\mu_1 \otimes_{U\bg} \ttw_1 \otimes \dots \otimes \ttw_n\\
&-\sum_{j \ne 1} \ttw_1 \otimes \dots \otimes \mu_1(z_j) \ttw_j \otimes \dots \otimes \ttw_n\\
&+\mu_2(z_1) \ttw_1 \otimes \dots \otimes \ttw_n\\
&+\mu_2 \otimes_{U\bg} \ttw_1 \otimes \dots \otimes \ttw_n\\
&-\sum_{j \ne 2} \ttw_1 \otimes \dots \otimes \mu_2(z_j) \ttw_j \otimes \dots \otimes \ttw_n\\
&+\ttw_1 \otimes \mu_1(z_2) \ttw_2 \otimes \dots \otimes \ttw_n\\
&=(\mu_1+\mu_2) \otimes_{U\bg} (\ttw_1 \otimes \dots \otimes \ttw_n)\\
&-\sum_{j\ne 1,\  2}  \ttw_1 \otimes \dots \otimes (\mu_1+\mu_2)(z_j) (\ttw_{j})  \otimes \dots \otimes \ttw_n\\
&=[\nu_1,\nu_2] \otimes_{U\bg} (\ttw_1 \otimes \dots \otimes \ttw_n)\\
&-\sum_{j\ne 1,\  2}  \ttw_1 \otimes \dots \otimes [\nu_1,\nu_2](z_j) (\ttw_{j})  \otimes \dots \otimes \ttw_n\\
\end{split}
\end{equation*}
This proves that the remaining terms vanish as well.
\end{proof}

We record
\begin{proposition}
$J_P^{\nu} = \sum_l J_\bL^{\nu}$
\end{proposition}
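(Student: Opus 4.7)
The plan is to prove $J_P^\nu = \sum_l J_l^\nu$ as operators on $\cW_P$ by reducing to two checks: (i) equality on the ``primary'' states $\bw = \ttw_1 \otimes \cdots \otimes \ttw_n$, and (ii) equality of commutators with arbitrary $\balpha \in \fg_-$. Since $\cW_P = U\fg_- \cdot (W_1 \otimes \cdots \otimes W_n)$, these together force equality on all of $\cW_P$ by induction on the $U\fg_-$-degree: if both operators agree on some $\ttv$ and $[J_P^\nu, \balpha] = \sum_l [J_l^\nu, \balpha]$, then they agree on $\balpha \circ \ttv$, since both satisfy the same Leibniz-type rule.

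For (i), I would use partial fractions. Writing $\nu = c + \nu_P^{\mathrm{pol}} + \sum_l \nu_l^{\mathrm{pol}}$ with $c \in \bg$ a constant, $\nu_P^{\mathrm{pol}}$ the principal part at $P$, and $\nu_l^{\mathrm{pol}}$ the principal part at $z_l$, linearity reduces the problem to each summand. Each $\nu_l^{\mathrm{pol}}$ is regular away from $z_l$ (and so at $P$ and at every other $z_{l'}$), $\nu_P^{\mathrm{pol}}$ is regular away from $P$ (and so at every $z_l$), and $c$ is regular everywhere. Applying formulas (\ref{defJPtwo}) and (\ref{defJltwo}) case by case, the cross terms $\sum_{l'\ne l} J_l^{\nu_{l'}^{\mathrm{pol}}} \bw$ (which only feel the value at $z_l$) cancel exactly against the negative-sign contributions $-\sum_{j\ne l} \ttw_1 \otimes \cdots \nu_l^{\mathrm{pol}}(z_j)\ttw_j \cdots$ arising inside $J_l^{\nu_l^{\mathrm{pol}}} \bw$. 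The identification $c \otimes_{U\bg} \bw = \sum_l \ttw_1 \otimes \cdots c(\ttw_l) \cdots$ handles the constant part. The remaining sum collapses to $\nu \otimes_{U\bg} \bw = J_P^\nu \bw$ in the ``regular at $P$'' case, and to $\sum_j \ttw_1 \otimes \cdots \nu(z_j)\ttw_j \cdots = J_P^\nu \bw$ in the ``regular away from $P$'' case.

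For (ii), the defining equations (\ref{defJPone}) and (\ref{defJlone}) give
\begin{equation*}
[J_P^\nu, \balpha] - \sum_l [J_l^\nu, \balpha] = \bigl(J_P^{[\nu,\balpha]} - \sum_l J_l^{[\nu,\balpha]}\bigr) - \Bigl(Res_P(\nu, d\balpha) + \sum_l Res_{z_l}(\nu, d\balpha)\Bigr).
\end{equation*}
Under the inductive hypothesis that the identity holds for $[\nu,\balpha]$ (which has the same pole structure as $\nu$ since $\balpha$ is regular at $P$), the first parenthesis vanishes. The residue sum vanishes by the residue theorem on $\bbP$: the meromorphic one-form $(\nu, d\balpha)$ has poles only in $\{P\} \cup |\bz|$, using that $\balpha \in \fg_-$ vanishes at $P$ (so $d\balpha$ is regular there) and that $\nu$ and $\balpha$ have their poles confined to this set.

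The main obstacle is ensuring the pole structure is consistent: the identity as stated is natural precisely when $\nu$ and the $\balpha$'s have poles only in $\{P, z_1, \ldots, z_n\}$, i.e.\ in the ambient $\fg$ generated by $\fg_-$ and contour integration near the insertion points. If $\nu$ had poles elsewhere, the right-hand side would miss the corresponding contour contributions, and the statement would need reinterpretation as a contour-deformation identity. A secondary nuisance is the bookkeeping of cross-term cancellations in step (i), which is mechanical but requires care in separating the ``regular at'' vs.\ ``regular away from'' branches of the definitions.
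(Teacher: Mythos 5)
The paper offers no argument here at all --- the proposition is simply ``recorded'' --- so there is nothing to compare your proof against, and what you have written is a correct proof. Your two-step scheme (check the identity on $\ttw_1\otimes\dots\otimes\ttw_n$, then propagate it through commutators with $\balpha$) is exactly the inductive template the paper itself uses for the neighbouring theorem $[J^{\nu_1}_1,J^{\nu_2}_2]=0$, and both your partial-fraction bookkeeping in step (i) and the residue-theorem cancellation in step (ii) go through. Two remarks. First, you are right to flag the pole restriction, and it is genuinely needed: if $\nu$ has a pole at some $y\notin\{P\}\cup|\bz|$, its principal part $\nu_y$ there contributes the degree-one vector $\nu_y\otimes_{U\bg}\bw$ to $J^{\nu}_P\bw$ (via the ``regular at $P$'' branch of (\ref{defJPtwo})) but only the degree-zero vector $\sum_l \ttw_1\otimes\dots\otimes\nu_y(z_l)(\ttw_l)\otimes\dots\otimes\ttw_n$ to $\sum_l J^{\nu}_l\bw$ (via the ``regular at $z_l$'' branch of (\ref{defJltwo})), and these differ; likewise a pole of $\balpha$ outside $\{P\}\cup|\bz|$ can give $(\nu,d\balpha)$ a nonzero residue there and destroy the cancellation in (ii). So the statement must be read on $\lbz\cW$ and for $\nu$ with poles confined to $\{P\}\cup|\bz|$ --- consistent with the boson analogue, where the paper does write the restriction $\sum_l\Phi_l=\Phi_P|_{\lbz\bbW_-}$. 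Second, a small structural point: in step (ii) you invoke ``the inductive hypothesis that the identity holds for $[\nu,\balpha]$'', which reads as an induction on the test function and is not well-founded as such. The induction must run on the $U\fg_-$-degree of the state, with the claim quantified over \emph{all} admissible $\nu$ at each degree, so that $J_P^{[\nu,\balpha]}=\sum_l J_l^{[\nu,\balpha]}$ is used only on the lower-degree state where it is already known. Your opening paragraph sets the induction up in precisely this way, so this is a matter of phrasing rather than a gap.
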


\subsection{Affine lie algebra without insertions}

Consider the case when there are no insertions. Then $J^\nu_P$ is defined inductively by
\begin{equation*}
[J^\nu_P,\balpha] (w)=Res_P(\nu, d\balpha)+J^{[\nu,\balpha]}_P
\end{equation*}
and
\begin{equation*}
J^\nu_P (\mathbf{1})= 
\begin{cases}
(\nu(w) - \nu(P)) \otimes \mathbf{1}\ \  \text{if} \ \nu \ \text{is regular at}\ P\\
0 \ \  \text{if} \ \nu \ \text{is regular away from}\ P\\
\end{cases}
\end{equation*}
By an easy induction, we can prove that if $\nu_i \in \fg_-, \ \ i=1,\dots,N$,
\begin{equation*}
J^{\nu_1}_P \dots J^{\nu_N}_P (\mathbf{1})= 
\nu_1 \dots \nu_N \in U\fg_-
\end{equation*}

\subsection{Separation of variables}

Consider the space $\lbz\cW$. By definition,
\begin{equation*}
\lbz\cW=U\lbz\fg\otimes_{U\bg} \{W_1 \otimes \dots \otimes W_n\}
\end{equation*}
where $\lbz\fg$ is the space of meromorphic maps $\bbP \to \bg$ regular away from the $z_j$.  For $j=1,\dots,n$, set
\begin{equation*}
\lzj\cW_j=U\lzj\fg\otimes_{U\bg} W_j 
\end{equation*}

\begin{theorem} There is a surjective map
\begin{equation*}
{}_{z_1}\cW_1 \otimes \dots \otimes {}_{z_n}\cW_n = \lbz\cW
\end{equation*}
which preserves filtrations, and is a morphism of $\oplus_j \hfg_j$-modules.
\end{theorem}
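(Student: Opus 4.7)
The plan is to realize $\phi$ as multiplication in $U\lbz\fg$, tensored with the identity on the insertion factors, with well-definedness and surjectivity controlled by PBW together with partial-fraction decomposition. First I would fix an auxiliary point $Q\in\bbP$ distinct from $P$ and from $|\bz|$, and for each $j$ introduce the complement
\[
\overline{\lzj\fg}=\{\balpha\in\lzj\fg:\balpha(Q)=0\},
\]
so that $\lzj\fg=\bg\oplus\overline{\lzj\fg}$ as vector spaces. By PBW the natural map $U\overline{\lzj\fg}\otimes_{\bbC}U\bg\to U\lzj\fg$ is an isomorphism of right $U\bg$-modules, yielding the identification $\lzj\cW_j\cong U\overline{\lzj\fg}\otimes_{\bbC}W_j$. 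The same argument applied to $\lbz\fg$, with $\overline{\lbz\fg}=\{\balpha\in\lbz\fg:\balpha(Q)=0\}$, gives $\lbz\cW\cong U\overline{\lbz\fg}\otimes_{\bbC}(W_1\otimes\cdots\otimes W_n)$.

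In these canonical representatives, I would define
\[
\phi\bigl((\xi_1\otimes\ttw_1)\otimes\cdots\otimes(\xi_n\otimes\ttw_n)\bigr)=(\xi_1\cdots\xi_n)\otimes(\ttw_1\otimes\cdots\otimes\ttw_n),
\]
with each $\xi_j\in U\overline{\lzj\fg}$ and the product taken inside $U\lbz\fg$. Well-definedness is then automatic (the $U\bg$-relations are already accounted for in the identifications above), and filtration preservation is immediate: factors of order $\le d_j$ give a product of order $\le\sum_j d_j$.

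For surjectivity the key input is partial-fraction decomposition of meromorphic maps $\bbP\to\bg$ vanishing at $Q$, which yields $\overline{\lbz\fg}=\bigoplus_j\overline{\lzj\fg}$. Crucially, for $j\ne j'$ and $\balpha\in\overline{\lzj\fg}$, $\bbeta\in\overline{{}_{z_{j'}}\fg}$, the pointwise bracket $[\balpha,\bbeta]$ is a meromorphic map $\bbP\to\bg$ with singularities contained in $\{z_j,z_{j'}\}$ and vanishing at $Q$, hence again in $\overline{\lbz\fg}=\bigoplus_l\overline{{}_{z_l}\fg}$. A PBW reordering that groups factors according to the subspace $\overline{\lzj\fg}$ to which they belong therefore rewrites any monomial in $U\overline{\lbz\fg}$ as a sum of ``ordered'' products $\xi_1\cdots\xi_n$ modulo terms of strictly lower total degree. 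Induction on total degree yields surjectivity.

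For the $\oplus_j\hfg_j$-module structure, each $\hfg_j$ acts on $\lzj\cW_j$ through the local operators $J^\nu_j$ of \S\ref{DefKM} (contours winding around $z_j$), and so acts on $\bigotimes_k{}_{z_k}\cW_k$ through its action on the $j$-th factor alone; the same operators $J^\nu_j$ are already globally defined on $\lbz\cW$, and satisfy $[J^\nu_j,J^{\nu'}_{j'}]=0$ for $j\ne j'$ by the theorem proved above. Combined with the multiplicative definition of $\phi$, this commutativity implies that $\phi$ intertwines the two actions. The main obstacle I expect is the surjectivity step: one must verify that iterated application of $[\overline{\lzj\fg},\overline{{}_{z_{j'}}\fg}]\subset\bigoplus_l\overline{{}_{z_l}\fg}$ allows a PBW reordering with controlled lower-degree remainder. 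No cocycle obstruction appears here because we work with the plain Lie algebra $\lbz\fg$; central terms enter only with the integrated currents $J^\nu_l$, and are already recorded in (\ref{KMrigorous}).
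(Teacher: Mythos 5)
Your construction of $\phi$ as multiplication in $U\lbz\fg$ is well defined, filtration-preserving and surjective: the partial-fraction decomposition $\overline{\lbz\fg}=\oplus_j\overline{\lzj\fg}$ exhibits $\overline{\lbz\fg}$ as a direct sum of Lie \emph{subalgebras} (brackets of maps with poles only at $z_j$ and vanishing at $Q$ again have poles only at $z_j$ and vanish at $Q$), so the reordering step you flag as the main obstacle is the standard PBW argument for a Lie algebra written as a direct sum of subalgebras and causes no trouble. The genuine gap is in the last step: the multiplicative $\phi$ is \emph{not} a morphism of $\oplus_j\hfg_j$-modules. The operator $J^\nu_j$ on $\lbz\cW$ (equation (\ref{defJltwo})) is not left multiplication by $\nu$: for $\nu$ regular away from $z_j$ it is $\nu\otimes_{U\bg}(-)$ \emph{minus} the evaluation terms $\sum_{j'\ne j}\ttw_1\otimes\dots\otimes\nu(z_{j'})(\ttw_{j'})\otimes\dots\otimes\ttw_n$, whereas on the single-insertion module $\lzj\cW_j$ that sum is empty. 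Concretely, with $n=2$ and $\nu$ having poles only at $z_1$, one computes
\begin{equation*}
\phi\bigl((J^\nu\otimes 1)(\ttw_1\otimes\ttw_2)\bigr)-J^\nu_1\,\phi(\ttw_1\otimes\ttw_2)=\ttw_1\otimes\bigl(\nu(z_2)-\nu(Q)\bigr)(\ttw_2),
\end{equation*}
which is nonzero in general, and no choice of the auxiliary point $Q$ can cancel the corrections at all the other insertion points simultaneously. The commutativity $[J^{\nu}_j,J^{\nu'}_{j'}]=0$ does not repair this; it is a statement about the $J$'s, not about left multiplication.

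The map the paper intends (spelled out in the Remark following the theorem) is the one determined by the module structures themselves: write an element of $\lzj\cW_j$ as a product of operators $J^{\balpha}$ applied to $W_j$, and apply the corresponding product of the globally defined $J^{\balpha}_j$ to $W_1\otimes\dots\otimes W_n$ inside $\lbz\cW$. The commutativity of $J^{\nu}_j$ and $J^{\nu'}_{j'}$ for $j\ne j'$ is exactly what makes \emph{that} assignment independent of the ordering across different factors. Your argument is salvageable because the module map and the multiplication map differ only by terms of strictly lower filtration degree, hence induce the same map on associated graded objects; so the right fix is to keep your PBW/partial-fraction analysis for surjectivity but to define $\phi$ via the $J^\nu_j$'s rather than by multiplication.
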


\begin{proof} Both sides are generated as $\oplus_j \hfg_j$-modules from 
$W_1 \otimes \dots \otimes W_n$.  
\end{proof}

\begin{remark} It is worth writing out this morphism in a simple case, when we have two insertions, but with the trivial representation attached to each point. Consider, therefore, the space ${}_{z_1,z_2}\cV=U{}_{z_1,z_2}\fg$. The morphism
sends
$$
\frac{v_1}{(u-u(z_1))^{l_1}} \mathbf{1}_1 \otimes \frac{v_2}{(u-u(z_1))^{l_2}} \mathbf{1}_2
$$
to
\begin{equation*}
\begin{split}
J_1^{\frac{v_1}{(u-u(z_1))^{l_1}}} J_2^{\frac{v_2}{(u-u(z_2))^{l_2}}} \mathbf{1}
&= J_1^{\frac{v_1}{(u-u(z_1))^{l_1}}} \frac{v_2}{(u-u(z_2))^{l_2}} \mathbf{1}\\
&= [J_1^{\frac{v_1}{(u-u(z_1))^{l_1}}},\frac{v_2}{(u-u(z_2))^{l_2}}] \mathbf{1} \\
&+ \frac{v_2}{(u-u(z_2))^{l_2}} J_1^{\frac{v_1}{(u-u(z_1))^{l_1}}} \mathbf{1}\\
&= (v_1,v_2) Res_{z_1} \{\frac{1}{(u-u(z_1))^{l_1}} d \frac{1}{(u-u(z_2))^{l_2}}\} \mathbf{1}\\
&+ J_1^{[{\frac{v_1}{(u-u(z_1))^{l_1}}},\frac{v_2}{(u-u(z_2))^{l_2}}]} \mathbf{1}\\
&+ \frac{v_2}{(u-u(z_2))^{l_2}} J_1^{\frac{v_1}{(u-u(z_1))^{l_1}}} \mathbf{1}\\
&= (v_1,v_2) Res_{z_1} \{\frac{1}{(u-u(z_1))^{l_1}} d \frac{1}{(u-u(z_2))^{l_2}}\} \mathbf{1}\\
&+[v_1,v_2]\frac{p_1(u)}{(u-u(z_1))^{l_1}} \\
&+ \frac{v_2}{(u-u(z_2))^{l_2}}\frac{v_1}{(u-u(z_1))^{l_1}} \mathbf{1}\\
\end{split}
\end{equation*}
where we have written  
$$
\frac{1}{(u-u(z_1))^{l_1}} \frac{1}{(u-u(z_2))^{l_2}}=\frac{p_1(u)}{(u-u(z_1))^{l_1}} + \frac{p_2(u)}{(u-u(z_2))^{l_2}}
$$
with the polynomials $p_1$ and $p_2$ chosen such that each of the terms on the right is regular at $P$ (the point at infinity). On the other hand,
\begin{equation*}
\begin{split}
J_2^{\frac{v_2}{(u-u(z_2))^{l_2}}} J_1^{\frac{v_1}{(u-u(z_1))^{l_1}}} \mathbf{1}
&= (v_2,v_1) Res_{z_2} \{\frac{1}{(u-u(z_2))^{l_2}} d \frac{1}{(u-u(z_1))^{l_1}}\} \mathbf{1}\\
&+[v_2,v_1]\frac{p_2(u)}{(u-u(z_2))^{l_2}} \mathbf{1} \\
&+ \frac{v_1}{(u-u(z_1))^{l_1}}\frac{v_2}{(u-u(z_2))^{l_2}} \mathbf{1}\\
\end{split}
\end{equation*}
We check that indeed
$$
J_1^{\frac{v_1}{(u-u(z_1))^{l_1}}} J_2^{\frac{v_2}{(u-u(z_2))^{l_2}}} \mathbf{1}
=J_2^{\frac{v_2}{(u-u(z_2))^{l_2}}} J_1^{\frac{v_1}{(u-u(z_1))^{l_1}}} \mathbf{1}
$$

\end{remark}

\section{\textbf{Current algebras pairings}}

\subsection{Pairings in the absence of insertions}\label{currentpairing}

We now define pairings paralleling the discussion of pairings for the chiral boson in \S \ref{bosonpairing}.

As before,  let $\gamma$ be a contour separating a domain $D$
from another domain $D'$, with $P \in D'$.
We set ${}_D\fg$ to be the space of meromorphic functions $\bbP \to \bg$ with all poles contained in $D$.   For $z \in D$, we define ${}_D\fg^z$ to be the space of such functions (with all poles in $D$, but) regular at $z$.  We denote by $\lDp\fg$ the space of meromorphic maps $\bbP \to \bg$ with all poles in $D'$; given $z' \in D$, we let $\lDp\fg^{z'}$ denote those which are regular at $z'$.

Given $v\in \bg$ and $z, \ z' \in \bbP \ P$, we define fields as before:
\begin{itemize} 
\item $\epsilon^v(z): \lD\cV \to \lD\cV \otimes K_z\ $,
\item $\iota^v(z): \lD\cV^z \to \lD\cV^z \otimes K_z\ $,
\item $\epsilon'^{v}(z'): \lDp\cV \to \lDp\cV \otimes K_{z'}\ $, 
\item $\iota'^{v}(z'): \lDp\cV^{z'} \to \lDp\cV^{z'} \otimes K_{z'}\ $.   
\end{itemize}

Given a second point $\tP$, chosen to be in $D$, we also define, as in \S \ref{globalcurrent}, fields $\tiota$, etc.

\begin{proposition}\label{proposition-no-insertions} There is a unique pairing between $\lDp\cV$ and $\lD\cV$ such that $\langle {\mathbf 1}',\mathbf 1\rangle =1$, and for $v \in \bg$ and
$\balpha' \in \lDp\cV, \ \balpha \in \lD\cV$, we have
\begin{itemize}
\item the equality of (meromorphic) sections of $K$ on $D$:
\begin{equation*}
\langle \balpha',\epsilon^v(z) \balpha \rangle =-\langle \iota'^v(z) \balpha', \balpha \rangle 
\end{equation*}
\item the equality of (meromorphic) sections of $K$ on $D' \setminus P$:
\begin{equation*}
\langle \epsilon'^{v}(z')\balpha',\balpha\rangle =-\langle \balpha',\iota^{v}(z') \balpha \rangle 
\end{equation*}
which extends to $P$ as the equality
\begin{equation*}
\langle \tepsilon'^{v}(z')\balpha',\balpha\rangle =-\langle \balpha',\tiota^{v}(z') \balpha \rangle 
\end{equation*}
\end{itemize}
\end{proposition}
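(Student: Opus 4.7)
The plan is to follow the scheme of Proposition \ref{propbosonpairing}, with appropriate modifications for the universal enveloping algebra and the non-trivial commutators of $\iota$ and $\epsilon$.

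For uniqueness I would induct on the total PBW degree $N = n + n'$ on $F_n\lD\cV \otimes F_{n'}\lDp\cV$, with the base case $N = 0$ fixed by $\langle \mathbf{1}', \mathbf{1}\rangle = 1$. Since $\lD\fg$ is spanned by polar parts $\frac{v}{u-u(z)}$ for $v \in \bg$ and $z \in D$, any $\chi$ of PBW-degree $n \ge 1$ can be written as $\epsilon^v(z)\chi_0$ modulo $F_{n-1}$; I would then apply the first compatibility identity to reduce $\langle \chi', \chi\rangle$ to $-\langle \iota'^v(z)\chi', \chi_0\rangle$, a pairing of total degree at most $N-1$ that is determined by the inductive hypothesis. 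Note that, unlike in the boson case, $\iota'^v(z)\chi'$ need not have strictly lower PBW degree than $\chi'$; nevertheless it has the same degree, and shifting one unit of degree from $\chi$ to $\iota'^v(z)\chi'$ decreases the \emph{total} degree by one, which suffices for the induction.

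Existence will be the main obstacle: I would need to verify that the inductive rule is self-consistent, independent of the chosen decomposition of $\chi$ (and symmetrically of $\chi'$). This is the current-algebra analog of Lemma \ref{lemmaconsistentpairing}. The verification will reduce to showing that the rule respects the commutation relations in $U\lD\fg$ and $U\lDp\fg$: swapping two successive $\epsilon^{v_i}(z_i)$ reductions on the right introduces $[\epsilon^{v_1}(z_1), \epsilon^{v_2}(z_2)]\chi_0$, which must match the effect of the analogous swap of two $\iota'^{v_i}(z_i)$ operators on the left, producing $[\iota'^{v_1}(z_1), \iota'^{v_2}(z_2)]\chi'$. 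Using the formulas for these commutators together with \eqref{iotaepsiloncommute}, both expressions will be shown to reduce to the same central contribution $\frac{(v_1,v_2)\, du_{z_1} du_{z_2}}{(u(z_1)-u(z_2))^2}$ paired appropriately, plus lower-degree bracket terms $\iota^{[v_1,v_2]}, \epsilon^{[v_1,v_2]}$ that are controlled by a nested induction on total degree. This step is genuinely more intricate than the boson case, where $[\iota,\epsilon]$ was a pure scalar and the analogous consistency check was trivial.

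Finally, the extension of the identity across $P$ using $\tiota, \tepsilon$ will follow directly from the comparison theorem of \S \ref{globalcurrent}: substituting $\tiota^v(z') - \iota^v(z') = v\, d\tu_{z'}/\tu(z')$ and $\tepsilon'^v(z') - \epsilon'^v(z') = -v\, d\tu_{z'}/\tu(z')$ reduces the tilde identity to the already-established untilde identity together with the auxiliary condition $\langle v\chi', \chi\rangle = \langle \chi', v\chi\rangle$ for constant $v \in \bg$ (acting by left multiplication, i.e., by the adjoint action on $\cV$). This $\bg$-invariance is precisely the skew-adjointness of the operator $\lDp J^v = \lD J^v$, which I would derive from the basic $\iota, \epsilon$ adjointness (already established) by integrating $j = \iota + \epsilon$ around $\partial D$ and invoking \eqref{KMrigorous} with $\mu = v$ constant.
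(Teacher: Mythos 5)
Your overall architecture — induction on the total filtration degree for uniqueness (correctly noting that $\iota'$ preserves rather than lowers PBW degree, so only the \emph{total} degree drops), a consistency lemma for existence, and the comparison of $(\tepsilon,\tiota)$ with $(\epsilon,\iota)$ from \S\ref{globalcurrent} for the extension across $P$ — is the same as the paper's. The extension step is in fact spelled out in more detail by you than by the paper, which is silent on it.

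There is, however, a concrete problem in your existence step. The decisive consistency check is not the one you describe. Swapping two successive $\epsilon^{v_i}(z_i)$ on the right produces $[\epsilon^{v_1}(z_1),\epsilon^{v_2}(z_2)]$, which by the first Lemma of \S\ref{section: Current algebra: the fields} is a pure combination of $\epsilon^{[v_1,v_2]}$ terms, and likewise $[\iota'^{v_1}(z_1),\iota'^{v_2}(z_2)]$ is a pure combination of $\iota'^{[v_1,v_2]}$ terms by (\ref{iicommute}); \emph{no central contribution $\frac{(v_1,v_2)du_{z_1}du_{z_2}}{(u(z_1)-u(z_2))^2}$ arises on either side of that check}, so your claim that both swaps "reduce to the same central contribution" is incorrect as stated. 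The central term lives only in $[\iota,\epsilon]$, i.e. in (\ref{iotaepsiloncommute}), and the place where it appears and must cancel is the \emph{cross}-consistency between the two defining identities: if one reduces $\langle \epsilon'^{v'}(z')\bchi',\epsilon^{v}(z)\bchi\rangle$ by stripping the $\epsilon$ from the right factor first versus stripping the $\epsilon'$ from the left factor first, one needs
\begin{equation*}
\langle \iota'^{v}(z)\,\epsilon'^{v'}(z')\,\bchi',\,\bchi\rangle \;=\; \langle \bchi',\,\iota^{v'}(z')\,\epsilon^{v}(z)\,\bchi\rangle ,
\end{equation*}
which is exactly the paper's Lemma \ref{pairingdef}; its proof is the computation in which the scalar $\frac{(v,v')}{(u(z)-u(z'))^2}$ terms cancel and the residual $\iota^{[v,v']},\epsilon^{[v,v']}$ terms match by the lower-degree inductive hypothesis. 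Your same-side swap condition is also a genuine requirement (well-definedness over the relations of $U\lD\fg$, which the paper does not explicitly address), but it does not substitute for the cross-check, and as you have analyzed it the cancellation you invoke would not occur where you place it. To close the existence argument you need to state and prove the displayed identity above.
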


By ``equality of (meromorphic) sections'' we mean the following. For fixed $\balpha', \balpha$, both sides are sections of $K$ with poles at finitely many points determined by $\balpha'$ and $\balpha$; these sections agree.  The proof follows the lines of the proof of Proposition \ref{propbosonpairing}, using the next lemma. As in that last proof, the induction starts with the step $\langle {\mathbf 1}',\mathbf 1\rangle =1$. (Cf., Remark \ref{independentofintegralpairing}.)

\begin{lemma}\label{pairingdef}  Suppose that a pairing as in the proposition exists for all $\bchi \in \lD\cV$ with $degree\ \bchi \le N$ and $\bchi' \in \lDp\cV$ with $degree\ \bchi' \le N'$. Then, given such $\bchi,\ \bchi'$, and for $z \in D$, $z' \in D'$ and $v,\ v' \in \bg$, we have 
\begin{equation*}
\langle \iota'^{v}(z) \epsilon'^{v'}(z') \bchi', \bchi\rangle =\langle \bchi', \iota^{v'}(z') \epsilon^{v}(z)\bchi\rangle 
\end{equation*}
\end{lemma}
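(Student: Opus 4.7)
The plan is to mimic the proof of Lemma \ref{lemmaconsistenthermitianpairing} in the boson case, but now using the more elaborate commutation relation (\ref{iotaepsiloncommute}) in place of the scalar commutator $[\hi(z_1),\he(z_2)]=(u(z_1)-u(z_2))^{-2}$. All manipulations will take place at pairing-bidegrees for which the inductive hypothesis supplies a well-defined pairing, namely $(N,N')$, $(N{+}1,N')$, and $(N,N'{+}1)$.

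First I would commute $\iota^{v}(z)$ past $\epsilon^{v'}(z')$ on the LHS, writing
\begin{equation*}
\iota^{v}(z)\epsilon^{v'}(z') = \epsilon^{v'}(z')\iota^{v}(z) + [\iota^{v}(z),\epsilon^{v'}(z')],
\end{equation*}
and symmetrically commute $\iota^{v'}(z')$ past $\epsilon^{v}(z)$ on the RHS. The element $\iota^{v}(z)\bchi'$ has degree $\le N'$, so $\epsilon^{v'}(z')\iota^{v}(z)\bchi'$ pairs with $\bchi$ at bidegree $(N,N'{+}1)$; applying $(**)$ followed by $(*)$, both of which reduce to the available bidegree $(N+1,N')$ or $(N,N')$, yields
\begin{equation*}
\langle \epsilon^{v'}(z')\iota^{v}(z)\bchi',\bchi\rangle = -\langle \iota^{v}(z)\bchi',\iota^{v'}(z')\bchi\rangle = \langle \bchi',\epsilon^{v}(z)\iota^{v'}(z')\bchi\rangle.
\end{equation*}
The analogous expansion of the RHS produces the same term $\langle \bchi',\epsilon^{v}(z)\iota^{v'}(z')\bchi\rangle$, so it remains to match the two commutator contributions.

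Next I would substitute the explicit formula (\ref{iotaepsiloncommute}) for each commutator. Both yield the same scalar piece $\tfrac{(v,v')\,du_z\,du_{z'}}{(u(z)-u(z'))^2}\langle \bchi',\bchi\rangle$, which cancels trivially. The remaining pieces are proportional to $\iota^{[v,v']}$ and $\epsilon^{[v,v']}$ with a factor $1/(u(z)-u(z'))$; here the key arithmetic observation is that, after using antisymmetry $[v',v]=-[v,v']$ and $(u(z')-u(z))^{-1}=-(u(z)-u(z'))^{-1}$, the unprimed commutator $[\iota^{v'}(z'),\epsilon^{v}(z)]$ rearranges to
\begin{equation*}
\frac{(v,v')\,du_z\,du_{z'}}{(u(z)-u(z'))^2} + \frac{du_z\,\iota^{[v,v']}(z') + du_{z'}\,\epsilon^{[v,v']}(z)}{u(z)-u(z')}.
\end{equation*}
Applying $(*)$ to $\iota^{[v,v']}(z)$ acting on $\bchi'$ and $(**)$ to $\epsilon^{[v,v']}(z')$ acting on $\bchi'$, both legitimate at the available bidegrees, converts the LHS commutator contribution into a pairing with operators acting on $\bchi$, and the result must be shown to coincide with the RHS commutator contribution.

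The main obstacle will be this last sign/arrangement bookkeeping in the $\iota^{[v,v']}$ and $\epsilon^{[v,v']}$ pieces: the two commutators place the brackets at \emph{different} points ($z$ vs.\ $z'$), so the matching is not a trivial relabelling and relies on the precise form of (\ref{iotaepsiloncommute}) together with the invariance $(a,[b,c])=([a,b],c)$ of the form on $\bg$. A careful verification with $\bchi=\bchi'=\mathbf{1}$ and one-step extensions (e.g.\ $\bchi'=\epsilon^{w}(y')\mathbf{1}'$) will serve as a sanity check before the general induction is concluded.
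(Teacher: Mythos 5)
Your plan is essentially the paper's own proof: the paper likewise expands $\iota'^{v}(z)\epsilon'^{v'}(z')$ via (\ref{iotaepsiloncommute}), transfers the leading term $\langle \epsilon'^{v'}(z')\iota'^{v}(z)\bchi',\bchi\rangle$ by two applications of the defining relations (the two minus signs cancelling, exactly as you write), re-commutes $\epsilon^{v}(z)\iota^{v'}(z')$ on the unprimed side, and then cancels the scalar and bracket contributions term by term. The final bookkeeping you defer is precisely the computation the paper writes out, and it closes as you anticipate; the only small correction is that invariance of the bilinear form on $\bg$ is not in fact needed there --- only its symmetry and the antisymmetry of the bracket enter.
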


\begin{proof} Note that $degree\ i'(z) e'(z') \chi' = degree\ \chi'$ and $degree\ i(z') e(z)\chi = degree\ \chi$, so that both sides of the above equation are determined by assumption of the Lemma. The proof itself is a computation that uses (\ref{iotaepsiloncommute})  
\begin{equation*} 
\begin{split}
 \langle \iota'^v(z) \epsilon'^{v'}(z') \bchi', \bchi\rangle &=\langle \epsilon'^{v'} (z') \iota'^{v} (z) \bchi',\bchi\rangle \\
&+\frac{(v,v'\rangle du_{z}du_{z'}}{(u(z)-u(z'))^2}\langle \bchi',\bchi\rangle \\
&+\frac{1}{u(z)-u(z')}du_{z'} \langle \iota'^{[v,v']}(z)\bchi',\bchi\rangle \\
&+\frac{1}{u(z)-u(z')} du_{z} \langle \epsilon'^{[v,v']}(z')\bchi',\bchi\rangle \\
&=\langle \bchi',\epsilon^v(z)\iota^{v'}(z')\bchi\rangle \\
&+\frac{(v,v')du_{z}du_{z'}}{(u(z)-u(z'))^2}\langle \bchi',\bchi\rangle \\
&+\frac{1}{u(z)-u(z')}du_{z'} \langle \iota'^{{[v,v']}}(z)\bchi',\bchi\rangle \\
&+\frac{1}{u(z)-u(z')} du_{z} \langle \epsilon'^{{[v,v']}}(z')\bchi',\bchi\rangle \\
&=\langle \bchi',\iota^{v'}(z')\epsilon^v(z)\bchi\rangle \\
&-\frac{(v',v)du_{z'}du_{z}}{(u(z')-u(z))^2}\langle \bchi',\bchi\rangle \\
&-\frac{1}{u(z')-u(z)}du_{z} \langle \bchi',\iota^{[v',v]}(z')\bchi\rangle \\
&-\frac{1}{u(z')-u(z)} du_{z'} \langle \bchi', \epsilon^{[v',v]}(z)\bchi\rangle \\
&+\frac{(v,v')du_{z}du_{z'}}{(u(z)-u(z'))^2}\langle \bchi',\bchi\rangle \\
&+\frac{1}{u(z)-u(z')}du_{z'} \langle \bchi',\epsilon^{[v,v']}(z)\bchi\rangle \\
&+\frac{1}{u(z)-u(z')} du_{z} \langle \bchi',\iota^{[v,v']}(z')\bchi\rangle \\
&=\langle \bchi',\iota^{v'}(z')\epsilon^v(z)\bchi\rangle \\
\end{split}
\end{equation*}
\end{proof}

One checks that in particular, this induces the pairing of spaces $\lDp\bg$ and ${}_D\fg_-$:
\begin{equation}\label{integralpairingcurrents}
\langle \balpha', \balpha \rangle=-\frac{1}{2 \pi i} \int_{\gamma} (\balpha',d\balpha)
\end{equation}
where $\balpha$ is a $\bg$-valued meromorphic function with all poles in $D$ and $\balpha'$ has all poles in $D'$. The bracket $(,)$ in the integrand denotes the invariant symmetric pairing on the  Lie algebra $\bg$, extended in this case to act between a function and a one-form.  

\subsection{Pairings in the presence of insertions}\label{currentpairinginsert}

We adapt the notation of \S \ref{insertions} and \S \ref{currentpairing}. 

For $j=1,\dots,n$, let $W'_j$  denote the representation dual to $W_j$.  We denote by $(,)_j$ the pairing $W'_j \times W_j \to \bbC$; by definition,
\begin{equation*}
(\ttw',v(\ttw))_j=-(v(\ttw'),\ttw) \ \forall \ \ttw \in W_j, \ttw' \in W'_j \ and \ v\in \bg
\end{equation*}

Choose points $\bz=(z_1,\dots,z_n)$, all in $D$, distinct from each other and from $P$; similarly choose $\bz'=(z'_1,\dots,z'_n)$, all in $D'$, distinct from each other.

Let $\lD\cW =U \lD \fg  \otimes_{\bg} \{W_1 \otimes \dots \otimes W_n\}$, and $\lD\cW^z=U \lD \fg^z  \otimes_{\bg}  \{W_1 \otimes \dots \otimes W_n\}$. The spaces $\lDp \fg$, $\lDp \fg^{z'}$, $\lDp \cW'$ and $\lDp \cW'^{z'}$ are defined similarly. Given $v\in \bg$ and $z \ z' \in \bbP$, we define fields:
\begin{itemize} 
\item $\epsilon^v(z): \lD\cW  \to \lD\cW  \otimes K_z\ $,
\item $\iota^v(z): \lD\cW^z \to \lD\cW^z \otimes \{K(\sum_j z_j)\}_z\ $,
\item $\epsilon'^{v}(z'): \lDp\cW' \to  \lDp \cW' \otimes K_{z'}\ $, 
\item $\iota'^{v}(z'): \lDp\cW'^{z'}  \to \lDp\cW'^{z'}  \otimes \{K(\sum_{j} z'_{j})\}_{z'}\ $.   
\end{itemize}
Given a second point $\tP$, chosen to be in $D$, we also define, as in \S \ref{globalcurrent}, fields $\tiota$, etc.

We now state and prove an analogue of Proposition \ref{proposition-no-insertions}.

\begin{proposition}\label{proposition-with-insertions} There is a unique pairing between $\lDp\cW'$ and $\lD\cW$  such that $\langle \ttw'_1 \otimes \dots \otimes \ttw'_n, \ttw_1 \otimes \dots \otimes \ttw_n \rangle =\prod_j <\ttw'_j,\ttw_j>$, and for $v \in \bg$ and 
$\bpsi' \in \lDp\cW', \ \bpsi \in \lD\cW$, we have
\begin{itemize}
\item the equality of (meromorphic) sections of $K$ on $D$:
\begin{equation}\label{pairingwithinsertions1}
\langle \bpsi',\epsilon^v(z) \bpsi \rangle =-\langle \iota'^v(z) \bpsi', \bpsi \rangle 
\end{equation}
\item the equality of (meromorphic) sections of $K$ on $D'\setminus P$:
\begin{equation}\label{pairingwithinsertions2}
\langle \epsilon'^{v}(z')\bpsi',\bpsi\rangle =-\langle \bpsi',\iota^{v}(z') \bpsi \rangle 
\end{equation}
 which extends to $P$ as the equality
\begin{equation*}
\langle \tepsilon'^{v}(z')\balpha',\balpha\rangle =-\langle \balpha',\tiota^{v}(z') \balpha \rangle 
\end{equation*}
\end{itemize}
\end{proposition}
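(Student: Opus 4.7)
The plan is to mirror the proof of Proposition \ref{proposition-no-insertions} (and its scalar predecessor Proposition \ref{propbosonpairing}), with suitable bookkeeping for the insertions. The approach is a double induction on the PBW filtration degrees $N,N'$ of $\bpsi \in \lD\cW$ and $\bpsi' \in \lDp\cW'$, started at $(N,N')=(0,0)$ by the prescribed initial condition $\prod_j (\ttw'_j, \ttw_j)_j$. Since $\iota,\iota'$ preserve the filtration degree while $\epsilon,\epsilon'$ (together with derivatives in the spectral variable and limits) generate the ambient spaces from their degree-zero subspaces, the adjointness identities (\ref{pairingwithinsertions1}) and (\ref{pairingwithinsertions2}) let me inductively raise the degree of either argument by one, reducing a new pairing to one of lower total degree. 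This gives uniqueness directly.

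For existence I would then establish the consistency lemma analogous to Lemma \ref{pairingdef}:
$$\langle \iota'^{v}(z)\,\epsilon'^{v'}(z')\,\bpsi',\bpsi\rangle \;=\; \langle \bpsi',\,\iota^{v'}(z')\,\epsilon^{v}(z)\,\bpsi\rangle.$$
The strategy is to apply the OPE (\ref{iotaepsiloncommute}) on both sides to commute $\iota$ past $\epsilon$; the central contribution $(v,v')du_z du_{z'}/(u(z)-u(z'))^2 \langle \bpsi',\bpsi\rangle$ is manifestly symmetric under $z \leftrightarrow z'$, $v \leftrightarrow v'$, and the two residue-type terms involving $\iota^{[v,v']}$ and $\epsilon^{[v,v']}$ cancel pairwise via $[v',v]=-[v,v']$. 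The remaining inner piece $\langle \epsilon'^{v'}(z')\iota'^{v}(z)\bpsi',\bpsi\rangle$ then equals $\langle \bpsi',\epsilon^{v}(z)\iota^{v'}(z')\bpsi\rangle$ by two applications of the inductive pairing identities. Insertions affect the argument only through the nonzero action of $\iota$ on the factor $W_1\otimes\cdots\otimes W_n$, and the base case --- pairing $\bpsi' = \ttw'_1\otimes\cdots\otimes \ttw'_n$ against $\bpsi = \epsilon^{v}(z)(\ttw_1\otimes\cdots\otimes \ttw_n)$ --- is forced to be consistent with the initial condition precisely by the invariance $(v(\ttw'_j),\ttw_j)_j + (\ttw'_j,v(\ttw_j))_j = 0$ of each representation pairing.

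The step I expect to require the most care is the extension to $z'=P$, namely the assertion that $\langle \tepsilon'^{v}(z')\balpha',\balpha\rangle = -\langle \balpha',\tiota^{v}(z')\balpha\rangle$ continues to hold at $P$. My plan is to invoke section \ref{globalcurrent}: with base-point $\tP\in D$, the fields $\tepsilon',\tiota'$ are defined on all of $D'$ including $P$, and on $D'\setminus P$ they differ from $\epsilon',\iota'$ only by the explicit ``gauge'' terms computed there. Both sides of the putative identity are meromorphic sections of $K$ (with the appropriate poles at $P$ and the $z'_j$) on $D'$ that agree on $D'\setminus P$ by (\ref{pairingwithinsertions2}), so they must agree at $P$ as well by analytic continuation. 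The chief obstacle, apart from the lengthy but routine computation of the consistency lemma, is tracking the insertion-dependent scalar contributions correctly across the induction; modulo this notational overhead the argument is structurally identical to the insertion-free case.
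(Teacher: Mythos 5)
Your proposal is correct and follows essentially the same route as the paper: the paper's proof of Proposition \ref{proposition-with-insertions} consists precisely of observing that Lemma \ref{pairingdef} survives the introduction of insertions, so the double induction on filtration degree from Propositions \ref{propbosonpairing} and \ref{proposition-no-insertions} goes through unchanged, with the base case anchored at $\prod_j\langle\ttw'_j,\ttw_j\rangle$ exactly as you describe. Your additional remarks — that the base-case consistency rests on the invariance $(v(\ttw'_j),\ttw_j)_j+(\ttw'_j,v(\ttw_j))_j=0$ of the dual pairing, and that the extension to $P$ follows from the comparison of $\tepsilon',\tiota'$ with $\epsilon',\iota'$ in \S\ref{globalcurrent} — are details the paper leaves implicit but are correctly supplied.
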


\begin{proof} Lemma \ref{pairingdef} holds in this case as well, so the induction works as before.
\end{proof}

We make explicit the above pairing in certain cases. First,
\begin{equation*}
\begin{split}
\langle \epsilon'^v(z') \ttw'_1 \otimes \dots \otimes \ttw'_n, \ttw_1 \otimes \dots \otimes \ttw_n \rangle &= -\langle \ttw'_1 \otimes \dots \otimes \ttw'_n, \iota^v(z') \ttw_1 \otimes \dots \otimes \ttw_n \rangle\\
&= -\langle \ttw'_1 \otimes \dots \otimes \ttw'_n, 
\sum_j \ttw_1 \otimes \dots \otimes \frac{v(\ttw_j)du_{z'}}{u(z')-u(z_j)} \otimes \dots \otimes \ttw_n \rangle\\
\end{split}
\end{equation*}
This implies the pairing
\begin{equation*}
\langle \balpha'\otimes_{U\bg} \ttw'_1 \otimes \dots \otimes \ttw'_n, \ttw_1 \otimes \dots \otimes \ttw_n \rangle = \langle  \ttw'_1 \otimes \dots  \otimes \ttw'_n, \sum_j
\ttw_1 \otimes \dots \otimes \{\balpha'(z_j)(\ttw_j)\} \otimes \dots \otimes  \ttw_n \rangle
\end{equation*}
A similar computation gives
\begin{equation*}
\langle \ttw'_1 \otimes \dots \otimes \ttw'_n, \balpha \otimes_{U\bg} \ttw_1 \otimes  \dots \otimes \ttw_n \rangle = \langle \sum_j \ttw'_1 \dots \otimes \{\balpha(z'_j)(\ttw_j)\} \otimes \dots  \otimes \ttw'_n,  \ttw_1 \otimes \dots \otimes  \ttw_n \rangle
\end{equation*}
It is useful to introduce the notation
\begin{equation*}
\balpha(\bz) (\ttw_1 \otimes \dots \otimes \ttw_n ) = \sum_j \ttw_1 \otimes \dots \otimes \{\balpha(z_j)(\ttw_j)\} \otimes \dots \otimes  \ttw_n
\end{equation*}
in terms of which we have
\begin{equation*}
\begin{split}
\langle \balpha'\otimes_{U\bg} \ttw'_1 \otimes \dots \otimes \ttw'_n, \ttw_1 \otimes \dots \otimes \ttw_n \rangle &= \langle \ttw'_1 \otimes \dots \otimes \ttw'_n,  \balpha'(\bz) (\ttw_1 \otimes \dots \otimes  \ttw_n) \rangle\\
\langle \ttw'_1 \otimes \dots \otimes \ttw'_n, \balpha \otimes_{U\bg} \ttw_1 \otimes \dots \otimes \ttw_n \rangle &= \langle \balpha(\bz') (\ttw'_1 \otimes \dots  \otimes \ttw'_n), 
\ttw_1 \otimes \dots \otimes  \ttw_n) \rangle
\end{split}
\end{equation*}

Finally
\begin{equation*}
\begin{split}
\langle \epsilon'^v(z') \ttw'_1 \otimes \dots \otimes \ttw'_n, \balpha \otimes_{U\bg}  \ttw_1 \otimes \dots \otimes \ttw_n \rangle &= -\langle \ttw'_1 \otimes \dots \otimes \ttw'_n, \iota^v(z') \balpha \ttw_1 \otimes \dots \otimes \ttw_n \rangle\\
&=  -\langle \ttw'_1 \otimes \dots \otimes \ttw'_n, [\iota^v(z'), \balpha] \ttw_1 \otimes \dots \otimes \ttw_n \rangle\\
&-\langle \ttw'_1 \otimes \dots \otimes \ttw'_n, \balpha \iota^v(z') \ttw_1 \otimes \dots \otimes \ttw_n \rangle\\
&=  \langle \ttw'_1 \otimes \dots \otimes \ttw'_n, (v, d\balpha(z')) \ttw_1 \otimes \dots \otimes \ttw_n \rangle\\
&-\langle \ttw'_1 \otimes \dots \otimes \ttw'_n, \iota^{[v, \balpha(z')]}(z') \ttw_1 \otimes \dots \otimes \ttw_n \rangle\\
&-\langle \ttw'_1 \otimes \dots \otimes \ttw'_n, [v, \balpha_{z'}] du_{z'} \ttw_1 \otimes \dots \otimes \ttw_n \rangle\\
&-\langle \ttw'_1 \otimes \dots \otimes \ttw'_n, \balpha \iota^v(z') \ttw_1 \otimes \dots \otimes \ttw_n \rangle\\
&=\langle \ttw'_1 \otimes \dots \otimes \ttw'_n, (v, d\balpha(z')) \ttw_1 \otimes \dots \otimes \ttw_n \rangle\\
&-\langle \ttw'_1 \otimes \dots \otimes \ttw'_n, \sum_j \ttw_1 \otimes \dots 
\otimes \frac{[v, \balpha(z')] (\ttw'_j) du_{z_j}}{u(z')-u(z_j)} \otimes \dots 
\otimes \ttw_n \rangle\\
&+\langle \ttw'_1 \otimes \dots \otimes \ttw'_n, \sum_j  \ttw_1 \otimes \dots  \otimes \frac{[v, \balpha(z')-\balpha(z_j)] (\ttw'_j) du_{z_j}}{u(z')-u(z_j)} \otimes \dots 
\otimes \dots \otimes \ttw_n \rangle\\
&-\langle \ttw'_1 \otimes \dots \otimes \ttw'_n, \balpha \iota^v(z') \ttw_1 \otimes \dots \otimes \ttw_n \rangle\\
&=\langle \ttw'_1 \otimes \dots \otimes \ttw'_n, (v, d\balpha(z')) \ttw_1 \otimes \dots \otimes \ttw_n \rangle\\
&-\langle \ttw'_1 \otimes \dots \otimes \ttw'_n, \sum_j \ttw_1 \otimes \dots 
\otimes \frac{[v, \balpha(z_j)] (\ttw'_j) du_{z_j}}{u(z')-u(z_j)} \otimes \dots 
\otimes \ttw_n \rangle\\
&-\langle \ttw'_1 \otimes \dots \otimes \ttw'_n, \balpha \iota^v(z') \ttw_1 \otimes \dots \otimes \ttw_n \rangle\\
\end{split}
\end{equation*}
which yields
\begin{equation*}
\begin{split}
\langle \balpha' \otimes_{U\bg} \ttw'_1 \otimes \dots \otimes \ttw'_n, \balpha \otimes_{U\bg} \ttw_1 \otimes \dots \otimes \ttw_n \rangle &= -<\balpha',\balpha>\langle \ttw'_1 \otimes \dots \otimes \ttw'_n, \ttw_1 \otimes \dots \otimes \ttw_n \rangle\\
&+\langle \ttw'_1 \otimes \dots \otimes \ttw'_n,  [\alpha',\alpha] (\bz)] (\ttw_1 \otimes \dots 
\otimes \ttw_n) \rangle\\
&-\langle \balpha (\bz') (\ttw'_1 \otimes \dots \otimes \ttw'_n),   \balpha' (\bz) (\ttw_1 \otimes \dots \otimes \ttw_n) \rangle\\
\end{split}
\end{equation*}
where the pairing $<\balpha',\balpha>$ is defined in equation (\ref{integralpairingcurrents}).

\pagebreak

\part{\textbf{{\Large A field theory associated to a one-dimensional lattice}}}

\section{\textbf{Preliminaries}}

Our goal is to describe a rational form of a field theory associated to a one-dimensional lattice $\sqrt{N} \bbZ$, where $N$ is a positive integer. We follow (up to a point) the notations of \S 4.2, \cite{F-BZ}.

Recall that the boson fields act on the symmetric algebra over $\bK$, the space of meromorphic forms of the \emph{second} kind - these are the ones with vanishing residues at all points. (An everywhere regular 1-form is said to be of the \emph{first} kind. Such a form is of course of the second kind as well, but in genus zero there is no nonzero regular form.) Let $\cK_{\bbZ}(K)$ denote the space of meromorphic forms with integer residues. We have (in genus zero) the diagram of abelian groups:
\begin{equation*}
\begin{CD}
@. @. \cK^* @. @. @.\\
@. @. @VVV  @. @. @.\\
0 @>>> \bK @>>> \cK_{\bbZ}(K) @>R>> \oplus_{p \in \bbP} \bbZ @>sum>>  \bbZ @>>>0\\
\end{CD}
\end{equation*}
where $\cK^*$ is the multiplicative group of non-zero meromorphic functions, $R$ is the residue map, and the arrow
$\cK^* \to \cK_\bbZ(K)$
$$
\bbeta \mapsto  \bbeta^{-1} d \bbeta
$$
maps a nonzero meromorphic function to its logarithmic derivative. This map is injective modulo $\bbC^*$ (constants); its image, which we denote $\cLK$ (where $\cL$ stands for ``logarithmic''), consists of forms with all poles of order one and integer residues; such a form is said to be of the \emph{third} kind.

We will work with a slight variant of the above diagram.

Let $\kappa$ be a square-root of the canonical bundle; this is a line-bundle together with an isomorphism $\kappa^2 \sim K$; on $\bbP$ this data is unique up to isomorphisms determined up to sign. Fix a point $P$, the point at infinity.  Then $\kappa$ has a unique connection $\nabla$ with a regular singular point at $P$. (In other words, if $\sigma$ is a section regular at $P$, $\nabla \sigma$ is a section of $\kappa \otimes K(P)$.) A coordinate $u$ with its pole at $P$ determines a section $\sqrt{du}$ (upto sign) with a simple pole at $P$, and $\nabla \sqrt{du}=0$ away from $P$. 

Let $\KK$ denote the set of non-zero meromorphic sections $\sigma$ of $\kappa$. This is a torsor over $\cK^*$, and the map
$$
\KK \to \cK_{\bbZ}(K), \ \ \ \sigma \mapsto \sigma^{-1} \nabla \sigma
$$
obeys, for $\bbeta \in \cK^*$,
$$
\bbeta \sigma \mapsto \bbeta^{-1}d\bbeta + \sigma^{-1} \nabla \sigma
$$
Consider the diagram
\begin{equation*}
\begin{CD}
@. @. \KK @. @. @.\\
@. @. @VVV  @. @. @.\\
0 @>>> \bK @>>> \cK_{\bbZ}(K) @>R>> \oplus_{p \in \bbP} \bbZ @>sum>>  \bbZ @>>>0\\
\end{CD}
\end{equation*}
The vertical arrow is again injective modulo multiplication by constant scalars, and equivariant with respect to the vertical arrow in the previous diagram. The image is  again the space of logarithmic forms, with $\sqrt{du}$ mapping to zero.

Let $\KKz$ denote the subset of $\KK$ consisting of meromorphic sections of $\kappa$ which are regular and nonvanishing at $z$.

\section{\textbf{The fields}}

We will now define fields acting on spaces which approximate the the symmetric algebra over $\cK(K)$, the space of \emph{all} meromorphic forms. Up to a twist (to be explained immediately below), this space will be spanned by vectors of the form
$$
\alpha^p \exp(\beta)
$$
where $\alpha$ is of the second kind and $\beta$ of the third kind. One  might expect that the exponential term will have to be defined in a completion of the symmetric algebra, but an alternative description is possible.

Fix a positive integer $N$. The choice of $P$ induces an additive character $\chi_P:\cK^* \to \bbZ$:
$$
\bbeta \mapsto Res_P(\bbeta^{-1} d\bbeta)
$$
and a compatible map, for which we retain the same notation, $\chi_P:\KK \to \bbZ$:
$$
\sigma \mapsto Res_P(\sigma^{-1} \nabla \sigma)
$$
Clearly, $\cK^*$ is divided into cosets by $\chi_P$:
$$
\cK^*= \sqcup_{l\in \bbZ} \cK^*_l
$$
with $\cK^*_l=\{\bbeta|\chi_P(\bbeta)=l\}$; and similarly
$$
\KK= \sqcup_{l\in \bbZ} \KKl
$$

Consider the action of $\bbC^*$ on $\KK$:
\begin{equation*}
(\sigma,t) \mapsto t\sigma
\end{equation*}
We set
\begin{itemize}
\item $\tbbW_{P,-,l}=$ maps $\rho:\KKl \to \bbW_{P,-}$ which are nonzero on finitely many fibres of $\KK \to \cLK$, and equivariant in the sense $\rho(t\sigma)=t^{Nl} \rho(\sigma)$.
\item $\tbbW_{P,-.l}^z=$ maps $\KKlz \to \bbW^z_{P,-}$ as above.
\end{itemize}
In this work we do not intend to study the dependence of the theory on the choice of base-point $P$. So we drop the suffixes and from now on simply write $\tbbW_{P,-,l}=\tbbW_{l}$, $\tbbW^z_{P,-,l}=\tbbW_{l}$.

Set $\tbbW=\oplus_l \tbbW_{l}$, $\tbbW^z=\oplus_l \tbbW^z_{l}$. Note that if $N$ were zero, then $\tbbW=\bbW_{P,l} \otimes \bbC[\cLK]$, where $\bbC[\cLK]$ is the group algebra of the additive group $\cLK$. (Similarly, if $N=0$, $\tbbW^z=\bbW^z \otimes \bbC[\cLK^z]$.) Given $\ttv \in \bbW_{l}$ and $\sigma \in \KKl$, we will denote by $\ttv \hotimes E[\sigma]$ the map $\KKl \to \bbW_{P,-}$ which is such that 
$$
t \sigma \mapsto t^{Nl} \ttv, \ t \in \bbC^*
$$
and is zero on other elements of $\KKl$. Note that therefore
$$
\ttv \hotimes E[\sigma]=t^{Nl} \ttv \hotimes E[t\sigma],
 \ t \in \bbC^*
$$

We define fields in the complement of $P$. First, a preliminary bit of notation. Let $\digamma$ denote the space of sections of $K$ regular away from $P$  and are nowhere-vanishing elsewhere. Every such nonzero section has a double pole at $P$, and any two are proportional, so $\digamma$ is a one-dimensional vector space, and the evaluation map  $\digamma \to K_z$ identifies the two lines as long as $z\ne P$. In the following, given a form $\alpha$ regular at $z$, we will therefore regard $\alpha(z)$ as an element of $\digamma$. In particular, given a coordinate $u$, we have $du_z  = du_{z'} \in \digamma$.   
\begin{enumerate}
\item Define $\epsilon(z):\tbbW \to \tbbW \otimes \digamma$ by multiplying by $\epsilon_z$ (in the factor $\bbW$) as earlier.
\item Define $\iota(z):\tbbW^z \to \tbbW \otimes \digamma$ inductively by
\begin{equation*}
[\iota(z),\balpha] = -d\balpha(z),\  \balpha \in \mathfrak{m}^z_P
\end{equation*}
ie., for $\balpha$ regular and vanishing at $P$ and regular at $z$, and
\begin{equation*}
\iota(z) \mathbf{1} \hotimes E[\sigma]= -\mathbf{1} \hotimes E[\sigma]  \sqrt{N} \sigma(z)^{-1}\nabla \sigma(z),
\end{equation*}
for $\sigma$ regular and non-vanishing at $z$.
\suspend{enumerate}

\textbf{Notation:} For $\clambda \in \bbZ$, set $\lambda=\sqrt{N}\clambda$. 

\resume{enumerate}
\item Define the field $\flat^+_\lambda: \tbbW_{l}  \to \tbbW_{l+\clambda}\otimes \digamma^{b_{l,\clambda}}$ by requiring
\begin{equation}\label{flatplus}
\begin{split}
[\flat^+_\lambda(z),\balpha] &=0\\
\flat^+_\lambda(z) \mathbf{1} \hotimes E[\sigma] &= \mathbf{1} \hotimes E[(u-u(z))^{-\clambda}\sigma] du_z^{b_{l,\clambda}}\\
\end{split}
\end{equation}
Here $b_{l,\clambda}$ is an integer, which we will fix immediately in terms of $N$ and $l$.
\item Define the field $\flat^-_\lambda: \tbbW_{l}^z \to \tbbW_{l} \otimes \kappa^{N\clambda} $ by requiring that for $\balpha \in \mathfrak{m}^z_P$ and $\sigma$ regular at $z$:
\begin{equation*}
\begin{split}
[\flat^-_\lambda(z),\balpha] &= -\lambda \balpha (z) \flat^-_\lambda(z),\ \text{and} \\ 
\flat^-_\lambda(z) \mathbf{1} \hotimes  E[\sigma]&=  \mathbf{1} \hotimes  E[\sigma] \sigma(z)^{N\clambda}\\
\end{split}
\end{equation*}
\end{enumerate}
To determine $b_{l,\clambda}$, suppose $u=t\cu$, with $t\in \bbC^*$. Then 
\begin{equation*} 
\begin{split}
\mathbf{1} \hotimes E[(u-u(z))^{-\clambda}\sigma] du_z^{b_{l,\clambda}}&=\mathbf{1} \hotimes E[t^{-\clambda}(\cu-\cu(z))^{-\clambda}\sigma] t^{b_{l,\clambda}}d\cu_z^{b_{l,\clambda}}\\
&= t^{N\clambda(l+\clambda)}
\times t^{-N\clambda(l+\clambda)}
\mathbf{1} \hotimes E[t^{-\clambda}(\cu-\cu(z))^{-\clambda}\sigma] t^{b_{l,\clambda}}d\cu_z^{b_{l,\clambda}}\\
&= t^{N\clambda(l+\clambda)+{b_{l,\clambda}}}
\mathbf{1} \hotimes E[(\cu-\cu(z))^{-\clambda}\sigma]
d\cu_z^{b_{l,\clambda}}\\
 \end{split}
\end{equation*}
For this to be well-defined independent of the choice of coordinate, we need
\begin{equation}\label{ablambda}
N\clambda(l+\clambda)+b_{l,\clambda}=0
\end{equation}
There is a further ambiguity in the definitions of $\flat^+$  and $\flat^-$,  in that they are not well defined under the equivalence $\mathbf{1} \hotimes E [\sigma] = t^{Nl} \mathbf{1} \hotimes E [t\sigma]$. For the moment, we resolve this by requiring that $\sigma$ is nonvanishing at $z$ and $\sigma(z)=1$.

Note that the first of the equations (\ref{flatplus}) implies that $[\flat^+_\lambda(z),e(z')]=0$ for $z \ne z'$. One checks easily that if $z' \ne z$,
\begin{equation}\label{iflatplus}
\begin{split}
[\iota(z'), \flat^+_\lambda(z)]  \mathbf{1} \hotimes  E[\sigma]& =\mathbf{1} \hotimes  E[(u-u(z))^{-\clambda} \sigma] \frac{\lambda du}{u(z')-u(z)} du^{b_{l,\clambda}} \\
\ \text{and}\ [\flat^+_{\lambda'}(z'),\flat^+_\lambda(z)]&=0
\end{split}
\end{equation}

\begin{lemma} We have, for $z \ne z'$,
\begin{equation*}
\begin{split}
[\iota(z'), \flat^+_\lambda(z)] &= \frac{\lambda du_{z'}}{u(z')-u(z)} \flat^+_\lambda(z)\\
[\iota(z'),\flat^-_\lambda(z)]&=0\\
[\epsilon(z'), \flat^+_\lambda(z)] &= 0\\
[\epsilon(z'), \flat^-_\lambda(z)] &=-\flat^-_\lambda(z)\frac{\lambda du}{u(z')-u(z)}\\
\end{split}
\end{equation*}
\end{lemma}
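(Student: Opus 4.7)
The strategy is a standard two-step induction: first verify each identity on the generating vectors $\mathbf{1}\hotimes E[\sigma]$ (for $\sigma\in \KKz\cap\KK_l^{z'}$ with $\sigma(z')$ nonzero), then extend to all of $\tbbW^{z'}$ by checking that both sides have the same commutator with the multiplication operators $\balpha$, $\balpha\in \mathfrak{m}_P^z\cap\mathfrak{m}_P^{z'}$.

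For the base case, the key computation is the derivative of a product in $\nabla$: for $\sigma$ regular and nonvanishing at $z'$,
\begin{equation*}
\bigl((u-u(z))^{-\clambda}\sigma\bigr)(z')^{-1}\nabla\bigl((u-u(z))^{-\clambda}\sigma\bigr)(z')=-\frac{\clambda\,du_{z'}}{u(z')-u(z)}+\sigma(z')^{-1}\nabla\sigma(z').
\end{equation*}
Combined with the definition of $\iota$ on $\mathbf{1}\hotimes E[\cdot]$ and the fact that $\flat^+_\lambda$ is $\bbC$-linear in the ``scalar'' factor, this gives
\begin{equation*}
[\iota(z'),\flat^+_\lambda(z)]\,\mathbf{1}\hotimes E[\sigma]=\frac{\lambda\,du_{z'}}{u(z')-u(z)}\,\flat^+_\lambda(z)\,\mathbf{1}\hotimes E[\sigma],
\end{equation*}
which is precisely (\ref{iflatplus}). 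The second identity is immediate because $\iota(z')\,\mathbf{1}\hotimes E[\sigma]$ is a scalar multiple of $\mathbf{1}\hotimes E[\sigma]$ while $\flat^-_\lambda$ just multiplies by $\sigma(z)^{N\clambda}$; commuting them yields no extra term when $z\ne z'$. The third identity is clear from the first line of (\ref{iflatplus}), since $\epsilon(z')$ is multiplication by the scalar form $\epsilon_{z'}$, and $\flat^+_\lambda(z)$ commutes with such multiplications. For the fourth, direct computation on $\mathbf{1}\hotimes E[\sigma]$ gives $\epsilon(z')\flat^-_\lambda(z)\mathbf{1}\hotimes E[\sigma]=\mathbf{1}\hotimes E[\sigma]\,\sigma(z)^{N\clambda}\,\frac{du_{z'}}{u-u(z')}$, while $\flat^-_\lambda(z)\epsilon(z')\mathbf{1}\hotimes E[\sigma]=\mathbf{1}\hotimes E[\sigma]\,\frac{du_{z'}}{u-u(z')}\sigma(z)^{N\clambda}$, and the commutator $[\frac{1}{u-u(z')},\lambda\,\text{(evaluation at $z$)}]$ produces the claimed term $-\flat^-_\lambda(z)\frac{\lambda du_{z'}}{u(z')-u(z)}$ after using $[\flat^-_\lambda(z),\balpha]=-\lambda\balpha(z)\flat^-_\lambda(z)$ with $\balpha=\frac{1}{u-u(z')}$-type generators (working modulo constants at $P$).

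For the inductive step, let $A$ denote either side of any one of the four identities; by Jacobi,
\begin{equation*}
[[\iota(z'),\flat^\pm_\lambda(z)],\balpha]=[[\iota(z'),\balpha],\flat^\pm_\lambda(z)]+[\iota(z'),[\flat^\pm_\lambda(z),\balpha]],
\end{equation*}
and analogously with $\epsilon(z')$ in place of $\iota(z')$. For $\flat^+$ the second term on the right vanishes (first line of (\ref{flatplus})); since $[\iota(z'),\balpha]=-d\balpha(z')$ is a scalar, the first term also vanishes. One checks directly that the purported right-hand sides ($\frac{\lambda du_{z'}}{u(z')-u(z)}\flat^+_\lambda(z)$ and $0$) have identical (vanishing) commutators with $\balpha$. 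For $\flat^-$, the bracket $[\flat^-_\lambda(z),\balpha]=-\lambda\balpha(z)\flat^-_\lambda(z)$ is a scalar multiple of $\flat^-_\lambda(z)$, so the Jacobi expansion reduces to $-\lambda\balpha(z)[\iota(z'),\flat^-_\lambda(z)]$ (resp.\ with $\epsilon$), and one checks by direct computation that the proposed right-hand sides have precisely this commutator with $\balpha$.

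The main point to watch is bookkeeping of signs and of the one-form vs.\ scalar nature of each side; in particular the appearance of $du_{z'}$ in the first and fourth identities comes from the value at $z'$ of the differential $d\bigl((u-u(z))^{-\clambda}\bigr)=-\clambda(u-u(z))^{-\clambda-1}du$, together with the sign convention in the definition of $\iota$. No genuine obstacle arises since the inductive step reduces in every case to comparing two expressions which commute in the same way with all $\balpha\in\mathfrak{m}_P^z\cap\mathfrak{m}_P^{z'}$, and the base-case agreement was already established.
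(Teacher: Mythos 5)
Your proposal is correct and takes essentially the same route as the paper: the first identity is the content of equation (\ref{iflatplus}), the third and fourth follow directly from the defining relations $[\flat^+_\lambda(z),\balpha]=0$ and $[\flat^-_\lambda(z),\balpha]=-\lambda\balpha(z)\flat^-_\lambda(z)$ applied to $\balpha=\tfrac{1}{u-u(z')}$, and the second is handled exactly as in the paper by the Jacobi identity (reducing to a statement proportional to the commutator itself) together with the vanishing on the generators $\mathbf{1}\hotimes E[\sigma]$.
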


\begin{proof} The first equation is a consequence of (\ref{iflatplus}). The third and fourth equations follow from the definitions. As for the second, note first that
\begin{equation*}
\begin{split}
[[\iota(z'),\flat^-_\lambda(z)],\balpha] &= [[\iota(z'),\balpha],\flat^-_\lambda(z)]]+[\iota(z'),[\flat^-_\lambda(z),\balpha]]\\
&=[\iota(z'),\flat^-_\lambda(z)]\clambda d\balpha(z)
\end{split}
\end{equation*}
On the other hand
\begin{equation*}
\begin{split}
[\iota(z'),\flat^-_\lambda(z)] E[\sigma]&=  \iota(z') E[\sigma] \sigma(z)^{\clambda N}+\flat^-_\lambda(z) E[\sigma] \sigma(z')^{-1} d\sigma(z')  \\
&=-\sigma(z')^{-1}d\sigma(z') E[\sigma] \sigma(z)^{\clambda N}+E[\sigma] \sigma(z)^{\clambda N} \sigma(z')^{-1}d\sigma(z')\\
&=0
\end{split}
\end{equation*}
\end{proof}

Consider now the product 
$$
V_\lambda(z) \equiv \flat^+_\lambda(z)\flat^-_\lambda(z) : \tbbW^z_l \to \tbbW_{l+\clambda} \otimes \kappa^{N\clambda} \otimes \digamma^{-N\clambda(l+\clambda)}
$$ 
We have
\begin{equation*}
\begin{split}
[V_\lambda(z),\balpha] &= \lambda \balpha (z) V_\lambda(z),\\
V_\lambda(z) \{\mathbf{1} \hotimes  E[\sigma]\} &=  \mathbf{1} \hotimes E[(u-u(z))^{-\clambda}\sigma]\sigma(z)^{\clambda N} du_z^{b_{l,\clambda}}
\end{split}
\end{equation*}
with $\balpha$ and $\sigma$ as  above. We first deal with the ambiguities in the definitions of $\flat^+$  and $\flat^-$ by checking that they ``cancel'' in the definition of $V_\lambda(z)$: 
\begin{equation*}
\begin{split}
V_\lambda(z) \{t^{Nl}\mathbf{1} \hotimes  E[t\sigma]\} &=  (t)^{Nl} \mathbf{1} \hotimes E[(u-u(z))^{-\clambda}t\sigma]t^{\clambda N}\sigma(z)^{\clambda N} du_z^{b_{l,\clambda}}\\
&= t^{\clambda N-N\clambda} (t)^{a(l+\clambda)} \mathbf{1} \hotimes E[(u-u(z))^{-\clambda}t\sigma]t^{\clambda N}\sigma(z)^{\clambda N} du_z^{b_{l,\clambda}}\\
&=  \mathbf{1} \hotimes E[(u-u(z))^{-\clambda}\sigma]\sigma(z)^{\clambda N} du_z^{b_{l,\clambda}}
\end{split}
\end{equation*}

\begin{proposition}
For $z'\ne z$ we have
\begin{equation*}
\begin{split}
[j(z'),V_{\lambda}(z)]&=0\\
V_{\lambda'}(z')V_\lambda(z)&=(-1)^{\lambda\lambda'} V_\lambda(z)V_{\lambda'}(z')
\end{split}
\end{equation*}
\end{proposition}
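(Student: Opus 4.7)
The plan is to establish the first identity by a direct Leibniz/OPE calculation using the four commutation rules in the preceding lemma, and the second by computing both sides on the highest-weight-type vectors $\mathbf{1}\hotimes E[\sigma]$ and then extending by $\mathfrak{m}_P$-multiplicativity.

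For the first identity, I would write $j(z')=\iota(z')+\epsilon(z')$ and $V_\lambda(z)=\flat^+_\lambda(z)\flat^-_\lambda(z)$, then expand
\[
[j(z'),V_\lambda(z)] = [\iota(z'),\flat^+_\lambda(z)]\flat^-_\lambda(z)+\flat^+_\lambda(z)[\iota(z'),\flat^-_\lambda(z)]+[\epsilon(z'),\flat^+_\lambda(z)]\flat^-_\lambda(z)+\flat^+_\lambda(z)[\epsilon(z'),\flat^-_\lambda(z)].
\]
By the lemma two of these four commutators vanish identically, leaving
\[
\frac{\lambda\,du_{z'}}{u(z')-u(z)}\,\flat^+_\lambda(z)\flat^-_\lambda(z)\;-\;\flat^+_\lambda(z)\flat^-_\lambda(z)\,\frac{\lambda\,du_{z'}}{u(z')-u(z)}.
\]
Since the coefficient $\lambda/(u(z')-u(z))$ is a scalar and $du_{z'}$ is an element of the one-dimensional space $\digamma$ (which we multiply through on the tensor side), these two terms cancel, giving $[j(z'),V_\lambda(z)]=0$.

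For the second identity, the hard part is bookkeeping of the $\digamma$-factors and the normalization constants $b_{l,\clambda}=-N\clambda(l+\clambda)$. First I would apply both compositions to $\mathbf{1}\hotimes E[\sigma]$ with $\sigma\in\KKlz\cap\KKl^{z'}$. Using the formula $V_\lambda(z)\{\mathbf{1}\hotimes E[\sigma]\}=\mathbf{1}\hotimes E[(u-u(z))^{-\clambda}\sigma]\sigma(z)^{\clambda N}du_z^{b_{l,\clambda}}$ twice, one obtains
\[
V_{\lambda'}(z')V_\lambda(z)\{\mathbf{1}\hotimes E[\sigma]\}=\mathbf{1}\hotimes E[(u-u(z'))^{-\clambda'}(u-u(z))^{-\clambda}\sigma]\,(u(z')-u(z))^{-\clambda\clambda' N}\sigma(z')^{\clambda' N}\sigma(z)^{\clambda N}\,du_z^{b_{l,\clambda}}du_{z'}^{b_{l+\clambda,\clambda'}},
\]
and the symmetric expression for the other ordering with $(u(z')-u(z))$ replaced by $(u(z)-u(z'))$ and the $du$-exponents becoming $du_{z'}^{b_{l,\clambda'}}du_z^{b_{l+\clambda',\clambda}}$. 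The ratio of the scalar factors is $(-1)^{-\clambda\clambda' N}=(-1)^{\lambda\lambda'}$; identifying $du_z=du_{z'}$ in $\digamma$, the exponent discrepancy is
\[
(b_{l,\clambda}-b_{l+\clambda',\clambda})+(b_{l+\clambda,\clambda'}-b_{l,\clambda'})=N\clambda\clambda'-N\clambda\clambda'=0
\]
by (\ref{ablambda}), so everything reduces to the sign $(-1)^{\lambda\lambda'}$.

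Finally, to extend this identity from the vacuum-type vectors $\mathbf{1}\hotimes E[\sigma]$ to all of $\tbbW_l^{z,z'}$, I would invoke the relation $[V_\lambda(z),\balpha]=\lambda\balpha(z)V_\lambda(z)$ for $\balpha\in\mathfrak{m}_P^{z,z'}$: a direct computation gives
\[
V_{\lambda'}(z')V_\lambda(z)\,\balpha = \balpha\,V_{\lambda'}(z')V_\lambda(z)+\bigl(\lambda\balpha(z)+\lambda'\balpha(z')\bigr)V_{\lambda'}(z')V_\lambda(z),
\]
and the same expression with $(\lambda,z)\leftrightarrow(\lambda',z')$ is obtained for the other ordering. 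Hence both sides intertwine $\balpha$-multiplication in the same way, so the identity on the cyclic generators propagates to the whole space. The main (if modest) obstacle is the careful handling of the $\kappa^{N\clambda}$ and $\digamma$ tensor factors so that the sign ultimately isolated is exactly $(-1)^{N\clambda\clambda'}=(-1)^{\lambda\lambda'}$; this is forced by the normalization (\ref{ablambda}) of $b_{l,\clambda}$.
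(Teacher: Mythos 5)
Your proposal is correct and follows essentially the same route as the paper: the second identity is verified on the vectors $\mathbf{1}\hotimes E[\sigma]$ by comparing the two orderings, using (\ref{ablambda}) to reconcile the $\digamma$-exponents and the identification $du_z=du_{z'}$ to isolate the sign $(-1)^{N\clambda\clambda'}=(-1)^{\lambda\lambda'}$, and then extended by induction via the conjugation relation with $\balpha$. You have simply written out the two steps the paper compresses into ``straightforward'' (the Leibniz expansion of $[j(z'),V_\lambda(z)]$ against the four commutators of the preceding lemma) and ``easy induction'' (the intertwining identity $V_{\lambda'}(z')V_\lambda(z)\balpha=\balpha V_{\lambda'}(z')V_\lambda(z)+(\lambda\balpha(z)+\lambda'\balpha(z'))V_{\lambda'}(z')V_\lambda(z)$, symmetric under the exchange), both of which check out.
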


\begin{proof} The first relation is straightforward.  Consider next
\begin{equation*}
\begin{split}
V_{\lambda'}(z')V_\lambda(z) \mathbf{1} \hotimes E[\sigma] 
&= \mathbf{1} \hotimes E[(u'-u(z'))^{-\clambda'}(u-u(z))^{-\clambda}\sigma]\\
&\ \ \ \sigma(z')^{\clambda' N} \sigma(z)^{\clambda N}  (u(z')-u(z))^{-\clambda' \clambda N} du_{z'}^{b_{l+\clambda,\clambda'}}du_z^{b_{l,\clambda}}\\
\end{split}
\end{equation*}
We use (\ref{ablambda}) to write
$$
du_{z'}^{b_{l+\clambda,\clambda'}}du_z^{b_{l,\clambda}}=
du_{z'}^{-N\clambda'(l+\clambda')}du_z^{-N\clambda(l+\clambda)} du_{z'}^{-N\clambda' \clambda}
$$
which yields 
\begin{equation*}
\begin{split}
V_{\lambda'}(z')V_\lambda(z) \mathbf{1} \hotimes E[\sigma] 
&= \mathbf{1} \hotimes E[(u'-u(z'))^{-\clambda'}(u-u(z))^{-\clambda}\sigma]\\
\times \sigma(z')^{\clambda' N} \sigma(z)^{\clambda N}  & (u(z')-u(z))^{-\clambda' \clambda N} du_{z'}^{-N\clambda'(l+\clambda')}du_z^{-N\clambda(l+\clambda)} du_{z'}^{-N\clambda' \clambda} \\
\end{split}
\end{equation*}
Recalling that we identify $du_z$ and $du_{z'}$ in $\digamma$, we get
$$
V_{\lambda'}(z')V_\lambda(z) \mathbf{1} \hotimes E[\sigma] = (-1)^{-\clambda' \clambda N}
V_{\lambda}(z)V_{\lambda'}(z') \mathbf{1} \hotimes E[\sigma] 
$$
The second equation of the Proposition follows by an easy induction. (Note that $\clambda' \clambda N = \lambda' \lambda$.)
\end{proof}

A detailed exploration of the above construction, including extensions to higher dimensional lattices, will appear in a sequel. For the moment, we briefly consider the case $N=1$. We  have the following fields:
\begin{enumerate}
\item $j(z):\tbbW^z \to \tbbW \otimes \digamma$
\item $V_\lambda(z): \tbbW^z_l \to \tbbW_{l+\lambda} \otimes \kappa^{\lambda} \otimes \digamma^{-\lambda(l+\lambda)}$
\end{enumerate}
which are mutually local with respect to each other. In particular, the fields
$$
V_{\pm 1}(z): \tbbW^z_l \to \tbbW_{l \pm 1} \otimes \kappa^{\pm1} \otimes  \digamma^{-1 \mp l}
$$
obey: given distinct points $z,z'$, we have
\begin{equation*}
\begin{split}
V_{+1}(z')V_{-1}(z) &=-V_{-1}(z)V_{+1}(z')\\
V_{+1}(z')V_{+1}(z) &=V_{+1}(z)V_{+1}(z')\\
V_{-1}(z')V_{-1}(z) &=V_{-1}(z)V_{-1}(z')\\
\end{split}
\end{equation*}
Note the minus sign in the first equation -- \emph{the fields $V_{\pm 1}$ anticommute with each other.}

Given any meromorphic function $\phi$, consider the operator $H_P^\phi=\Phi_P$, defined formally by
\begin{equation}
H^\phi_P \equiv \Phi_P =`` \frac{1}{2\pi i} \int_{\gamma} \phi(z) j(z)''
\end{equation}
where $\gamma$ is sufficiently close to $P$. If $\phi=1$ (constant), we see that
\begin{equation}\label{zeromodes}
\begin{split}
\Phi_P (\mathbf{1} \hotimes E[\sigma])&= \{-\frac{1}{2\pi i}\int_\gamma \sigma(z)^{-1}\nabla \sigma(z)\}
\mathbf{1} \hotimes E[\sigma]\\
&=l \mathbf{1} \hotimes E[\sigma]\\
\end{split}
\end{equation}
if $\sigma \in \cK^*_l(\kappa)$. 

\pagebreak

\part{\textbf{{\Large Fields on an arbitrary smooth curve}}}

The next sections are heavily influenced by the papers of Raina, in particular, \cite{Ra1}. (See also \cite{T1} and \cite{T2}).

Let $\bbX$ be a smooth complex projective curve of genus $g$. Let $K$ denote its canonical bundle, and let $\xi$ be a line bundle on $\bbX$ of degree $g-1$ with no nontrivial sections. Consider, on the product $\bbX \times \bbX$, the line bundle $K \boxtimes \xi^{-1}\otimes \xi(\Delta)$, where $\Delta$ is the diagonal; note that its restriction to $\Delta$ is (by the adjunction formula) canonically trivial. We will use the following lemma, which is key to Raina's approach. (The subscript in $\sSz$ below stands for Szego.)

\begin{lemma}\label{Szegolemma} There is a unique section $\sSz$ of $K\otimes \xi^{-1} \boxtimes  \xi (\Delta)$ that restricts to the constant function 1 on the diagonal. 
\end{lemma}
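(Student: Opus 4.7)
The plan is to compute the relevant cohomology using the standard short exact sequence for restriction to the diagonal. Set $L = \pi_1^*(K\otimes \xi^{-1}) \otimes \pi_2^*(\xi)\otimes \cO(\Delta)$ on $\bbX \times \bbX$. First I would verify that $L|_\Delta$ is canonically trivial: under the identification $\Delta \cong \bbX$ both projections become the identity, and by the adjunction formula $\cO(\Delta)|_\Delta \cong N_{\Delta/\bbX\times \bbX} \cong T_\bbX \cong K^{-1}$, so
\begin{equation*}
L|_\Delta \cong (K\otimes \xi^{-1})\otimes \xi \otimes K^{-1} \cong \cO_\bbX,
\end{equation*}
and this isomorphism is canonical (a distinguished trivialising section corresponds to the constant function $1$).

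Next I would reduce the statement to a cohomological vanishing via the short exact sequence
\begin{equation*}
0 \to L(-\Delta) \to L \to L|_\Delta \to 0.
\end{equation*}
Existence and uniqueness of a section restricting to $1$ on $\Delta$ follows once I show $H^0(\bbX\times\bbX, L(-\Delta)) = 0$ (for uniqueness) and $H^1(\bbX\times\bbX, L(-\Delta)) = 0$ (so that the restriction map is surjective and $1 \in H^0(\Delta, L|_\Delta)$ lifts). The crucial observation is that $L(-\Delta) = (K\otimes \xi^{-1})\boxtimes \xi$ is an external tensor product, so Künneth applies.

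Finally I would carry out the numerics using the hypotheses on $\xi$. Since $\deg \xi = g-1$, Riemann--Roch gives $\chi(\xi) = 0$; combined with the assumption $h^0(\xi) = 0$ this yields $h^1(\xi) = 0$. By Serre duality, $h^1(K\otimes \xi^{-1}) = h^0(\xi) = 0$ and $h^0(K\otimes \xi^{-1}) = h^1(\xi) = 0$. Künneth then gives
\begin{equation*}
H^0\bigl(\bbX\times\bbX, (K\otimes \xi^{-1})\boxtimes \xi\bigr) = H^0(K\otimes \xi^{-1}) \otimes H^0(\xi) = 0,
\end{equation*}
and
\begin{equation*}
H^1\bigl(\bbX\times\bbX, (K\otimes \xi^{-1})\boxtimes \xi\bigr) = H^1(K\otimes \xi^{-1})\otimes H^0(\xi) \,\oplus\, H^0(K\otimes \xi^{-1})\otimes H^1(\xi) = 0,
\end{equation*}
which closes the argument. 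There is no real obstacle here: the only delicate point is keeping the book-keeping of the adjunction isomorphism $\cO(\Delta)|_\Delta \cong K^{-1}$ straight so that the trivialisation of $L|_\Delta$ really is canonical and the section ``$1$'' is unambiguous; everything else is standard cohomology of line bundles on a curve.
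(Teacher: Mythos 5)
Your argument is correct and complete. The paper itself states this lemma without proof, deferring to Raina's work, and your route -- the restriction sequence $0 \to (K\otimes\xi^{-1})\boxtimes\xi \to L \to L|_\Delta \to 0$ combined with K\"unneth, Riemann--Roch ($\chi(\xi)=0$ since $\deg\xi = g-1$) and Serre duality to kill both $H^0$ and $H^1$ of the kernel -- is precisely the standard argument establishing existence and uniqueness of the Szeg\H{o}-type kernel from the single hypothesis $h^0(\xi)=0$. You are also right to flag the canonicity of the adjunction isomorphism $\cO(\Delta)|_\Delta \cong K^{-1}$ (via $\cI_\Delta/\cI_\Delta^2 \cong \Omega^1_\bbX$) as the one point requiring care, since without it the normalisation ``$=1$ on the diagonal'' would be ill-defined; nothing further is needed.
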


\section{\textbf{Arbitrary genus: the neutral fermion}}\label{section: Arbitrary genus: the neutral fermion}

This is perhaps the simplest example of a CFT;  it is canonically associated to a smooth projective curve together with a noneffective theta characteristic.

In this section we will suppose that $\xi$ is a square-root of the canonical bundle, and denote it by $\kappa$. Fix an isomorphism $\kappa^2 \to K$. In classical terminology, $\kappa$ is a theta-characteristic; we will assume that it has no nontrivial sections, i.e., that it is a \emph{non-effective} one. (Such a theta-characteristic always exists.  Note that with this choice, $\sSz$ is a section of $\kappa \boxtimes  \kappa (\Delta)$.  Note also that the \emph{square} of the section $\sSz$,
\begin{equation*}
\rho_B \equiv \sSz^2
\end{equation*}
defines a section of $K\boxtimes K(2\Delta)$ which restricts to $1$ on the diagonal.

Consider the exchange map $\bbX \times \bbX \to \bbX \times \bbX,\  (z,w) \mapsto (w,z)$, and lift this to sections of $\kappa \boxtimes  \kappa (\Delta)$ as follows. For any nonconstant meromorphic function $u$ on $\bbX$, let $U$ be the affine open set where $u$ is regular and $du$ nonzero; let $\sqrt{du}$ be one of the two sections of $\kappa$ such that $\sqrt{du} \otimes \sqrt{du} = du$. Let $f \sqrt{du} \boxtimes \sqrt{du}$ be a section of $\kappa \boxtimes  \kappa (\Delta)$ on $U \times U$; in other words,  $f$ is a function on $U \times U$ regular except possibly for a simple pole along the diagonal $\Delta_U$. The lift is defined by demanding $f(z,w) \mapsto f(w,z)$ for all $u$ and $f$.

\begin{lemma} The section $\sSz$ is odd under the exchange map.
\end{lemma}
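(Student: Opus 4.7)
The plan is to use the uniqueness characterization of $\sSz$ from Lemma \ref{Szegolemma}. Applying $\tau^{*}$ (the lift of the exchange map defined in the paragraph preceding the lemma) to $\sSz$ produces another global section of $\kappa \boxtimes \kappa(\Delta)$, because both $\kappa \boxtimes \kappa$ and the divisor $\Delta$ are invariant under swapping factors. Hence it suffices to compute the restriction of $\tau^{*}\sSz$ to $\Delta$ and invoke the uniqueness statement in Lemma \ref{Szegolemma} to conclude $\tau^{*}\sSz = -\sSz$.

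First I would fix an affine open $U \subset \bbX$ with coordinate $u$, let $\sqrt{du}$ be a chosen square root, and write
\begin{equation*}
\sSz\big|_{U\times U} \;=\; \frac{f(u_{1},u_{2})}{u_{1}-u_{2}}\,\sqrt{du_{1}} \boxtimes \sqrt{du_{2}}
\end{equation*}
with $f$ holomorphic on $U \times U$. The canonical trivialization $(\kappa \boxtimes \kappa(\Delta))\big|_{\Delta} \simeq \kappa^{2}\big|_{\Delta} \otimes N_{\Delta} = K \otimes K^{-1} = \cO_{\Delta}$ given by Poincar\'e residue along $\Delta = \{u_{1}=u_{2}\}$ identifies the restriction of such a local expression with $f(u,u)$, since $\sqrt{du_{1}\,du_{2}}\big|_{\Delta}$ pairs with $\partial/\partial(u_{1}-u_{2})$ to give $1$. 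The condition that $\sSz$ restricts to the constant function $1$ on the diagonal is therefore $f(u,u)=1$ for all $u \in U$.

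Now apply $\tau^{*}$. By the definition given just before the lemma, the lift acts on a local section $f(u_{1},u_{2})\,\sqrt{du_{1}}\boxtimes\sqrt{du_{2}}$ simply by $(u_{1},u_{2})\mapsto(u_{2},u_{1})$ in the scalar factor, leaving $\sqrt{du_{1}}\boxtimes\sqrt{du_{2}}$ alone. Thus
\begin{equation*}
\tau^{*}\sSz\big|_{U\times U} \;=\; \frac{f(u_{2},u_{1})}{u_{2}-u_{1}}\,\sqrt{du_{1}} \boxtimes \sqrt{du_{2}} \;=\; -\,\frac{f(u_{2},u_{1})}{u_{1}-u_{2}}\,\sqrt{du_{1}} \boxtimes \sqrt{du_{2}}.
\end{equation*}
Since $f(u_{2},u_{1})\big|_{u_{1}=u_{2}=u}=f(u,u)=1$, the restriction of $\tau^{*}\sSz$ to $\Delta$ equals the constant function $-1$. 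Consequently $-\tau^{*}\sSz$ restricts to $+1$ on $\Delta$, and by the uniqueness in Lemma \ref{Szegolemma} we conclude $-\tau^{*}\sSz = \sSz$, that is, $\tau^{*}\sSz = -\sSz$.

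The main subtlety, rather than any computational obstacle, is the careful bookkeeping of the trivialization $(\kappa \boxtimes \kappa(\Delta))\big|_{\Delta} \simeq \cO_{\Delta}$ and the verification that the sign genuinely arises from $u_{1}-u_{2}\mapsto -(u_{1}-u_{2})$, i.e., from the fact that the exchange map reverses the co-normal direction to $\Delta$, while leaving the symmetric expression $\sqrt{du_{1}}\boxtimes\sqrt{du_{2}}$ invariant. Once the normalization is pinned down coordinate-locally, the global conclusion is immediate from uniqueness.
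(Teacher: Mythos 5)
Your proof is correct, and it is the intended argument: the paper states this lemma without proof, and the natural route is exactly the one you take — observe that the exchange pullback of $\sSz$ is again a global section of $\kappa \boxtimes \kappa(\Delta)$, compute that its residue along $\Delta$ is $-1$ because the coefficient frame $\sqrt{du_1}\boxtimes\sqrt{du_2}$ is fixed by the lift while $u_1-u_2 \mapsto -(u_1-u_2)$, and conclude by the uniqueness in Lemma \ref{Szegolemma}. Your coordinate bookkeeping of the adjunction trivialization is the only delicate point and you have handled it correctly.
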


Given $z\in \bbX$, set 
$$
\psi^e_{z} =\sSz|_{\bbX \times z} \in H^0(\bbX,\kappa(z)) \otimes \kappa_z
$$
For any line bundle $\eta$ on $\bbX$, recall that $\cK(\eta)$ denotes the space of meromorphic sections; and given $z \in \bbX$ the subspace of sections regular at $z$ is $\cK^z(\eta)$. Set
\begin{equation*}
\begin{split}
\bF &= \bigwedge^* \cK(\kappa) \\
\bF^{z} &= \bigwedge^* \cK^z(\kappa) \\
\end{split}
\end{equation*}
Given $z\in \bbX$, define

\noindent (1) $\psi_e(z):\bF \to \bF \otimes \kappa_z$ by 
\begin{equation*}
\psi_e(z) (\eta_1 \wedge \dots \wedge \eta_p) =\psi^e_z \wedge \eta_1 \wedge \dots \wedge \eta_p
\end{equation*}

\noindent (2) $\psi_i(z):\bF^{z} \to \bF^{z} \otimes \kappa_z$ by 
\begin{equation*}
\psi_i(z) (\eta_1 \wedge \dots \wedge \eta_p) =\sum_j (-1)^{j-1}   \eta_1 \wedge \dots \widehat{\eta_j} \dots \wedge \eta_p \otimes  \eta_j(z)
\end{equation*}

These will be \emph{fermionic} fields; in other words, when necessary we consider anti-commutators (for which we will use the notation $[-,-]_+$) rather than commutators. (In tensor products, the lines $K_z, \ \xi_z$ etc.,  $(z \in \bbX)$  will be taken to ``commute without sign'' with all factors.)
Define the "chiral neutral fermion" field $\psi$ thus:
\begin{equation*}
\psi(z)=\psi_i(z)+\psi_e(z):\bF^{z} \to \bF \otimes \kappa_z
\end{equation*}

Let $z_1$ and $z_2$ be two distinct points and consider the anti-commutator:
\begin{equation*}
\psi(z_1) \circ \psi(z_2) + \psi(z_2) \circ \psi(z_1):\bF^{z_1,z_2} \to \bF \otimes \kappa_{z_1} \otimes \kappa_{z_2}
\end{equation*}
where $\bF^{z_1,z_2} = \bF^{z_1} \cap \bW^{z_2}$. This is equal to
\begin{equation*}
\begin{split}
\psi_i(z_1) \circ \psi_e(z_2) + \psi_e(z_1) \circ \psi_i(z_2)&+\psi_i(z_2) \circ \psi_e(z_1) + \psi_e(z_2) \circ \psi_i(z_1) \\
&=\sigma_{Sz} (z_1,z_2)+\sigma_{Sz} (z_2,z_1)\\
&=0
\end{split}
\end{equation*}
In addition, we easily check that 
\begin{equation*}
\lim_{z_2 \to z_1} \psi(z_2) \circ \psi(z_1) -\sigma_{Sz} (z_2,z_1) =0
\end{equation*}
and this proves that $\psi$ is local with respect to itself.

Let $z_1,\dots, z_n$ be distinct points on $\bbX$; we define the $n$-point function $\langle \psi(z_1)\psi(z_2)\dots \psi(z_n)\rangle$ by the condition
\begin{equation*}
\psi(z_1)\psi(z_2)\dots \psi(z_n) \mathbf{1} = \langle \psi(z_1)\psi(z_2)\dots \psi(z_n)\rangle + \text{terms in} \ \oplus_{m>0} \bigwedge^m \cK(\kappa) 
\end{equation*}
We imitate the computation in the case of the boson:
\begin{equation*}
\begin{split}
\langle \psi(z_1)\psi(z_2)\dots \psi(z_n)\rangle &=\langle (\psi_e(z_1)+\psi_i(z_1))\psi(z_2) \dots \psi(z_n) \rangle \\
&=\langle \psi_i(z_1)\psi(z_2)\dots \psi(z_n) \rangle \\
&=\langle \sum_{l=2}^n (-1)^{l-1}  \psi(z_2)\dots [\psi_i(z_1),\psi(z_l)]_+ \dots \psi(z_n) \rangle \\
&= \sum_{l=2}^n (-1)^{l-1} \sigma_{Sz} (z_1,z_l) \langle \psi(z_2)\dots \widehat{\psi(z_l)} \dots \psi(z_n) \rangle \\
\end{split}
\end{equation*}
where the hat marks a term to be omitted.  By induction, we get the expression given by (fermionic) Wick's theorem:
\begin{equation*}
\langle \psi(z_1)\psi(z_2)\dots \psi(z_n)\rangle=
\begin{cases}
\begin{aligned}
& \ \ \ 0 \ \ \ \ \ \text{if $n$ is odd, and}\\
&\{\sum_{\substack{\rho}}  \ \ (-1)^{sign(\rho)} \prod_{\substack{j=1,\dots,m}}  \sSz (z_{\rho(2j-1)},z_{\rho(2j)})\}\\
& \ \ \ \ \ \ \ \ \text{if $n=2m$ is even}\\
\end{aligned}
\end{cases}
\end{equation*}
where $\rho$ runs over permutations of$\{1,\dots,n\}$.

\subsection{Mode expansions in genus zero}

Specialising to genus zero, let $\bbX=\bbP$ and choose a square root $\kappa$ of $K$; this is unique up to an isomorphism, which is in turn also unique up to sign.  Choose a coordinate $u$ on $\bbP$ (with its zero at $O$ and pole at $P$ following the notation agreed in \S \ref{Notation}). Choose a square root  $\sqrt{du}$. This will be a section of $\kappa$ with a simple pole at $P$.

Let $H^0(\bbP \setminus O, \kappa)$ denote the space of (algebraic) sections of $\kappa$ on $\bbP \setminus O$ and let $\lO \bF$ denote the exterior algebra over $H^0(\bbP \setminus O,K)$:
\begin{equation*}
\lO \bF=\bbC \oplus H^0(\bbP \setminus O, \kappa) \oplus \bigwedge^2 H^0(\bbP \setminus O, \kappa) \oplus ......
\end{equation*}

For $l \in \bbZ+1/2$, define operators $\psi_{l}:  \bF \to  \bF$ as follows. If $l>0$, let
\begin{equation*}
\psi_{-l}[\eta] = u^{-l-1/2}\sqrt{du} \wedge \eta 
\end{equation*}
(Note that $\psi_{-l}$ leaves the flag $\lO \bF  \subset \bF$ invariant.) For $l < 0$, define $\psi_{l}: \cK(\kappa) \to \bbC$ by setting
\begin{equation*}
\psi_{l} [u^{-m-1/2}\sqrt{du}] =  \delta_{l,m}, \ m > 0,\ m \in \bbZ+1/2
\end{equation*}
and extending by linearity to  $\cK(\kappa)$.  Extend to $\psi_{l}:\bF \to \bF$ by setting $\psi_l ({\mathbf 1})=0$ and
\begin{equation*}
\psi_{l} (\eta_1 \wedge \dots \wedge \eta_p) =  \sum_i (-1)^{i-1}({\eta}_1 \wedge \dots \widehat{{{\eta}_i}} \dots \otimes^s {\eta}_p) \psi_{l}(\eta_i)
\end{equation*}
The operators $\psi_{l}, \  l \in \bbZ+1/2$ are all defined on $\bF$ and satisfy the commutation relations $[\psi_{l},\psi_{m}]_+=\delta_{l+m,0}$. 

Let $z \in \bbP \setminus \{O,P\}$ and set $u(z)=w \in \bbC$. The series $\sum_{l > 0} \psi_{-l} w^{l-1/2}[\mathbf{1}]$ converges uniformly on the subsets $\{u||u|\ge |w|+\delta\}$ (for $\delta >0$) to $\frac{du}{(u-w)} \in \cK(\kappa)$. Formally,
\begin{equation*}
\sum_{l > 0} \psi_{-l} w^{l-1/2}  = \hat{\psi}^e(z)
\end{equation*}
On the other hand,  $\sum_{l >0} \psi_{l} w^{-l-1/2}[u^{-m-1/2}du]=w^{-m-1/2}$, so that if $\eta = f \sqrt{du}\in  H^0(\bbP \setminus O,\kappa)$,
\begin{equation*}
\sum_{l >0 } \psi_{l} w^{-l-1/2} [\eta] = f(z)
\end{equation*}
(This is actually a finite sum for any given $\eta$.)
Thus
\begin{equation}
\sum_{l > 0} \psi_l w^{-l-1/2}= \hpsi(z)|_{{}_{\lO \bF\subset \bF^z}} : \lO \bF \to \lO \bF
\end{equation}
We have used the notation $\psi^i(z)=\hpsi^i(z) \sqrt{du}_z, \  \psi^e(z)=\hpsi^e(z) \sqrt{du}_z$.

We summarise the definition of the operators $\psi_{l}$:
\begin{equation*}
\hpsi_{l}=
\begin{cases}
\begin{aligned}
& u^{l-1/2}\sqrt{du} \wedge \ \ \ \ \ \ \ \ \ \ \ \ \  l < 0\\
& u^{-m-1/2} \sqrt{du} \mapsto  \delta_{l+m} \ \ l >0,\ \text{extended as an antiderivation}\\
\end{aligned}
\end{cases}
\end{equation*}

\section{\textbf{Arbitrary genus: the $b-c$ system}}\label{sectionbcsystem}

Consider now a general $\xi$ of degree $g-1$ with no nontrivial sections. (Such a $\xi$ always exists.) Given $z\in \bbX$, set
\begin{itemize}
\item $b^e_{z} \in H^0(\bbX,K \otimes \xi^{-1}(z)) \otimes \xi_z=\sSz|_{\bbX \times z}$, and
\item $c^e_z \in H^0(\bbX,\xi(z)) \otimes K_z \otimes \xi^{-1}_z=\sSz|_{z \times \bbX}$
\end{itemize}
Set
\begin{equation*}
\begin{split}
\bW_{bc} &=\bigwedge^* \cK(K \otimes \xi^{-1}) \otimes \bigwedge^* \cK(\xi) \\
\bW^{z|}_{bc} &= \bigwedge^* \cK^z(K \otimes \xi^{-1}) \otimes \bigwedge^* \cK(\xi) \\
\bW^{|z}_{bc} &= \bigwedge^* \cK(K \otimes \xi^{-1}) \otimes \bigwedge^* \cK^z(\xi) \\
\end{split}
\end{equation*}
Given $z\in \bbX$, define

\noindent (1) $b_e(z):\bW_{bc} \to \bW_{bc} \otimes \xi_z$ by 
\begin{equation*}
b_e(z) (\beta_1 \wedge \dots \wedge \beta_p \otimes \gamma_1 \wedge \dots \wedge \gamma_q) =b^e_z \wedge \beta_1 \wedge \dots \wedge \beta_p \otimes \gamma_1 \wedge \dots \wedge \gamma_q
\end{equation*}

\noindent (2)  $c_e(z):\bW_{bc} \to \bW_{bc} \otimes K_z \otimes \xi^{-1}_z$ by 
\begin{equation*}
c_e(z) (\beta_1 \wedge \dots \wedge \beta_p \otimes \gamma_1 \wedge \dots \wedge \gamma_q) = (-1)^p \beta_1 \wedge \dots \wedge \beta_p  \otimes c^e_z \wedge \gamma_1 \wedge \dots \wedge \gamma_q
\end{equation*}

\noindent (3) $b_i(z):\bW^{z|}_{bc} \to \bW^{z|}_{bc} \otimes \xi_z$ by 
\begin{equation*}
b_i(z) (\beta_1 \wedge \dots \wedge \beta_p \otimes \gamma_1 \wedge \dots \wedge \gamma_q) = \sum_j (-1)^{p+j-1} \beta_1 \wedge \dots \wedge \beta_p \otimes \gamma_1 \wedge \dots \widehat{\gamma_j} \dots \wedge \gamma_q \otimes  \gamma_j(z)
\end{equation*}

\noindent (4) $c_i(z):\bW^{|z}_{bc} \to \bW^{|z}_{bc} \otimes K_z \otimes \xi^{-1}_z$ by 
\begin{equation*}
c_i(z) (\beta_1 \wedge \dots \wedge \beta_p \otimes \gamma_1 \wedge \dots \wedge \gamma_q) = -\sum_j (-1)^{j-1} \beta_1 \wedge \dots  \widehat{\beta_j} \dots \wedge \beta_p \otimes \gamma_1 \wedge \dots \wedge \gamma_q \otimes \beta_j(z) 
\end{equation*}
 
Let $z_1,z_2$ be distinct points. Note the anticommutation rules:
\begin{enumerate}
\item $[b_e(z_1),b_e(z_2)]_+=0$, $[b_i(z_1),b_i(z_2)]_+=0$.
\item $[c_e(z_1),c_e(z_2)]_+=0$, $[c_i(z_1),c_i(z_2)]_+=0$.
\item $[b_i(z_1),c_e(z_2)]_+=c^e_{z_2} (z_1)=\sSz(z_2,z_1) \in 
K_{z_2} \otimes \xi^{-1}_{z_2} \otimes \xi_{z_1}$.
\item $[c_i(z_1),b_e(z_2)]_+=-b^e_{z_2} (z_1)=-\sSz(z_1,z_2) \in 
K_{z_1} \otimes \xi^{-1}_{z_1} \otimes \xi_{z_2}$
\end{enumerate}

Define fields $b,\ c$ thus:
\begin{equation*}
\begin{split}
b(z)&=b_i(z)+b_e(z):\bW^{z|}_{bc} \to \bW_{bc} \otimes \xi_z\\
c(z)&=c_i(z)+c_e(z):\bW^{|z}_{bc} \to \bW_{bc} \otimes K_z \otimes \xi^{-1}_z\\
\end{split}
\end{equation*}

One checks easily that each of the fields $b$ and $c$ is  local with respect to itself; we will now verify that these two are mutually local.  Let $z_1$ and $z_2$ be two distinct points and consider the anti-commutator:
\begin{equation*}
b(z_1) \circ c(z_2) + c(z_2) \circ b(z_1):\bW^{z_1|z_2}_{bc} \to \bW_{bc} \otimes \xi_{z_1} \otimes K_{z_2}  \otimes \xi^{-1}_{z_2}
\end{equation*}
where $\bW^{z_1|z_2}_{bc} = \bW^{z_1|}_{bc} \cap \bW^{|z_2}_{bc}$. This is equal to
\begin{equation*}
\begin{split}
b_i(z_1) \circ c_e(z_2) + b_e(z_1) \circ c_i(z_2)&+c_i(z_2) \circ b_e(z_1)+c_e(z_2) \circ b_i(z_1)\\
&=\sigma_{Sz} (z_2,z_1)-\sigma_{Sz} (z_2,z_1)\\
&=0
\end{split}
\end{equation*}
Consider now the limit:
\begin{equation*}
\begin{split}
\lim_{z_2 \to z_1} b(z_2) \circ c(z_1) -\sigma_{Sz} (z_1,z_2)
&=\lim_{z_2 \to z_1} \{b_i(z_2) \circ c_i(z_1) + b_e(z_2) \circ c_e(z_1)\\
&+b_i(z_2) \circ c_e(z_1)+ b_e(z_2) \circ c_i(z_1)-\sigma_{Sz} (z_1,z_2)\}\\
&=\lim_{z_2 \to z_1} \{b_i(z_2) \circ c_i(z_1) + b_e(z_2) \circ c_e(z_1)\\
&+b_e(z_2)c_i(z_1)-c_e(z_1)b_i(z_2)\}\\
&=b_i(z_1)c_i(z_1)+b_e(z_1)c_e(z_1)+b_e(z_1)c_i(z_1)-c_e(z_1)b_i(z_1)
\end{split}
\end{equation*}
Define the field $\sfb$ by
\begin{equation*}
\sfb(z)=b_i(z) \circ c_i(z) + b_e(z) \circ c_e(z)+b_e(z) \circ c_i(z)-c_e(z) \circ b_i(z)
\end{equation*}

Set
\begin{equation*}
\begin{split}
\bW &=  \bigwedge^* \cK(K \otimes \xi^{-1}) \otimes \bigwedge^* \cK(\xi) \\
\bW^{z} &=   \bigwedge^* \cK^z(K \otimes \xi^{-1}) \otimes \bigwedge^* \cK^z(\xi) \\
\end{split}
\end{equation*}

Clearly the field $\sfb$ acts from $\bW^z$ to $\bW \otimes K_z$. We check now that it is a bosonic field that is local with respect to itself. Let $z_1$ and $z_2$ be two distinct points and consider the commutator: $[\sfb(z_1),\sfb(z_2)]$.  This is equal to
\begin{equation*}
\begin{split}
&[b_i(z_1)c_i(z_1),b_e(z_2)c_e(z_2)]+[b_e(z_1)c_e(z_1),b_i(z_2)c_i(z_2)]\\
+&[b_i(z_1)c_i(z_1),b_e(z_2)c_i(z_2)]-[b_i(z_1)c_i(z_1),c_e(z_2)b_i(z_2)]\\
+&[b_e(z_1)c_e(z_1),b_e(z_2)c_i(z_2)]-[b_e(z_1)c_e(z_1),c_e(z_2)b_i(z_2)]\\
-&[b_i(z_2)c_i(z_2),b_e(z_1)c_i(z_1)]+[b_i(z_2)c_i(z_2),c_e(z_1)b_i(z_1)]\\
-&[b_e(z_2)c_e(z_2),b_e(z_1)c_i(z_1)]+[b_e(z_2)c_e(z_2),c_e(z_1)b_i(z_1)]\\
+&[b_e(z_1)c_i(z_1),b_e(z_2)c_i(z_2)]\\
+&[c_e(z_1)b_i(z_1),c_e(z_2)b_i(z_2)]\\
&=[b_i(z_1)c_i(z_1),b_e(z_2)]c_e(z_2)
+b_e(z_2)[b_i(z_1)c_i(z_1),c_e(z_2)]\\
&+[b_e(z_1)c_e(z_1),b_i(z_2)]c_i(z_2)
+b_i(z_2)[b_e(z_1)c_e(z_1),c_i(z_2)]\\
&+[b_i(z_1)c_i(z_1),b_e(z_2)]c_i(z_2)
-[b_i(z_1)c_i(z_1),c_e(z_2)]b_i(z_2)\\
&+b_e(z_2)[b_e(z_1)c_e(z_1),c_i(z_2)]
-c_e(z_2)[b_e(z_1)c_e(z_1),b_i(z_2)]\\
&-[b_i(z_2)c_i(z_2),b_e(z_1)]c_i(z_1)
+[b_i(z_2)c_i(z_2),c_e(z_1)]b_i(z_1)\\
&-b_e(z_1)[b_e(z_2)c_e(z_2),c_i(z_1)]
+c_e(z_1)[b_e(z_2)c_e(z_2),b_i(z_1)]\\
&+[b_e(z_1)c_i(z_1),b_e(z_2)]c_i(z_2)\\
&+b_e(z_2)[b_e(z_1)c_i(z_1),c_i(z_2)]\\
&+[c_e(z_1)b_i(z_1),c_e(z_2)]b_i(z_2)\\
&+c_e(z_2)[c_e(z_1)b_i(z_1),b_i(z_2)]\\
\end{split}
\end{equation*}
We use the following fact: if the operator $A$ anticommutes with each of $B,\ C$, then
\begin{itemize}
\item $[AB,C]=A[B,C]_+$, and
\item $[BA,C]=-[B,C]_+A$
\end{itemize}
which yields
\begin{equation*}
\begin{split}
[\sfb(z_1),\sfb(z_2)]
&=b_i(z_1)[c_i(z_1),b_e(z_2)]_+c_e(z_2)
-b_e(z_2)[b_i(z_1),c_e(z_2)]_+c_i(z_1)\\
&+b_e(z_1)[c_e(z_1),b_i(z_2)]_+c_i(z_2)
-b_i(z_2)[b_e(z_1),c_i(z_2)]_+c_e(z_1)\\
&+b_i(z_1)[c_i(z_1),b_e(z_2)]_+c_i(z_2)
+[b_i(z_1),c_e(z_2)]_+c_i(z_1)b_i(z_2)\\
&-b_e(z_2)[b_e(z_1),c_i(z_2)]_+c_e(z_1)
-c_e(z_2)b_e(z_1)[c_e(z_1),b_i(z_2)]_+\\
&-b_i(z_2)[c_i(z_2),b_e(z_1)]_+c_i(z_1)
-[b_i(z_2),c_e(z_1)]_+c_i(z_2)b_i(z_1)\\
&+b_e(z_1)[b_e(z_2),c_i(z_1)]_+c_e(z_2)
+c_e(z_1)b_e(z_2)[c_e(z_2),b_i(z_1)]_+\\
&+b_e(z_1)[c_i(z_1),b_e(z_2)]_+c_i(z_2)\\
&-b_e(z_2)[b_e(z_1),c_i(z_2)]_+c_i(z_1)\\
&+c_e(z_1)[b_i(z_1),c_e(z_2)]_+b_i(z_2)\\
&-c_e(z_2)[c_e(z_1),b_i(z_2)]_+b_i(z_1)\\
&=-\sSz(z_1,z_2) b_i(z_1)c_e(z_2)
-\sSz(z_2,z_1) b_e(z_2)c_i(z_1)\\
&+\sSz(z_1,z_2) b_e(z_1)c_i(z_2)
+\sSz(z_2,z_1) b_i(z_2)c_e(z_1)\\
&-\sSz(z_1,z_2) b_i(z_1)c_i(z_2)
+\sSz(z_2,z_1)  c_i(z_1)b_i(z_2)\\
&+\sSz(z_2,z_1) b_e(z_2)c_e(z_1)
-\sSz(z_1,z_2) c_e(z_2)b_e(z_1) \\
&+\sSz(z_2,z_1)b_i(z_2)c_i(z_1)
-\sSz(z_1,z_2)  c_i(z_2)b_i(z_1)\\
&-\sSz(z_1,z_2) b_e(z_1)c_e(z_2)
+\sSz(z_2,z_1) c_e(z_1)b_e(z_2)\\
&-\sSz(z_1,z_2) b_e(z_1)c_i(z_2)\\
&+\sSz(z_2,z_1) b_e(z_2)c_i(z_1)\\
&+\sSz(z_2,z_1) c_e(z_1)b_i(z_2)\\
&-\sSz(z_1,z_2) c_e(z_2)b_i(z_1)\\
&=0
\end{split}
\end{equation*}

Let us compute the two-point function of the field $\sfb$. For $z_1, \ne z_2$, consider
\begin{equation*}
\begin{split}
\sfb(z_1)\sfb(z_2)\mathbf{1} \otimes \mathbf{1}=&[b_i(z_1) \circ c_i(z_1) + b_e(z_1) \circ c_e(z_1)+b_e(z_1) \circ c_i(z_1)-c_e(z_1) \circ b_i(z_1)]\\ &[b_i(z_2) \circ c_i(z_2) + b_e(z_2) \circ c_e(z_2)+b_e(z_2) \circ c_i(z_2)-c_e(z_2) \circ b_i(z_2)] \mathbf{1} \otimes \mathbf{1}\\
=&[b_i(z_1) \circ c_i(z_1)]\circ [b_e(z_2) \circ c_e(z_2)] \mathbf{1} \otimes \mathbf{1} + \text{higher order terms}\\ 
=& c_i(z_1) \circ b_e(z_2) \circ b_i (z_1) \circ c_e(z_2) \mathbf{1} \otimes \mathbf{1} + \text{higher order terms}\\ 
=& -\sSz(z_1,z_2) \otimes \sSz(z_2,z_1) \mathbf{1} \otimes \mathbf{1} + \text{higher order terms}\\ 
\end{split}
\end{equation*}
which yields 
\begin{equation*}
\begin{split}
<\sfb(z_1)\sfb(z_2)>&=-\sSz(z_1,z_2) \otimes \sSz(z_2,z_1)\\  
&\in  K_{z_1} \otimes \xi^{-1}_{z_1} \otimes \xi_{z_2} \otimes K_{z_2} \otimes \xi^{-1}_{z_2} \otimes \xi_{z_1} \sim K_{z_1} \otimes  K_{z_2}
\end{split}
\end{equation*}

\section{\textbf{Arbitrary genus: the boson}}\label{section: Arbitrary genus: the boson}

We saw that the composite field $\sfb$ of the $b-c$ system is $K$-valued and has a two-point function with a double pole and residue 1.  Such a kernel can be used as the starting point for the definition of a boson on an arbitrary smooth projective curve $\bbX$.

Consider, on $\bbX \times \bbX$, the line bundle $K \boxtimes K (2\Delta)$. Its restriction to the diagonal $\Delta$ is canonically trivial. By a Bergman(-type) kernel, we will understand a section of $K \boxtimes K (2\Delta)$ which
\begin{itemize}
\item is symmetric with respect to the exchange of the two factors in the product $\bbX \times \bbX$, and 
\item has as residue on the diagonal the constant function 1.
\end{itemize}

\begin{proposition} The space of Bergman-type kernels is a affine space over $S^2H^0(\bbX,K)$
\end{proposition}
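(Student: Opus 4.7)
The plan is to split the statement into (i) the claim that the set of Bergman-type kernels is a torsor over $S^2 H^0(\bbX,K)$, and (ii) the existence of at least one such kernel. The first part is a linear-algebra statement about symmetric sections of $K \boxtimes K(2\Delta)$; the second I would extract from Lemma \ref{Szegolemma}.

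First I would observe that by adjunction applied to $\Delta \subset \bbX \times \bbX$ one has $N_\Delta = K^{-1}$, hence $(K \boxtimes K)(2\Delta)|_\Delta$ is canonically trivial; the residue condition in the definition of a Bergman-type kernel is by convention the value of this restriction. Given two such kernels $\rho_1, \rho_2$, the difference $\rho := \rho_1 - \rho_2$ is a symmetric section of $(K \boxtimes K)(2\Delta)$ whose restriction to $\Delta$ vanishes, so $\rho$ lies in $(K \boxtimes K)(2\Delta)(-\Delta) = (K \boxtimes K)(\Delta)$.

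Next I would check that a symmetric section of $(K \boxtimes K)(\Delta)$ is automatically regular. Locally near a diagonal point, write $\rho = \frac{a(u_1,u_2)}{u_1 - u_2}\, du_1 \boxtimes du_2$; since the swap $\tau$ acts without sign on $du_1 \boxtimes du_2$, the equation $\tau^*\rho = \rho$ forces $a(u_2,u_1) = -a(u_1,u_2)$, so $a$ vanishes on the diagonal and $(u_1-u_2) \mid a$. Hence $\rho \in H^0(\bbX\times\bbX, K \boxtimes K)^{S_2}$, which by Künneth equals $S^2 H^0(\bbX,K)$. Conversely, adding any element of $S^2 H^0(\bbX,K)$ to a Bergman-type kernel preserves symmetry and does not change the restriction to $\Delta$; so the action is free and transitive.

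For existence I would invoke Lemma \ref{Szegolemma}: fix a line bundle $\xi$ of degree $g-1$ with no nontrivial sections (such $\xi$ exists), obtain the section $\sSz$ of $K \otimes \xi^{-1} \boxtimes \xi(\Delta)$ with $\sSz|_\Delta = 1$, and set
\begin{equation*}
\rho_B(z_1,z_2) = \sSz(z_1,z_2) \otimes \sSz(z_2,z_1),
\end{equation*}
a section of $K \boxtimes K(2\Delta)$ after the canonical cancellation of $\xi_{z_i}$ against $\xi^{-1}_{z_i}$ in each factor. Symmetry under the swap $z_1 \leftrightarrow z_2$ is immediate from the defining formula. The main technical step, which I expect to be the only delicate point, is verifying that under the adjunction trivialisation $(K \boxtimes K)(2\Delta)|_\Delta \cong \cO_\Delta$ the value of $\rho_B|_\Delta$ is $1$: this amounts to checking that this trivialisation is the tensor product of the two individual adjunction trivialisations of $(K \otimes \xi^{-1})\boxtimes \xi(\Delta)|_\Delta$ and $\xi \boxtimes (K\otimes\xi^{-1})(\Delta)|_\Delta$, after which $\rho_B|_\Delta = 1 \cdot 1 = 1$ by Lemma \ref{Szegolemma}. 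Everything else reduces to the elementary antisymmetry-forces-vanishing argument above.
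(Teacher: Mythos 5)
The paper states this proposition without proof, so there is no argument of the author's to compare against; yours is surely the intended one. The torsor half is correct and complete: the difference of two kernels is a symmetric section of $(K\boxtimes K)(2\Delta)$ whose restriction to $\Delta$ vanishes, hence lies in $(K\boxtimes K)(\Delta)$, and your local computation (symmetry with the \emph{signless} exchange on $du_1\boxtimes du_2$ forces $a(u_2,u_1)=-a(u_1,u_2)$, so $a$ vanishes on the diagonal) correctly kills the residual simple pole, landing the difference in $H^0(\bbX\times\bbX,K\boxtimes K)^{S_2}=S^2H^0(\bbX,K)$; the converse inclusion is immediate, so the action is free and transitive.

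The one genuine slip is in the existence step, at exactly the point you flag as delicate: $\sSz(z_1,z_2)\otimes\sSz(z_2,z_1)$ restricts to $-1$ on the diagonal, not $+1$. Locally $\sSz=\frac{f(u_1,u_2)}{u_1-u_2}(\cdots)$ with $f(u,u)=1$, and the swapped factor contributes $\frac{f(u_2,u_1)}{u_2-u_1}$, so the product is $-\frac{f(u_1,u_2)f(u_2,u_1)}{(u_1-u_2)^2}\,du_1\boxtimes du_2$; equivalently, the two individual adjunction trivialisations identify $N^*_\Delta$ with $K|_\Delta$ via opposite projections and therefore differ by a sign, so the product trivialisation is \emph{not} simply ``$1\cdot 1$''. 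The fix is trivial: take $-\sSz(z_1,z_2)\otimes\sSz(z_2,z_1)$, which is precisely the two-point function $\langle\sfb(z_1)\sfb(z_2)\rangle$ computed in \S\ref{sectionbcsystem}, or take $\rho_B=\sSz^2$ for a non-effective theta characteristic as in \S\ref{section: Arbitrary genus: the neutral fermion}, where the odd parity of $\sSz$ under the exchange absorbs the sign. With that correction the proof is complete.
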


Fix a Bergman kernel $\bomega \in H^0(\bbX \times \bbX, K \boxtimes K (2\Delta))$.  We define
$$
\bW_\bbX= S^* \bK_\bbX
$$
where $\bK_\bbX$ is the space of meromorphic forms of the second kind on $\bbX$. Given $z \in \bbX$, we define
$$
\bW^z_\bbX= S^* \bK^z_\bbX
$$
where $\bK^z_\bbX$ is the space of meromorphic forms of the second kind which are regular at $z$. Define $\bomega_z \in H^0(\bbX,K(2z)) \otimes K_z$ by
$$
\bomega_z = \bomega|_{z \times \bbX}
$$
Next, define fields $e$ and $i$ as on $\bbP$, using $\bomega_z$  in place of $e_z$; let $b(z)=i(z)+e(z): \bW^z_\bbX \to \bW_\bbX \otimes K_z$. One checks that $b$ is local with respect to itself (the symmetry of $\bomega$ is used here). The correlation functions are given in terms of the two-point function by Wick's theorem, and the two-point function
is
\begin{equation*}
<b(z_1)b(z_2)>=\bomega(z_1,z_2)   
\end{equation*}

\pagebreak

\part{\textbf{{\Large Further writing.}}}

In sequels to this work I will address several issues.

\begin{enumerate}

\item \textit{Current algebras on $\bbP$.} Virasoro action, vertex algebra structure, hermitian pairing with integral central charge and reflection positivity, conformal blocks, construction of the KZ connection, unitarity.
\item \textit{Current algebras in higher genus.} Conformal blocks, construction of the ADW/Hitchin/KZB/TUY connection, unitarity.
\item \textit{A precise description of the parameter space of each theory.} We saw above that to construct the neutral fermion on  a curve, we need to choose a non-effective theta characteristic. The definition of the $b-c$ system involves a choice of line-bundle of degree $g-1$ ($g$ being the genus.) In the case of a boson, a Bergmann-type kernel is chosen. What happens if the theta characteristic chosen has non-trivial sections? Analogous questions can be asked regarding non-generic choices in other cases. These have been considered in the Physics literature.
\item Rational forms of vertex algebras associated to higher-dimensional lattices.

\suspend{enumerate}

In the works of Tsuchiya-Ueno-Yamada [T-U-Y], Beilinson-Drinfeld [B-D] and Frenkel-Ben-Zvi [F-BZ], quantum fields are constructed as vertex operators first (in terms of power series) and then globalised over curves. 

\resume{enumerate}

\item \textit{Projective structures.} How do the above choices relate to a choice of projective structure, which (as far as I can tell) is the choice made in globalising a vertex algebra? 
\item \textit{Vertex algebra bundles.} As described in this work, quantum fields are maps of (not necessarily quasi-coherent) sheaves of $\cO_\bbX$-modules, which are also $\cD_\bbX$-modules. We hope to study the relation with vertex algebra bundles, as described in \cite{F-BZ}. 
\end{enumerate}

\end{document}